\newtheorem{thm}{Theorem}[section]
\newtheorem{cor}[thm]{Corollary}
\newtheorem{lem}[thm]{Lemma}
\newtheorem{prop}[thm]{Proposition}
\newtheorem{thmintro}{Theorem}
\theoremstyle{definition}
\newtheorem{defn}[thm]{Definition}
\newtheorem{ex}[thm]{Example}
\providecommand{\norm}[1]{\left\| #1 \right\|}
\newcommand{\enuma}[1]{\begin{enumerate}[\textup{(}a\textup{)}] {#1} \end{enumerate}}
\newcommand{\mh}{\mathbb}
\newcommand{\mr}{\mathrm}
\newcommand{\mc}{\mathcal}
\newcommand{\mf}{\mathfrak}
\newcommand{\N}{\mathbb N}
\newcommand{\Z}{\mathbb Z}
\newcommand{\R}{\mathbb R}
\newcommand{\C}{\mathbb C}
\newcommand{\es}{\emptyset}
\newcommand{\red}{\mr{red}}
\newcommand{\cpt}{\mr{cpt}}
\newcommand{\inp}[2]{\langle #1 \,,\, #2 \rangle}
\def\Hom{{\rm Hom}}
\def\Rep{{\rm Rep}}
\begin{document}

\title[Quasi-reductive groups over local fields]{
Pseudo-reductive and quasi-reductive groups \\ over non-archimedean local fields}

\date{\today}
\subjclass[2010]{20G25, 20G07, 22E50}
\maketitle

\begin{center}
{\Large Maarten Solleveld} \\
IMAPP, Radboud Universiteit \\
Heyendaalseweg 135, 6525AJ Nijmegen, the Netherlands \\
email: m.solleveld@science.ru.nl 
\end{center}

\begin{abstract}
Among connected linear algebraic groups, quasi-reductive groups generalize 
pseudo-reductive groups, which in turn form a useful relaxation of the notion of 
reductivity. We study quasi-reductive groups over non-archimedean local fields, 
focusing on aspects involving their locally compact topology.

For such groups we construct valuated root data (in the sense of Bruhat--Tits) and we make 
them act nicely on affine buildings. We prove that they admit Iwasawa and Cartan 
decompositions, and we construct small compact open subgroups with an Iwahori decomposition.

We also initiate the smooth representation theory of quasi-reductive groups. Among others,
we show that their irreducible smooth representations are uniformly admissible, and that 
all these groups are of type I.

Finally we discuss how much of these results remains valid if we omit the connectedness
assumption on our linear algebraic groups.
\end{abstract}

\tableofcontents

\newpage

\section*{Introduction}
\noindent
Pseudo-reductive and quasi-reductive groups generalize connected reductive groups. These 
classes of linear algebraic groups were already known to Tits \cite{BoTi} and Springer 
\cite{Spr}, who developed some basic theory. The recent work of Conrad, Gabber and Prasad
\cite{CGP,CP,CP2} has revived the interest in these groups. 

These sources are mainly
concerned with the structure of pseudo-reductive groups over arbitrary or separably closed
fields. Properties which involve the locally compact topology of pseudo-reductive or 
quasi-reductive groups over local fields have been investigated far less (apart from
reductive groups of course). With this paper we try to narrow that gap. \\

Let $\mc G$ be a connected linear algebraic group defined over a field $F$. The $F$-unipotent 
radical $\mc R_{u,F}(\mc G)$ of $\mc G$ is the largest connected normal unipotent $F$-subgroup 
of $\mc G$ (so it is contained in the usual unipotent radical of $\mc G$). By definition,
$\mc G$ is pseudo-reductive as $F$-group if $\mc R_{u,F}(\mc G) = 1$.

Over a field of characteristic zero, every connected linear algebraic group $\mc G$ admits 
a Levi decomposition, that is, it can be written as the semidirect product of its unipotent 
radical and a reductive subgroup. But this is not always the case over fields $F$ of positive 
characteristic $p$. Firstly, suitable Levi factors need not exist, even if $F$ is algebraically
closed \cite[\S A.6]{CGP}. Secondly, the unipotent radical need not be defined over $F$. 
For example, suppose that $F' / F$ is an inseparable field extension 
of degree $p$ and that $\mc G'$ is a nontrivial connected reductive $F'$-group. Then 
restriction of scalars yields a pseudo-reductive $F$-group $R_{F'/F}(\mc G')$ which is not 
reductive \cite[Proposition 1.1.10]{CGP}.

Nevertheless, $\mc G$ is always an extension of a pseudo-reductive $F$-group by a
unipotent $F$-group. Namely, in the short exact sequence
\begin{equation}\label{eq:1}
1 \to \mc R_{u,F}(\mc G) \to \mc G \to \mc G / \mc R_{u,F}(\mc G) \to 1 
\end{equation}
the quotient group $\mc G / \mc R_{u,F}(\mc G)$ is easily seen to be pseudo-reductive over $F$.
Thus one can try to understand connected linear algebraic $F$-groups in terms of pseudo-reductive
$F$-groups and unipotent $F$-groups. This is considerably easier over perfect fields,
for then every pseudo-reductive $F$-group is in fact reductive and every connected unipotent 
group is $F$-split.

Over a general field $F$, every connected unipotent group is an extension of a $F$-wound unipotent 
group by a $F$-split unipotent group. Such $F$-wound unipotent groups were studied deeply
by Tits (see \cite{Oes} and \cite[Appendix B]{CGP}). Although their structure is described
quite well, it is safe to say that they are much more complicated than split unipotent groups.\\

Every connected linear algebraic $F$-group $\mc G$ has a maximal normal $F$-split unipotent 
subgroup, its $F$-split unipotent radical $\mc R_{us,F}(\mc G)$. As in \cite[\S 1.1.12]{BrTi2}, we 
say that $\mc G$ is quasi-reductive over $F$ if $\mc R_{us,F}(\mc G) = 1$. Since $\mc R_{u,F}(\mc G) 
\supset \mc R_{us,F}(\mc G)$, every pseudo-reductive $F$-group is also quasi-reductive. If $F$ is 
perfect, then every connected unipotent $F$-group is $F$-split, so every quasi-reductive 
$F$-group is also pseudo-reductive and in fact reductive. When $F$ is not perfect, there do exist
quasi-reductive $F$-groups that are not pseudo-reductive, for instance $F$-wound unipotent groups.

In the short exact sequence 
\begin{equation}\label{eq:2}
1 \to \mc R_{us,F}(\mc G) \to \mc G \to \mc G / \mc R_{us,F}(\mc G) \to 1 
\end{equation}
the quotient $\mc G / \mc R_{us,F}(\mc G)$ is always quasi-reductive \cite[Corollary B.3.5]{CGP}. 
Thus, as an alternative to \eqref{eq:1}, one can try analyse connected linear algebraic groups in 
terms of quasi-reductive groups and split unipotent groups. An advantage of doing this with 
quasi-reductive groups (as compared to pseudo-reductive groups) is that split unipotent groups are 
much easier than non-split unipotent groups, and that \eqref{eq:2} always splits as a sequence of 
$F$-varieties \cite[Theorem 14.2.6]{Spr}. For every quasi-reductive $F$-group $\mc G$, the 
$F$-unipotent radical $\mc R_{u,F}(\mc G)$ is $F$-wound. Thus \eqref{eq:1} shows that every 
quasi-reductive $F$-group is an extension of a pseudo-reductive $F$-group by a $F$-wound 
unipotent group.\\

Now we consider a local field $F$ and a quasi-reductive $F$-group $\mc G$. If $F$ is archimedean,
then $\mc G (F)$ is a real reductive group -- an object which we do not investigate in this
paper. When $F$ is non-archimedean and has characteristic zero, $\mc G (F)$ is reductive
$p$-adic group. The structure of such groups, and more generally of reductive groups over 
discretely valued fields, was studied deeply by Bruhat and Tits \cite{BrTi1,BrTi2,Tit}. 

The most interesting case for us arises when $F$ is a local field of positive characteristic
and $\mc G$ is quasi-reductive but not reductive. From the work of Borel and Tits (see \cite{BoTi}
and \cite[Appendix C.2]{CGP}) it is known that $\mc G$ shares many properties with reductive groups. 
Summarizing: there exists a maximal $F$-split torus in $\mc G$, an associated root system, a Weyl 
group and root subgroups, which are $F$-split unipotent. There are pseudo-parabolic subgroups with 
similar properties as parabolic subgroups of reductive groups, and the Bruhat decomposition holds
for $\mc G (F)$.\\

Since $F$ is a local field, we can also consider $G = \mc G (F)$ with the topology coming from 
the metric on $F$. Then it becomes a locally compact, totally disconnected, unimodular group (Lemma 
\ref{lem:1.4}). We investigate several properties of $G$ involving this locally compact topology.

\begin{thmintro}\label{thm:1} (see Theorem \ref{thm:3.3} and Propostion \ref{prop:3.17}) \\
$G$ has a generating root datum with a prolonged valuation in the sense of \cite{BrTi1}. 
\end{thmintro}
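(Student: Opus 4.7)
The plan is to construct the valuated root datum by combining the Borel--Tits structure theory for quasi-reductive groups (summarized in the Introduction) with the discrete valuation on $F$, following the Bruhat--Tits template for reductive groups. First I would fix a maximal $F$-split torus $\mc S \subset \mc G$, form its centralizer $\mc Z = Z_{\mc G}(\mc S)$, and let $\Phi = \Phi(\mc G, \mc S)$ be the relative root system. For each $a \in \Phi$ one has the root subgroup $\mc U_a$, which is $F$-split unipotent by \cite[Appendix C.2]{CGP}. Setting $T = \mc Z(F)$ and $U_a = \mc U_a(F)$, the family $(T,(U_a)_{a \in \Phi})$ is the candidate generating root datum of $G$.

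Next I would verify the axioms of a generating root datum in the sense of \cite[6.1]{BrTi1}: the conjugation action of $T$ on $U_a$ by the character $a$, the commutator relations $[U_a, U_b] \subset \prod_{pa+qb \in \Phi,\, p,q > 0} U_{pa+qb}$, the existence for each nontrivial $u \in U_a$ of a reflection representative $m(u) \in U_{-a} u U_{-a}$, and the Bruhat decomposition for $G$ relative to the family of $U_a$'s. Each of these is supplied by the Borel--Tits theory of pseudo-parabolic subgroups as recapitulated in \cite[Appendix C.2]{CGP}, so this step is largely bookkeeping.

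To build the valuation $\varphi_a \colon U_a \to \R \cup \{\infty\}$, I would exploit that $\mc U_a$ is $F$-split unipotent, hence admits a composition series by connected normal $F$-subgroups with successive quotients $\mathbb{G}_a$. Using the normalized discrete valuation $v_F$ together with a choice of such coordinates, one defines $\varphi_a$ so that the sublevel sets $U_{a,\ell} = \{u : \varphi_a(u) \geq \ell\}$ form a decreasing exhaustive filtration of $U_a$ by open subgroups. The formal valuation axioms (compatibility with the group law, discreteness of the image, $T$-equivariance) follow directly from the coordinate description, and the commutator estimate \textbf{(V3)} reduces to the commutator relations in $\mc G$ via the $\mathbb{G}_a$-composition series.

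The main obstacle will be the rank-one axiom controlling the action of $m(u)$ on $U_{\pm a}$ and the ``prolongation'' of $\varphi$ to the normalizer $N_{\mc G}(\mc S)(F)$, since this is where the gap between reductive and merely quasi-reductive groups is felt. For reductive groups Bruhat--Tits use the explicit $\SL_2$- or $\mathrm{SU}_3$-structure of the rank-one derived subgroup; in our setting that subgroup is only quasi-reductive, so I would pass to its pseudo-reductive quotient (the $F$-wound unipotent part cannot contribute to the root groups) and invoke the CGP classification of pseudo-reductive rank-one groups to reduce to a standard model, which is a Weil restriction of $\SL_2$ or a related group along a finite (possibly purely inseparable) extension $F'/F$. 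The valuation $v_{F'}$ on the larger field then forces the correct value of $\varphi_a$ on the rank-one subgroup, and the compatibility with the Weyl reflection $s_a$ as well as the prolongation to $N_{\mc G}(\mc S)(F)$ fall out of this identification. This last step will be the technical heart of the proof, deferred to Proposition \ref{prop:3.17}.
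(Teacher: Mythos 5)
Your reduction of the quasi-reductive case to the pseudo-reductive quotient via the root groups is sound (this matches the paper), but the construction of the valuation itself has a genuine gap. Defining $\phi_a$ from an arbitrary $\mathbb{G}_a$-composition series of the $F$-split unipotent group $\mc U_a$ does not give the axioms "directly": the crux is (V2), which demands that conjugation by \emph{every} element of $M_a = m_a Z_{\mc G}(\mc S)(F)$ -- hence by every element of the Cartan $Z_{\mc G}(\mc S)(F)$ -- shift $\phi_a$ by a constant. Already for $R_{F'/F}(\SL_2)$ with $F'/F$ purely inseparable, the root group is $\mathbb{G}_a^{[F':F]}$ as an $F$-group and a filtration built from arbitrary $F$-coordinates is not shifted uniformly under multiplication by $F'^\times \subset Z_{\mc G}(\mc S)(F)$; one is forced to use $v_{F'}$. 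More seriously, $Z_{\mc G}(\mc S)(F)$ contains the $F$-points of a commutative pseudo-reductive group $\mc C$ which need not lie in any torus, and its action on $U_a$ is not visible in your coordinates nor controlled by the rank-one subgroups, since $Z_{\mc G}(\mc S)$ is not generated by them together with $\mc S$. This is exactly where the paper works: it takes $\phi_a$ to be the Bruhat--Tits valuations of the reductive groups $\mc G_i(F_i)$ occurring in the (generalized) standard presentation (using \eqref{eq:2.11} and \eqref{eq:2.34} to replace exotic and totally non-reduced factors by reductive, resp.\ symplectic, groups on rational points), and then verifies condition \eqref{eq:3.31} for a general $z \in Z_G(S_\es)$ by passing to a finite Galois extension where the presentation becomes surjective on points and invoking the schematic (Galois-compatible) nature of the Bruhat--Tits valuations. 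Your rank-one reduction addresses only the $m_a(u)$-part of (V2) and axiom (V5), not this.

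Second, you have misread what "prolonged valuation" means, so the half of the statement corresponding to Proposition \ref{prop:3.17} is not addressed at all. A prolongation in the sense of \cite[6.4.38]{BrTi1} is not an extension of $\phi$ to $N_{\mc G}(\mc S)(F)$; it is a good filtration $(U_{0,k})_{k \in \R}$ of the group $U_0 = Z_{\mc Q}(\mc S_\es)(F)$ itself, squeezed between the groups $H_{[k]}$ and $H_{(k)}$ and satisfying the commutator condition $[U_{0,k},U_{0,\ell}] \subset U_{0,k+\ell}$ (Definition \ref{defn:0}). Producing such a filtration of the anisotropic-mod-center Cartan -- which for a quasi-reductive group also contains the $F$-wound unipotent radical -- is the technical heart: the paper uses Yu's schematic filtrations of $Z_{\mc G_i}(\mc S_i)(F_i)$, an adjustment by compact open subgroups of $\mc C(F)$ when $Z(\mc G)(F)$ is non-compact, an intersection over $N_G(S_\es)$-conjugates to force normality in $N_Q(S_\es)$, and finally pull-back along $\pi^{-1}$ to $Q$. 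None of this "falls out" of the rank-one identification, so your plan as stated does not yield the prolonged valuation.
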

In fact it was already shown in \cite[\S C.2.28]{CGP} that $G$ has a generating root datum,
we provide a prolonged valuation thereof.

\begin{thmintro}\label{thm:2} (see Theorem \ref{thm:3.4}) \\
To $\mc G (F)$ we can associate two affine buildings: a thick building $\mc{BT}(\mc G,F)$
coming from a double Tits system in $\mc G (F)$ and an extended building $\mc B (\mc G,F)$,
which is the direct product of $\mc{BT}(\mc G,F)$ and a vector space. 
Moreover $\mc B (\mc G,F)$ is a universal space for proper $\mc G (F)$-actions.
\end{thmintro}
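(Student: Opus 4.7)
The plan is to feed the prolonged valuated root datum from Theorem \ref{thm:1} into the standard Bruhat--Tits machinery \cite{BrTi1}. Fix a maximal $F$-split torus $\mc S \subset \mc G$, put $S = \mc S(F)$, $Z = Z_{\mc G}(\mc S)(F)$, $N = N_{\mc G}(\mc S)(F)$, and let $\Phi = \Phi(\mc G, \mc S)$. Set $V = X_*(\mc S) \otimes_{\Z} \R$ and construct the affine apartment $A$ as a $V$-torsor on which $Z$ acts by translations through the valuation $\nu$ supplied by Theorem \ref{thm:1}. Using the filtration $\{U_{a,r}\}_{a \in \Phi,\, r \in \R}$ of the root subgroups, define for each bounded nonempty $\Omega \subset A$ the parahoric-type subgroup $P_\Omega \subset G$ generated by the pointwise stabilizer of $\Omega$ in $Z$ together with all $U_{a,f_\Omega(a)}$, as in \cite[\S 7]{BrTi1}. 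Then glue:
\[
\mc{BT}(\mc G,F) \;:=\; \bigl( G \times A \bigr) / \sim, \qquad (g,x) \sim (gp, x) \text{ for } p \in P_{\{x\}}.
\]
This produces a polysimplicial complex with an isometric $G$-action and $A$ embedded as a fundamental apartment.

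To extract the double Tits system, fix a chamber $C \subset A$ and set $B = P_C$, $H = B \cap N$. The saturated affine BN-pair axioms for $(B,N)$ follow formally from the valuation axioms (V0)--(V3), exactly as in the reductive case treated by Bruhat and Tits; thickness is a consequence of the strict jumps of $\{U_{a,r}\}$ at each wall of $C$, which is built into the prolonged valuation. Since the valuated root datum is generating, $\mc{BT}(\mc G,F)$ is canonically the thick affine building of the Tits system $(G,B,N)$.

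For the extended building, let $V_0 = X_*(\mc Z^\circ) \otimes_{\Z} \R$, where $\mc Z^\circ$ is the maximal central $F$-torus of $\mc G$, and set $\mc B(\mc G,F) = \mc{BT}(\mc G,F) \times V_0$. The valuation on $Z$ lifts to a homomorphism to $V \oplus V_0$, so that $G$ acts isometrically and properly on the product, with the $V_0$-factor absorbing the translations coming from the center. For the universal property, $\mc B(\mc G,F)$ is a complete CAT(0) metric space (a Euclidean extension of a polysimplicial building), the $G$-action is proper (point stabilizers are compact, being contained in the $P_\Omega$), and by the Bruhat--Tits fixed-point theorem every compact subgroup $H \le G$ fixes a nonempty convex --- hence contractible --- subset, while $\mc B(\mc G,F)^H = \emptyset$ for non-compact closed $H$ by properness. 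The standard characterization of $\underline{E}G$ for totally disconnected locally compact groups (as a proper $G$-space whose fixed-point sets under compact subgroups are contractible) then identifies $\mc B(\mc G,F)$ as a universal space for proper $G$-actions.

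The main obstacle is the verification of thickness and saturation of the Tits system in the quasi-reductive (not necessarily pseudo-reductive) case: the $F$-wound unipotent radical $\mc R_{u,F}(\mc G)(F)$ is bounded in $G$, hence lies in every $P_\Omega$ and acts trivially on $\mc{BT}$, so one must check that the building of $\mc G$ coincides canonically with that of the pseudo-reductive quotient $\mc G / \mc R_{u,F}(\mc G)$, in a way consistent with the valuated root datum of Theorem \ref{thm:1}. A closely related technical point is the precise identification of $V_0$ and the compatibility of the prolonged valuation on $Z$ with the action on the $V_0$-factor; this is what makes the product decomposition $\mc B(\mc G,F) = \mc{BT}(\mc G,F) \times V_0$ canonical rather than merely an ad hoc choice.
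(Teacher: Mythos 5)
Your strategy coincides in outline with the paper's proof of Theorem \ref{thm:3.4}: feed the valuated (and prolonged) root datum of Theorem \ref{thm:1} into \cite[Th\'eor\`eme 6.5]{BrTi1} to obtain a double Tits system and the thick building $\mc{BT}(\mc G,F)$, define the extended building as the product with $X_* (\mc Z) \otimes_\Z \R$ as in \eqref{eq:3.17}, and deduce universality from completeness, uniqueness of geodesics and the Bruhat--Tits fixed point theorem (the paper invokes \cite[Proposition 1.8]{BCH}). That architecture is fine, and the thickness and BN-pair points are indeed formal consequences of the valuated root datum.

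The genuine gap is properness. Your justification ``point stabilizers are compact, being contained in the $P_\Omega$'' fails: the groups $P_\Omega$, and the stabilizers of points of $\mc{BT}(\mc G,F)$, contain the kernel of the translation action of $Z_G (S_\es)$ on the apartment --- in particular the $F$-points of the maximal split central torus, since a central element shifts no valuation $\phi_\alpha$ --- so they are non-compact whenever that torus is nontrivial. Passing to $\mc{BT}(\mc G,F) \times V_0$ only restores properness if one proves that the combined translation homomorphism from $Z_G (S_\es)$ to the translation group of (apartment)$\,\times V_0$ has compact kernel; this needs (i) compactness of $Z_G (S_\es)/S_\es$ (Lemma \ref{lem:1.6}.b, Lemma \ref{lem:3.16}.a), (ii) the decomposition of $X_*(\mc S_\es) \otimes_\Z \R$ into the span of the coroot directions plus $X_*(\mc Z) \otimes_\Z \R$ as in \eqref{eq:1.4}, and (iii) the fact that $\mc Z$ maps with finite kernel to $\mc G / \mc D (\mc G)\mc R_{u,F}(\mc G)$ (the rank statement used before Lemma \ref{lem:3.15}) --- none of which your construction addresses. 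In the paper these points are exactly what Theorem \ref{thm:3.4}.c--d deliver: identifying $\mc{BT}(\mc G,F)$ with $\prod_i \mc{BT}(\mc G_i,F_i)$ reduces properness (and cocompactness) to the reductive factors, Lemma \ref{lem:1.1} for the commutative part, Theorem \ref{thm:3.2} and the compactness of $\mc R_{u,F}(\mc G)(F)$ (Theorem \ref{thm:3.8}.b). So the identification with the building of the pseudo-reductive quotient, which you set aside as an ``obstacle'', is not optional bookkeeping: in the paper's logic it is the mechanism that yields the properness on which the universal-space claim rests. Either carry out the kernel computation intrinsically (also making precise that $V_0$ is built from the $F$-split part of the central torus) or prove that identification; as written, the universality assertion is unsupported.
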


\begin{thmintro}\label{thm:3} (see Theorem \ref{thm:3.10}) \\
$G$ has arbitrarily small compact open subgroups $K$ that admit an Iwahori decomposition
with respect to a given maximal $F$-split torus $S$ in $G$.
\end{thmintro}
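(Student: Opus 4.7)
The plan is to exploit the prolonged valuated root datum furnished by Theorem~\ref{thm:3.3} and apply the standard Bruhat--Tits recipe for concentric compact open subgroups attached to a point of the apartment. Let $\Phi = \Phi(\mc G, S)$ be the root system, $\mc U_\alpha$ the root subgroup attached to $\alpha$, and $\mc Z = Z_{\mc G}(S)$ the centralizer; fix a positive system $\Phi^+ \subset \Phi$, say coming from a minimal pseudo-parabolic $F$-subgroup containing $\mc Z$.

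First I would form, for each $\alpha \in \Phi$ and $r \in \R$, the sublevel set
\[ U_{\alpha, r} = \{u \in U_\alpha(F) \setminus\{1\} : \varphi_\alpha(u) \geq r\} \cup \{1\} . \]
Since $\mc U_\alpha$ is $F$-split unipotent, $U_\alpha(F)$ is an $F$-vector space; combined with the axioms of a valuated root datum this gives that $U_{\alpha,r}$ is a compact open subgroup of $U_\alpha(F)$ for $r$ sufficiently large, with $\bigcap_r U_{\alpha,r} = \{1\}$. Next, as $Z(F)$ is locally compact and totally disconnected (Lemma~\ref{lem:1.4}), it admits a decreasing neighbourhood basis of the identity by compact open subgroups $Z_n \subset Z(F)$ with $\bigcap_n Z_n = \{1\}$. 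The prolonged-valuation axioms include compatibility of $\varphi_\alpha$ with conjugation by $Z(F)$, and on shrinking the $Z_n$ further one can arrange that each $Z_n$ normalizes every $U_{\alpha, r}$ in the range used below.

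For suitably coupled parameters $r_n \to \infty$ I would set
\[ U_n^{\pm} = \prod_{\alpha \in \Phi^{\pm}} U_{\alpha, r_n}, \qquad K_n = U_n^{-} \cdot Z_n \cdot U_n^{+} . \]
The commutator relations of a valuated root datum (axiom (V3) of \cite{BrTi1}) then show, by the usual Bruhat--Tits argument, that each $U_n^{\pm}$ is a group independent of the product order, that $K_n$ is a compact open subgroup of $G$, and that the displayed factorization is an Iwahori decomposition of $K_n$ with respect to $S$. Because $\bigcap_n U_{\alpha,r_n} = \{1\}$ for every $\alpha$ and $\bigcap_n Z_n = \{1\}$, one has $\bigcap_n K_n = \{1\}$, which supplies the arbitrarily small subgroups required.

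The principal new feature compared to the reductive case is that $Z(F)$ can fail to be compact modulo the split centre and can harbour a nontrivial $F$-wound unipotent part, so the classical single "bounded part" of $Z(F)$ has to be replaced by the filtration $(Z_n)$. This is harmless for the Iwahori decomposition because all that is needed is that $Z_n$ normalizes the root-subgroup pieces, which follows formally from the prolonged valuation. The genuine work therefore lies in Theorem~\ref{thm:3.3}; granted that, Theorem~\ref{thm:3.10} reduces to a standard verification of axioms.
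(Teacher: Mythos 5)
There is a genuine gap at the point where you claim that, by ``the usual Bruhat--Tits argument'' from the commutator axiom (V3), the set $K_n = U_n^- \cdot Z_n \cdot U_n^+$ is a group for an \emph{arbitrary} small compact open subgroup $Z_n$ of $Z_{\mc G}(S)(F)$ that merely normalizes the pieces $U_{\alpha,r}$. Axiom (V3) only controls commutators $[U_{\alpha,r},U_{\beta,s}]$ for roots $\alpha,\beta$ that are not opposite; for opposite roots, $[U_{\alpha,r},U_{-\alpha,s}]$ produces elements whose component in the direction of $Z_{\mc G}(S)(F)$ is in general nontrivial (already in $\SL_2$ one gets torus-like elements). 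For the product $U_n^- Z_n U_n^+$ to be closed under multiplication, the middle factor must \emph{absorb} these contributions: in the language of the paper's Definition \ref{defn:0}, the middle factor has to contain the group $H_{[k]}$ generated (inside $U_0 = Z_Q(S_\es)$) by commutators of opposite root-group pieces at the relevant level, in addition to being contained in $H_{(k)}$ (which is the precise form of your ``normalization'' condition). Your proposal secures only the second containment; nothing in it guarantees the first, and with a genuinely arbitrary small $Z_n$ it simply fails, so $K_n$ need not be a subgroup and the claimed Iwahori factorization does not even make sense. The sentence ``all that is needed is that $Z_n$ normalizes the root-subgroup pieces'' is exactly the false step.

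This absorption problem is the heart of the paper's proof of Theorem \ref{thm:3.10}, and it is why the argument there is not a routine verification. The paper starts from the prolonged valuation of Proposition \ref{prop:3.17} (so the middle factor $U_{0,n+}$ satisfies both containments $H_{[n+]}\subset U_{0,n+}\subset H_{(n+)}$), and then, to make the intersection of the $K_n$ trivial in the quasi-reductive case, it shrinks the middle factor to $\tilde U_{0,n+}=\pi^{-1}(U_{0,n+})\cap Z_{k(n+)}$ where the $Z_r$ come from Lemma \ref{lem:3.16}.c. The crucial point is the choice of the level $k(n+)$: it is taken just large enough that $Z_{k(n+)}$ still contains $H_{[n+]}$, which is possible only because the commutator sets \eqref{eq:3.55} shrink to $\{1\}$ as the root-group depth $n$ grows, and one checks $k(n+)\to\infty$ so that $\bigcap_n \tilde U_{0,n+}=\{1\}$. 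In other words, the parameters of the centralizer part and of the root-group part must be coupled through the opposite-root commutators, not merely through a normalization requirement. Your construction could be repaired along exactly these lines, but as written the group property of $K_n$ (and hence the Iwahori decomposition) is unjustified.
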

More explicitly, let $P = MU, \bar P = M \bar U$ be a pair of opposite pseudo-parabolic 
$F$-subgroups of $G$ containing $Z_G (S)$. Then the multiplication map
\[
(K \cap U) \times (K \cap M) \times (K \cap \bar U) \to K 
\]
is bijective.

\begin{thmintro}\label{thm:4} (see Theorem \ref{thm:3.6}) \\
$G$ admits Iwasawa decompositions. More precisely, let $x$ be a special vertex of 
$\mc B (\mc G,F)$. Then the isotropy group $G_x$ is a maximal compact subgroup of $G$ and 
$G = P G_x$ for every pseudo-parabolic $F$-subgroup $P$ of $G$.
\end{thmintro}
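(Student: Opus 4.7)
The plan is to mirror the classical Bruhat--Tits argument for reductive $p$-adic groups, using the extended building of Theorem~\ref{thm:2} and the valuated root datum of Theorem~\ref{thm:1} in place of the usual reductive-group ingredients. For the Iwasawa assertion, it suffices to treat a minimal pseudo-parabolic $F$-subgroup, since if $P \supseteq P_0$ then $P G_x \supseteq P_0 G_x$. By conjugacy of minimal pseudo-parabolics in $\mc G$, we may take $P_0 = M_0 U_0$ with $M_0 = Z_G(S)$ for a maximal $F$-split torus $S$ whose apartment $A$ contains $x$.

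For the maximality assertion, Theorem~\ref{thm:2} tells us that $\mc B(\mc G,F)$ is a universal proper $G$-space, so the point stabilizer $G_x$ is compact. If $K \supseteq G_x$ is any compact subgroup of $G$, then by the Bruhat--Tits fixed-point theorem $K$ fixes some $y \in \mc B(\mc G, F)$. The $K$-fixed-point set is contained in the $G_x$-fixed-point set, so it is enough to show that the latter equals $\{x\}$. Since $x$ is special, $G_x$ contains representatives of every element of the spherical Weyl group $W = N_G(S)/M_0$, and these act on $A$ with unique fixed point $x$; using that any point of $\mc B(\mc G,F)$ lies together with $x$ in some apartment conjugate to $A$ under $G_x$, one concludes that the global fixed-point set of $G_x$ reduces to $\{x\}$. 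Thus $y = x$, $K \subseteq G_x$, and $G_x$ is maximal compact.

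For the Iwasawa decomposition we use the following building-theoretic fact: $P_0$, being the $G$-stabilizer of a chamber $c$ of the spherical building at infinity of $\mc B(\mc G, F)$, acts transitively on the apartments of $\mc B(\mc G, F)$ whose boundary contains $c$, and in particular, for any $y \in \mc B(\mc G, F)$ one can find $p \in P_0$ with $p y \in A$. Given $g \in G$, apply this with $y = g x$ to obtain $p g x \in A$. The intersection $(G \cdot x) \cap A$ equals the $N_G(S)$-orbit $N_G(S) \cdot x$, by the strong transitivity of the Tits system supplied by Theorem~\ref{thm:2}. Hence $p g x = n x$ for some $n \in N_G(S)$. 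Because $x$ is special, $n$ can be written as $m n_x$ with $m \in M_0$ and $n_x \in N_G(S) \cap G_x$. Then $pgx = mx$, hence $m^{-1} p g \in G_x$ and $g \in p^{-1} m \cdot G_x \subseteq P_0 G_x$.

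The main obstacle is verifying, in the quasi-reductive setting, the two building-theoretic inputs used above: the transitivity of $P_0$ on apartments sharing the chamber at infinity $c$, and the identity $(G \cdot x) \cap A = N_G(S) \cdot x$. In the reductive case both follow from the strongly transitive $(B,N)$-action of Bruhat--Tits, and one must check that they continue to hold when the root groups of the valuated root datum of Theorem~\ref{thm:1} are only $F$-wound rather than $F$-split unipotent. A related technical point is to formulate the ``special-vertex'' condition carefully when the relative root system is non-reduced, e.g.\ of type $BC_n$, so that the representatives of $W$ inside $G_x$ genuinely exist.
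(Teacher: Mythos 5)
Your Iwasawa argument is, in outline, the same classical Bruhat--Tits argument the paper uses in Theorem \ref{thm:3.6}: bring $gx$ into the standard apartment by an element of the split unipotent radical, use $G\cdot x\cap A=N_G(S)\cdot x$, and absorb the Weyl part into $G_x$ by speciality. But the two building-theoretic inputs you invoke and explicitly leave unverified are exactly where the content lies; in the paper they are Theorem \ref{thm:3.5}.b,c, obtained by feeding the valuated (and prolonged) root datum of Theorem \ref{thm:3.3} and Proposition \ref{prop:3.17} into \cite[7.3.1 and 7.4.2]{BrTi1}. Your stated worry --- that these could fail because the root groups ``are only $F$-wound'' --- is off target: for $\alpha\neq 0$ the root groups $U_\alpha$ of $\mc Q$ are $F$-split unipotent (by \eqref{eq:3.4} they are isomorphic to those of the pseudo-reductive quotient), and the $F$-wound piece $\mc R_{u,F}(\mc Q)$ lies inside $Z_Q(S_\es)$, i.e.\ in the ``$T$''-part of the root datum; since \cite[\S\S 6--7]{BrTi1} is axiomatic on valuated root data, your two facts follow once Theorem \ref{thm:3.3} is granted. (Two smaller points: the reduction ``we may take $P_0$ with $Z_G(S)\subset P_0$ and $x$ in the apartment of $S$'' needs the remark that one may conjugate the pair $(P,x)$ simultaneously, using transitivity of $G$ on apartments --- the paper instead keeps $x$ fixed and moves the torus of $P$ by an element of $\mc R_{us,F}(\mc P)(F)$ via Theorem \ref{thm:3.5}.c; and the $BC_n$ worry is harmless, speciality gives $N_G(S)_x Z_G(S)=N_G(S)$ by \cite[4.4.6]{BrTi1} irrespective of reducedness.)

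The maximality argument has a genuine error: the fixed-point set of $G_x$ in $\mc B(\mc G,F)$ is not $\{x\}$ unless the maximal $F$-split central torus $\mc Z$ is trivial. Every compact subgroup of $G$, in particular $G_x$, acts trivially on the flat factor $X_*(\mc Z)\otimes_\Z\R$ (Lemma \ref{lem:3.15}.b), and your Weyl-group representatives fix the whole slice $\{x_s\}\times X_*(\mc Z)\otimes_\Z\R$ of the apartment, since roots vanish on the central directions; so ``unique fixed point $x$'' already fails for $G=(F^\times)^d$, although $G_x$ is of course still maximal compact there. The claim can be repaired by arguing only in the factor $\mc{BT}(\mc Q,F)$ (where the fixed set of $G_x$ is the single vertex $x_s$) and combining this with the triviality of compact subgroups on the flat factor; this is essentially how the paper proceeds, via Proposition \ref{prop:3.18}.c (a compact overgroup of $G_x$ fixes a point, and stabilizers of vertices are maximal among stabilizers of points), with the finer description of maximal compact subgroups in Proposition \ref{prop:3.19}.b. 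As written, then, your proposal reproduces the paper's route for the decomposition itself but leaves its key lemmas as admitted gaps, and the maximality step as stated would fail for any $G$ with noncompact center.
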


\begin{thmintro}\label{thm:5} (see Theorem \ref{thm:3.7}) \\
$G$ admits Cartan decompositions. More precisely, let $S$ be a maximal $F$-split torus in 
$G$ and let $x$ be a special vertex of the apartment of $\mc B (\mc G,F)$ associated to $S$. 
Then there exists a finitely generated semigroup $A \subset Z_G (S)$ such that
$G = \bigsqcup_{a \in A} G_x a G_x$.
\end{thmintro}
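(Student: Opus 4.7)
The plan is to combine the $G$-action on the affine building $\mc B (\mc G, F)$ from Theorem \ref{thm:2} with the classical combinatorics of the extended affine Weyl group at a special vertex, reducing the Cartan decomposition to the description of a $W_0$-fundamental domain on a translation lattice.

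First I would fix the apartment $\mc A \subset \mc B (\mc G, F)$ corresponding to $S$ and verify that $Z_G (S)$ acts on $\mc A$ by translations, with kernel equal to its maximal bounded subgroup $Z_G (S)_{\cpt}$. Let $\Lambda$ denote the resulting translation lattice (a finitely generated abelian group), let $W_0 = N_G (S)/Z_G (S)$ be the finite Weyl group, and let $\tilde W = N_G (S)/Z_G (S)_{\cpt}$ be the extended affine Weyl group acting on $\mc A$. Because $x$ is \emph{special}, its stabilizer in $\tilde W$ is a complement to $\Lambda$ mapping isomorphically to $W_0$, so $\tilde W = W_0 \ltimes \Lambda$. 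Moreover $Z_G (S)_{\cpt} \subset G_x$, since $Z_G (S)_{\cpt}$ fixes $\mc A$ pointwise.

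Next I would reduce to the translation lattice. For any $g \in G$, the point $g \cdot x$ lies in some apartment containing $x$, and by the $G_x$-transitivity on apartments through $x$ (a Bruhat--Tits property packaged into the double Tits system of Theorem \ref{thm:2}) there exists $k_1 \in G_x$ with $k_1 g \cdot x \in \mc A$. Applying an element of $W_0$ lifted into $N_G (S) \cap G_x$, I may arrange that $k_1 g \cdot x = \lambda \cdot x$ for a unique $\lambda$ in the dominant cone $\Lambda^+ := \Lambda \cap \overline{\mc C^+}$, where $\mc C^+$ is a chosen closed Weyl chamber at $x$. Any lift $a \in Z_G (S)$ of $\lambda$ then satisfies $g \in G_x a G_x$. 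Conversely, the uniqueness of dominant representatives in $W_0 \backslash \Lambda$, combined with $Z_G (S)_{\cpt} \subset G_x$, implies that distinct dominant $\lambda$ give distinct double cosets.

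Finally, $\Lambda^+$ is a finitely generated sub-monoid of $\Lambda$, being the intersection of a finitely generated abelian group with a rational polyhedral cone (Gordan's lemma). Lifting a finite set of monoid generators to $Z_G (S)$ produces a finitely generated sub-semigroup $A \subset Z_G (S)$ whose image in $\Lambda$ is $\Lambda^+$; after a choice of section modulo $Z_G (S)_{\cpt}$ (which is absorbed into $G_x$), this yields the required disjoint decomposition $G = \bigsqcup_{a \in A} G_x a G_x$. The main obstacle is verifying the semidirect decomposition $\tilde W = W_0 \ltimes \Lambda$ together with $Z_G (S)_{\cpt} \subset G_x$ directly from the quasi-reductive data, since $Z_G (S)$ is generally not reductive. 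Both facts should however follow from the explicit construction of $\mc B (\mc G, F)$ in Theorem \ref{thm:2}: $Z_G (S)$ acts on $\mc A$ only through $Z_G (S) / Z_G (S)_{\cpt}$ by translations, and the specialness of $x$ manifests as the vanishing at $x$ of all affine roots coming from the root system of $(\mc G, S)$.
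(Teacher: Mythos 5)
Your route is the same as the paper's (Theorem \ref{thm:3.7}, deduced from the building facts of Theorem \ref{thm:3.5}): put $g\cdot x$ and $x$ in a common apartment, use $G_x$-transitivity on apartments through $x$, use specialness to lift $W_0$ into $G_x$, and represent $W_0$-orbits in the translation lattice by a dominant cone. However, two steps are asserted without real justification. First, after finding $k_1\in G_x$ with $k_1 g\cdot x\in\mc A$, you claim that an element of $W_0$ (lifted into $N_G(S)\cap G_x$) moves this point to $\lambda\cdot x$ with $\lambda\in\Lambda^+$. This silently uses that $G\cdot x\cap\mc A = N_G(S)\cdot x$, i.e. that a point of the apartment lying in the $G$-orbit of $x$ already lies in the $\tilde W$-orbit of $x$. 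That is not a formal consequence of what you have set up; in the paper it is Theorem \ref{thm:3.5}.b, which rests on the Iwasawa decomposition at the level of the building (\cite[Proposition 7.3.1 and 7.4.2]{BrTi1}), and it is needed before the word ``$\lambda$'' can even be written.

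Second, and more seriously, the disjointness. From $G_x a G_x = G_x a' G_x$ you only get $a'\cdot x = k(a\cdot x)$ for some $k\in G_x$, i.e. the two apartment points are in the same $G_x$-orbit. Uniqueness of dominant representatives in $W_0\backslash\Lambda$ plus $Z_G(S)_\cpt\subset G_x$ does not bridge this: you must know that two points of $\mc A$ (in the orbit of $x$) which are $G_x$-conjugate are already $N_G(S)_x$-conjugate, equivalently $G_x n G_x\cap N_G(S)\subset N_G(S)_x\, n\, N_G(S)_x$. This is precisely the content of Theorem \ref{thm:3.7}.b in the paper, proved by invoking \cite[Proposition 4.2.1]{BrTi1}; it is a genuine Bruhat--Tits input, not lattice combinatorics. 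Both missing facts are true and standard, so your skeleton is sound and matches the paper's argument (your appeal to Gordan's lemma for finite generation of $\Lambda^+$ replaces the paper's Weyl-chamber cone remark), but as written the surjectivity step presupposes Theorem \ref{thm:3.5}.b and the disjointness step is a non sequitur until the double-coset injectivity is supplied.
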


In the course of proving the Cartan decomposition, we also establish an Iwahori--Bruhat
decomposition. That is, we show that the double cosets with respect to a kind of Iwahori
subgroup of $G$ are in bijection with a suitable (extended) affine Weyl group.\\

The upshot of all the above geometric results is that quasi-reductive $F$-groups are actually not 
so different from reductive groups. The main difference lies in the structure of Cartan subgroups 
and ``$F$-Cartan subgroups", i.e. the centralizers of maximal $F$-split tori in $\mc G$. 
Upon dividing out its center, such a $F$-Cartan subgroup becomes $F$-anisotropic (see Lemma 
\ref{lem:3.16}). By \cite[Proposition A.5.7]{Con} the $F$-rational points of any $F$-anisotropic 
group form a compact group, and conversely. Thus the $F$-rational points of any $F$-Cartan 
subgroup of $\mc G$ form a group which is compact modulo its center. But, whereas the structure 
of $F$-tori and of anisotropic reductive $F$-groups is understood very well, this is much less 
the case for commutative or anisotropic pseudo- or quasi-reductive $F$-groups.

With this in mind we start to investigate smooth complex representation of quasi-reductive 
groups. In the representation theory of reductive $p$-adic groups $F$-Cartan subgroups are
often treated as black boxes, in the sense that one only uses that they are compact modulo
a central torus. Therefore most of the elementary representation theory of reductive $F$-groups
should remain valid for quasi-reductive groups. In particular, almost everything in the
influential preprint \cite{Cas} should hold for quasi-reductive groups. Using Renard's treatment 
\cite{Ren} of Bernstein's work as main reference, we check that several well-known results
involving parabolic induction generalize to quasi-reductive groups. Our main result about
$G$-representations is uniform admissibility:

\begin{thmintro} \label{thm:6} (see Theorem \ref{thm:1.1}) \\
Let $K$ be a compact open subgroup of $G$. There exists a bound $N(G,K) \in \N$ such that
$\dim_\C (V^K) \leq N(G,K)$ for every irreducible smooth $G$-representation $V$. The same
holds when $V$ is a topologically irreducible unitary $G$-representation.
\end{thmintro}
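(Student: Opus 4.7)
The plan is to adapt Bernstein's proof of uniform admissibility for reductive $p$-adic groups (following the exposition in \cite{Ren}) to the quasi-reductive setting, exploiting the structural results obtained above. First I would reduce to a cofinal family of small compact open subgroups: since $V^{K'} \supset V^K$ whenever $K' \subset K$, it suffices to produce a bound for $K$ small, and by Theorem \ref{thm:3.10} we may assume that $K$ admits an Iwahori decomposition with respect to a fixed minimal pseudo-parabolic $P_0 = M_0 U_0$ and its opposite $\bar{P}_0$.

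Next I would induct on the split $F$-rank of $\mc G$. In the base case $\mc G$ is $F$-anisotropic modulo its split central torus, hence $G$ is compact modulo its centre by Lemma \ref{lem:3.16} together with \cite[Proposition A.5.7]{Con}; every irreducible smooth representation is then finite-dimensional, of dimension bounded in terms of $[G : K \cdot Z(G)]$. For the inductive step, if $V$ is non-supercuspidal then Frobenius reciprocity embeds $V$ into $\ind_P^G W$ for some proper pseudo-parabolic $P = MU$ and some irreducible smooth $M$-representation $W$, so the Jacquet module $V_U$ is nonzero. The crucial technical input is Jacquet's lemma in Casselman's form: for $K$ sufficiently small with Iwahori decomposition adapted to $P$, the canonical map $V^K \to (V_U)^{K \cap M}$ is bijective. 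The standard proof uses only the Iwahori decomposition, the existence of strictly $P$-positive elements in the semigroup $A \subset Z_G(S)$ supplied by Theorem \ref{thm:3.7}, and compactness arguments in $G$ --- all available here. Induction applied to $M$ (of strictly smaller split rank) and to the compact open subgroup $K \cap M$ then yields the desired bound.

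The main obstacle I anticipate is the supercuspidal case. Bernstein's argument relies on (a) matrix coefficients of supercuspidals being compactly supported modulo centre, and (b) the fact that for each $K$ only finitely many supercuspidal classes up to unramified twist admit a nonzero $K$-fixed vector, each of bounded $K$-fixed dimension. Item (a) follows from the Cartan decomposition (Theorem \ref{thm:3.7}) together with the compactness of $Z_G(S)$ modulo a finitely generated subgroup. For (b) one shows that the summand of the Hecke algebra $\mc H(G,K)$ corresponding to cuspidal support on a given pseudo-Levi $M_0$ is a finitely generated module over its centre; this centre is identified with the algebra of regular functions on the complex algebraic group $\Hom(M_0 / M_0^{\cpt}, \C^\times)$ of unramified characters of $M_0$, which is a (finite extension of a) complex torus since $M_0 / M_0^{\cpt}$ is finitely generated by the Cartan decomposition applied to $M_0$. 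The subtle point distinguishing the quasi-reductive case from the reductive one is that the scheme-theoretic centre of $\mc M_0$ need not be a torus, so one must work consistently with the rational-points quotient $M_0 / M_0^{\cpt}$; fortunately this is all that enters Bernstein's argument. Finally, the extension from smooth to topologically irreducible unitary representations is routine: the subspace $V^\infty$ of smooth vectors is a dense, $G$-stable smooth subrepresentation of finite length with $V^K = (V^\infty)^K$, so the smooth bound applies.
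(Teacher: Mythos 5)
Your inductive step rests on a claim that is false: for $K$ with an Iwahori decomposition adapted to $P=MU$, the canonical map $V^K \to (V_U)^{K\cap M}$ is only \emph{surjective} (this is Jacquet's lemma in Casselman's form, \cite[Proposition 4.1.4]{Cas}); it is not injective, and injectivity is exactly the direction you need to bound $\dim V^K$. The Steinberg representation of $\SL_2(F)$ already kills it: its Jacquet module along a Borel is one-dimensional, while $\dim V^K \to \infty$ as $K$ shrinks, so your step would give $\dim V^K \le 1$. One can repair the reduction by instead embedding $V$ into $i_P^G(W)$ with $W$ irreducible supercuspidal on a pseudo-Levi (Lemma \ref{lem:2.13}) and using Mackey theory together with the compactness of $G/P$ (Corollary \ref{cor:2.8}, \eqref{eq:3.25}), but then the entire weight of the theorem falls on the supercuspidal case — and that is where the second, more serious gap lies. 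To handle supercuspidals you invoke the Bernstein-decomposition package: finiteness of the set of supercuspidal inertial classes admitting level-$K$ vectors, and finite generation of the corresponding summands of $\mc H(G,K)$ over centres identified with algebras of unramified characters. None of this is established for quasi-reductive groups (the paper proves no Bernstein decomposition), and even for reductive groups these finiteness theorems are at least as deep as uniform admissibility and are standardly derived with it (or comparable finiteness input) in hand; as written your argument is essentially circular, or at best defers to machinery far heavier than the statement being proved.

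For comparison, the paper separates the two issues. Plain admissibility of each irreducible (Theorem \ref{thm:1.2}) is proved along the lines you sketch for the non-uniform part: embedding into $i_P^G$ of a supercuspidal plus the compact-support characterization of supercuspidality (Theorem \ref{thm:1.14}). The \emph{uniform} bound, however, comes from Bernstein's Hecke-algebra argument \cite{Ber}, applied directly to irreducible $\mc H(G,K_n)$-modules with no case distinction and no cuspidal-support theory: its only inputs are the Cartan decomposition $G = G_x A G_x$ with $A$ a finitely generated semigroup in $Z_G(S_\es)$ (Theorem \ref{thm:3.7}.d, refined via Lemma \ref{lem:3.16}.a as in \eqref{eq:1.16}) and arbitrarily small $K_n$ with an Iwahori decomposition that are \emph{normal} in the good maximal compact $G_x$ (Theorem \ref{thm:3.10}) — precisely the structural results the paper develops. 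Finally, your treatment of the unitary case asserts that the space of smooth vectors has finite length; that is unjustified and is essentially equivalent to the admissibility statement you are proving. The paper instead observes that $V^K$ is a topologically irreducible $*$-representation of $\mc H(G,K)$ (Proposition \ref{prop:1.4}.c) and applies a Kaplansky-type theorem (\cite[Theorem 6.27]{Rud}) to the bound from part (a).
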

As a consequence, every quasi-reductive group over a non-archimedean local field has type I
(Corollary \ref{cor:1.3}). 

Finally, we check in Section \ref{sec:disc} which of the above results remain valid for
disconnected linear algebraic $F$-groups with a quasi-reductive connected component. This 
turns out to be a nontrivial issue, because such groups do not always possess good maximal
compact subgroups. But under mild conditions almost everything we did generalizes. \\

Several of our proofs are much easier for pseudo-reductive groups than for 
quasi-reductive groups. In the pseudo-reductive case one can show Theorems 
\ref{thm:2} -- \ref{thm:5} without using Theorem \ref{thm:1}. The main technique, relying 
heavily on \cite{CGP}, is reduction to the cases of semisimple groups and of commutative 
pseudo-reductive groups. We put our results and arguments for pseudo-reductive groups 
in Section \ref{sec:pseudo}. Let us note here that our results about parabolic induction
and restriction for pseudo-reductive groups (see Paragraph \ref{par:ind}) rely only
on Section \ref{sec:pseudo}.

Only in Section \ref{sec:quasi} we turn to quasi-reductive groups.
A difficulty in the generalization from pseudo-reductive to quasi-reductive arises from
taking $F$-rational points in \eqref{eq:1}. The resulting sequence
\[
1 \to \mc R_{u,F}(\mc G)(F) \to \mc G (F) \to (\mc G / \mc R_{u,F}(\mc G))(F)
\]
need not be exact at the right hand side, even when $\mc G$ is quasi-reductive. We analyse
the failure of surjectivity in Theorem \ref{thm:3.2}, showing that the image of the 
quasi-reductive group $\mc G (F)$ in the pseudo-reductive group $(\mc G / \mc R_{u,F}(\mc G))(F)$
has finite index and contains several important subgroups. But still, the non-surjectivity 
makes it hard to derive certain
results for quasi-reductive groups from those for pseudo-reductive groups. To overcome this,
we feel forced to appeal to Bruhat--Tits theory. Obviously this approach is rather technical, 
but once we have established the setup with valuated root data, we get many beautiful results
quite easily. Moreover, even for pseudo-reductive groups we can produce more precise results
than before.\\

One motivation for this paper was our desire to prove the Baum--Connes conjecture \cite{BCH}
for pseudo-reductive and quasi-reductive groups over local function fields. We prepare 
specifically for that in Paragraph \ref{par:Xi}. In joint work with K. Li (M\"unster)
we intend to use results from \cite{Laf} and from this paper to verify that conjecture 
for all linear algebraic groups over non-archimedean local fields, thus complementing
the results of \cite{CEN}.

\vspace{3mm} \textbf{Acknowledgements.} \\
The author is supported by a NWO Vidi grant ``A Hecke algebra approach to the local
Langlands correspondence" (nr. 639.032.528). 
We thank Brian Conrad and Gopal Prasad for lot of useful advice, which lead to substantial 
improvement of the paper. The author enjoyed some enlightening discussions with Kang Li 
about this paper and related topics.

\newpage

\section{Notations and preliminaries}

Our default field is called $F$.  Our algebraic groups are assumed to be smooth and affine, 
unless explicitly stated otherwise. When $\mc G$ is an algebraic group defined over $F$, 
we will often denote its group of $F$-rational points $\mc G (F)$ by $G$ (and similarly 
$\mc T (F) = T$ etcetera). When $F'/F$ is a finite field extension, we denote the Weil 
restriction functor by $R_{F' / F}$. Thus $R_{F'/F}(\mc G)$ is a linear algebraic $F$-group
with $\mc R_{F'/F}(\mc G)(F) = \mc G (F')$.

We denote the derived group of $\mc G$ by $\mc D (\mc G)$.
The algebraic groups $\mc R_{u,F}(\mc G)$ and $\mc R_{us,F}(\mc G)$ are by definition the 
$F$-unipotent radical and the $F$-split unipotent radical of $\mc G$. We say that $\mc G$ is
$F$-anisotropic if it contains neither the additive group $\mc G_a$
nor the multiplicative group $\mc G_m$ as an $F$-subgroup.

Starting from Paragraph \ref{par:local}, $F$ will be local and non-archimedean.
Then the group $\mc G (F)$ is endowed with the topology coming from the metric on $F$. 
A phrase like ``$S$ is a maximal $F$-split torus of $G$" means that $S = \mc S (F)$, where 
$\mc S$ is a maximal $F$-split torus in $\mc G$.
By $Z (\mc G)$ we mean the scheme-theoretic center of $\mc G$, in contrast to $Z(G)$, 
which is the center of $G$ as an abstract group. Similarly $Z_{\mc G}(X)$ will be the
scheme-theoretic centralizer of $X$ in $\mc G$, whereas $Z_G (X)$ denotes the ordinary
centralizer of $X$ in $G$.

By an affine building we mean those buildings that appear in Bruhat--Tits theory for
reductive groups over discretely valued fields \cite{Tit}. In other words, every affine 
building is a direct product of Euclidean/affine buildings that are simplicial complexes.
Typical instances of the latter are the real line and the building coming from an 
irreducible affine Tits system \cite[\S 2]{BrTi1}. All actions of topological groups on 
topological spaces (e.g. buildings) are assumed to be continuous.\\

Let us recall here a few results which do not have a natural place elsewhere 
in the paper, as they are valid in far greater generality. We will use them 
repeatedly in Paragraph \ref{par:local}.

\begin{prop}\label{prop:1.8} \textup{\cite[Proposition A.5.7]{Con}} \ \\
Let $\mc G$ be a connected linear algebraic group defined over a local field $F$.
Then $\mc G$ is $F$-anisotropic if and only if $\mc G (F)$ is compact.
\end{prop}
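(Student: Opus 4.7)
The reverse implication is elementary: if $\mc G$ contains $\mc G_a$ or $\mc G_m$ as an $F$-subgroup, then $\mc G(F)$ has $(F,+)$ or $F^\times$ as a closed subgroup, neither of which is compact over a non-archimedean local field (for $F^\times$ the valuation surjects onto $\Z$). Hence $\mc G(F)$ is non-compact. I would record this first.

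For the forward direction, assume $\mc G$ is $F$-anisotropic and argue by devissage along \eqref{eq:1}. Since $\mc G$ contains no $\mc G_a$-subgroup, the $F$-unipotent radical $\mc R_{u,F}(\mc G)$ is $F$-wound, and Tits' theorem on $F$-wound unipotent groups over local fields \cite[Appendix B]{CGP} yields that $\mc R_{u,F}(\mc G)(F)$ is compact. Compactness of $\mc G(F)$ then reduces to compactness of the image of $\mc G(F)$ in $\mc H(F) := (\mc G/\mc R_{u,F}(\mc G))(F)$. One checks that $\mc H$ remains $F$-anisotropic: maximal $F$-split tori lift through unipotent quotients, ruling out $\mc G_m \subset \mc H$, and a $\mc G_a \subset \mc H$ would pull back to a $\mc G_a$-subgroup of $\mc G$ via the splitting of extensions of $\mc G_a$ by $F$-wound unipotent $F$-groups. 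So it suffices to prove the proposition for pseudo-reductive $\mc H$.

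For $\mc H$ pseudo-reductive and $F$-anisotropic, the maximal $F$-split torus $\mc S \subset \mc H$ is trivial, so $\mc H = Z_{\mc H}(\mc S)$ equals its own Cartan $F$-subgroup. Cartan $F$-subgroups of pseudo-reductive groups are commutative \cite[Proposition 1.2.4]{CGP}, so $\mc H$ is commutative. Picking a maximal $F$-torus $\mc T \subset \mc H$ (which is $F$-anisotropic since $\mc S = 1$), the short exact sequence $1 \to \mc T \to \mc H \to \mc H/\mc T \to 1$ reduces the problem to two cases: $\mc T(F)$ is compact by the classical theorem on anisotropic $F$-tori over local fields, and $\mc H/\mc T$ is a connected commutative unipotent $F$-group with no $\mc G_a$-subgroup (else one could lift to contradict anisotropy of $\mc H$), hence $F$-wound with compact $F$-points by Tits' theorem again.

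The main obstacle is the appeal to Tits' theorem that $F$-wound connected unipotent $F$-groups have compact $F$-rational points. This is a deep structural result resting on the explicit description of $F$-wound groups from \cite[Appendix B]{CGP} and is really the content of the anisotropic-implies-compact direction once one strips away the tori. A secondary technical subtlety, treated later in the paper as Theorem \ref{thm:3.2}, is that $\mc G(F) \to \mc H(F)$ need not be surjective; however, for compactness one only needs that $\mc G(F)$ is an extension of a compact quotient by the compact normal subgroup $\mc R_{u,F}(\mc G)(F)$, which suffices to conclude.
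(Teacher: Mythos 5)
Note first that the paper does not prove this proposition at all: it is quoted verbatim from \cite[Proposition A.5.7]{Con}, so the only question is whether your argument is sound on its own. Your reverse direction is fine, and the top-level d\'evissage along \eqref{eq:1} (wound unipotent radical with compact $F$-points by \cite[Th\'eor\`eme VI.1]{Oes} -- this is Oesterl\'e's theorem rather than something in \cite[App.\ B]{CGP} -- plus anisotropy of the pseudo-reductive quotient) is a reasonable and correct reduction, modulo replacing your appeal to ``splitting of extensions of $\mc G_a$ by wound groups'' by the standard fact that a quotient of an $F$-wound group by a smooth connected normal $F$-subgroup is again $F$-wound, so the preimage of a $\mc G_a$ cannot be wound and hence contains a $\mc G_a$.

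The genuine gap is the step where you claim that an $F$-anisotropic pseudo-reductive group $\mc H$ is commutative. You argue that since the maximal $F$-split torus $\mc S$ is trivial, $\mc H = Z_{\mc H}(\mc S)$ ``equals its own Cartan $F$-subgroup'' and then invoke the commutativity of Cartan subgroups of pseudo-reductive groups. But a Cartan subgroup is the centralizer of a maximal \emph{torus}, not of a maximal \emph{split} torus; when $\mc S = 1$ the equality $Z_{\mc H}(\mc S) = \mc H$ says nothing, because the maximal torus of $\mc H$ is then a nontrivial anisotropic torus whose centralizer is in general a proper subgroup. Concretely, $\mc H = \SL_1(D)$ for a central division algebra $D$ over $F$ is anisotropic, reductive (hence pseudo-reductive) and noncommutative, as are groups such as $R_{F'/F}(\SL_1(D'))$ or anisotropic unitary groups. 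Your subsequent reduction to ``anisotropic torus $+$ wound commutative unipotent'' therefore never touches the pseudo-semisimple anisotropic case, which is exactly the hard core of the statement: even for reductive groups, ``anisotropic $\Rightarrow$ compact'' over a non-archimedean local field is a substantial theorem (Bruhat--Tits/Prasad), and Conrad's proof of A.5.7 reduces the general quasi-reductive case to that theorem together with the wound unipotent case via the structure theory of \cite{CGP}; it does not, and cannot, pass through commutativity. As written, the proof establishes the easy direction and the unipotent/toral pieces, but the main case is missing.
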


\begin{prop}\label{prop:1.7}
Let $\mc G$ be a connected linear algebraic group defined over a local field $F$, 
and let $\mc N$ be a normal $F$-subgroup.
\enuma{
\item The map $\mc G (F) \to (\mc G / \mc N)(F)$ is a submersion of $F$-analytic manifolds.
In particular this map is open and smooth.
\item The map $\mc G \to \mc G / \mc N$ sends maximal $F$-split tori onto maximal
$F$-split tori.
\item Suppose moreover that $\mc G / \mc N$ is pseudo-reductive or quasi-reductive over $F$.
Then the image of $\mc G (F)$ in $(\mc G / \mc N)(F)$ is an open subgroup of finite index.
}
\end{prop}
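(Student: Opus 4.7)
Part (a) is essentially smoothness of the quotient morphism. Since $\mc N$ is smooth, $\pi: \mc G \to \mc G/\mc N$ is a smooth surjective morphism of $F$-schemes (an $\mc N$-torsor onto its image); translating via the $F$-analytic implicit function theorem, this gives surjectivity of the differential $d\pi_g : T_g \mc G(F) \to T_{\pi(g)}(\mc G/\mc N)(F)$ at every $g \in \mc G(F)$. Indeed, at the identity the kernel is $\mr{Lie}(\mc N)$, and left-translation equivariance of $\pi$ propagates surjectivity to every point. Hence $\pi$ is a submersion of $F$-analytic manifolds, and in particular it is open and smooth.

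For (b), my plan is to invoke the standard lifting of maximal $F$-split tori under a surjective $F$-homomorphism of smooth connected affine $F$-groups (a lifting result of this kind is available in Appendix A of \cite{CGP}): every maximal $F$-split torus of $\mc G/\mc N$ is the image of some maximal $F$-split torus of $\mc G$. Starting from a maximal $F$-split torus $\mc S \subset \mc G$, its image $\pi(\mc S)$ is $F$-split and is contained in some maximal $F$-split torus $\bar{\mc T}$ of the quotient; lifting $\bar{\mc T}$ to a maximal $F$-split torus $\mc S'$ of $\mc G$ and using conjugacy of maximal $F$-split tori of $\mc G$ (Borel--Tits), I may arrange $\mc S = \mc S'$, so that $\pi(\mc S) = \bar{\mc T}$ is maximal as desired.

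For (c), openness of the image is immediate from (a). To get finite index, I would first exploit the hypothesis to place $\mc N$ above the relevant unipotent radical: if $\mc G/\mc N$ is pseudo-reductive (resp.\ quasi-reductive) then the image of $\mc R_{u,F}(\mc G)$ (resp.\ $\mc R_{us,F}(\mc G)$) in $\mc G/\mc N$ is a normal unipotent (resp.\ normal $F$-split unipotent) $F$-subgroup, hence trivial; in particular $\mc R_{us,F}(\mc G) \subset \mc N$ in both cases. Dividing out by $\mc R_{us,F}(\mc G)$, whose Galois cohomology vanishes because split unipotent groups are iterated extensions of $\mathbb G_a$, the map on $F$-points becomes surjective, so I may reduce to the situation in which $\mc G$, and therefore $\mc N$, is quasi-reductive. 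The long exact sequence in Galois cohomology then embeds the cokernel of $\mc G(F) \to (\mc G/\mc N)(F)$ into the pointed set $H^1(F, \mc N)$, and finite index reduces to finiteness of $H^1(F, \mc N)$ over the local field $F$.

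The main technical obstacle I anticipate is precisely this finiteness in positive characteristic, which is not a direct consequence of Borel--Serre. My plan to deal with it is to filter $\mc N$ by its $F$-wound unipotent radical $\mc R_{u,F}(\mc N)$: the pseudo-reductive quotient $\mc N / \mc R_{u,F}(\mc N)$ has finite $H^1(F, -)$ by Borel--Serre together with its extensions to pseudo-reductive groups in positive characteristic, while the wound unipotent kernel has finite $H^1(F, -)$ by Oesterl\'e's theorem on unipotent groups over local fields. A short diagram chase with the resulting cohomology sequence then gives finiteness of $H^1(F, \mc N)$ and hence of the cokernel.
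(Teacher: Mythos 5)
Parts (a) and (b) are essentially fine (for (b) note that the statement you want to quote — that maximal $F$-split tori map onto maximal $F$-split tori, valid over imperfect fields — is \cite[Lemma C.2.31]{CGP}, in Appendix C rather than Appendix A; the route through lifting plus $F$-rational conjugacy is correct but no shorter than citing that lemma directly, which is what the paper does via \cite[Theorem 22.6.ii]{Bor}).

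Part (c), however, has a genuine gap. Your plan rests on finiteness of $H^1(F,\mc N)$, reduced by d\'evissage to finiteness of $H^1(F,\mc R_{u,F}(\mc N))$ for the $F$-wound unipotent radical, which you attribute to Oesterl\'e. Oesterl\'e's theorem (\cite[Th\'eor\`eme VI.1]{Oes}, quoted as Theorem \ref{thm:3.8}.b in this paper) gives compactness of $U(F)$ for $F$-wound $U$, not finiteness of $H^1$, and the latter is in fact false over local function fields. For example, take $p\geq 3$, $F=\mathbb{F}_p((t))$ and the wound group $U=\{(x,y): y^p-y=tx^p\}$; from $0\to U\to \mathbb{G}_a^2\to \mathbb{G}_a\to 0$ one gets $H^1(F,U)\cong F/(\wp(F)+tF^p)$, where $\wp(y)=y^p-y$, and a leading-term argument on polar parts shows the classes of $t^{-n}$ with $n\equiv 1 \pmod p$ are linearly independent over $\mathbb{F}_p$, so $H^1(F,U)$ is infinite. (Similarly, finiteness of $H^1$ for commutative pseudo-reductive groups in positive characteristic is not a routine ``extension of Borel--Serre'': such a group $\mc C$ has $\mc C/\mc T_{\mc C}$ wound unipotent, so the same obstruction reappears.) Thus the cohomological bound on the cokernel, while correct as an injection, does not yield finite index in characteristic $p$, which is precisely the case of interest. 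The paper's proof avoids Galois cohomology altogether: openness of the image comes from (a), \cite[Lemma 4.1.2.i]{Con} gives that the image of $\mc S(F)\to\mc S_N(F)$ is open of finite index, and \cite[Proposition 4.1.9]{Con} then shows that an open subgroup of the rational points of a quasi-reductive group over a local field containing such toral data automatically has finite index. If you want to keep a self-contained argument, you would need a substitute of that structural type; the $H^1$-finiteness you invoke is not available.
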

\begin{proof}
(a) According to \cite[Proposition 5.5.10 and Corollary 12.2.2]{Spr}, $\mc G / \mc N$ is 
again a connected linear algebraic $F$-group. By \cite[Corollary 5.5.4]{Spr}
the map $\mc G \to \mc G / \mc N$ is a separable morphism of $F$-varieties, and by
\cite[Theorem 4.3.7]{Spr} its differential at any point of $\mc G$ is surjective. 
Hence it is an $F$-analytic submersion.\\
(b) Let $\mc S$ be a maximal $F$-split torus in $\mc G$. Its image $\mc S_N$ in $\mc G / 
\mc N$ is a maximal $F$-split torus over there \cite[Theorem 22.6.ii]{Bor}. We note that 
by part (a) this result works when $F$ is not perfect -- see also \cite[Lemma C.2.31]{CGP}.\\
(c) By part (a) the image of $\mc G (F)$ in $(\mc G / \mc N)(F)$ is open.
Let $\mc S$ and $\mc S_N$ be as above. By \cite[Lemma 4.1.2.i]{Con} the image of the map 
$\mc S (F) \to \mc S_N (F)$ is open and has finite index. 
These facts allow us to conclude with \cite[Proposition 4.1.9]{Con}.
\end{proof}

For any smooth linear algebraic group $\mc G$, the modulus character 
$\delta_{\mc G} : \mc G \to GL_1$ is defined as the determinant of the adjoint representation 
of $\mc G$ on its Lie algebra. Equivalently, $\delta_{\mc G}$ describes the action of $\mc G$ 
on the one-dimensional space of top-degree left-invariant differential forms on $\mc G$.

The following proof was kindly communicated to the author by Gopal Prasad.

\begin{lem}\label{lem:1.4}
The modulus character of any pseudo-reductive or quasi-reductive $F$-group is trivial.
\end{lem}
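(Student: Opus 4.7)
The plan is to exploit that $\delta_{\mc G}$ is a character of $\mc G$ (hence trivial on $\mc D(\mc G)$) together with structural facts about the derived subgroup. Since $\delta_{\mc G}$ is a character it vanishes on $\mc D(\mc G)$, and since $\mc G / \mc D(\mc G)$ is commutative the induced $\mc G$-action on $\mr{Lie}(\mc G / \mc D(\mc G))$ is trivial; the Lie-algebra short exact sequence
\[
0 \to \mr{Lie}(\mc D(\mc G)) \to \mr{Lie}(\mc G) \to \mr{Lie}(\mc G / \mc D(\mc G)) \to 0
\]
then yields $\delta_{\mc G}(g) = \det(\mr{Ad}(g)|_{\mr{Lie}(\mc D(\mc G))})$, so the problem reduces to showing that the character $g \mapsto \det(\mr{Ad}(g)|_{\mr{Lie}(\mc D(\mc G))})$ of $\mc G$ is trivial.

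For pseudo-reductive $\mc G$, by \cite{CGP} the derived group $\mc D(\mc G)$ is pseudo-semisimple, so $\delta_{\mc D(\mc G)} = 1$ (perfect groups have no nontrivial characters). Moreover the identity component of the automorphism group scheme $\mr{Aut}_{\mc D(\mc G)/F}$ equals the adjoint group of $\mc D(\mc G)$, so $\mr{Out}_{\mc D(\mc G)/F}$ is an \'etale $F$-group scheme. The conjugation morphism $\mc G / \mc D(\mc G) \to \mr{Out}_{\mc D(\mc G)/F}$ from a connected group to an \'etale group scheme is therefore trivial, meaning that every conjugation $c_g$ is an inner automorphism of $\mc D(\mc G)$. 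Consequently $\det(\mr{Ad}(g)|_{\mr{Lie}(\mc D(\mc G))}) = \delta_{\mc D(\mc G)}(d) = 1$ for the corresponding inner element $d \in \mc D(\mc G)$.

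For quasi-reductive $\mc G$ one uses $1 \to \mc R_{u,F}(\mc G) \to \mc G \to \mc G / \mc R_{u,F}(\mc G) \to 1$ with pseudo-reductive quotient, and the analogous Lie-algebra filtration reduces matters to showing that the character $\bar g \mapsto \det(\mr{Ad}(g)|_{\mr{Lie}(\mc R_{u,F}(\mc G))})$ on the pseudo-reductive $\mc G / \mc R_{u,F}(\mc G)$ is trivial. This character vanishes on (the image of) a maximal $F$-split torus $\mc S \subset \mc G$, because any non-zero $\mc S$-weight on $\mc R_{u,F}(\mc G)$ would produce an $F$-split unipotent $F$-subgroup of $\mc R_{u,F}(\mc G)$, contradicting its $F$-woundness (which follows from quasi-reductivity). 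The main obstacle I anticipate is precisely this last step: commutative pseudo-reductive groups can carry non-trivial algebraic characters (for instance the norm character of $R_{F'/F}(\mc G_m)$), so triviality on $\mc S$ must be upgraded to triviality on the whole abelianization of $\mc G / \mc R_{u,F}(\mc G)$ by a separate argument, typically via Galois descent or an outer-automorphism analysis adapted to the unipotent normal subgroup.
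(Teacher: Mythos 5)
Your reduction to the character $g \mapsto \det\big(\mr{Ad}(g)|_{\mr{Lie}(\mc D(\mc G))}\big)$ is fine, but the pseudo-reductive step collapses at the structural claim that $\mr{Aut}^0_{\mc D(\mc G)/F}$ is the adjoint group of $\mc D(\mc G)$, so that $\mr{Out}_{\mc D(\mc G)/F}$ is \'etale and every $c_g$ is inner. For pseudo-semisimple groups in characteristic $p$ this is false. Take $F'/F$ purely inseparable of degree $p$ and $\mc D(\mc G) = R_{F'/F}(\SL_p)$: the automorphism scheme has identity component containing the smooth connected group $R_{F'/F}(\mathrm{PGL}_p)$, whereas the adjoint quotient $R_{F'/F}(\SL_p)/R_{F'/F}(\mu_p)$ is a \emph{proper} closed subgroup of it (the kernel $R_{F'/F}(\mu_p)$ has dimension $p-1$, and the cokernel is a positive-dimensional smooth connected unipotent group, cf. \cite[\S 1.3]{CGP}). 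So there is a connected family of non-inner automorphisms, and indeed the conjugation action of $\mc G$ on $\mc D(\mc G)$ need not factor through $\mc D(\mc G)/Z(\mc D(\mc G))$: for $\mc G = R_{F'/F}(\GL_p)$ the conjugation map surjects onto $R_{F'/F}(\mathrm{PGL}_p)$. This non-innerness of the Cartan action on the derived group is exactly the mechanism behind the standard construction of pseudo-reductive groups, so it cannot be argued away; in these examples one can still see the needed character is trivial (the abelianization of $R_{F'/F}(\mathrm{PGL}_p)$ is unipotent), but that requires a genuinely different argument which you do not supply. For the quasi-reductive case you acknowledge yourself that triviality on the maximal $F$-split torus does not yield triviality on the commutative pseudo-reductive quotient (which can carry nontrivial characters, e.g. the norm on $R_{F'/F}(\GL_1)$), so that step is also incomplete.

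For comparison, the paper avoids automorphism schemes and any separate treatment of $\mc R_{u,F}(\mc G)$ altogether: after base change to a separable closure it writes $\mc G = \mc C\, \mc D(\mc G)$ with $\mc C = Z_{\mc G}(\mc T)$ a Cartan subgroup, computes $\delta_{\mc G}|_{\mc T}$ from the $\mc T$-weight decomposition of $\mr{Lie}(\mc G)$ — quasi-reductivity gives a root system whose reflections are realized in $N_{\mc G}(\mc T)(F)$, forcing $\dim \mc U_\alpha = \dim \mc U_{-\alpha}$ so the product of weights cancels — and then kills $\delta_{\mc G}|_{\mc C}$ because $\mc C/\mc T$ is unipotent. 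If you want to salvage your approach you would need a substitute for the false \'etale-outer claim, e.g. a proof that the relevant smooth connected automorphism group has no nontrivial characters, which in practice again comes down to a root-theoretic or classification argument of the above kind.
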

\begin{proof}
Let $F_s$ be a separable closure of $F$. By \cite[Proposition 1.1.9]{CGP}
\[
\mc R_{u,F_s}(F_s \otimes_F \mc G) = F_s \otimes_F \mc R_{u,F}(\mc G) ,
\]
and by \cite[Proposition B.3.2]{CGP} right hand side is $F_s$-wound. Hence
$F_s \otimes_F \mc G$ is quasi-reductive, and we may assume that $F$ is separably closed.

Let $\mc T$ be any maximal $F$-torus in $\mc G$, and let $\mc C = Z_{\mc G}(\mc T)$ be the
associated Cartan $F$-subgroup of $\mc G$. By \cite[Proposition 1.2.6]{CGP}
$\mc G = \mc C \mc D (\mc G)$. Clearly the algebraic character $\delta_{\mc G}$ is
trivial on the derived group $\mc D (G)$, so it is enough to prove that $\delta_{\mc G}$ 
is trivial on $\mc C$. 

Since $\mc G$ is quasi-reductive, the decomposition of its Lie algebra as 
$\mc T$-representation yields a root system $\Phi$ \cite[Theorem C.2.15]{CGP}.
For any root $\alpha \in \Phi$, it is shown in \cite[pp. 601--602]{CGP} that 
$N_{\mc G}(\mc T) (F)$ contains an element $n_\alpha$ which acts on $\Phi (\mc G,\mc T)$
as the reflection associated to $\alpha$. In particular conjugation by $n_\alpha$ 
interchanges the root subgroups $\mc U_\alpha$ and $\mc U_{-\alpha}$, which therefore have
the same dimension. For $t \in \mc T (F)$ one computes
\begin{align*}
\delta_{\mc G}(t) & = \det \big( \mr{Ad}(t) : \mr{Lie}(\mc G) \to \mr{Lie}(\mc G) \big) \\
 & = \prod\nolimits_{\alpha \in \Phi} \det \big( \mr{Ad}(t) : 
 \mr{Lie}(\mc U_\alpha) \to \mr{Lie}(\mc U_\alpha) \big) \; = \; 
 \prod\nolimits_{\alpha \in \Phi} \alpha (t)^{\dim U_\alpha} \\
 & = \prod\nolimits_{\alpha \in \Phi / \{\pm 1\}} \alpha (t)^{\dim U_\alpha} 
 (-\alpha)(t)^{\dim U_{-\alpha}} \\
 & = \prod\nolimits_{\alpha \in \Phi / \{\pm 1\}} \alpha (t)^{\dim U_\alpha} 
 \alpha(t)^{- \dim U_\alpha} \hspace{10mm} = \; 1
\end{align*}
Thus $\delta_{\mc G}$ restricted to $\mc T$ is trivial and $\delta_{\mc G}$ restricted
to $\mc C$ factors through the group $\mc C / \mc T$. By the maximality of $\mc T$, 
$\mc C / \mc T$ has no nontrivial tori, which means that it is unipotent. But a unipotent
group does not admit nontrivial algebraic characters, so $\delta_{\mc G}$ is trivial
on $\mc C$.
\end{proof}

\newpage

\section{Pseudo-reductive groups}
\label{sec:pseudo}

Based on the work of Tits, Conrad, Gabber and Prasad, we give a rough description of the
structure of pseudo-reductive groups. In \cite{CGP} and \cite{CP} pseudo-reductive groups are 
classified in terms of reductive groups and commutative pseudo-reductive groups. 
Among them is a class of ``standard pseudo-reductive" groups, which turns out to exhaust all 
possibilities if char$(F) \notin \{2,3\}$. We now recall the standard construction.

Let $F_i$ be a finite extension of a field $F$. (The interesting case here is when $F_i / F$ 
is inseparable.) Let $\mc G_i$ be an absolutely simple and simply connected $F_i$-group, 
and let $\mc T_i$ be a maximal $F_i$-torus of $\mc G_i$. 
Then $R_{F_i / F}(\mc G_i)$ is a pseudo-reductive $F$-group \cite[Proposition 1.1.10]{CGP} 
and by \cite[Proposition A.5.15]{CGP}
\begin{equation}\label{eq:1.17}
R_{F_i/F}(\mc T_i) \text{ is the centralizer of a maximal }F\text{-torus in } R_{F_i / F}(\mc G_i) .
\end{equation}
Furthermore $R_{F_i / F}(\mc G_i)$ is reductive if and only if $F_i / F$ is separable.
Let $i$ run through a finite index set $I_s$ and put
\begin{equation}\label{eq:1.14}
\mc G' = \prod\nolimits_{i \in I_s} R_{F_i/F}(\mc G_i) ,\quad 
\mc T' = \prod\nolimits_{i \in I_s} R_{F_i/F}(\mc T_i) .
\end{equation}
Let $\mc C$ be a commutative pseudo-reductive $F$-group, endowed with $F$-homomorphisms
\begin{equation}\label{eq:1.1}
\mc T' \xrightarrow{\phi_{\mc C}} \mc C \xrightarrow{\psi_{\mc C}}
\prod\nolimits_{i \in I_s} R_{F_i / F} \big( \mc T_i  / Z (\mc G_i) \big)
\end{equation}
such that $\psi_{\mc C} \circ \phi_{\mc C} : \mc T' \to \prod\nolimits_{i \in I_s} R_{F_i / F} 
\big( \mc T_i  / Z (\mc G_i) \big)$ is the canonical map. (We warn that $\psi_{\mc C} \circ 
\phi_{\mc C}$ need not be surjective.) The group $\prod\nolimits_{i \in I_s} R_{F_i / F} 
\big( \mc T_i  / Z (\mc G_i) \big)$ acts on $\mc G'$, by $R_{F_i / F}$ applied to the conjugation
action of $\mc T_i / Z(\mc G_i)$ on $\mc G_i$. Hence we can use $\psi_{\mc C}$ 
to build a semi-direct product $\mc G' \rtimes \mc C$. The map
\[
\begin{array}{llll}
\alpha: & \mc T' & \to & \mc G' \rtimes \mc C , \\
 & t & \mapsto & (t^{-1}, \phi_{\mc C}(t))
\end{array}
\]
provides an embedding of $\mc T'$ as a central $F$-subgroup $\mc G' \rtimes \mc C$. Then
\begin{equation}\label{eq:1.27}
\mc G := ( \mc G' \rtimes \mc C ) / \alpha (\mc T)
\end{equation}
is a pseudo-reductive $F$-group \cite[Proposition 1.4.3]{CGP}. Every standard
pseudo-reductive group arises in this way from data $(\mc G', \prod_{i \in I_s} F_i / F,
\mc T', \mc C)$, which are essentially unique \cite[\S 4.2]{CGP}. 

From now on, $\mc G$ is any pseudo-reductive $F$-group and we assume that $[F : F^2] \leq 2$
if char$(F) = 2$. According to \cite[Theorem 10.2.1]{CGP}, $\mc G$ factors as
\begin{equation}\label{eq:2.1}
\mc G = \mc G_{nr} \times \mc G_r ,
\end{equation}
where $\mc G_{nr}$ is ``totally non-reduced" and $\mc G_r$ has a reduced root system (when
computed over a separable closure of $F$). 

By \cite[Theorem 10.2.1.(2)]{CGP} $\mc G_r$ admits a generalized standard presentation, similar
to \eqref{eq:1.27}. First let $\mc G_i, F_i$ and $\mc T_i$ be as above, so with $\mc G_i$ 
an absolutely simple and simply connected $F_i$-group and
$\mc T_i$ a maximal $F_i$-torus in $\mc G_i$. Next we allow more possible $\mc G_i$, indexed
by a new set $I_e$. Namely, they may also be ``basic exotic pseudo-reductive", as in 
\cite[Definition 7.2.6]{CGP}. The group $R_{F_i / F}(\mc G_i)$ is then called ``exotic 
pseudo-reductive". Such groups exist only if the characteristic $p$ of $F$ is 2 or 3. For every 
such index $i \in I_e$, let $\mc T_i$ be the centralizer of a maximal $F_i$-torus in $\mc G_i$. 
The construction in \cite[\S 7.2]{CGP} entails that $\mc T_i$ is commutative. Moreover, by
\cite[Propositions 7.1.5 and 7.3.3]{CGP} there exist a semisimple simply connected $F_i$-group 
$\overline{\mc G_i}$ and a canonical homomorphism of $F$-groups
\begin{equation}\label{eq:2.11}
(\mc R_{F_i/F} \, \mc G_i) (F) = \mc G_i (F_i) \; \longrightarrow \; \overline{\mc G_i}(F_i) =
(\mc R_{F_i/F} \, \overline{\mc G_i}) (F) ,
\end{equation}
which is a homeomorphism if $F_i$ is a local field. In that case \eqref{eq:2.11} provides 
a bijection between the collections of maximal $F$-split tori on both sides 
\cite[Corollary 7.3.4]{CGP}. Since every pseudo-parabolic $F$-subgroup is determined by a
cocharacter with values in a maximal $F$-split torus, \eqref{eq:2.11} also induces a bijection
between the pseudo-parabolic $F$-subgroups on both sides.

Using all these objects we put
\begin{equation}\label{eq:2.33}
\mc T' = \prod\nolimits_{i \in I_s \cup I_e} R_{F_i / F}(\mc T_i) ,\quad
\mc G'_r = \prod\nolimits_{i \in I_s \cup I_e} R_{F_i / F} (\mc G_i) .
\end{equation}
Here $\mc T'$ is a Cartan subgroup of $\mc G'_r$. Although this looks the same as in 
\eqref{eq:1.14}, there are more possibilities when char$(F) \in \{2,3\}$. The remainder of 
the construction of reduced pseudo-reductive group is the same as before, we write it down 
for reference. We need a commutative pseudo-reductive $F$-group $\mc C$ and $F$-homomorphisms
\begin{equation}\label{eq:2.4}
\mc T' \xrightarrow{\phi_{\mc C}} \mc C \xrightarrow{\psi_{\mc C}} 
\prod\nolimits_{i \in I_s \cup I_e} R_{F_i / F} (Z_{\mc G_i,\mc T_i}),
\end{equation}
where $Z_{\mc G_i,\mc T_i}$ is an algebraic group of automorphisms of $\mc G_i$ which restrict 
to the identity on $\mc T_i$ \cite[p. 427]{CGP}. With these we build the semidirect product
$\mc G'_r \rtimes \mc C$. Define 
\[
\alpha : \mc T' \to \mc T' \times \mc C , \quad t \mapsto (t^{-1},\phi_{\mc C}(t)).
\] 
Then $\mc T' \times \mc C = \alpha (\mc T') \times \mc C$ is the centralizer of $\mc T'$ in 
$\mc G'_r \rtimes \mc C$. Now we define
\begin{equation}\label{eq:2.5}
\mc G_r = (\mc G'_r \rtimes \mc C) / \alpha (\mc T') .
\end{equation}
Here the image of $\mc G'_r$ in $\mc G_r$ is the derived group $\mc D(\mc G_r)$ 
\cite[Proposition 4.1.4.(1) and Remark 10.1.11]{CGP}. Furthermore $\mc C$ embeds naturally in 
$\mc G_r$ as a Cartan subgroup, namely the centralizer of a maximal $F$-torus in $\mc C$. 

Totally non-reduced pseudo-reductive groups exist only when char$(F) = 2$. Their classification 
is much easier if $[F:F^2] = 2$, so we assume that now. By \cite[Proposition 10.1.4]{CGP}
\begin{equation}\label{eq:2.2}
\mc G_{nr} = \prod\nolimits_{i \in I_{nr}} R_{F'_i / F}(\mc G'_{nr,i}) ,
\end{equation}
where each $\mc G'_{nr,i}$ is a ``basic non-reduced pseudo-simple" group over a finite
extension $F'_i$ of $F$. By \cite[10.1.2 and Theorem 9.9.3]{CGP} there exist $n_i \in \N$ and
quadratic inseparable extensions $F''_i / F'_i$ such that the (solvable) radical 
$\mc R (\mc G'_{nr,i})$ is defined over $F''_i$ and
\[
\mc G'_{nr,i} / \mc R (\mc G'_{nr,i}) \cong Sp_{2 n_i} \text{ as } F''_i\text{-groups}.
\]
This gives rise to a canonical homomorphism of $F'_i$-groups
\begin{equation}\label{eq:2.3}
\mc G'_{nr,i} \to R_{F''_i / F'_i}(Sp_{2 n_i}) .
\end{equation}
If $[F'_i : (F'_i)^2] = 2$ then by \cite[Proposition 9.9.2]{CGP} the $F'_i$-group homomorphism 
\begin{equation}\label{eq:2.34}
\mc G'_{nr,i} (F'_i) \to Sp_{2 n_i} (F''_i) \text{ is bijective.}  
\end{equation}
Proposition \ref{prop:1.7} implies that \eqref{eq:2.34} induces a bijection between
the maximal $F$-split tori on both sides. Furthermore  \cite[Proposition 11.4.4]{CGP} says that 
\eqref{eq:2.3} provides a bijection between the pseudo-parabolic $F'_i$-subgroups of 
$\mc G'_{nr,i}$ and the pseudo-parabolic $F''_i$-subgroups of $Sp_{2 n_i}$.

The shape of a general pseudo-reductive $F$-group becomes \cite[Theorem 10.2.1]{CGP}
\begin{equation}\label{eq:2.6}
\mc G = \mc G_{nr} \times (\mc G'_r \rtimes \mc C) / \alpha (\mc T') = 
(\mc G_{nr} \times \mc G'_r \rtimes \mc C) / \alpha (\mc T') .
\end{equation}
Since $\mc G_{nr}$ is perfect \cite[Definition 10.1.1]{CGP}, the image of 
$\mc G_{nr} \times \mc G'_r$ in $\mc G$ equals the derived group $\mc D (\mc G)$.

\subsection{Structure over non-archimedean local fields} \
\label{par:local}

The above holds over any field, from now on we specialize to a non-archimedean local field $F$.
As a $F$-variety, $\mc G (F)$ is locally compact and totally disconnected. Hence it has a modular 
function $\delta_{\mc G (F)}$. We recall from \cite[p. 167--168]{PlRa} that the modular function 
of any linear algebraic $F$-group $\mc H (F)$ can be computed as the norm of its modulus character:
\begin{equation}\label{eq:1.35}
\delta_{\mc H (F)}(h) = \norm{ \det \big( \mr{Ad}(h) : 
\mr{Lie}(\mc H) \to \mr{Lie}(\mc H) \big) }_F . 
\end{equation}
In view of Lemma \ref{lem:1.4}, this means that $\mc G (F)$ is unimodular.

We write $\mc G' = \mc G_{nr} \times \mc G'_r$ and
\begin{equation}\label{eq:1.18}
G' \rtimes C = \mc G' (F) \rtimes \mc C (F) =
\prod\nolimits_{i \in I_s \cup I_e} \mc G_i (F_i) \rtimes \mc C (F) .
\end{equation}
By \cite[Propositions 4.1.4.(3) and 10.2.2.(3)]{CGP} all maximal $F$-tori of $\mc G'_r$ are equally 
good for the standard presentation, which means that we may choose any maximal $F_i$-torus 
$\mc T_i$ in $\mc G_i$. Thus we may and will assume that every $\mc T_i$ contains a maximal 
$F_i$-split torus of $\mc G_i$. 

Let $\mc S$ be the unique maximal $F$-split torus in $\mc C$. The image of $\mc S (F)$ in 
$\mc G_r (F)$ contains the image of $\mc T_i (F)$ in $\mc G_r (F)$, for each $i \in I_s \cup I_s$. 
By the assumption on $\mc T_i$ and by \cite[Proposition A.5.15]{CGP}, the image of $\mc S$ in 
$\mc G_r$ is a maximal $F$-split torus in there. Let $\mc S_\es$ be
a maximal $F$-split torus in $\mc G$ containing $\mc S$. By \eqref{eq:2.6} it factorizes as
\begin{equation}\label{eq:1.45}
\mc S_\es = (\mc S_\es \cap \mc G_{nr}) \times \mc S. 
\end{equation}
By \cite[Proposition 2.2.12]{CGP} the preimage of any maximal $F$-torus of $\mc G_r$ under
$\mc G'_r \rtimes \mc C \to \mc G_r$ contains a unique maximal $F$-torus of $\mc G'_r \rtimes 
\mc C$. Hence the preimage of $\mc S \subset \mc C \subset \mc G_r$ in $\mc G'_r \rtimes \mc C$ 
contains a unique maximal $F$-split torus, say $\mc S' \times \mc S$. Then
\begin{equation}\label{eq:1.46}
\mc S' \subset \mc G'_r \quad \text{and} \quad (\mc S_\es \cap \mc G_{nr}) \times 
(\mc S' \times \mc S) \; \subset \; \mc G_{nr} \times \mc G'_r \rtimes \mc C
\end{equation}
are maximal $F$-split tori. 

Recall that $[F:F^p ] = p$ for every non-archimedean local field of residual characteristic $p$. 
For all our purposes, i.e. for groups over local fields, \eqref{eq:2.11} shows that we may replace
every exotic pseudo-reductive group $\mc G_i$ by $\overline{\mc G_i}$ and $\mc T_i$ by its 
image $\overline{\mc T_i}$ in $\overline{\mc G_i}$. Then \cite[Theorem 1.3.9]{CGP} allows us 
to replace $Z_{\mc G_i, \mc T_i}$ in \eqref{eq:2.4} by $\overline{\mc T_i} / 
Z (\overline{\mc G_i})$, like in \eqref{eq:1.1}.
Similarly \eqref{eq:2.34} shows that we may replace every non-reduced pseudo-simple group 
$\mc G'_{nr,i}$ by $R_{F''_i / F'_i}(Sp_{2 n_i})$. In particular we obtain
\begin{equation}\label{eq:1.37}
G' = \mc G_{nr} (F) \times \mc G'_r (F) = \prod\nolimits_{i \in I_{nr}} Sp_{2 n_i} (F''_i) 
\times \prod\nolimits_{i \in I_s \cup I_e} \mc G_i (F_i) ,
\end{equation}
where we may read $\overline{\mc G_i}(F_i)$ for an exotic pseudo-reductive group $\mc G_i (F_i)$.
By the remarks after \eqref{eq:2.11} and \eqref{eq:2.34}, these replacements preserve the notions
of maximal $F$-split torus and of pseudo-parabolic $F$-subgroup.

\begin{lem}\label{lem:1.6}
\enuma{
\item $\mc C / \mc S$ is $F$-anisotropic and $\mc C (F) / \mc S (F)$ is compact.
\item $Z_{\mc G}(\mc S_\es) / \mc S_\es$ is $F$-anisotropic and 
$Z_{\mc G}(\mc S_\es) (F) / \mc S_\es(F)$ is compact.
} 
\end{lem}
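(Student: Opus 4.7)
The plan is to show in each case that the relevant quotient is $F$-anisotropic, and then appeal to Propositions \ref{prop:1.7} and \ref{prop:1.8} to convert this into compactness of the quotient of $F$-points.

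For part (a), since $\mc C$ is commutative the torus $\mc S$ is central, so $\mc C/\mc S$ is defined as an $F$-group. It cannot contain $\mc G_m$ as an $F$-subgroup, for the preimage in $\mc C$ would then be an extension of $\mc G_m$ by the split torus $\mc S$, which is again a split torus and contradicts the maximality of $\mc S$ in $\mc C$. Likewise $\mc C/\mc S$ cannot contain $\mc G_a$: the preimage $\mc E \subset \mc C$ would fit in an exact sequence $1 \to \mc S \to \mc E \to \mc G_a \to 1$, and since $\mathrm{Pic}$ of the affine line is trivial and the invertible regular functions on $\mc G_a \times \mc G_a$ are constants, both the underlying $\mc S$-torsor and the symmetric $2$-cocycle classifying such a commutative extension are forced to be trivial; hence $\mc E \cong \mc S \times \mc G_a$, so $\mc G_a \subset \mc C$ is a smooth connected $F$-unipotent subgroup, normal by commutativity, contradicting $\mc R_{u,F}(\mc C) = 1$. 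Therefore $\mc C/\mc S$ is $F$-anisotropic, so $(\mc C/\mc S)(F)$ is compact by Proposition \ref{prop:1.8}. Because $F$-anisotropic groups are a fortiori quasi-reductive, Proposition \ref{prop:1.7}(a,c) applies to $\mc C \to \mc C/\mc S$: the induced continuous injection $\mc C(F)/\mc S(F) \hookrightarrow (\mc C/\mc S)(F)$ is a homeomorphism onto an open (hence closed) subgroup of the compact group $(\mc C/\mc S)(F)$, so $\mc C(F)/\mc S(F)$ is compact.

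For part (b), set $\mc M := Z_{\mc G}(\mc S_\es)$. This is a pseudo-reductive $F$-group by a standard result in \cite{CGP}, and $\mc S_\es$ is its central maximal $F$-split torus: centrality is built into the definition of a centralizer, and any strictly larger $F$-split torus in $\mc M$ would centralize and contain $\mc S_\es$, yielding a strictly larger $F$-split torus in $\mc G$. Applying \cite[Proposition 1.2.4]{CGP} to the normal central $F$-torus $\mc S_\es \subset \mc M$ shows that $\mc M/\mc S_\es$ is again pseudo-reductive, and by Proposition \ref{prop:1.7}(b) its maximal $F$-split torus is trivial. A pseudo-reductive $F$-group with no nontrivial $F$-split torus is $F$-anisotropic: it contains no $\mc G_m$ by hypothesis, and any putative $\mc G_a$-subgroup would, via the standard presentation of pseudo-reductive groups recalled at the start of Section \ref{sec:pseudo}, either contradict the absence of $F$-split tori in the various Weil-restriction factors or contradict part (a) applied to the commutative Cartan piece. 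Hence $\mc M/\mc S_\es$ is $F$-anisotropic, and the compactness of $\mc M(F)/\mc S_\es(F)$ follows exactly as in part (a).

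The main obstacle is the final implication that a pseudo-reductive $F$-group with trivial maximal $F$-split torus must be $F$-anisotropic; everything else amounts to formal manipulations of the already-established Propositions \ref{prop:1.7} and \ref{prop:1.8} together with standard CGP facts about centralizers of tori and quotients by central tori.
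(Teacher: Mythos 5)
Your part (a) is correct, and it takes a genuinely different route from the paper: instead of passing through the maximal $F$-torus $\mc T_{\mc C}$ and the $F$-woundness of $\mc C/\mc T_{\mc C}$, you rule out $\mc G_m$ and $\mc G_a$ inside $\mc C/\mc S$ directly by lifting the would-be subgroup to an extension of it by $\mc S$ and splitting that extension (triviality of $\mathrm{Pic}(\mathbb A^1)$ and of units on $\mathbb A^2$, i.e.\ $\mathrm{Ext}^1(\mc G_a,\mc G_m)=0$, resp.\ triviality of extensions of split tori by split tori); and you replace the paper's Hilbert 90 step by the weaker but sufficient observation that $\mc C(F)/\mc S(F)$ is an open, hence closed, subgroup of the compact group $(\mc C/\mc S)(F)$ via Proposition \ref{prop:1.7}. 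That is all fine.

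Part (b), however, has a genuine gap, and it sits exactly where the real content of the lemma lies. Your pivotal claim that $\mc M/\mc S_\es$ is pseudo-reductive (attributed to \cite[Proposition 1.2.4]{CGP}, which is about centralizers of tori, not quotients) is false in general: pseudo-reductivity is not inherited by quotients, even by central tori. Already the basic commutative case breaks it: take $\mc G=\mc C=R_{F'/F}(\mc G_m)$ with $F'/F$ purely inseparable of degree $p$; then $\mc M=Z_{\mc G}(\mc S_\es)=\mc G$, and $\mc M/\mc S_\es$ is a nontrivial $F$-wound \emph{unipotent} group, hence not pseudo-reductive (though the conclusion of the lemma still holds for it). So your reduction to the statement ``pseudo-reductive with trivial maximal $F$-split torus $\Rightarrow$ $F$-anisotropic'' does not apply to the group at hand; and that statement itself --- which you acknowledge is the main obstacle --- is only gestured at via the standard presentation (``either contradict the absence of split tori in the Weil-restriction factors or contradict part (a)''), without an argument for why a $\mc G_a$-subgroup of the quotient would have to interact with the presentation in either of those ways. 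This missing implication is essentially equivalent in difficulty to part (b) itself. The paper avoids both problems by never forming the quotient abstractly: it computes $Z_{\mc G}(\mc S_\es)$ through the generalized standard presentation, treats the standard, exotic and totally non-reduced factors separately (transferring the latter two to honest reductive groups over the local fields $F_i$ via \eqref{eq:2.11} and \eqref{eq:2.34}, and using Proposition \ref{prop:1.8} in both directions to shuttle between anisotropicity and compactness), and then propagates $F$-anisotropicity through explicit short exact sequences before dividing out the central $\alpha(\mc T')$. To repair your argument you would either have to carry out an analogous factor-by-factor analysis, or prove directly that the (merely quasi-reductive) quotient $\mc M/\mc S_\es$ contains no $\mc G_a$ and no $\mc G_m$, neither of which follows from what you have written.
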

\begin{proof}
(a) Let $\mc T_{\mc C}$ be the maximal $F$-torus in $\mc C$. Then $\mc C / \mc T_{\mc C}$ 
is a commutative $F$-group without nontrivial $F$-tori, so by \cite[Theorem 13.3.6]{Spr} 
it has no nontrivial tori, which means that it is unipotent. If $\mc C / \mc T_{\mc C}$ 
would contain any $F$-split unipotent subgroup, then by \cite[Lemma 4.1.4]{Con} so would $\mc C$.
That would contradict the pseudo-reductivity of $\mc C$, so $\mc C / \mc T_{\mc C}$ is a
$F$-wound unipotent group. Since $\mc S$ is the maximal $F$-split subtorus of $\mc T_{\mc C}$,
$\mc T_{\mc C} / \mc S$ and $\mc C / \mc S$ are $F$-anisotropic. Now Proposition \ref{prop:1.8}
tells us that $(\mc C / \mc S)(F)$ is compact. Since $\mc S$ is $F$-split, its first Galois
cohomology group is trivial and $\mc C (F) / \mc S (F)$
is isomorphic to $(\mc C / \mc S)(F)$ as topological groups.\\
(b) In view of \eqref{eq:2.6}, we first ignore $\mc G_{nr}$ and focus on $\mc G_r$. The preimage of 
$Z_{\mc G_r}(\mc S)$ is $Z_{\mc G'_r} (\mc S') \rtimes \mc C$. By \cite[Proposition A.5.15.ii]{CGP} 
there are (unique) maximal $F_i$-split tori $\mc S_i$ in $\mc G_i$ that 
$\mc S'$ is contained in $\prod_i R_{F_i/F}(\mc S_i)$. Since $F^\times$ is Zariski-dense and
cocompact in $F_i^\times$, $\mc S' (F)$ is Zariski-dense and cocompact in $\mc S_i (F_i)$, and
$R_{F_i/F}(\mc S_i) / \mc S'$ is $F$-anisotropic. Hence
\begin{equation}\label{eq:1.60}
Z_{\mc G'_r}(\mc S') (F) = 
\prod\nolimits_{i \in I_s \cup I_e} R_{F_i / F} (Z_{\mc G_i}(\mc S_i)) (F).
\end{equation}
For the standard factors of $\mc G'_r$, the reductivity of $\mc G_i$ and the maximality of 
$\mc S_i$ imply that each $Z_{\mc G_i}(\mc S_i)/\mc S_i$ is $F_i$-anisotropic. For the exotic 
factors $\mc G_i$ one first observes that $\mc S_i$ is mapped to a maximal $F_i$-split torus 
$\overline{\mc S_i}$ of $\overline{\mc G_i}$ by \eqref{eq:2.11}. Then 
$Z_{\overline{\mc G_i}}(\overline{\mc S_i}) / \overline{\mc S_i}$ is $F_i$-anisotropic because 
$\overline{\mc G_i}$ is reductive. Proposition \ref{prop:1.8} says that 
$\big( Z_{\overline{\mc G_i}} (\overline{\mc S_i}) / \overline{\mc S_i}\big)(F_i)$ is compact. 
Since $\overline{\mc S_i}$ is $F_i$-split this is homeomorphic to 
$Z_{\overline{\mc G_i}}(\overline{\mc S_i})(F_i) / \overline{\mc S_i} (F_i)$, which by
\eqref{eq:2.11} is homeomorphic to $Z_{\mc G_i}(\mc S_i) (F_i) / \mc S_i (F_i)$ as topological 
groups. In particular these groups are compact. By Proposition \ref{prop:1.7}.c also 
$(Z_{\mc G_i}(\mc S_i) / \mc S_i)(F_i)$ is compact, so by Proposition \ref{prop:1.8} 
$Z_{\mc G_i}(\mc S_i) / \mc S_i$ is $F_i$-anisotropic -- just as for the standard factors.

From the short exact sequence
\begin{multline*}
1 \to \prod\nolimits_{i \in I_s \cup I_e} R_{F_i/F}(\mc S_i) / \mc S' \to 
\prod\nolimits_{i \in I_s \cup I_e} R_{F_i / F} (Z_{\mc G_i}(\mc S_i)) / \mc S' \\
\to \prod\nolimits_{i \in I_s \cup I_e} R_{F_i / F} (Z_{\mc G_i}(\mc S_i)) / R_{F_i / F} 
(\mc S_i) \to 1 
\end{multline*}
we see that the middle term is $F$-anisotropic. By \eqref{eq:1.60} so is
$Z_{\mc G'_r} (\mc S') / \mc S'$. Then 
\[
1 \to Z_{\mc G'_r} (\mc S') / \mc S' \to \big( Z_{\mc G'_r} (\mc S') \rtimes \mc C \big) \big/
\big( \mc S' \times \mc S \big) \to \mc C / \mc S \to 1
\]
shows that also $( Z_{\mc G'_r} (\mc S') \rtimes \mc C ) / ( \mc S' \times \mc S )$ is 
$F$-anisotropic. Dividing out the central subgroup $\alpha (\mc T')$, we find that 
$Z_{\mc G_r}(\mc S) / \mc S$ is  also $F$-anisotropic. Now Proposition \ref{prop:1.8} tells us
that $(Z_{\mc G_r}(\mc S) / \mc S)(F)$ is compact. Since $\mc S$ is $F$-split the group 
$Z_{\mc G_r}(\mc S) (F) / \mc S (F)$ is homeomorphic to that, and also compact.

The totally non-reduced factors of $\mc G'$ (i.e. those of $\mc G_{nr}$) 
can be handled in the same way as the exotic factors, using \eqref{eq:2.34}.
\end{proof}

In general the quotient map $\mc G' \rtimes \mc C \to \mc G$ need not be surjective on 
$F$-rational points, but we can get quite close:

\begin{lem}\label{lem:2.5}
\enuma{
\item The natural map $(\mc G' \rtimes \mc C)(F) / \alpha (\mc T')(F) \to \mc G(F)$ 
is smooth, injective and open. Its image has finite index in $\mc G (F)$.
\item Let $\tilde{\mc G}(F)$ be the image of $\mc G'(F)$ in $\mc G(F)$. Then
\[
\tilde{\mc G}(F) Z_{\mc G_r}(\mc S)(F) = Z_{\mc G_r}(\mc S)(F) \tilde{\mc G}(F) = \mc G (F) .
\]
}
\end{lem}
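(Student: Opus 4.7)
For part (a), I apply Proposition \ref{prop:1.7} directly to the quotient $\pi: \mc G' \rtimes \mc C \to \mc G$ by the central $F$-subgroup $\alpha(\mc T')$. The kernel of the induced map on $F$-points is automatically $\alpha(\mc T')(F)$, giving injectivity of the induced map $(\mc G' \rtimes \mc C)(F)/\alpha(\mc T')(F) \to \mc G(F)$. Smoothness and openness are immediate from Proposition \ref{prop:1.7}(a), and the image has finite index by Proposition \ref{prop:1.7}(c), since $\mc G$ is pseudo-reductive.

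For part (b), I first reduce to the reduced factor. Since $\mc G = \mc G_{nr} \times \mc G_r$ with $\mc G_{nr}(F) \subset \tilde{\mc G}(F)$ (as $\mc G_{nr}$ is a direct factor of $\mc G'$) and $Z_{\mc G_r}(\mc S) \subset \mc G_r$, the claim reduces to showing $\tilde{\mc G}_r(F) \cdot Z_{\mc G_r}(\mc S)(F) = \mc G_r(F)$, where $\tilde{\mc G}_r(F)$ denotes the image of $\mc G'_r(F)$. The main tool I plan to use is the Bruhat decomposition for pseudo-reductive groups \cite[Appendix C.2]{CGP}: letting $\mc U$ be the unipotent radical of a minimal pseudo-parabolic $F$-subgroup containing $\mc S$, one has $\mc G_r(F) = \mc U(F) \cdot N_{\mc G_r}(\mc S)(F) \cdot \mc U(F)$, with $N_{\mc G_r}(\mc S)(F) = Z_{\mc G_r}(\mc S)(F) \cdot \{n_w : w \in W\}$ for a chosen system of Weyl representatives.

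The two key structural observations are (i) $\mc U(F) \subset \tilde{\mc G}_r(F)$ and (ii) the Weyl representatives $n_w$ can be chosen inside $\tilde{\mc G}_r(F)$. For (i), each root subgroup $\mc U_\alpha \subset \mc G_r$ is the image of a root subgroup $\mc U'_\alpha \subset \mc G'_r$, and because the central kernel $\alpha(\mc T')$ of $\pi$ meets the unipotent group $\mc U'_\alpha$ trivially, the map $\mc U'_\alpha(F) \to \mc U_\alpha(F)$ is bijective. For (ii), each reflection representative $n_\alpha$ lies in the subgroup generated by $\mc U_{\pm\alpha}(F)$, as in the proof of Lemma \ref{lem:1.4}. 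Given a Bruhat factorization $g = u_1 \cdot z n_w \cdot u_2 \in \mc G_r(F)$, I slide $z$ through $n_w$ via $n_w^{-1} z n_w = w^{-1}(z) \in Z_{\mc G_r}(\mc S)(F)$ and then past $u_2$, using that $Z_{\mc G_r}(\mc S)$ normalizes $\mc U$. This yields
\[
g = (u_1 n_w u_2') \cdot w^{-1}(z)
\]
for some $u_2' \in \mc U(F)$, with $u_1 n_w u_2' \in \tilde{\mc G}_r(F)$ and $w^{-1}(z) \in Z_{\mc G_r}(\mc S)(F)$. The symmetric decomposition $Z_{\mc G_r}(\mc S)(F) \cdot \tilde{\mc G}_r(F) = \mc G_r(F)$ follows by the analogous argument. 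I expect the main technical point to be the precise verification of observations (i) and (ii)---tracing root subgroups and Weyl representatives through the standard presentation, and treating the totally non-reduced factors via the isomorphism \eqref{eq:2.34}---rather than the essentially routine Bruhat manipulation itself.
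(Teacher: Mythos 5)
Your proposal is correct and follows essentially the same route as the paper: part (a) is exactly the appeal to Proposition \ref{prop:1.7} plus the identification $\alpha(\mc T'(F)) = \alpha(\mc T')(F)$, and part (b) splits off $\mc G_{nr}$, then uses the Bruhat decomposition of $\mc G_r(F)$ together with the facts that $\mc U(F)$ lifts to $\mc G'_r(F)$ and that Weyl representatives can be taken in $\tilde{\mc G}(F)$. The only cosmetic difference is that you produce the Weyl representatives from reflection elements built out of the root subgroups, where the paper simply cites \cite[Proposition C.2.10]{CGP}, and you shuffle $z$ to the right by conjugation rather than starting from the form $g = z u \tilde w u'$.
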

\begin{proof}
(a) Since $\alpha$ is an embedding, $\alpha (\mc T' (F)) = \alpha (\mc T') (F)$, which 
implies the injectivity. The remaining properties were shown in Proposition \ref{prop:1.7}.\\
(b) By definition $\mc G_{nr}(F)$ is a direct factor on both sides, 
so it suffices to consider the map
\begin{equation}\label{eq:1.10}
(\mc G_r' \rtimes \mc C)(F) / \alpha (\mc T')(F) \to \mc G_r (F).
\end{equation}
Let $N_{\mc G_r}(\mc S)$ be the normalizer of the maximal $F$-split torus $\mc S$ in $\mc G_r$, 
and let $W = N_{\mc G_r}(\mc S) / Z_{\mc G_r}(\mc S)$ be the associated Weyl group. Let $\mc U$ 
be the $F$-split unipotent radical of a minimal pseudo-parabolic subgroup $\mc P$ of $\mc G_r$
containing $Z_{\mc G_r} (\mc S)$. The Bruhat decomposition of $\mc G_r (F)$ (see 
\cite[Theorem C.2.8]{CGP} or Theorem \ref{thm:2.2}) guarantees that any element 
$g \in \mc G_r (F)$ can be written as $z u \tilde{w} u'$, where 
$z \in Z_{\mc G_r} (\mc S) (F)$, $u, u' \in \mc U (F)$ and 
$\tilde w \in N_{\mc G_r}(\mc S)$ is any representative for an element $w \in W$. 

Since $\alpha (\mc T')$ is central in $\mc G'_r \rtimes \mc C$, the groups 
$\mc G_r' \rtimes \mc C$ and $\mc G_r$ have the same $F$-root system, and the pre-image $\mc P'$ of 
$\mc P$ in $\mc G'_r \rtimes \mc C$ is a minimal pseudo-parabolic subgroup over there 
\cite[Proposition 2.2.12]{CGP}. Then \eqref{eq:1.10} induces an isomorphism of 
algebraic groups $\mc U' \to \mc U$, where $\mc U'$ denotes the $F$-split unipotent radical of 
$\mc P'$. By \cite[Proposition C.2.10]{CGP} every element of $W$ can be represented by an 
element of $\mc G' (F)$, let us choose $\tilde w$ in this way. Then $u,u'$ and $\tilde{w}$ 
lie in $\tilde{\mc G} (F)$, so $g = z u w u' \in Z_{\mc G_r}(\mc S)(F) \tilde{\mc G}(F)$.
\end{proof}

Let $\lambda : GL_1 \to \mc S_\es$ be a cocharacter defined over $F$. It gives rise to a
pseudo-parabolic $F$-subgroup $\mc P_{\mc G}(\lambda)$, with $F$-split unipotent
radical $\mc R_{us,F}(\mc P_{\mc G}(\lambda)) = U_{\mc G}(\lambda)$. The group
$\mc Z_{\mc G}(\lambda) = \mc P_{\mc G}(\lambda) \cap \mc P_{\mc G}(\lambda^{-1})$
is again pseudo-reductive over $F$ \cite[Corollary 2.2.5]{CGP}. It satisfies 
$\mc P_{\mc G}(\lambda) = \mc Z_{\mc G}(\lambda) \ltimes \mc U_{\mc G}(\lambda)$ and
\begin{equation}\label{eq:2.7}
\text{Lie}(\mc G) = \text{Lie}(\mc U_{\mc G}(\lambda)) \oplus
\text{Lie}(\mc Z_{\mc G}(\lambda)) \oplus \text{Lie}(\mc U_{\mc G}(\lambda^{-1})) .
\end{equation}
Then every pseudo-parabolic $F$-subgroup containing $Z_{\mc G}(\mc S_\es)$ is of
the above form $\mc P_{\mc G}(\lambda)$, see \cite[Theorem C.2.15]{CGP}.
Since $\mc G$ is pseudo-reductive over $F$ and $\mc S_\es$ is maximal $F$-split torus,
the adjoint action of $\mc S_\es$ on the Lie algebra of $\mc G$ gives rise to a root system
$\Phi (\mc G,\mc S_\es)$, whose set of reduced roots we denote by $\Phi (\mc G,\mc S_\es)_\red$. 
For every reduced root $\beta \in \Phi (\mc G,\mc S_\es)$. there is a root subgroup $\mc U_\beta$, 
which by \cite[Proposition 2.1.10]{CGP} is a $F$-split unipotent group. Choose any ordering of 
the set of reduced roots $\Phi (\mc U_{\mc G}(\lambda),\mc S_\es)_\red$ appearing in the 
Lie algebra of $\mc U_{\mc G}(\lambda)$. Then the multiplication map
\begin{equation}\label{eq:2.40}
\prod\nolimits_{\beta \in \Phi (\mc U_{\mc G}(\lambda),\mc S_\es)_\red} \mc U_\beta 
\; \to \; \mc U_{\mc G}(\lambda)
\end{equation}
is a bijection \cite[Proposition C.2.26]{CGP}.

\begin{prop}\label{prop:2.1}
There exists a decreasing sequence $(K_n )_{n=0}^\infty$ of compact open subgroups
of $\mc G (F)$ such that:
\begin{itemize}
\item $\{ K_n : n \geq 0 \}$ is a neighborhood basis of 1 in $\mc G (F)$;
\item each $K_n$ is a normal subgroup of $K_0$;
\item each $K_n$ admits an Iwahori decomposition. That is, for every pseudo-parabolic
$F$-subgroup $\mc P_{\mc G}(\lambda)$ of $\mc G$ which contains $Z_{\mc G}(\mc S_\es)$,
the multiplication map
\[
\big( K_n \cap U_{\mc G}(\lambda)(F) \big) \times \big( K_n \cap Z_{\mc G}(\lambda)(F) \big) 
\times \big( K_n \cap U_{\mc G}(\lambda^{-1})(F) \big) \to K_n
\]
is a homeomorphism.
\end{itemize}
\end{prop}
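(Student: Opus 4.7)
The plan is to realize $\mc G$ as a closed $F$-subgroup of some $GL(V)$ compatibly with the $\mc S_\es$-weight decomposition of $V$, and then to take the principal congruence subgroups of an $O_F$-lattice in $V$. Because pseudo-parabolic $F$-subgroups containing $Z_{\mc G}(\mc S_\es)$ correspond bijectively to parabolic subsets of the root system $\Phi(\mc G, \mc S_\es)$, only finitely many such subgroups occur, so a single filtration will handle them all simultaneously.

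First I would fix a closed $F$-embedding $\rho : \mc G \hookrightarrow GL(V)$ together with an $F$-basis of $V$ consisting of $\mc S_\es$-eigenvectors; such a basis exists because $\mc S_\es$ is $F$-split and hence acts diagonalizably on $V$ over $F$. Let $L \subset V$ be the $O_F$-lattice spanned by this basis, let $\pi$ be a uniformizer of $F$, set $K_0 = \{ g \in \mc G (F) : \rho(g) L = L \}$, and for $n \geq 1$ put $K_n = \ker\bigl( K_0 \to GL(L / \pi^n L) \bigr)$. Standard congruence-subgroup arguments give that $K_0$ is compact open in $\mc G (F)$, that each $K_n$ is a compact open normal subgroup of $K_0$, that $(K_n)_n$ is decreasing, and that it forms a neighborhood basis of $1$ in $\mc G (F)$.

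For a cocharacter $\lambda : GL_1 \to \mc S_\es$, the induced grading $V = \bigoplus_d V_d$ yields $L = \bigoplus_d (L \cap V_d)$, and by the dynamic characterizations, $\rho(\mc U_{\mc G}(\lambda))$ is block strictly upper-triangular unipotent, $\rho(\mc U_{\mc G}(\lambda^{-1}))$ is block strictly lower-triangular unipotent, and $\rho(\mc Z_{\mc G}(\lambda))$ is block diagonal with respect to this grading. By \eqref{eq:2.7} combined with the standard dynamic open-immersion result, the multiplication map
\[
m_\lambda : \mc U_{\mc G}(\lambda) \times \mc Z_{\mc G}(\lambda) \times \mc U_{\mc G}(\lambda^{-1}) \to \mc G
\]
is an open immersion onto a Zariski open subscheme of $\mc G$. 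Choosing $n_0$ large enough that $K_{n_0}$ lies in the $F$-point image of $m_\lambda$ for each of the finitely many relevant $\lambda$, and reindexing, we may assume the inclusion holds for all $n$ and all $\lambda$. Every $g \in K_n$ then factors uniquely as $g = u z u'$, and $\rho(u), \rho(z), \rho(u')$ must be the block strictly upper, block diagonal, and block strictly lower factors of the LDU decomposition of $\rho(g) \in 1 + \pi^n \End(L)$. A direct Taylor expansion shows that these LDU factors themselves lie in $1 + \pi^n \End(L)$, whence $u, z, u' \in K_n$; combined with the uniqueness of $m_\lambda^{-1}$, this yields a bijection from the product of intersections onto $K_n$, which is a homeomorphism because both sides are profinite.

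The main obstacle is to match the group-theoretic factorization coming from $m_\lambda^{-1}$ with the matrix-theoretic block LDU factorization of $\rho(g)$, so that the three factors lie in the prescribed subgroups while simultaneously respecting the $\pi^n$-congruence. This is exactly what the $\mc S_\es$-eigenbasis choice, fixed once and for all, supplies: it produces one block structure compatible with every cocharacter $\lambda$ into $\mc S_\es$, so the same $K_n$ admits an Iwahori decomposition relative to all relevant pseudo-parabolics simultaneously.
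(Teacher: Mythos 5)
Your proof is correct, but it takes a genuinely different route from the paper. You realize $\mc G$ inside $GL(V)$ via a closed $F$-embedding diagonalizing $\mc S_\es$, take the principal congruence subgroups of the stabilizer of a weight-adapted lattice, and deduce the Iwahori decomposition from block LDU factorization in $GL(V)$: uniqueness of the big-cell factorization identifies the dynamic factors $u,z,u'$ with the matrix LDU factors, the Schur-complement formulas show these again lie in $1+\omega_F^n \End (L)$, and discarding finitely many initial terms guarantees $K_n$ lands in each of the finitely many open cells $\big( \mc U_{\mc G}(\lambda)\mc Z_{\mc G}(\lambda)\mc U_{\mc G}(\lambda^{-1})\big)(F)$. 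The paper instead follows Deligne's method intrinsically: it fixes an $\mc O_F$-lattice in $\mr{Lie}(\mc G)(F)$ adapted to the root space decomposition, transports $\omega_F^j L$ into $\mc G (F)$ by a product of analytic charts $\phi_\alpha$ taken in a fixed order on $\Phi (\mc G,\mc S_\es)_\red \cup \{0\}$, and proves by non-archimedean norm estimates that the resulting sets are groups, normal in $K_0$, and decompose as products of their intersections with the individual $\mc U_\alpha (F)$ and $Z_{\mc G}(\mc S_\es)(F)$. Your argument is more elementary and makes the $K_n$ completely explicit as congruence subgroups; the paper's construction is analytically heavier but yields the finer product decomposition over single root subgroups (the map \eqref{eq:2.20}, with freedom to reorder the factors), and the estimates \eqref{eq:2.24}--\eqref{eq:1.25} inside that proof are reused later (e.g. in Theorem \ref{thm:3.8}.e and Proposition \ref{prop:3.17}), so for the paper's purposes the intrinsic construction carries extra value even though both arguments prove the proposition as stated. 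Two small points to tighten: the compatibility of $\mc P_{\mc G}(\lambda)$, $\mc Z_{\mc G}(\lambda)$, $\mc U_{\mc G}(\lambda)$ with the closed embedding and the open-immersion property of the multiplication map should be cited from the dynamic formalism of \cite[Proposition 2.1.8]{CGP} rather than from \eqref{eq:2.7}, and the phrase ``Taylor expansion'' should be replaced by the explicit Gaussian-elimination/Schur-complement computation, since that is what actually shows the LDU factors of an element of $1+\omega_F^n \End (L)$ remain in $1+\omega_F^n \End (L)$.
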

\begin{proof}
We use an extension of the method of \cite{Del}.
We fix an ordering on $\Phi (\mc G, \mc S_\es)_\red \cup \{0\}$, such that the positive 
roots (with respect to $\mc P_\es$) come first, then $0$ and finally the negative roots. 
Writing $\mc U_0 = Z_{\mc G}(\mc S_\es)$, the root space decomposition becomes
\begin{equation}\label{eq:2.8}
\text{Lie}(\mc G) = \bigoplus\nolimits_{\alpha \in \Phi (\mc G, \mc S_\es)_\red \cup \{0\}}
\text{Lie}(\mc U_\alpha) .
\end{equation}
Moreover the properties of the open Bruhat cell \cite[Corollary 3.3.6 and p. 624]{CGP} 
imply that the multiplication map
\begin{equation}\label{eq:2.31}
\prod\nolimits_{\alpha \in \Phi (\mc G, \mc S_\es) \cup \{0\}}
\mc U_\alpha (F) \; \to \; \mc G (F)  
\end{equation}
is an open immersion.

Let $\mc O_F$ be the ring of integers of $F$ and $\omega_F \in \mc O_F$ a uniformizer.
Choose an $\mc O_F$-lattice $L$ in Lie$(\mc G)(F)$ such that in \eqref{eq:2.8} $L$ is
the direct sum of the
\[
L_\alpha := L \cap \text{Lie}(\mc U_\alpha)(F) \quad \text{with} \quad
\alpha \in \Phi (\mc G,\mc S)_\red \cup \{0\}.
\]
Choose $\mc O_F$-bases of the $L_\alpha$, and combine them to an $\mc O_F$-basis $B$ of $L$.
For all $b,b' \in B$, the Lie bracket $[b,b']$ is an element of Lie$(\mc G)(F)$. Pick
$i \in 2 \N$ such that
\[
[\omega_F^i b, \omega_F^i b'] = \omega_F^{2i} [b,b'] \in \omega_F^{i/2} (\omega_F^i L)
\quad \forall b, b' \in B.
\]
Then the $\mc O_F$-lattice $\omega_F^i L$ is a Lie algebra, it even satisfies
\begin{equation}\label{eq:2.18}
[x,y] \in \omega_F^{i/2} (\omega_F^i L) \quad \forall x,y \in \omega_F^i L .
\end{equation}
For each $\alpha \in \Phi (\mc G,\mc S)_\red \cup \{0\}$ we choose an analytic diffeomorphism 
$\phi_\alpha$ from a neighborhood of 0 in Lie$(\mc U_\alpha)(F)$ to a neighborhood of 1 in 
$\mc U_\alpha (F)$, such that $\phi_\alpha (0) = 1$ and the differential of $\phi_\alpha$ 
at 0 is the identity map of Lie$(\mc U_\alpha)(F)$. For $l_\alpha$ in the domain of
$\phi_\alpha$ we define 
\[
\phi \Big( \sum\nolimits_\alpha l_\alpha \Big) = \prod\nolimits_\alpha \phi_\alpha (l_\alpha), 
\]
where we use the ordering of $\Phi (\mc G,\mc S)_\red \cup \{0\}$. By \eqref{eq:2.31} $\phi$
is an analytic diffeomorphism from a neighborhood of 0 in Lie$(\mc G)(F)$ to a neighborhood of 
1 in $\mc G (F)$, and d$\phi$ is the identity map. The subsets $\phi (\omega_F^i L)$ of
$\mc G (F)$ (for $i \in \N$ so large that $\omega_F^i L$ is contained in the domain of $\phi$) 
form a neighborhood basis of 1 in $\mc G (F)$, because the sets
$\omega_F^i L$ form a neighborhood basis of 0 in Lie$(\mc G)(F)$.

Let us endow Lie$(\mc G)(F)$ with a non-archimedean norm $\norm{}$. Because $L$ is open in
Lie$(\mc G)(F)$, there exists a $\epsilon \in \R_{>0}$ such that $L$ contains the ball of
radius $\epsilon \max_{l \in L} \norm{L}$ around 0 in Lie$(\mc G)(F)$. Then, for each $i \in \N$:
\begin{equation}\label{eq:2.44}
\omega_F^i L \text{ contains the ball of radius } \epsilon \max_{l \in \omega_F^i L} \norm{l} . 
\end{equation}
By the continuity of inversion in $\mc G (F)$, the domain of $\phi$ contains 
$\phi (\omega_F^i L)^{-1}$ for sufficiently large $i$. For such $i$ we define the map
\[
\begin{array}{cccc}
\imath : & \omega_F^i L & \to & L\\
& l & \mapsto & \phi^{-1}(\phi(l)^{-1}) + l   
\end{array} .
\]
The assumptions on $\phi$ entail that $\imath$ is $F$-analytic and that its derivative at 
$0 \in \omega_F^i L$ is zero. In terms of the norm, this says
\[
\lim_{l \to 0} \norm{\imath (l)} \norm{l}^{-1} = 0 . 
\]
Hence, for $j \in \N$ sufficiently large, $\norm{\imath(l)} < \epsilon \norm{l}$ on $\omega_F^j L$.
With \eqref{eq:2.44} we obtain $\imath (\omega_F^j L) \subset \omega_F^j L$. Furthermore
$\omega_F^j L$ is a subgroup of Lie$(\mc G)(F)$, so 
\[
\phi^{-1}(\phi (l)^{-1}) = \imath (l) - l \in \omega_F^j L \quad \text{for all } l \in \omega_F^j L .
\]
This says that $\phi (\omega_F^j L)$ is closed under taking inverses, for $j \in \N$ sufficiently large.

By the continuity of multiplication in $\mc G (F)$, the image of $\phi$ contains 
$\phi (\omega_F^i L)^2$ for $i$ sufficiently large. For such $i$ we consider the $F$-analytic map
\[
\begin{array}{cccc}
\mu : & \omega_F^i L \times \omega_F^i L & \to & L\\
 & (l,m) & \mapsto & \phi^{-1}(\phi(l) \phi(m)) - l - m  
\end{array} .
\]
Since $\mu (0,m) = 0$ for all $m$ and $\mu$ is smooth, there exists $f : \omega_F^i L \to \R_{\geq 0}$ 
such that $\norm{\mu (l,m)} \leq f(m) \norm{l}$ for all $l,m \in \omega_F^i L$. Since $\omega_F^i L$
is compact, we can take
\begin{equation}\label{eq:2.41}
f(m) = \sum\nolimits_{b \in B} \max_{l\in \omega_F^i L} 
\norm{ \frac{\partial \mu}{\partial l_b} (l,m) } ,
\end{equation}
where $\partial \mu / \partial l_b$ denotes the partial derivative with respect to the variable
$l$ in the direction of the basis element $b$ of $L$. Since $l \mapsto \mu (l,0)$ is identically
zero, so is $l \mapsto \frac{\partial \mu}{\partial l_b} (l,0)$. Therefore the above reasoning
can also be applied to $\partial \mu / \partial l_b$. This yields $C_b \in \R_{>0}$ such that
$\norm{ \frac{\partial \mu}{\partial l_b} (l,m) } \leq C_b \norm{m}$ for all $l,m \in \omega_F^i L$, 
for example
\begin{equation}\label{eq:2.42}
C_b = \sum\nolimits_{a \in B} \max_{l,m \in \omega_F^i L} 
\norm{\frac{\partial^2 \mu}{\partial m_a \partial l_b} (l,m)} .
\end{equation}
From \eqref{eq:2.41} and \eqref{eq:2.42} we conclude that 
\begin{equation}\label{eq:2.43}
\norm{\mu (l,m)} \leq f(m) \norm{l} \leq \sum\nolimits_{b \in B} C_b \norm{m} \norm{l} 
\quad \text{for all} \quad l,m \in \omega_F^i L .
\end{equation}
Suppose that $\omega_F^j L$ contains the ball of radius $\sum_{b \in B} C_b 
\max_{l \in \omega_F^j L} \norm{l}^2$ in $L$. (By \eqref{eq:2.44} this is the case for 
$j \in \N$ sufficiently large.) 
Then \eqref{eq:2.43} says that $\mu \big( (\omega_F^j L)^2 \big) \subset \omega_F^j L$.
From the definition of $\mu$ we see that also $\phi^{-1}(\phi(l) \phi(m)) \in \omega_F^j L$,
or equivalently $\phi(l)\phi(m) \in \phi (\omega_F^j L)$ for all $l,m \in \omega_F^j L$.

We showed that, for $j \in \N$ sufficiently large, $\phi (\omega_F^j)$ is closed under 
multiplication and under inversion. Thus there exists $i_0 \in \N$ such that,
for $j \geq i_0$, $\phi (\omega_F^j L)$ is a compact open subgroup of $\mc G (F)$.

For $k \in \phi (\omega_F^{i_0} L)$ we consider the map
\begin{equation}\label{eq:2.24}
\begin{array}{cccc} 
\gamma_k : & \omega_F^{i_0} L & \to & \omega_F^{i_0} L \\
 & l & \mapsto & \phi^{-1} (k \phi (l) k^{-1} \phi (l)^{-1}) 
\end{array}.
\end{equation}
By the analyticity of $\phi$, the $\gamma_k$ converge to the zero map as $k$ goes to
$1 \in \mc G (F)$. In view of the compactness of $\omega_F^{i_0} L$, this can be formulated
more precisely. For every $D \in \N$ there exists an open neighborhood
$U_D$ of 1 in $\mc G (F)$ such that
\begin{equation}\label{eq:2.19}
\norm{\gamma_k (l)} < D^{-1} \norm{l} \quad \forall k \in U_D, l \in \omega_F^{i_0} L.
\end{equation}
For $D$ such that the ball of radius $D^{-1} \max_{l \in L} \norm{l}$ is
contained in $L$, \eqref{eq:2.19} implies that
\begin{equation}\label{eq:1.25}
\begin{array}{l}
k \phi (l) k^{-1} \phi (l)^{-1} = \phi (\gamma_k (l)) \in \phi (\omega_F^j L) \\
k \phi (\omega_F^j L) k^{-1} \subset \phi (\omega_F^j L)
\end{array}
\qquad \forall k \in U_D, j \geq i_0, l \in \omega_F^j L .
\end{equation}
Fix such a $D$ and choose $i_D \geq i_0$ such that $\phi (\omega_F^{i_D} L) \subset U_D$
and \eqref{eq:2.18} holds.
Then all the $\phi (\omega_F^j L)$ with $j \geq i_D$ are normal subgroups of
$\phi (\omega_F^{i_D} L)$. We define
\[
K_n := \phi (\omega_F^{n + i_D} L) \text{ for } n \geq 0 .
\]
Now the first two bullets of the statement are satisfied.

For $\alpha \in \Phi (\mc G, \mc S_\es)_\red \cup \{0\}$, the above argument
also shows that $\phi (\omega_F^j L_\alpha)$ with $j \geq i_D$ is a normal subgroup
of $\phi (\omega_F^{i_D} L_\alpha)$. The bijectivity of $\phi : L \to L$ implies that
\[
\phi ( \omega_F^j L ) \cap \phi (\omega_F^{i_D} L_\alpha) = \phi (\omega_F^j L_\alpha) .
\]
For any ordering of $\Phi (\mc G, \mc S_\es)_\red \cup \{0\}$, not necessarily the same
as we fixed before, we can consider the multiplication map
\begin{equation}\label{eq:2.20}
\prod\nolimits_{\alpha \in \Phi (\mc G, \mc S_\es)_\red \cup \{0\}} 
\phi (\omega_F^j L_\alpha) \longrightarrow \phi (\omega_F^j L) .
\end{equation}
By the above, \eqref{eq:2.20} gives rise to a well-defined map on the lower line of the
following diagram:
\begin{equation}\label{eq:2.21}
\begin{array}{ccc}
\bigoplus_\alpha \omega_F^j L_\alpha / \omega_F^{j+1} L_\alpha & \longrightarrow &
\omega_F^j L / \omega_F^{j+1} L \\
\downarrow \phi & & \downarrow \phi \\
\prod_\alpha \phi (\omega_F^j L_\alpha) / \phi (\omega_F^{j+1} L_\alpha) & \longrightarrow &
\phi (\omega_F^j L) / \phi (\omega_F^{j+1} L)
\end{array}
\end{equation}
By \eqref{eq:2.19} the two lower terms in \eqref{eq:2.21} are abelian groups, and the
map between them is a group homomorphism. Similarly \eqref{eq:2.18} says that the two
upper terms are abelian Lie algebras, and that the map between them is a Lie homomorphism.
Consequently the diagram \eqref{eq:2.21} commutes.

Since $\phi$ was bijective to start with, the downward arrows induced by $\phi$ are still
bijective. The upper horizontal map is a bijection because $\omega_F^j L = \bigoplus_\alpha
\omega_F^j L_\alpha$, so the lower horizontal map is also bijective. Using this for
every $j \geq i_D$, in combination with the profiniteness of all the involved groups,
we deduce that \eqref{eq:2.20} is bijective. Since it is also an analytic map between
compact $\mc O_F$-varieties, it is in fact a homeomorphism.

For any pseudo-parabolic $F$-subgroup $\mc P_{\mc G}(\lambda)$ containing $Z_{\mc G}(\lambda)$,
we can focus on the roots $\alpha$ in $\Phi (\mc U_{\mc G}(\lambda), \mc S_\es)_\red$. 
The above argument shows that multiplication induces a homeomorphism
\[
\prod_{\alpha \in \Phi (\mc U_{\mc G}(\lambda), \mc S_\es)_\red} \hspace{-8mm} 
K_n \cap \mc U_\alpha (F) = \prod_{\alpha \in \Phi (\mc U_{\mc G}(\lambda), \mc S_\es)_\red} 
\hspace{-8mm} \phi (\omega_F^{n + i_D} L_\alpha) \longrightarrow \phi \Big( 
\prod_{\alpha \in \Phi (\mc U_{\mc G}(\lambda), \mc S_\es)_\red} \hspace{-8mm}
\omega_F^{n + i_D} L_\alpha \Big) .
\]
All this takes place in $\mc U_{\mc G} (\lambda)(F)$, and considering that as enveloping
group we see that the right hand side equals $K_n \cap \mc U_{\mc G} (\lambda)(F)$.
The groups $K_n \cap \mc Z_{\mc G} (\lambda)(F)$ and $K_n \cap \mc U_{\mc G} (\lambda^{-1})(F)$
admit analogous expressions.

Finally we use the freedom in the ordering of $\Phi (\mc G, \mc S_\es)_\red \cup \{0\}$. 
We may assume that the roots in 
$\Phi (\mc U_{\mc G}(\lambda),\mc S_\es)_\red$ come first, then those in 
$\Phi (\mc Z_{\mc G}(\lambda), \mc S_\es)_\red$ and finally the remaining set 
$\Phi (\mc U_{\mc G}(\lambda^{-1}), \mc S_\es)_\red$. Under those conditions \eqref{eq:2.40}
shows that \eqref{eq:2.20} becomes the Iwahori decomposition of $K_n$ with respect to 
$\mc P_{\mc G}(\lambda)$.
\end{proof}

\subsection{Actions on affine buildings} \

To understand the geometry of pseudo-reductive groups over non-archimedean local fields 
better, we first make such groups act on affine buildings. 

\begin{lem}\label{lem:1.1}
Let $\mc C$ be a commutative pseudo-reductive $F$-group. Let $\mc S$ be the maximal
$F$-split torus in $\mc C$, and let $X_* (\mc S)$ be its cocharacter lattice. Then
there is a natural cocompact, proper action of $\mc C (F)$ on $X_* (\mc S) \otimes_\Z \R$,
by translations.
\end{lem}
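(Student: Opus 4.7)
The plan is to define a continuous homomorphism $\nu \colon \mc C(F) \to V := X_*(\mc S) \otimes_\Z \R$ and let $\mc C(F)$ act on $V$ by translations $c \cdot v := \nu(c) + v$. Under this setup, properness of the action is equivalent to $\ker \nu$ being compact, and cocompactness is equivalent to $V / \nu(\mc C(F))$ being compact.

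The natural starting point is the standard valuation map on the split torus: $\nu_{\mc S} \colon \mc S(F) \to V$ characterized by $\chi(\nu_{\mc S}(s)) = -v_F(\chi(s))$ for all $\chi \in X^*(\mc S)$. This is a continuous homomorphism with image the full-rank lattice $X_*(\mc S) \subset V$ and kernel the maximal compact subgroup $\mc S(\mc O_F)$. The core of the proof is to extend $\nu_{\mc S}$ continuously to all of $\mc C(F)$. By Lemma \ref{lem:1.6}(a) the quotient $\bar Q := \mc C(F)/\mc S(F)$ is compact, giving a short exact sequence
\[
1 \to \mc S(F) \to \mc C(F) \to \bar Q \to 1
\]
of locally compact abelian groups with compact cokernel. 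Forming the pushout extension $0 \to V \to E \to \bar Q \to 0$ along $\nu_{\mc S}$, one observes that this extension splits continuously: since $\bar Q$ is compact totally disconnected and $V$ is a real vector space with no nontrivial compact subgroups, any Borel cross-section of $\mc C(F) \to \bar Q$ produces a continuous $V$-valued $2$-cocycle on $\bar Q$ that is trivialized by Haar-averaging. Composing $\mc C(F) \to E$ with a resulting retraction $E \to V$ then yields the desired continuous homomorphism $\nu \colon \mc C(F) \to V$ extending $\nu_{\mc S}$.

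With $\nu$ in hand, the required properties follow. For properness, $\ker \nu \cap \mc S(F) = \mc S(\mc O_F)$ is compact; moreover the preimage $\nu^{-1}(X_*(\mc S))$ is closed in $\mc C(F)$ (the target being discrete in $V$) and saturated under $\mc S(F)$, so its image in $\bar Q$, which coincides with the image of $\ker \nu$, is closed in the compact group $\bar Q$ and hence compact. Thus $\ker \nu$ is an extension of a compact group by a compact group, and so is itself compact. Cocompactness is immediate: $\nu(\mc C(F)) \supseteq X_*(\mc S)$ is already a full-rank lattice in $V$. The principal technical obstacle is the continuous splitting of the pushout, for which the key inputs are the compactness of $\mc C(F)/\mc S(F)$ established in Lemma \ref{lem:1.6}(a) and the vanishing of $H^2_{\mr{cts}}(\bar Q, V)$ for compact $\bar Q$ and a real vector space $V$.
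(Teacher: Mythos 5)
Your strategy --- extending the valuation $\nu_{\mc S}\colon \mc S (F) \to V = X_*(\mc S)\otimes_\Z \R$ to a continuous homomorphism $\nu$ on all of $\mc C (F)$ and translating along $\nu$ --- is genuinely different from the paper's proof, which invokes Loisel's theorem that $\mc C (F)$ has a unique maximal compact \emph{open} subgroup $K_{\mc C}$, observes that $\mc C (F)/K_{\mc C}$ is a finitely generated free abelian group, and identifies $(\mc C (F)/K_{\mc C})\otimes_\Z \R$ with $X_*(\mc S)\otimes_\Z \R$ via the finite-index image of $\mc S (F)$ (finite index again by Lemma \ref{lem:1.6}.a). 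Your route can be made to work, but as written it has two real gaps. First, the claim that properness of the translation action is \emph{equivalent} to compactness of $\ker \nu$ is false, and your argument only establishes the compact kernel. Properness means $\nu^{-1}(C)$ is compact for every compact $C \subset V$, which in addition requires $\nu (\mc C (F))$ to be discrete (hence closed) in $V$: the homomorphism $\Z^2 \to \R$, $(m,n)\mapsto m+n\sqrt{2}$, has trivial kernel but a non-proper translation action, and merely containing the full-rank lattice $X_*(\mc S)$ does not force discreteness. The repair is short: the composite $\mc C (F)\to V \to V/X_*(\mc S)$ kills $\mc S (F)$ (as $\nu_{\mc S}$ surjects onto $X_*(\mc S)$), hence factors through the compact totally disconnected group $\bar Q = \mc C (F)/\mc S (F)$; its image is then a profinite, hence finite, subgroup of the torus $V/X_*(\mc S)$, so $\nu (\mc C (F))$ is a lattice containing $X_*(\mc S)$ with finite index, and properness follows from this together with compactness of $\ker\nu$.

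Second, the splitting step is not justified as stated: a Borel cross-section of $E \to \bar Q$ yields only a \emph{Borel} $2$-cocycle, not a continuous one, so the vanishing of continuous $H^2$ of the compact group $\bar Q$ cannot be applied directly. Note also that a continuous section is not automatic here: every open subgroup of $E$ contains the connected subgroup $V$, so (for $\mc S \neq 1$) $E$ has no compact open subgroups to reduce to. To close this you need one of: a zero-dimensional selection theorem to produce a continuous (non-homomorphic) section over the profinite base $\bar Q$, after which averaging the resulting continuous cocycle works; or Moore-type measurable cohomology plus automatic continuity of measurable homomorphisms between locally compact Polish groups; or, simplest, the classical structure theorem for locally compact abelian groups that a closed vector subgroup is a topological direct summand, which hands you the retraction $E \to V$ at once. (Once it exists the retraction is unique, since two splittings differ by a continuous homomorphism $\bar Q \to V$, necessarily trivial; so your $\nu$ is indeed canonical.) With these two repairs your proof is correct, though it trades the paper's single input --- the maximal compact open subgroup from \cite{Loi} together with Lemma \ref{lem:1.6}.a --- for a heavier dose of locally compact abelian group theory.
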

\begin{proof}
By \cite[Theorem 1.4.3]{Loi} $\mc C (F)$ contains a unique maximal compact subgroup,
say $K_{\mc C}$, which is open. Then $\mc C (F) / K_{\mc C}$ is a finitely generated free
abelian group and $\mc C (F)$ acts properly and cocompactly on $\mc C (F) / K_{\mc C} 
\otimes_\Z \R$, by multiplication. This can be regarded as a translation action of
$\mc C (F)$ on a vector space.

Furthermore $K_{\mc C} \cap \mc S (F)$ is the unique maximal compact subgroup of
$\mc S (F)$, and $\mc S (F) / (K_{\mc C} \cap \mc S (F))$ embeds naturally in
$\mc C (F) / K_{\mc C}$. The image is of finite index in $\mc C (F) / K_{\mc C}$
because $\mc C (F) / \mc S(F)$ is compact (Lemma \ref{lem:1.6}.a) and 
$K_{\mc C}$ is open. Hence there is a natural isomorphism
\begin{equation}\label{eq:1.2}
\mc S (F) / (K_{\mc C} \cap \mc S (F)) \otimes_\Z \R \to \mc C (F) / K_{\mc C} \otimes_\Z \R .
\end{equation}
Let $\omega_F$ be a uniformizer of $F$. The map
\begin{equation}\label{eq:1.3}
X_* (\mc S) \to \mc S (F) : \nu \mapsto \nu (\omega_F^{-1})
\end{equation}
and \eqref{eq:1.2} provide canonical isomorphisms
$X_* (\mc S) \to \mc S (F) / (K_{\mc C} \cap \mc S (F))$ and
\begin{equation}\label{eq:1.5}
X_* (\mc S) \otimes_\Z \R \to \mc C (F) / K_{\mc C} \otimes_\Z \R .
\end{equation}
We transfer the $\mc C (F)$-action from the right hand side to the left hand side.
Restricted to $\mc S (F)$, this recovers the action coming from \eqref{eq:1.3} and 
used in Bruhat--Tits theory, see \cite[\S 1.1]{ScSt}. In this way we get a canonical 
proper, cocompact $\mc C (F)$-action on $X_* (\mc S) \otimes_\Z \R$.
\end{proof}

Let $\mc T_{nr,i}$ be the standard maximal split torus in $Sp_{2 n_i}$, so that
$\mc T_{nr,i}(F)$ is a maximal $F$-split torus in $R_{F'_i/F}(\mc G'_{nr,i})(F)$.
We write
\[
\mc T = \prod\nolimits_{i \in I_{nr}} R_{F''_i / F}(\mc T_{nr,i}) \times
\Big( \prod\nolimits_{i \in I_s \cup I_e} R_{F_i / F}(\mc T_i) \rtimes \mc S \Big) / \alpha (\mc T') .
\]
In view \eqref{eq:1.45} we may assume that our maximal $F$-split torus of $\mc G$ is given as
\[
\mc S_\es := \prod\nolimits_{i \in I_{nr}} \mc T_{nr,i} \times \mc S .
\]

\begin{prop}\label{prop:2.3}
There exists an affine building $\mc B (\mc G,F)$ with an action 
of the group $\mc G (F)$ from \eqref{eq:2.6}, such that:
\enuma{
\item The action is isometric, proper and cocompact;
\item The set of apartments of $\mc B (\mc G,F)$ is naturally in bijection with the set of
maximal $F$-split tori in $\mc G$.
\item $\mc C (F)$ stabilizes an apartment.
}
\end{prop}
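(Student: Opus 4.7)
The plan is to assemble $\mc B(\mc G,F)$ as a direct product of affine buildings, one factor for each simple summand in the generalized standard presentation \eqref{eq:2.6}, together with a vector space factor for the commutative piece $\mc C$. For $i \in I_{nr}$ take the classical Bruhat--Tits building $\mc B(Sp_{2n_i},F''_i)$ of the symplectic group over the local field $F''_i$; via \eqref{eq:2.34} it carries an isometric, proper, cocompact action of $\mc G'_{nr,i}(F'_i)$. For $i \in I_s \cup I_e$ take the Bruhat--Tits building $\mc B_i$ of the simply connected semisimple $F_i$-group $\mc G_i$, using \eqref{eq:2.11} to replace an exotic pseudo-reductive $\mc G_i$ by $\overline{\mc G_i}$ without loss. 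Finally take $A_{\mc C} := X_*(\mc S) \otimes_\Z \R$ with the translation action of $\mc C(F)$ supplied by Lemma \ref{lem:1.1}. Set
\[
\mc B(\mc G,F) = \prod_{i \in I_{nr}} \mc B(Sp_{2n_i},F''_i) \times \prod_{i \in I_s \cup I_e} \mc B_i \times A_{\mc C},
\]
which is automatically an affine building as a product of affine buildings.

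Next I would construct an action of $(\mc G' \rtimes \mc C)(F)$. The factor-wise action gives one of $\mc G_{nr}(F) \times \mc G'_r(F) \times \mc C(F)$; the semidirect product structure is absorbed by the $\mc C(F)$-action on each $\mc B_i$ obtained by pushing $\psi_{\mc C}$ through $R_{F_i/F}(\mc T_i/Z(\mc G_i))(F) = (\mc T_i/Z(\mc G_i))(F_i)$, which normalizes $\mc T_i$ and hence acts on $\mc B_i$ by affine automorphisms stabilizing the apartment of $\mc T_i$. The key compatibility is that the central subgroup $\alpha(\mc T'(F)) \subset (\mc G' \rtimes \mc C)(F)$ acts trivially on $\mc B(\mc G,F)$: for $t \in \mc T'(F)$ the factor $t^{-1}$ translates the apartment of $\mc B_i$ by the negative of the vector associated to its $i$-th component via \eqref{eq:1.3}, while $\phi_{\mc C}(t) \in \mc C(F)$ translates $A_{\mc C}$ by the vector determined from \eqref{eq:1.5}; since $\psi_{\mc C} \circ \phi_{\mc C}$ is the canonical map in \eqref{eq:1.1}, these two translations are opposite and cancel in the product. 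Thus the action descends through $(\mc G' \rtimes \mc C)(F)/\alpha(\mc T'(F))$, which by Lemma \ref{lem:2.5}.a is an open subgroup of finite index in $\mc G(F)$; to reach all of $\mc G(F)$ I would invoke Lemma \ref{lem:2.5}.b together with the compactness of $Z_{\mc G_r}(\mc S)(F)/\mc S(F)$ from Lemma \ref{lem:1.6}.b, so that the remaining elements act on the product apartment through this compact quotient by pointwise stabilizers.

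Properties (a)--(c) are then verified factor-wise. A direct product of isometric, proper, cocompact actions is again such, yielding (a); the $\mc C(F)$-part is handled by Lemma \ref{lem:1.1} and the other factors by the classical Bruhat--Tits theory. An apartment of a product building is a product of apartments in the factors; these are parameterized by maximal $F_i$-split tori of the $\mc G_i$ together with the unique apartment $A_{\mc C}$ of the $\mc C$-factor, which via \eqref{eq:1.46} matches the set of maximal $F$-split tori of $\mc G$, giving (b). For (c), $\mc C(F)$ preserves each apartment of $\mc T_i$ in $\mc B_i$ (since $\psi_{\mc C}$ has image in the normalizer of $\mc T_i$) and preserves $A_{\mc C}$, so it stabilizes the apartment of $\mc B(\mc G,F)$ associated to $\mc S_\es$. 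The main obstacle is the triviality check for $\alpha(\mc T'(F))$: it demands a precise comparison between the Bruhat--Tits translation on $\mc B_i$ induced by $\mc T_i(F_i)$ and the translation on $A_{\mc C}$ produced by Lemma \ref{lem:1.1}, and hinges on the defining property of the standard construction that $\psi_{\mc C} \circ \phi_{\mc C}$ is the canonical map in \eqref{eq:1.1}.
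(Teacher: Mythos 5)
The key step of your descent argument is false. In your product $\prod_i \mc B_i \times A_{\mc C}$ the element $\alpha(t) = (t^{-1}, \phi_{\mc C}(t))$ moves two \emph{different} factors: $t^{-1}$ translates (the standard apartment of) the simplicial factors, while $\phi_{\mc C}(t)$ translates the separate Euclidean factor $A_{\mc C} = X_*(\mc S)\otimes_\Z \R$. Translations in distinct factors of a Cartesian product cannot cancel: the composite sends a point $(x,y)$ to $(x - v,\, y + v')$, which is the identity only if both vectors vanish. In fact your space is exactly the building $\mc B(\mc G' \rtimes \mc C, F)$ of \eqref{eq:1.23}, on which $(\mc G' \rtimes \mc C)(F)$ acts properly; whenever some $\mc G_i$ is $F_i$-isotropic the subgroup $\alpha(\mc T')(F)$ is closed and non-compact, so properness forces its orbits to be unbounded and it cannot act trivially. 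Hence the factor-wise action does not descend to $\mc G(F)$, and no amount of reliance on $\psi_{\mc C}\circ\phi_{\mc C}$ being the canonical map can repair this. The paper's construction avoids the problem in two ways: the Euclidean factor of $\mc B(\mc G,F)$ is the smaller space $X_*(\mc Z)\otimes_\Z\R$ with $\mc Z = (\mc S\cap Z(\mc G))^\circ$, on which, by the decomposition \eqref{eq:1.4}, the action of Lemma \ref{lem:1.1} factors through $C/\phi_{\mc C}(T')$; and the action on each $\mc{BT}(\mc G_i,F_i)$ is defined not factor-wise but via the conjugation action of the whole group $G$ on $\mc G_i(F_i)$, using that for the simply connected semisimple $\mc G_i$ the maximal compact subgroups are precisely the vertex stabilizers (the device generalized in Proposition \ref{prop:3.19}.c). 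With that definition, central elements act trivially for free and there is nothing to descend.

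A second, related gap is your passage from the finite-index image of $(\mc G'\rtimes\mc C)(F)$ to all of $\mc G(F)$. An action of a finite-index open subgroup does not extend automatically, and the sentence invoking Lemma \ref{lem:2.5}.b and the compactness of $Z_{\mc G_r}(\mc S)(F)/\mc S(F)$ does not specify how an element outside the image is supposed to move points of the building; as written it is not a construction. The conjugation-action device above settles this as well, since it is defined for every element of $\mc G(F)$ acting on $\mc G_i(F_i)$ by automorphisms, while the vector factor is acted on through the quotient \eqref{eq:1.7}. Once these two points are repaired, your factor-wise verification of (a)--(c) (Lemma \ref{lem:1.1} for the Euclidean part, Bruhat--Tits theory for the simplicial parts, apartments as products matched with maximal $F$-split tori) is essentially the argument of the paper.
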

\begin{proof}
For notational ease we simplify the description of the totally non-reduced factors and of 
the exotic pseudo-reductive factors.
During this proof we will denote the factors $R_{F''_i/F}(Sp_{2 n_i})$ from \eqref{eq:2.2} and 
\eqref{eq:1.37} and the factors $R_{F_i/F} (\overline{\mc G_i})$ from \eqref{eq:2.11} also as 
$R_{F_i / F}(\mc G_i)$. That is, we treat them on the same footing as the standard $\mc G_i (F_i)$.

Let $\mc{BT} (\mc G_i, F_i)$ be the Bruhat--Tits building of $\mc G_i (F_i)$. We claim that
this building admits a natural isometric action of any group $H_i$ that acts on $\mc G_i (F_i)$ 
by automorphisms of topological groups. (See Proposition \ref{prop:3.19}.c for a generalization
and more detailed proof of this claim.) The stabilizer $\mc G_i (F_i)_{x_i}$ of any point
$x_i \in \mc{BT} (\mc G_i,F_i)$ is a compact open subgroup of $\mc G_i (F_i)$. As $\mc G_i$ is
semisimple and simply connected, \cite[Corollaire 3.3.3]{BrTi1} applies, and tells us that 
$\mc G_i (F_i)_{x_i}$ is a maximal compact subgroup if and only if $x_i$ is a vertex of 
$\mc{BT} (\mc G_i, F_i)$. In that case $x_i$ is the only fixed point of $\mc G_i (F_i)_{x_i}$,
for different facets of $\mc{BT}(\mc G_i,F_i)$ have different stabilizers.

For any $h \in H_i$, $h (\mc G_i (F_i)_{x_i})$ is another maximal compact subgroup of 
$\mc G_i (F_i)_{x_i}$, so of the form $\mc G_i (F_i)_{y_i}$ for a unique vertex of 
$\mc{BT}(\mc G_i,F_i)$. It is easily seen that the definition $h (x_i) := y_i$ extends uniquely
to an isometric action of $H_i$ on $\mc{BT}(\mc G_i,F_i)$. This applies in particular 
to the conjugation actions of $G' \rtimes C$ and $G$ on $\mc G_i (F_i)$.

Let $\mc Z = (\mc S \cap Z(\mc G) )^\circ$ be the maximal $F$-split torus in $Z(\mc G)$. 
Then $\mc S = \mc Z (\mc S \cap \phi_{\mc C}(\mc T'))^\circ$ and
\begin{equation}\label{eq:1.4}
X_* (\mc S) \otimes_\Z \R = X_* (\mc S \cap \phi_{\mc C}(\mc T')^\circ)
\otimes_\Z \R \; \oplus \; X_* (\mc Z) \otimes_\Z \R .
\end{equation}
In the decomposition \eqref{eq:1.4}, with the action from Lemma \ref{lem:1.1}, 
$\phi_{\mc C} (T')$ acts only the first summand, and the quotient $C / \phi_{\mc C} (T')$ 
acts on the second summand. 
Thus $G' \rtimes C$ and $G$ act on $X_* (\mc Z) \otimes_\Z \R$, via the quotient maps
\begin{equation}\label{eq:1.7}
G' \rtimes C \to G \to C / \phi_{\mc C} (T') .
\end{equation}
We define the affine buildings
\begin{equation}\label{eq:1.23}
\begin{aligned}
\mc B (\mc G' \rtimes \mc C ,F) & = \prod\nolimits_{i \in I_s \cup I_e \cup I_{nr}} 
\mc{BT} (\mc G_i ,F_i) \times X_* (\mc S) \otimes_\Z \R ,\\
\mc B (\mc G, F) & = \prod\nolimits_{i \in I_s \cup I_e \cup I_{nr}} \mc{BT} (\mc G_i ,F_i) 
\times X_* (\mc Z) \otimes_\Z \R .
\end{aligned}
\end{equation}
(a) The group $G' \rtimes C$ acts componentwise on these two buildings, and the action
on $\mc B (\mc G,F)$ factors through $G$. It is known from Bruhat--Tits theory 
\cite[\S 2.2]{Tit} that the action of $\mc G' (F)$ on
$\prod\nolimits_{i \in I_s \cup I_e \cup I_{nr}} \mc{BT} (\mc G_i ,F_i)$
is proper and cocompact. The action of
$C / \phi_{\mc C} (T')$ (resp. of $C$) on $X_* (\mc Z) \otimes_\Z \R$ (resp. on
$X_* (\mc S) \otimes_\Z \R$) is proper and cocompact by Lemma \ref{lem:1.1}. Hence
the action of $G' \rtimes C$ on $\mc B (\mc G' \rtimes \mc C,F)$ is proper and cocompact.
By Lemma \ref{lem:2.5}.a the action of $G$ on $\mc B (\mc G,F)$ is also proper and cocompact.

Recall \cite[Lemme 2.5.1]{BrTi1} that $\mc B (\mc G_i, F_i)$ is endowed with a 
$\mc G_i (F)$-invariant metric $d_i$, which comes from an inner product on an apartment of
this building. We endow $\mc B (\mc G ,F)$ with a metric of the form
\begin{equation}\label{eq:1.9}
d \big( (\prod\nolimits_i x_i ,x_{\mc Z}), (\prod\nolimits_i y_i, y_{\mc Z}) \big) =
\sum\nolimits_i d (x_i,y_i) + \inp{x_{\mc Z} - y_{\mc Z}}{x_{\mc Z} - y_{\mc Z}}_{\mc Z} ,
\end{equation}
where $\inp{}{}_{\mc Z}$ is an inner product on $X_* (\mc Z) \otimes_\Z \R$. Since 
$G' \rtimes C$ and $G$ act on $X_* (\mc Z) \otimes_\Z \R$ by translations, the actions of 
$G' \rtimes C$ and $G$ on $\mc B (\mc G',F)$ are isometric. In the same way we can define a 
$G' \rtimes C$-invariant metric on $\mc B (\mc G' \rtimes \mc C,F)$.\\
(b) By definition an apartment of $\mc B (\mc G,F)$ is a set of the form
$\prod_i \mh A_i \times X_* (\mc Z) \otimes_\Z \R$, where each $\mh A_i$ is an apartment
of $\mc{BT}(\mc G_i,F_i)$. According to \cite[\S 2.1]{Tit} there is a canonical bijection 
between the set of apartments of $\mc{BT}(\mc G_i,F_i)$ and the set of maximal $F_i$-split 
tori of $\mc G_i$. By \cite[Proposition A.5.15.ii]{CGP} the latter set is in natural bijection 
with the set of maximal $F$-split tori in $R_{F_i / F}(\mc G_i)$. Thus the apartments of 
$\mc{BT}(\mc G,F)$ correspond bijectively to the maximal $F$-split tori in $\mc G'$. 

Clearly the maps $\mc S' \leftrightarrow \mc S' \times \mc S$ provide a bijection between
these and the maximal $F$-split tori of $\mc G' \rtimes \mc C$. As $\mc G = (\mc G' \rtimes 
\mc C) / \alpha (\mc T')$, the natural map $\mc G' \rtimes \mc C \to \mc G$ sends maximal
$F$-split tori to maximal $F$-split tori. As $\alpha (\mc T')$ central, 
\cite[Proposition 2.2.12.i]{CGP} shows that this induces a bijection between the set of maximal 
$F$-split tori of $\mc G' \rtimes \mc C$ and the analogous set for $\mc G$. \\
(c) Since we assumed that every $\mc T_i$ contains a maximal $F_i$-split torus, we can define
$\mh A_{\mc T_i}$ to be the apartment of $\mc BT (\mc G_i, F_i)$
associated to that maximal $F_i$-split torus. The sets
\[
\mh A_{\mc T} = \prod\nolimits_i \mh A_{\mc T_i} \times X_* (\mc S) \otimes_\Z \R \quad
\text{and} \quad \mh A_\es = \prod\nolimits_i \mh A_{\mc T_i} \times X_* (\mc Z) \otimes_\Z \R
\]
are apartments in, respectively, $\mc B (\mc G' \rtimes \mc C,F)$ and $\mc B (\mc G,F)$.
The latter is associated to the maximal $F$-split torus $\mc S_\es$.
The action of $C$ on $\mc B (\mc G',F)$ factors via $\mc T' (F) / Z(\mc G' (F))$, so it
stabilizes $\prod_i \mh A_{\mc T_i}$. It follows that $C \subset G' \rtimes C$ stabilizes
$\mh A_{\mc T}$ and that $C \subset G$ stabilizes $\mh A_\es$.
\end{proof}

The building $\prod_i \mc{BT}(\mc G_i,F_i)$ is endowed with a canonical polysimplicial
structure, but $X_* (\mc Z) \otimes_\Z \R$ is not. A facet of $\mc B (\mc G,F)$ is by
definition a subset of the form $\mf f = \prod_i \mf f_i \times X_* (\mc Z) \otimes_\Z \R$,
where each $\mf f_i$ is a facet of $\mc{BT}(\mc G_i,F_i)$. 
By \cite[\S 3.1]{Tit} and Lemma \ref{lem:1.1}
\begin{equation}\label{eq:2.60}
\text{the pointwise stabilizer of any facet of } \mc B (\mc G,F) 
\text{ is an open subgroup of } \mc G(F).
\end{equation}
We will call a point $x = ((x_i)_i,x_{\mc Z}) \in \mc B (\mc G,F)$ a vertex if each 
$x_i$ is a vertex of $\mc{BT}(\mc G_i,F_i)$ and $x_{\mc Z} \in X_* (\mc Z) \otimes_\Z \R$.
Recall that $x_i$ is a special vertex of $\mc{BT}(\mc G_i,F_i)$ if there is an apartment
$\mh A_i$ such that for every wall in $\mh A_i$ there is a parallel wall in $\mh A_i$ 
which contains $x_i$. We say that $x$ is a special vertex if each $x_i$ is so. By 
\cite[\S 1.3.7]{BrTi1} every apartment of $\mc{BT}(\mc G_i,F_i)$, and hence every 
apartment of $\mc B (\mc G,F)$, contains special vertices.

As a first consequence of the existence of appropriate affine buildings for $\mc G(F)$,
we analyse the structure of the centralizer of a maximal $F$-split torus.

\begin{lem}\label{lem:2.7}
$Z_G (S_\es) = Z_{\mc G}(\mc S_\es)(F)$ has a unique maximal compact subgroup,
say $Z_G (S_\es)_\cpt$.
\end{lem}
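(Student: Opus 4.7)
The plan is to exhibit a continuous homomorphism from $Z_G(S_\es)$ onto a discrete torsion-free abelian group whose kernel is compact; such a kernel is automatically the unique maximal compact subgroup, because any compact subgroup of $Z_G(S_\es)$ must have trivial image in the torsion-free target and hence lies in the kernel.

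To construct this map I use the action on the apartment $\mh A_\es \subset \mc B(\mc G,F)$ from Proposition \ref{prop:2.3}. Since every element of $Z_G(S_\es)$ normalizes $\mc S_\es$, Proposition \ref{prop:2.3}(b) shows that $Z_G(S_\es)$ stabilizes $\mh A_\es$, on which it acts by affine isometries. The subgroup $S_\es$ acts on $\mh A_\es$ by translations: the factor $(\mc S_\es \cap \mc G_{nr})(F)$ acts by standard Bruhat--Tits translations on $\prod_{i \in I_{nr}} \mc{BT}(\mc G_i, F_i)$, while $\mc S(F) \subset \mc C(F)$ acts by translations on the remaining factors $\prod_{i \in I_s \cup I_e} \mc{BT}(\mc G_i,F_i)$ (through $\psi_{\mc C}$ followed by Bruhat--Tits) and on $X_*(\mc Z) \otimes_\Z \R$ (through Lemma \ref{lem:1.1}). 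Using \eqref{eq:1.4}, \eqref{eq:1.45} and the matching of the rank of $\mc S_\es$ with the dimension of $\mh A_\es$, one checks that the image of $S_\es$ in the group $T$ of translations of $\mh A_\es$ is a lattice $\Lambda$ of full rank $d = \dim \mc S_\es$. Since every $z \in Z_G(S_\es)$ commutes with $S_\es$, its action on $\mh A_\es$ commutes with translation by every element of the full-rank lattice $\Lambda$; its linear part therefore fixes a spanning set and must be trivial, so $z$ acts by a translation as well. This yields a continuous homomorphism $\tau : Z_G(S_\es) \to T$.

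The kernel $\ker \tau$ equals the pointwise stabilizer of $\mh A_\es$ in $Z_G(S_\es)$ and is contained in the stabilizer of any single base point, which is compact by properness of the $\mc G(F)$-action (Proposition \ref{prop:2.3}(a)). The same properness shows $\tau$ is itself a proper map, so $\tau(Z_G(S_\es))$ is a closed subgroup of $T$. Now $Z_G(S_\es)/\ker\tau$, being a Hausdorff quotient of the totally disconnected group $G$, is totally disconnected, and any closed totally disconnected subgroup of $\R^d$ is discrete (as its identity component, which would be an $\R$-subspace, vanishes). Lemma \ref{lem:1.6}(b) gives that $\tau(Z_G(S_\es))/\Lambda$ is compact, so $\tau(Z_G(S_\es))$ is a lattice of finite index over $\Lambda$, in particular torsion-free. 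Any compact subgroup of $Z_G(S_\es)$ therefore maps to a finite and hence trivial subgroup of this lattice and lies in $\ker \tau$; we set $Z_G(S_\es)_\cpt := \ker\tau$.

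The main obstacle is the rank computation: verifying that the image of $S_\es$ in $T$ is a \emph{full-rank} lattice requires carefully unwinding the decomposition \eqref{eq:1.45} of $S_\es$ and tracking how each summand contributes to $\mh A_\es$ through the construction in Proposition \ref{prop:2.3}.
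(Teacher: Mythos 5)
Your proof is correct and takes essentially the same route as the paper: both arguments rest on the facts that $Z_G (S_\es)$ stabilizes the apartment $\mh A_\es$ and acts on it by translations, and that properness of the action on $\mc B (\mc G,F)$ makes point stabilizers compact, so that the common pointwise stabilizer of $\mh A_\es$ is the unique maximal compact subgroup. The differences are only in the packaging: the paper cites the translation property (proof of Lemma \ref{lem:1.1} for $C$ and Tits for $G_{nr}\times G'_r$) and concludes with the Bruhat--Tits fixed point theorem, whereas you rederive the translation property from commutation with the translations coming from $S_\es$ (your flagged rank check does go through, and in fact only the spanning of the translation space, not a full-rank lattice, is needed) and then use that a vector space has no nontrivial compact subgroups, so the discreteness and closedness of $\tau (Z_G (S_\es))$ are superfluous.
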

\begin{proof}
The group $Z_G (S_\es)$ stabilizes the apartment $\mh A_\es$ of $\mc B (\mc G,F)$
and acts by translations on it (see the proof of Lemma \ref{lem:1.1} for $C$ and
\cite[\S 1.2]{Tit} for $G_{nr} \times G'_r$).
By the Bruhat--Tits fixed point theorem every compact subgroup of $Z_G (S_\es)$ fixes
a point of $\mh A_\es$. By Proposition \ref{prop:2.3} the action is proper, so every maximal
compact subgroup of $Z_G (S_\es)$ is the stabilizer of a point of $\mh A_\es$.
But the action is by translations, so every point of $\mh A_\es$ has the same isotropy
group. That is the unique maximal compact subgroup of $Z_G (S_\es)$.
\end{proof}

\subsection{The Iwasawa and Cartan decompositions} \
\label{subsec:dec}

The above actions on buildings can be used to construct maximal compact subgroups of 
pseudo-reductive $F$-groups. Recall that $K_{\mc C}$ is the unique maximal 
compact subgroup of $C$, and denote the unique maximal compact subgroup of $T'$ by $T'_\cpt$.

\begin{lem}\label{lem:1.3}
Let $x = ((x_i)_i,x_{\mc Z})$ be a vertex in the apartment $\mh A_\es$ of $\mc B (\mc G,F)$.
\enuma{
\item $G'_{(x_i)_i}$ is a maximal compact subgroup of $G'$ and $G'_{(x_i)_i} \rtimes K_{\mc C}$
is a maximal compact subgroup of $G' \rtimes C$.
\item $G_x$ is a maximal compact subgroup of $G$. It contains 
$(G'_{(x_i)_i} \rtimes K_{\mc C}) / \alpha (T'_\cpt)$ as an open subgroup of finite index.
}
\end{lem}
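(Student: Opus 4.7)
The plan is to apply the Bruhat--Tits fixed point theorem on the buildings from Proposition~\ref{prop:2.3} to detect compact subgroups as point stabilizers, and to transfer the analysis from $G' \rtimes C$ to $G$ via the quotient map $\pi : G' \rtimes C \to G$ of Lemma~\ref{lem:2.5}. The crucial geometric fact I would use throughout is that $K_{\mc C}$ acts trivially on each apartment $\mh A_{\mc T_i}$ of $\mc{BT}(\mc G_i, F_i)$: the image $\psi_{\mc C}(K_{\mc C})$ in $\mc T_i(F_i)/Z(\mc G_i)(F_i)$ lies in the maximal compact of this quotient torus (since quotient maps of tori send maximal compacts to maximal compacts), and the maximal compact of $\mc T_i(F_i)$ is the kernel of the translation action on $\mh A_{\mc T_i}$.

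For part (a), the maximality of $G'_{(x_i)_i} = \prod_i \mc G_i(F_i)_{x_i}$ in $G'$ is essentially Bruhat--Tits for simply connected semisimple groups: each $\mc G_i(F_i)_{x_i}$ is maximal compact in $\mc G_i(F_i)$ by \cite[Corollaire 3.3.3]{BrTi1} (after the reductions \eqref{eq:2.11} and \eqref{eq:2.34}), and the Bruhat--Tits fixed point theorem combined with the uniqueness of $x_i$ as fixed point of $\mc G_i(F_i)_{x_i}$ yields maximality in the product. For $G' \rtimes C$, I would lift $x$ to $\tilde x = ((x_i)_i, 0, x_{\mc Z}) \in \mc B(\mc G' \rtimes \mc C, F)$ via the splitting \eqref{eq:1.4} and compute the stabilizer componentwise: on $X_*(\mc S) \otimes_\Z \R$ the $C$-stabilizer of any point is $K_{\mc C}$ by Lemma~\ref{lem:1.1}; $K_{\mc C}$ fixes $(x_i)_i$ by the remark above; and the $G'$-stabilizer of $(x_i)_i$ is $G'_{(x_i)_i}$. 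This yields $(G' \rtimes C)_{\tilde x} = G'_{(x_i)_i} \rtimes K_{\mc C}$. Any compact $L$ containing this subgroup fixes some $y = ((y_i)_i, y_{\mc S})$ by Bruhat--Tits, which forces $(y_i)_i = (x_i)_i$ as above, and the same stabilizer computation applied to $((x_i)_i, y_{\mc S})$ then gives $L \subseteq G'_{(x_i)_i} \rtimes K_{\mc C}$.

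For part (b), since $\pi$ is equivariant for the projection $\mc B(\mc G' \rtimes \mc C, F) \to \mc B(\mc G, F)$ induced by $X_*(\mc S) \twoheadrightarrow X_*(\mc Z)$, we have $\pi(G'_{(x_i)_i} \rtimes K_{\mc C}) \subseteq G_x$. I would then compute $\alpha(\mc T'(F)) \cap (G'_{(x_i)_i} \rtimes K_{\mc C})$: an element $\alpha(t) = (t^{-1}, \phi_{\mc C}(t))$ lies in this intersection iff $t^{-1} \in G'_{(x_i)_i}$, which forces each $t_i \in \mc T_i(F_i)_\cpt$ (so $t \in T'_\cpt$) by the trivial-translation argument applied inside $\mc T_i(F_i)$; then $\phi_{\mc C}(t) \in K_{\mc C}$ is automatic by continuity and compactness of $T'_\cpt$. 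Hence $\pi(G'_{(x_i)_i} \rtimes K_{\mc C}) = (G'_{(x_i)_i} \rtimes K_{\mc C})/\alpha(T'_\cpt)$. Openness in $G$ follows from Proposition~\ref{prop:1.7}(a) together with openness of $G'_{(x_i)_i}$ in $G'$ (by \eqref{eq:2.60}) and of $K_{\mc C}$ in $C$ (\cite[Theorem 1.4.3]{Loi}); since $G_x$ is compact by Proposition~\ref{prop:2.3}(a), finite index follows automatically. For the maximality of $G_x$: any compact $L \supseteq G_x$ fixes some $y = ((y_i)_i, y_{\mc Z})$ by Bruhat--Tits; the image of $G'_{(x_i)_i}$ inside $G_x \subseteq L$ acts on each $\mc{BT}(\mc G_j, F_j)$ through $\mc G_j(F_j)_{x_j}$ and so forces $y_i = x_i$; since $G$ acts on $X_*(\mc Z) \otimes_\Z \R$ by translations, the stabilizer of $((x_i)_i, y_{\mc Z})$ equals the stabilizer of $x$, giving $L \subseteq G_x$.

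The main hurdle is the preliminary geometric claim about the $K_{\mc C}$-action on each $\mc{BT}(\mc G_i, F_i)$; it underpins both the stabilizer computation in (a) and the kernel computation in (b). Once this is in place, the rest is largely formal, combining the Bruhat--Tits fixed point theorem with routine facts about open and compact subgroups of locally compact groups and the open map $\pi$ from Lemma~\ref{lem:2.5}.
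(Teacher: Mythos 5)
Your proposal is correct and takes essentially the same route as the paper: identify $G'_{(x_i)_i}\rtimes K_{\mc C}$ and $G_x$ as point stabilizers for the building actions of Proposition \ref{prop:2.3}, use that $K_{\mc C}$ fixes $\prod_i \mh A_{\mc T_i}$ pointwise (compact subgroups act trivially under a translation action through a torus quotient), invoke the Bruhat--Tits fixed point theorem for maximality, transfer to $G$ via the open map of Proposition \ref{prop:1.7}/Lemma \ref{lem:2.5}, and compute $\alpha (T') \cap (G'_{(x_i)_i}\rtimes K_{\mc C}) = \alpha (T'_\cpt)$ exactly as the paper does. The only cosmetic difference is that for maximality in $G'\rtimes C$ the paper argues group-theoretically via the uniqueness of the maximal compact subgroup of $C$, whereas you redo the stabilizer computation geometrically; both are valid.
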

\begin{proof}
(a) As a consequence of the Bruhat--Tits fixed point theorem \cite[Corollaire 3.3.2]{BrTi1}, 
the group $G'_{(x_i)_i} = \prod_i \mc G_i (F_i)_{x_i}$ is a maximal compact subgroup of 
$G' = \prod_i \mc G_i (F_i)$. 
The action of $C$ on $\prod_i \mh A_{\mc T_i}$ factors via $\mc T' (F) / Z(\mc G' (F))$,
so it acts by translations and every compact subgroup of $\mc C$ acts trivially.
In particular the maximal compact subgroup $K_{\mc C}$ of $C$ fixes $\prod_i \mh A_{\mc T_i}$
pointwise. Hence $K_{\mc C}$ normalizes $G'_{(x_i)_i}$, and $G'_{(x_i)_i} \rtimes K_{\mc C}$ 
is a compact open subgroup of $G' \rtimes C$.

Since the image of $G'_{(x_i)_i} \rtimes K_{\mc C}$ in $C \cong G' \rtimes C / G'$  is the unique
maximal compact subgroup of $C$ and since $G'_{(x_i)_i}$ is maximal compact in $G'$,
$G'_{(x_i)_i} \rtimes K_{\mc C}$ is maximal compact in $G' \rtimes C$.\\
(b) $G_{x}$ is maximal compact in $G$ for the same reason as $K'$.
By construction the image of $K' \rtimes K_{\mc C}$ in $G$ is contained in $G_x$. By Proposition
\ref{prop:1.7} it is open and of finite index in $G_x$. By Lemma \ref{lem:2.5}.a this image is 
isomorphic to $(K' \rtimes K_{\mc C} ) / (\alpha (T') \cap K' \rtimes K_{\mc C})$. Since $K'$ is 
associated to a point in the apartment of $\mc B (\mc G',F)$ stabilized by $T'$, 
$\alpha (T') \cap (K' \rtimes K_{\mc C})$ is the maximal compact subgroup 
$\alpha (T')_\cpt = \alpha (T'_\cpt)$ of $\alpha (T')$. 
\end{proof}

The Iwasawa decomposition and the Cartan decomposition are two of the most useful
results abouts the structure of reductive groups over local fields. 
We thank Gopal Prasad for explaining to us how the next result can be proven using the
universal smooth $F$-tame central extension of $\mc G$ from \cite{CP}. Nevertheless, with
an eye on later results we find it useful to provide a different argument.

Let $x'$ be a special vertex of the apartment $\mh A_\es$ and let $K = G_{x'}$ be its
stabilizer in $G$. Let $K'$ be the stabilizer in $G' = G_{nr} \times G'_r$ of the image of $x'$ in 
$\mc B (\mc G' \rtimes \mc C,F)$. The maximal compact subgroups $K \subset G$ and $K' \subset G'$ 
contain representatives for all elements of $W(\mc G,\mc S_\es)$, because $x'$ and its image 
in $\mc B (\mc G' \rtimes \mc C,F)$ are special vertices \cite[Proposition 4.4.6]{BrTi1}.

\begin{thm}\label{thm:2.4}
The group $G$ admits an Iwasawa decomposition. More precisely, for every pseudo-parabolic
$F$-subgroup $\mc P$ of $\mc G$ we have
\[
\mc G (F) = \mc P (F) K = K \mc P (F).
\]
\end{thm}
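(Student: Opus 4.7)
The strategy is to reduce the Iwasawa decomposition for pseudo-reductive $\mc G$ to the classical Iwasawa for connected reductive groups over non-archimedean local fields, via the structure theorem \eqref{eq:2.6} expressing $\mc G$ as a quotient of $\mc G_{nr} \times \mc G'_r \rtimes \mc C$. Since $\mc P_0 \subset \mc P$ implies $P_0 K \subset P K$, it suffices to prove $G = P_\emptyset K$ for a minimal pseudo-parabolic $\mc P_\emptyset$. By $G$-conjugacy of maximal $F$-split tori, every such $\mc P_\emptyset$ is $G$-conjugate to one of the form $\mc P_{\mc G}(\lambda)$ with $\lambda : GL_1 \to \mc S_\es$, so I may further assume $\mc P_\emptyset \supset Z_{\mc G}(\mc S_\es)$; the transfer back to a general $\mc P$ exploits that, since $x'$ is a special vertex, $K$ contains representatives of the full spherical Weyl group $W(\mc G, \mc S_\es)$ (as noted just before the theorem).

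For such $\mc P_\emptyset$, I would lift to the pre-image $\widetilde{\mc P}_\emptyset$ in $\mc G_{nr} \times \mc G'_r \rtimes \mc C$. By \cite[Proposition 2.2.12]{CGP} and the product structure,
\[
\widetilde{\mc P}_\emptyset = \prod\nolimits_{i \in I_{nr}} \mc P_{nr,i} \times \Big( \prod\nolimits_{i \in I_s \cup I_e} \mc P_i \Big) \rtimes \mc C ,
\]
with each $\mc P_i$ a minimal pseudo-parabolic of the corresponding factor and $\mc C$ appearing in full because the only pseudo-parabolic $F$-subgroup of a commutative pseudo-reductive group is the group itself. After the simplifications \eqref{eq:2.11} and \eqref{eq:2.34}, each factor $\mc G_i(F_i)$ is a connected reductive group over a local field, so the classical Bruhat--Tits Iwasawa gives $\mc G_i(F_i) = \mc P_i(F_i) \cdot (\mc G_i(F_i))_{x_i}$ for the special vertex $x_i$ of $\mc{BT}(\mc G_i, F_i)$ corresponding to $x'$. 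Combining factor-wise yields
\[
\mc G_{nr}(F) \times \mc G'_r(F) \rtimes \mc C(F) \;=\; \widetilde{\mc P}_\emptyset(F) \cdot (K' \rtimes K_{\mc C}),
\]
with $K' = \prod_i (\mc G_i(F_i))_{x_i}$ and $K_{\mc C}$ the maximal compact subgroup of $C$.

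Finally I would descend to $G$. The image of the right-hand side above in $G$ equals $P_\emptyset \cdot K_0$, where $K_0 = (K' \rtimes K_{\mc C})/\alpha(T'_{\cpt}) \subset K$ is an open subgroup of finite index by Lemma \ref{lem:1.3}(b); in particular $\tilde{\mc G}(F) \subset P_\emptyset K_0$. Since $\mc P_\emptyset \supset Z_{\mc G}(\mc S_\es) \supset Z_{\mc G_r}(\mc S)$, we have $Z_{\mc G_r}(\mc S)(F) \subset P_\emptyset$. For any $g \in G$, Lemma \ref{lem:2.5}(b) provides a decomposition $g = z \tilde g$ with $z \in Z_{\mc G_r}(\mc S)(F)$ and $\tilde g \in \tilde{\mc G}(F)$; writing $\tilde g = p k$ with $p \in P_\emptyset$ and $k \in K_0 \subset K$, we obtain $g = (zp)k \in P_\emptyset K$, whence $G = P_\emptyset K$ and hence $G = P K$ for every $\mc P \supset \mc P_\emptyset$. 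The equality $G = K P$ then follows by taking inverses. The principal technical obstacle is the descent through the non-surjective quotient $(\mc G' \rtimes \mc C)(F) \to G$, which is mitigated by combining the finite-index statement of Lemma \ref{lem:2.5}(a), the index control from Lemma \ref{lem:1.3}(b), and the covering statement of Lemma \ref{lem:2.5}(b); a secondary subtlety lies in the conjugation reduction of the first step, which relies on the richness of $K$ as carrier of the spherical Weyl group.
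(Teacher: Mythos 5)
Your proposal is correct and is essentially the paper's own argument: both rest on the presentation \eqref{eq:2.6}, the classical Iwasawa decomposition for the reductive factors $\mc G_i (F_i)$ reached via \eqref{eq:2.11} and \eqref{eq:2.34}, and the descent to $G$ through Lemma \ref{lem:1.3} and Lemma \ref{lem:2.5}.b (the paper merely conjugates the preimage of an arbitrary $\mc P$ inside $\mc G_{nr} \times \mc G'_r \rtimes \mc C$ instead of first reducing to the standard minimal pseudo-parabolic). The one imprecision is your appeal to Weyl-group representatives in $K$ for the passage back to a general $\mc P$: what is actually needed there is the routine bootstrap in which the conjugating element $g$ is itself decomposed as $g^{-1} = p k$ using the case already proved, giving $K\, g \mc P_{\emptyset}(F) g^{-1} = K \mc P_{\emptyset}(F) k = G$ and hence $g \mc P_{\emptyset}(F) g^{-1} K = G$ -- a one-line fix.
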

\begin{proof}
First we prove the theorem for $(\mc G_{nr} \times \mc G'_r)(F)$ with the compact
subgroup $K'$. By construction
\begin{align*}
& \mc B (\mc G_{nr} \times \mc G'_r,F) = \mc B (\mc G_{nr},F) \times \mc B (\mc G'_r,F) ,\\
& K' = (K' \cap \mc G_{nr}(F)) \times (K' \cap \mc G'_r (F)) .
\end{align*}
Moreover both factors of $K'$ are associated to special vertices of the appropriate affine
buildings. Thus we may treat the two groups $\mc G_{nr}$ and $\mc G'_r$ separately.

We note that $\mc G_{nr}(F) = \prod_{i \in I_{nr}} Sp_{2n_i}(F''_i)$ and that each 
$Sp_{2n_i}(F''_i)$ satisfies an Iwasawa decomposition. As compact subgroup of 
$Sp_{2n_i}(F''_i)$ one can use the isotropy group of a special vertex in the Bruhat--Tits 
building $\mc B (Sp_{2n_i},F''_i)$. Now $K' \cap \mc G_{nr}(F)$ is a direct product of such 
isotropy groups, so $(\mc G_{nr}(F), K' \cap \mc G_{nr}(F))$ satisfies the Iwasawa 
decomposition, for every pseudo-parabolic $F$-subgroup.

Similarly, for $\mc G'_r (F)$ the problem reduces to the direct factors $\mc G_i (F_i)$,
which by \eqref{eq:2.11} are isomorphic to reductive groups over local fields. The
same argument as for $\mc G_{nr}(F)$ applies here. This concludes the case
$(\mc G_{nr} \times \mc G'_r)(F)$, now we consider $\mc G (F)$.

Every pseudo-parabolic $F$-subgroup arises from a $F$-rational
cocharacter $\lambda : GL_1 \to \mc G$. Since all maximal $F$-split tori in $\mc G (F)$
are conjugate \cite[Theorem C.2.3]{CGP}, $\mc P$ is conjugate to $\mc P_{\mc G}(\lambda)$
for some $\lambda : GL_1 \to \mc T$.

In particular $\mc P$ is $\mc G (F)$-conjugate to a pseudo-parabolic $F$-subgroup of
$\mc G$ containing $\mc C$. By \cite[Proposition 2.2.12]{CGP} the pre-image $\mc P_1$
of $\mc P$ in $\mc G_{nr} \times \mc G'_r \rtimes \mc C$ is again pseudo-parabolic.
It follows that there exists a pseudo-parabolic $F$-subgroup $\mc P_2$ of $\mc G_{nr} \times
\mc G'$ such that $\mc P_1$ is $(\mc G_{nr} \times \mc G'_r \rtimes \mc C)(F)$-conjugate to
$\mc P_2 \rtimes \mc C$, say by an element $g_2$. Then $\mc P_1$ contains $g_2 \mc P_2
g_2^{-1}$, which is a pseudo-parabolic $F$-subgroup of $\mc G_{nr} \times \mc G'_r$.

From the case $(\mc G_{nr} \rtimes \mc G'_r)(F)$ we know that $K' \mc P_1 (F)$ contains
\[
K' g_2 \mc P_2 (F) g_2^{-1} = (\mc G_{nr} \times \mc G'_r)(F).
\]
The image of $K' \mc P_1 (F)$ in
\[
\mc C (F) = (\mc G_{nr} \times \mc G'_r \rtimes \mc C)(F) \big/ (\mc G_{nr} \times \mc G'_r)(F)
\]
contains
\[
(\mc G_{nr} \times \mc G'_r)(F) g_2 (\mc P_2 \times \mc C)(F) g_2^{-1} \big/
(\mc G_{nr} \times \mc G'_r)(F) = \bar{g_2} \mc C (F) \bar{g_2}^{-1} = \mc C (F) ,
\]
where $\bar{g_2}$ the image of $g_2$ in the quotient. Hence
\[
(K' \rtimes K_{\mc C}) \mc P_1 (F) = (\mc G_{nr} \times \mc G'_r \rtimes \mc C)(F) .
\]
Since $\mc P_1$ contains the central subgroup $\alpha (\mc T')$ we conclude with Lemma
\ref{lem:1.3} that
\begin{multline}
K \mc P (F) \supset \big( (K' \rtimes K_{\mc C}) \mc P_1 (F) / \alpha (T') \big) \mc P (F)
= \big( (\mc G_{nr} \times \mc G'_r \rtimes \mc C)(F) / \alpha (T') \big) \mc P (F) .
\end{multline}
Since $Z_{\mc G_r}(\mc S) \subset Z_{\mc G}(\mc S_\es) \subset \mc P$, we can use 
Lemma \ref{lem:2.5}.b to conclude that the right hand side is none other than $\mc G (F)$.
\end{proof}

Related to the Iwasawa decomposition is the question whether $(\mc G / \mc P) (F)$ is compact,
for any pseudo-parabolic $F$-subgroup $\mc P$. We note that by \cite[Proposition 3.5.7]{CGP},
$\mc G (F) / \mc P (F)$ can be identified with the set of pseudo-parabolic $F$-subgroups of 
$\mc G$ that are $\mc G (F)$-conjugate to $\mc P$. It is known from \cite[Proposition C.1.6]{CGP} 
that the variety $\mc G / \mc P$ is pseudo-complete, but it is unclear (to the author) 
whether or not this  implies compactness of $(\mc G / \mc P) (F)$. 
Fortunately Theorem \ref{thm:2.4} solves this.

\begin{cor}\label{cor:2.8}
Let $\mc P$ be any pseudo-parabolic $F$-subgroup of the pseudo-reductive group $\mc G$.
The natural map $\mc G (F) / \mc P (F) \to (\mc G / \mc P)(F)$ is a homeomorphism and
this space is compact.
\end{cor}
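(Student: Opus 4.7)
The plan is to combine the Iwasawa decomposition (Theorem~\ref{thm:2.4}) with smoothness of the quotient morphism $\mc G \to \mc G/\mc P$. First, from $\mc G(F) = K\mc P(F)$ with $K = G_{x'}$ compact, the continuous surjection $K \to \mc G(F)/\mc P(F)$ shows that the source is compact; once the natural map is known to be a bijective homeomorphism, compactness of the target $(\mc G/\mc P)(F)$ follows automatically.

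Next I would check that the natural map $\mc G(F)/\mc P(F) \to (\mc G/\mc P)(F)$ is continuous, injective, and open. Continuity is immediate from functoriality, and injectivity is exactly the identification of $\mc G(F)/\mc P(F)$ with the $\mc G(F)$-orbit of $\mc P$ inside $(\mc G/\mc P)(F)$ via \cite[Proposition 3.5.7]{CGP}, as recorded in the paragraph preceding the statement. For openness I would replay the argument of Proposition~\ref{prop:1.7}(a): since $\mc P$ is smooth, $\mc G \to \mc G/\mc P$ is a smooth surjection of smooth $F$-varieties, hence induces an $F$-analytic submersion on $F$-points, in particular an open map; this openness descends to the quotient by the closed subgroup $\mc P(F)$. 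A continuous open injection from a compact space into a Hausdorff space is a homeomorphism onto its image, which is automatically clopen.

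The step I expect to be the real obstacle is surjectivity of this map, equivalently the statement that every pseudo-parabolic $F$-subgroup in the geometric conjugacy class of $\mc P$ is $\mc G(F)$-conjugate to $\mc P$. In the reductive setting this is the familiar vanishing of $H^1(F,\mc P)$. For pseudo-reductive $\mc G$ I would reduce along the standard presentation \eqref{eq:2.6} together with Lemma~\ref{lem:2.5}: the commutative Cartan factor $\mc C$ carries no nontrivial pseudo-parabolic subgroups, each factor $R_{F_i/F}(\mc G_i)$ is controlled by the reductive (resp.\ simply-connected) case after the identifications \eqref{eq:2.11} and \eqref{eq:2.34}, and one then has to check that the finite-index defect in Lemma~\ref{lem:2.5}(a) does not introduce extra $\mc G(F)$-orbits on $(\mc G/\mc P)(F)$. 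Since the image is already known to be a clopen $\mc G(F)$-stable subset, an equivalent and perhaps cleaner route would be to argue transitivity of $\mc G(F)$ on $(\mc G/\mc P)(F)$ directly via a Bruhat-type decomposition.
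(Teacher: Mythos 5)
Your outline matches the paper on two of the three ingredients: compactness of $\mc G (F) / \mc P (F)$ via the Iwasawa decomposition (Theorem~\ref{thm:2.4}), and the topological identification via smoothness/submersivity of $\mc G \to \mc G / \mc P$ as in Proposition~\ref{prop:1.7}(a) (the paper phrases this as ``bijective submersion of analytic $F$-varieties, hence a homeomorphism''). The genuine gap is exactly the step you flag and then leave unexecuted: surjectivity of $\mc G (F) \to (\mc G / \mc P)(F)$. Without it you only obtain a homeomorphism onto a clopen subset, and compactness of $(\mc G / \mc P)(F)$ does not follow. In the paper this step is not an obstacle at all: it is a known general fact for pseudo-parabolic subgroups over an arbitrary field, quoted as \cite[Corollary 15.1.4]{Spr} (the same point is handled for quasi-reductive groups later via \cite[Lemma C.2.1]{CGP}), so the whole corollary is a three-line argument.

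Neither of your two sketched routes, as stated, closes this gap. The finite-index statement of Lemma~\ref{lem:2.5}(a) controls the image of $(\mc G' \rtimes \mc C)(F)$ inside $\mc G (F)$; it says nothing direct about $\mc G (F)$-orbits on $(\mc G / \mc P)(F)$, and ``finite-index defect'' is not the mechanism by which extra orbits could appear. Likewise, a Bruhat-type decomposition is a statement about $\mc G (F)$ itself (equivalently about $\mc P (F)$-orbits on $\mc G (F) / \mc P (F)$), so it cannot by itself detect $F$-points of the quotient variety that fail to lift to $\mc G (F)$ --- which is precisely the issue. If you wanted to avoid the citation, the workable route through your reformulation ``every pseudo-parabolic $F$-subgroup that is conjugate to $\mc P$ over a separable closure is $\mc G (F)$-conjugate to it'' would use: self-normalization of pseudo-parabolics plus Galois descent to identify $(\mc G / \mc P)(F)$ with such subgroups (Theorem~\ref{thm:1.30}(b)), conjugacy of minimal pseudo-parabolic $F$-subgroups under $\mc G (F)$ (Theorem~\ref{thm:1.30}(a)), and the classification by type of the pseudo-parabolic $F$-subgroups containing a fixed minimal one \cite[\S 3.5, \S C.2]{CGP}; none of these appear in your sketch. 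So: correct skeleton, but the decisive step is missing, and the proposed substitutes would not deliver it.
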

\begin{proof}
By \cite[Corollary 15.1.4]{Spr} the map $\mc G (F) \to (\mc G / \mc P) (F)$ is surjective. The
induced map $\mc G (F) / \mc P (F) \to (\mc G / \mc P)(F)$ is a bijective submersion of analytic
$F$-varieties, in particular a homeomorphism. By Theorem \ref{thm:2.4} already $K \to \mc G (F) /
\mc P (F)$ is surjective. Since $K$ is compact, so is its image $\mc G (F) / \mc P (F)$.
\end{proof}


Recall that $\mc S_\es$ is a maximal $F$-split torus in $\mc G$ and that $\mc P_\es$ is a
minimal pseudo-parabolic $F$-subgroup of $\mc G$ containing $\mc C$ and $\mc S_\es$. We
also recall $Z_G (S_\es)_\cpt$ from Lemma \ref{lem:2.7}.

\begin{thm}\label{thm:2.6}
The group $G$ admits a Cartan decomposition. More explicitly, there exists a finitely
generated semigroup $A \subset Z_G (S_\es)$ such that
\begin{itemize}
\item $G = K A K$ and the natural map $A \to K \backslash G / K$ is bijective;
\item $A$ represents the orbits of the Weyl group $W(\mc G,\mc S_\es)$ on
$Z_G (S_{\es}) / Z_G (S_{\es})_\cpt$.
\end{itemize}
\end{thm}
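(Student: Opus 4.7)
The plan is to use the $\mc G(F)$-action on the affine building $\mc B(\mc G,F)$ from Proposition \ref{prop:2.3} to identify $K\backslash G/K$ with a quotient of a concrete lattice inside the standard apartment $\mh A_\es$. Since the action of $\mc G(F)$ on $\mc B(\mc G,F)$ is proper and cocompact and $Z_G(S_\es)$ acts on $\mh A_\es$ purely by translations (as in the proof of Lemma \ref{lem:2.7}), the quotient $Z_G(S_\es)/Z_G(S_\es)_\cpt$ is a finitely generated free abelian group that sits cocompactly in $\mh A_\es$. The finite Weyl group $W(\mc G,\mc S_\es)$ acts on this lattice through its action on $\mh A_\es$ fixing $x'$, and I would take $A_0$ to be the set of lattice points in a closed Weyl chamber with apex at $x'$; standard root-system theory shows $A_0$ is finitely generated as a semigroup. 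Lifting $A_0$ to a semigroup $A\subset Z_G(S_\es)$ (via a suitable choice of coset representatives of $Z_G(S_\es)_\cpt$) furnishes the desired $A$.

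To prove $G=KAK$, I would exploit the bijection $K\backslash G/K\leftrightarrow K\backslash(G\cdot x')$ coming from $K=G_{x'}$. Given $g\in G$, the building axioms provide an apartment containing both $x'$ and $g\cdot x'$; by strong transitivity of the $\mc G(F)$-action on each factor of $\mc B(\mc G,F)$ (classical Bruhat--Tits theory in each $\mc B(\mc G_i,F_i)$, trivially in the Euclidean factor $X_*(\mc Z)\otimes_\Z\R$), the subgroup $K$ acts transitively on apartments through $x'$, so some $k\in K$ satisfies $kg\cdot x'\in\mh A_\es$. This reduces the question to understanding $\mh A_\es\cap G\cdot x'$. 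If $g\cdot x'\in\mh A_\es$, a second apartment argument (transport $g^{-1}\mh A_\es$ back to $\mh A_\es$ by an element of $K$) produces $n\in N_G(\mc S_\es)$, the stabilizer of $\mh A_\es$, with $n\cdot x'=g\cdot x'$. Writing $n=z\,n_w$ with $z\in Z_G(S_\es)$ and $n_w\in K$ a representative of some $w\in W(\mc G,\mc S_\es)$, which exists because $x'$ is special by \cite[Proposition 4.4.6]{BrTi1}, I obtain $g\cdot x'=z\cdot x'$. Hence $\mh A_\es\cap G\cdot x'=Z_G(S_\es)\cdot x'$, and on this set the residual $K$-action is exactly the $W(\mc G,\mc S_\es)$-action, whose orbits are represented by $A$.

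For injectivity of $A\to K\backslash G/K$, if $Ka_1K=Ka_2K$ with $a_1,a_2\in A$, the previous step shows that $a_1\cdot x'$ and $a_2\cdot x'$ lie in the same $W(\mc G,\mc S_\es)$-orbit on $Z_G(S_\es)\cdot x'$, and the fundamental-domain choice of $A_0$ forces $a_1=a_2$. The main obstacle I anticipate is the apartment-transitivity step: proving that $K$ acts transitively on apartments through $x'$ and that the stabilizer of $\mh A_\es$ in $\mc G(F)$ is $N_G(\mc S_\es)$. For the pseudo-simple factors these are classical statements applied to each $\mc B(\mc G_i,F_i)$, but the reduction from $\mc G(F)$ to those factors uses crucially Proposition \ref{prop:2.3}.c and Lemma \ref{lem:2.5}.b --- that $\mc C(F)$ stabilizes $\mh A_\es$ and acts there by translations, so it contributes to the $Z_G(S_\es)$ part rather than to the Weyl part of the decomposition.
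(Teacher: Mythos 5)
Your route is genuinely different from the paper's proof of Theorem \ref{thm:2.6}: you run the classical building-theoretic Cartan argument directly on $\mc B (\mc G,F)$, which is how the paper later handles quasi-reductive groups (Theorems \ref{thm:3.5} and \ref{thm:3.7}), but there those geometric facts are extracted from a valuated root datum, machinery Section \ref{sec:pseudo} deliberately avoids. The paper instead proves surjectivity by a coset computation: it combines the Cartan decomposition of each factor $\mc G_i (F_i)$, in the strong form $K' Z_{G'}(S_\es)(a K' a^{-1}) = G'$ of \cite[Proposition 7.4.15]{BrTi1}, with Lemma \ref{lem:2.5}.b to compensate for the non-surjectivity of $(\mc G' \rtimes \mc C)(F) \to \mc G (F)$. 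Your geometric surjectivity argument can be made to work, but the two statements you flag yourself (transitivity of $K$ on apartments through $x'$, and that the stabilizer of $\mh A_\es$ in $G$ is $N_G (S_\es)$, i.e.\ the $G$-equivariance of the apartment--torus bijection of Proposition \ref{prop:2.3}.b) must be verified for the pieced-together $G$-action on the product building; they do reduce to classical Bruhat--Tits theory because the needed group elements can be taken in the image of $\prod_i \mc G_i (F_i)_{x_i}$, which fixes the Euclidean factor and lies in $K$, so this part of your plan is sound though it carries real content.

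The genuine gap is in the injectivity. You assert that on $Z_G (S_\es)\cdot x' = \mh A_\es \cap G \cdot x'$ ``the residual $K$-action is exactly the $W(\mc G,\mc S_\es)$-action''. One inclusion is clear (Weyl representatives lie in $K$ since $x'$ is special), but the converse --- that a single $K$-orbit meets $Z_G (S_\es)\cdot x'$ in only one $W(\mc G,\mc S_\es)$-orbit --- is precisely the hard half of the bijectivity of $A \to K \backslash G / K$, and it does not follow from transitivity of $K$ on apartments through the single point $x'$: from $a_1 x' = k a_2 x'$ that transitivity only yields $k' \in K$ with $n := k' k \in N_G (S_\es)$, and then $n a_2 x' = k' a_1 x'$, where $k'$ need not fix $a_1 x'$, so you cannot conclude that $a_1 x'$ and $a_2 x'$ are $W$-conjugate. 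To close this you need either the stronger transitivity statement that an apartment containing both $x'$ and $a_1 x'$ can be carried to $\mh A_\es$ by an element fixing both points (the factor-wise Bruhat--Tits fact that two apartments are conjugate by an element fixing their intersection pointwise), or you should argue as the paper does: project $G \cdot x'$ onto $X_*(\mc Z) \otimes_\Z \R$ (where $K$ and $W$ act trivially) and onto $\prod_i \mc{BT}(\mc G_i,F_i)$, and reduce the fundamental-domain claim to the bijectivity of the Cartan decomposition in each $\mc G_i (F_i)$. As written, this step is asserted rather than proved, and it is exactly the point on which the paper spends the final part of its argument.
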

\begin{proof}
First we consider $G_{nr} \times G'_r \rtimes C$, with the maximal compact
subgroup $K' \rtimes K_{\mc C}$. 

By \eqref{eq:1.37} and \eqref{eq:2.11} $G_{nr} \times G'_r$ is a direct product of semisimple
groups (we call them just $\mc G_i (F_i)$ for convenience) over non-archimedean local fields.
By \cite[Proposition 4.4.3]{BrTi1} each of these $\mc G_i (F_i)$ satisfies a Cartan
decomposition, with respect to the good compact subgroup $\mc G_i (F_i) \cap K'$. It also says
that for $A \cap \mc G_i (F_i)$ we should take a set of representatives for the orbits of
the Weyl group $W(\mc G_i,\mc S_i)$ on
\[
Z_{\mc G_i}(\mc S_i)(F_i) / Z_{\mc G_i}(\mc S_i) (F_i)_\cpt,
\]
namely a cone (essentially the intersection with a positive Weyl chamber) in this lattice.
For the moment, it is more to convenient to let $A_i$ be a group in $Z_{\mc G_i}(\mc S_i)(F_i)$
representating the cosets with respect to $Z_{\mc G_i}(\mc S_i)(F_i)_\cpt$. 

We put $A' = \prod_i A_i$, a finitely generated abelian subgroup of $Z_{G'}(S_\es)$ which represents
$Z_{G'}(S_\es) / Z_{G'}(S_\es)_\cpt$. It intersects $\phi_{\mc C}^{-1}(Z_G (S_\es)_\cpt) \subset 
Z_{G'}(S_\es)_\cpt$ trivially, so dividing out $\alpha (T')$ and then $Z_G (S_\es)_\cpt$ 
sends $A'$ injectively to $Z_G (S_\es) / Z_G (S_\es)_\cpt$. We extend the image of $A'$ in $Z_G (S_\es)$ 
to a set of representatives $A_C$ for $Z_G (S_\es) / Z_G (S_\es)_\cpt$. Since $K'$ contains 
$Z_{G'}(S_\es )_\cpt$, we obtain and $K' A' = K' Z_{G'}(S_\es)$.  We calculate inside $G$, 
temporarily identifying subsets of $G'$ with their image in $G$:
\begin{equation}\label{eq:1.39}
K A_C K = K A' A_C K' K = \bigcup\nolimits_{a \in A_C} K K' Z_{G'}(S_\es) (a K' a^{-1}) a K .
\end{equation}
Here $a K' a^{-1}$ is another good maximal compact subgroup of $G'$, namely the stabilizer of the
special vertex $a x' \in \mh A_\es$. By a more general version of the Cartan decomposition for the
$\mc G_i (F_i)$\cite[Proposition 7.4.15]{BrTi1}, $K' Z_{G'}(S_\es) (a K' a^{-1}) = G'$.
As $K$ contains $Z_{G}(S_\es)_\cpt$, we have $A_C K = Z_{G}(S_\es) K$.
Hence the right hand side of \eqref{eq:1.39} equals
\[
\bigcup\nolimits_{a \in A_C} K G' a K = K G' Z_G (S_\es) K .
\]
By Lemma \ref{lem:2.5}.b the last expression equals $G$, which together with the above implies
\begin{equation}\label{eq:1.40}
K A_C K = G .
\end{equation} 
Now that we know this, we may replace $A_C \subset Z_G (S_\es)$ by any set of representatives for 
$Z_G (S_\es) / Z_G (S_\es)_\cpt$, and then \eqref{eq:1.40} remains true. In the proof of Lemma 
\ref{lem:2.7} we already observed that $Z_G (S_\es) / Z_G (S_\es)_\cpt$ is a lattice in 
$\mh A_\es$. Hence we can it represent by finitely generated subgroup of $Z_G (S_\es)$. 

Of course, some redundancy remains in \eqref{eq:1.40}. The Weyl group $W(\mc G,\mc S_\es)$ can be 
identified with $\prod_i W(\mc G_i,\mc S_i)$, and since $x'$ is special we can represent 
$W(\mc G,\mc S_\es)$ by elements of $K$. For such a $w \in W(\mc G,\mc S_\es)$ represented in $K$,
\[
K a K = K w a w^{-1} K. 
\]
Therefore we may replace $A_C$ by a set $A$ of representatives for the orbits
of $W(\mc G,\mc S_\es)$ on $Z_G (S_{\es}) / Z_G (S_{\es})_\cpt$, and then \eqref{eq:1.40}
implies $G = K A K$. We take $A$ so that $A x'$ is the intersection of $Z_G (S_{\es})  x' = A_C x'$ 
with a positive Weyl chamber for $W(\mc G,\mc S_\es)$ in $\mh A_\es$ (with $x'$ as origin). Since 
$W(\mc G,\mc S_\es)$ is a subgroup of the (extended) affine Weyl group 
\[
N_G (S_\es) / Z_G (S_\es)_\cpt \cong W(\mc G,\mc S_\es) \rtimes Z_G (S_{\es}) / Z_G (S_{\es})_\cpt,
\] 
$A$ is still finitely generated (but now as semigroup).

We claim that this positive Weyl chamber in $Z_G (S_{\es}) x'$ is a fundamental domain for the action 
of $K$ on $G x' \subset \mc B (\mc G,F)$. 
Notice that $W(\mc G,\mc S_\es)$ and $K$ fix the factor $X_* (\mc Z) \otimes_\Z \R$ of $\mc B (\mc G,F)$ 
pointwise. Hence the projection of $G x'$ on $X_* (\mc Z) \otimes_\Z \R$ equals the projection of 
$Z_G (S_{\es}) x'$, which is also the same as the projections of $A_C x'$ and of $A x'$
on $X_* (\mc Z) \otimes_\Z \R$.

Therefore it suffices to check our claim for the other factor $\prod_i \mc BT (\mc G_i,F_i)$ of 
$\mc B (\mc G,F)$. There it reduces to a claim about the action of $G$ on $\mc BT (G_i,F_i)$. There it 
follows from (in fact is equivalent to) the Cartan decomposition with respect to the stabilizer of the
special vertex $x_i \in A_{\mc T_i} \subset \mc BT (\mc G_i,F_i)$.

Finally, suppose that $a,a' \in A$ and $K a K = K a' K$. Then 
\[
K a x' = K a K x' = K a' K x' = K a' x',
\]
where both $a x'$ and $a x$ lie in the positive Weyl chamber in $A_\es$. 
Our claim entails that $a = a'$, which establishes the bijectivity of $A \to K \backslash G / K$.
\end{proof}

\newpage

\section{Quasi-reductive groups}
\label{sec:quasi}

Throughout this section we suppose that $F$ is a non-archimedean local field and that 
$\mc Q$ is a quasi-reductive $F$-group, as defined in \cite[C.2.11]{CGP}. As an $F$-group, 
$\mc Q (F)$ is certainly locally compact and totally disconnected.
By Lemma \ref{lem:1.4} and \eqref{eq:1.35}, it is unimodular.

Let us recall some important results about the $F$-unipotent radical of $\mc Q$, and prove two new ones.

\begin{thm}\label{thm:3.8}
\enuma{
\item $\mc R_{u,F}(\mc Q)$ is a $F$-wound unipotent group \cite[Corollary B.3.5]{CGP}.
\item $\mc R_{u,F}(\mc Q)(F)$ is compact \cite[Th\'eor\`eme VI.1]{Oes}. 
\item Every $F$-torus in $\mc Q$ centralizes $\mc R_{u,F}(\mc Q)$ \cite[Proposition B.4.4]{CGP}.
\item Let $\mc U \subset \mc Q$ be a root subgroup with respect to a nontrivial root of a 
$F$-torus in $\mc Q$. Then $\mc U$ commutes with $\mc R_{u,F}(\mc Q)$.
\item $\mc R_{u,F}(\mc Q)(F)$ has arbitrarily small compact open subgroups which are normalized
by $\mc Q (F)$.
}
\end{thm}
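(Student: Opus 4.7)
The plan is to show that the $\mc Q(F)$-conjugation action on $V := \mc R_{u,F}(\mc Q)(F)$ factors through a compact quotient, after which the result follows by a short intersection argument.

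First, by part (b) the group $V$ is compact, and it is totally disconnected as the $F$-points of a unipotent $F$-group, hence profinite; in particular $V$ admits a neighborhood basis of $1$ consisting of compact open subgroups. Next I would invoke the Bruhat decomposition for $\mc Q(F)$ (which holds for quasi-reductive groups by \cite{BoTi} and \cite[Appendix C.2]{CGP}): any $g \in \mc Q(F)$ can be written as $g = z u_1 \dot w u_2$, with $z \in Z_{\mc Q}(\mc S)(F)$, $u_1, u_2 \in U(F)$ (the $F$-points of the unipotent radical of a minimal pseudo-parabolic $F$-subgroup containing $Z_{\mc Q}(\mc S)$), and $\dot w \in N_{\mc Q}(\mc S)(F)$ a representative of an element of the relative Weyl group $W(\mc Q,\mc S)$; here $\mc S$ denotes a maximal $F$-split torus of $\mc Q$. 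Since $U(F)$ is generated by the positive root subgroups $\mc U_\alpha(F)$, all of which centralize $V$ by part (d), and $\mc S(F) \subseteq Z_{\mc Q}(\mc S)(F)$ centralizes $V$ by part (c), one directly computes that for every $v \in V$,
\[
g v g^{-1} \;=\; z \, (\dot w v \dot w^{-1}) \, z^{-1},
\]
so the conjugation action of $g$ on $V$ agrees with that of $z \dot w \in N_{\mc Q}(\mc S)(F)$. Consequently the action of $\mc Q(F)$ on $V$ has the same image as the action of $N_{\mc Q}(\mc S)(F)$, and since $\mc S(F)$ acts trivially it descends to a continuous action of the quotient $N_{\mc Q}(\mc S)(F)/\mc S(F)$.

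This quotient is compact: it is an extension of the finite relative Weyl group $W(\mc Q,\mc S)$ by $Z_{\mc Q}(\mc S)(F)/\mc S(F)$, and the compactness of $Z_{\mc Q}(\mc S)(F)/\mc S(F)$ follows from the $F$-anisotropy of $Z_{\mc Q}(\mc S)/\mc S$ (Lemma \ref{lem:3.16}, the quasi-reductive analogue of Lemma \ref{lem:1.6}.b) together with Proposition \ref{prop:1.8}, using $H^1(F,\mc S)=1$ (since $\mc S$ is $F$-split) to identify $Z_{\mc Q}(\mc S)(F)/\mc S(F)$ with $(Z_{\mc Q}(\mc S)/\mc S)(F)$. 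Given any compact open subgroup $V_0 \subseteq V$, continuity of the conjugation action together with the compactness of $V_0$ forces---by the standard tube/finite-subcover argument applied to both $h V_0 h^{-1} \subseteq V_0$ and $h^{-1} V_0 h \subseteq V_0$---the stabilizer of $V_0$ in $N_{\mc Q}(\mc S)(F)/\mc S(F)$ to be an open subgroup. Being an open subgroup of a compact group, it has finite index, so $V_0$ has only finitely many $\mc Q(F)$-conjugates in $V$; their intersection $V_0'$ is then a compact open subgroup of $V$, contained in $V_0$ and normalized by $\mc Q(F)$. Since $V_0$ can be chosen arbitrarily small, so can $V_0'$.

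The main obstacle is the Bruhat reduction step: one must combine the Bruhat decomposition for quasi-reductive groups with parts (c) and (d) and with the compactness of the Cartan modulo its split part (Lemma \ref{lem:3.16}). Each ingredient is nontrivial in the quasi-reductive setting---especially the last, which rests on the delicate structure theory of Cartan subgroups in pseudo- and quasi-reductive groups---but once all three are in place, the remainder of the proof is a clean application of continuity and finite-index considerations.
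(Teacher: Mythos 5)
Your argument for part (e) is correct and follows essentially the same route as the paper: use the Bruhat decomposition together with parts (c) and (d) to see that conjugation of $\mc R_{u,F}(\mc Q)(F)$ by any element of $\mc Q(F)$ agrees with conjugation by an element of $N_Q(S_\es)$, then show the normalizer of a small compact open subgroup is open of finite index there, and intersect the finitely many conjugates. The only real difference is how openness of the normalizer is obtained: the paper runs the explicit argument from Proposition \ref{prop:2.1} (producing a compact open $U_D \subset Z_Q(S_\es)$ whose elements normalize $K$, so that $N_{N_Q(S_\es)}(K)$ contains $S_\es U_D$, of finite index by \eqref{eq:3.26}), whereas you get it softly from continuity of the conjugation action and compactness of $V_0$ via the tube lemma, working in the compact quotient $N_Q(S_\es)/\mc S_\es(F)$. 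Both finiteness steps rest on the same input, namely Lemma \ref{lem:3.16}.a together with the finiteness of $W(\mc Q,\mc S_\es)$, and your forward appeal to that lemma is as harmless as the paper's own appeal to \eqref{eq:3.26}, since its proof does not use part (e). The one point you leave untreated is part (d), which you invoke but do not prove, although it is part of the statement; the missing argument is short: by part (c), $\mc R_{u,F}(\mc Q)$ lies in $Z_{\mc Q}(\mc T)$ and hence normalizes $\mc U$, while $\mc R_{u,F}(\mc Q)$ is normal in $\mc Q$ and meets $\mc U$ trivially, so the two subgroups commute.
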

\begin{proof}
(d) Let $\mc T$ be the relevant $F$-torus in $\mc Q$. By part (c) $\mc R_{u,F}(\mc Q) \subset
Z_{\mc Q}(\mc T)$, so $\mc R_{u,F}(\mc Q)$ normalizes $\mc U$. But at the same time
$\mc R_{u,F}(\mc Q)$ is normal in $\mc Q$ and $\mc R_{u,F}(\mc Q) \cap \mc U = \{1\}$,
so $\mc R_{u,F}(\mc Q)$ commutes with $\mc U$.\\
(e) Since $\mc R_{u,F}(\mc Q)(F)$ is totally disconnected, compact and Hausdorff, it is a
profinite group and has small compact open subgroups, say $K$. 

Let $\mc P_\es$ be a minimal pseudo-parabolic subgroup of $\mc Q$ containing 
$Z_{\mc Q}(\mc S_\es)$. By \cite[Propositions C.2.4 and C.2.26]{CGP} 
\begin{equation}\label{eq:3.12}
\mc P_\es = Z_{\mc Q}(\mc S_\es) \ltimes \mc R_{us,F}(\mc P_\es) = Z_{\mc Q}(\mc S_\es) 
\ltimes \prod\nolimits_{\alpha \in \Phi (\mc Q, \mc S_\es)_\red^+} \mc U_\alpha ,
\end{equation}
where $ \Phi (\mc Q, \mc S_\es)^+$ is the positive subsystem of $\Phi (\mc Q, \mc S_\es)$
consisting of all roots that appear in Lie$(\mc R_{us,F}(\mc P_\es))$.
By part (d) $\mc R_{u,F}(\mc Q)$ commutes with $\mc R_{us,F}(\mc P_\es)$. Now the 
Bruhat decomposition (Theorem \ref{thm:2.2} for $\mc Q, \mc P_\es$) shows that
\begin{equation}\label{eq:3.20}
\{ q K q^{-1} : q \in Q \} = \{ n K n^{-1} : n \in N_Q (S_\es) \} . 
\end{equation}
By part (c) $\mc S_\es$ centralizes $\mc R_{u,F}(\mc Q)$. The argument with 
\eqref{eq:2.24}, \eqref{eq:2.19} and \eqref{eq:1.25} shows that there exists an open 
neighborhood $U_D$ of 1 in $Q$, all whose elements normalize $K$. By Proposition \ref{prop:2.1}
for $Z_{\mc Q}(\mc S_\es)$ we may assume that $U_D$ is a compact open subgroup of 
$Z_{Q}(S_\es)$. Then $N_{N_Q (S_\es)}(K)$ contains the open subgroup $S_\es U_D$.
By \eqref{eq:3.26} (whose proof does not rely on the current part e)
$S_\es U_D$ has finite index in $N_Q (S_\es)$. Hence
$[N_{N_Q (S_\es)}(K) : S_\es U_D]$ and the set \eqref{eq:3.20} are finite. Then
$K' := \bigcap_{q \in Q} q K q^{-1}$ is a finite intersection, so in particular another 
compact open subgroup of $\mc R_{u,F}(\mc Q)(F)$. Clearly it is normalized by $Q$. 
Starting with a very small $K \subset \mc R_{u,F}(\mc Q)(F)$, we can make $K' \subset K$ 
arbitrarily small.
\end{proof}

The definition of the unipotent radical implies that $\mc G := \mc Q / \mc R_{u,F}(\mc Q)$ 
is a pseudo-reductive $F$-group.
One could be inclined to deduce from this that the quasi-reductive group $\mc Q (F)$ is an
extension of the pseudo-reductive group $\mc G (F)$ by the compact group $\mc R_{u,F}(\mc Q)(F)$.
However, it seems that this need not be true in general, the correct analogue is more complicated.
Consider the short exact sequence of linear algebraic groups
\begin{equation}\label{eq:3.2}
1 \to \mc R_{u,F}(\mc Q) \to \mc Q \xrightarrow{\pi} \mc G = \mc Q / \mc R_{u,F}(Q) \to 1.
\end{equation}
It induces an exact sequence of $F$-rational points
\begin{equation}\label{eq:3.1}
1 \to \mc R_{u,F}(\mc Q)(F) \to \mc Q (F) \xrightarrow{\pi} \mc G (F) ,
\end{equation}
where the last map need not be surjective. We denote the image of $\pi : \mc Q (F) \to \mc G (F)$
by $\tilde{Q}$.

\begin{thm}\label{thm:3.2}
\enuma{
\item The map $\pi : \mc Q (F) \to \mc G (F)$ is smooth and open with respect to the locally
compact topology. The group $\tilde{Q}$ has finite index in $\mc G (F)$.
\item The group $\tilde{Q}$ contains all maximal $F$-tori of $\mc G (F)$ and all groups
$\mc R_{us,F}(\mc P)(F)$, where $\mc P$ is a pseudo-parabolic $F$-subgroup of $\mc G$.
\item Let $\mc S_\es$ be a maximal $F$-split torus in $\mc G$ and let $\mc P$ be a
pseudo-parabolic $F$-subgroup containing $Z_{\mc G}(\mc S_\es)$. Then $\tilde{Q}$
satisfies the Bruhat decomposition
\[
\tilde{Q} = (\mc P (F) \cap \tilde Q) N_{\tilde{Q}}(S_\es) (\mc P (F) \cap \tilde Q).
\]
}
\end{thm}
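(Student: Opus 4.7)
I would apply Proposition~\ref{prop:1.7} with the pair $(\mc Q, \mc R_{u,F}(\mc Q))$ in place of $(\mc G, \mc N)$. Since by construction $\mc Q/\mc R_{u,F}(\mc Q) = \mc G$ is pseudo-reductive, part (a) of that proposition gives smoothness and openness of $\pi$, while part (c) gives that $\tilde Q$ is open of finite index in $\mc G(F)$.

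\textbf{Part (b).} The strategy is to lift maximal $F$-tori and root subgroups of $\mc G$ isomorphically to $\mc Q$, the key input being Theorem~\ref{thm:3.8}(c): every $F$-torus in $\mc Q$ centralizes $\mc R_{u,F}(\mc Q)$. Given a maximal $F$-torus $\mc T$ of $\mc G$, I would look at the preimage $\pi^{-1}(\mc T)$, a smooth (by Proposition~\ref{prop:1.7}(a)) connected $F$-subgroup of $\mc Q$, and choose any maximal $F$-torus $\mc T'$ therein (existence by Grothendieck). Since $\mc T' \cap \mc R_{u,F}(\mc Q) = 1$, the restriction $\pi\vert_{\mc T'} : \mc T' \to \mc T$ is injective, and maximality forces it to be surjective, so $\pi : \mc T' \isom \mc T$. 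Passing to $F$-rational points gives $\mc T(F) = \pi(\mc T'(F)) \subseteq \tilde Q$. For root subgroups, choose a maximal $F$-split torus $\mc S$ of $\mc G$ inside $\mc P$, lift it to $\mc S' \subset \mc Q$ by the same argument, and observe that Theorem~\ref{thm:3.8}(c) places $\mr{Lie}(\mc R_{u,F}(\mc Q))$ entirely in the zero-weight subspace for $\mc S'$. Hence $\pi$ induces an isomorphism on every nonzero weight space, and for each $\alpha \in \Phi(\mc Q, \mc S')_\red = \Phi(\mc G,\mc S)_\red$ the restriction $\pi : \mc U'_\alpha \to \mc U_\alpha$ is an isomorphism of $F$-groups (trivial kernel by separation of weights, matching Lie algebra dimensions; cf.\ Theorem~\ref{thm:3.8}(d)). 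This gives $\mc U_\alpha(F) \subseteq \tilde Q$ for every relevant $\alpha$, and the product decomposition \eqref{eq:2.40} applied in $\mc G$ then yields $\mc R_{us,F}(\mc P)(F) \subseteq \tilde Q$.

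\textbf{Part (c).} For the Bruhat decomposition I would take the lifted maximal $F$-split torus $\mc S'_\es \subset \mc Q$ from part (b) together with the preimage $\mc P' := \pi^{-1}(\mc P)$, which is a pseudo-parabolic $F$-subgroup of $\mc Q$ containing $Z_{\mc Q}(\mc S'_\es)$ by the standard stability of pseudo-parabolic subgroups under unipotent quotients (\cite[Proposition 2.2.10]{CGP}). The Bruhat decomposition for $\mc Q(F)$ (Theorem~\ref{thm:2.2}) then gives
\[
\mc Q(F) \;=\; \mc P'(F)\, N_{\mc Q(F)}(\mc S'_\es)\, \mc P'(F).
\]
Since $\ker \pi = \mc R_{u,F}(\mc Q)(F) \subset \mc P'(F)$, one has $\pi(\mc P'(F)) = \mc P(F) \cap \tilde Q$; and since $\pi(\mc S'_\es) = \mc S_\es$, one has $\pi(N_{\mc Q(F)}(\mc S'_\es)) \subseteq N_{\tilde Q}(S_\es)$. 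Applying $\pi$ to both sides gives
\[
\tilde Q \;\subseteq\; (\mc P(F) \cap \tilde Q)\, N_{\tilde Q}(S_\es)\, (\mc P(F) \cap \tilde Q),
\]
and the reverse inclusion is immediate. The main obstacle I foresee is the bookkeeping in (b): one has to verify carefully that the root subgroups of $\mc Q$ with respect to $\mc S'$ really match those of $\mc G$ with respect to $\mc S$ dimension-by-dimension, and that the product decomposition analogous to \eqref{eq:2.40} is available in the quasi-reductive setting — both facts are essentially CGP, but they have to be tracked precisely in the presence of the nontrivial $F$-wound unipotent radical.
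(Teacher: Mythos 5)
Your proposal is correct, and while (a) and (b) run close to the paper, your part (c) takes a genuinely different route. In the paper, (b) is proved by splitting the unipotent radical of $\mc P' = \pi^{-1}(\mc P)$ at one stroke: writing $\mc P' = \mc P_{\mc Q}(\lambda)\mc R_{u,F}(\mc Q)$, one has $\mc R_{u,F}(\mc Q) \subset \mc Z_{\mc Q}(\lambda)$ and $\mc R_{us,F}(\mc P') \subset \mc U_{\mc Q}(\lambda)$, so \cite[Proposition 2.1.8(2)]{CGP} gives $\mc R_{u,F}(\mc P') = \mc R_{u,F}(\mc Q) \times \mc R_{us,F}(\mc P')$ and hence $\mc R_{us,F}(\mc P')(F) \isom \mc R_{us,F}(\mc P)(F)$; you instead prove the isomorphism root subgroup by root subgroup and reassemble via \eqref{eq:2.40}, which is a legitimate variant resting on the same dynamic facts. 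One point to tighten there: ``trivial kernel by separation of weights'' is not enough in characteristic $p$, since a subgroup scheme of $\mc U'_\alpha$ with trivial Lie algebra need not be trivial (e.g.\ an \'etale $\Z /p$ inside $\mc G_a$); the correct justification is scheme-theoretic, namely $\mc U'_\alpha \cap \mc R_{u,F}(\mc Q) \subset \mc U_{\mc Q}(\lambda) \cap \mc Z_{\mc Q}(\lambda) = 1$, exactly the CGP fact above, and similarly surjectivity of $\pi\vert_{\mc U'_\alpha}$ is best justified by noting the image is smooth, connected and of full dimension. For (c) the paper argues downstairs: it uses part (b) to choose representatives of $W(\mc G,\mc S_\es)$ inside $\tilde Q$, rewrites the Bruhat decomposition of $\mc G (F)$ via \eqref{eq:3.12}, and intersects with $\tilde Q$. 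You argue upstairs: apply Theorem~\ref{thm:2.2} directly to the connected group $\mc Q$ (which is permitted, since that theorem holds for any connected linear algebraic $F$-group) and push forward along $\pi$, using $\ker \pi \subset \mc P'(F)$ to get $\pi (\mc P'(F)) = \mc P (F) \cap \tilde Q$ and $\pi (N_{\mc Q (F)}(\mc S'_\es)) \subset N_{\tilde Q}(S_\es)$. This is cleaner in that it does not even need part (b), at the cost of invoking the Bruhat decomposition for $\mc Q$ itself rather than only for the pseudo-reductive quotient; both arguments are sound.
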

\begin{proof}
(a) This is an instance of Proposition \ref{prop:1.7}.\\
(b) Let $\mc S$ be a maximal $F$-torus of $\mc G$. By \cite[Proposition 2.2.10]{CGP}
$\pi^{-1}(\mc S)$ contains a maximal $F$-torus $\mc S'$ of $\mc Q$. By Theorem
\ref{thm:3.8}.c $\mc R_{u,F}(\mc Q)$ commutes with $\mc S'$, and $\mc R_{u,F}(\mc Q) \cap \mc S'
= 1$ because one group is unipotent and the other is semisimple. Then
$\mc S' \times \mc R_{u,F}(\mc Q)$ is a subgroup of $\pi^{-1}(\mc S)$, which by Proposition
\ref{prop:1.7}.b and \eqref{eq:3.2} in fact exhausts $\pi^{-1}(\mc S)$. Hence
\begin{equation}\label{eq:3.3}
\mc S \cong \big( \mc S' \times \mc R_{u,F}(\mc Q) ) / \mc R_{u,F}(\mc Q) \cong \mc S'
\end{equation}
and $\pi : \mc S' (F) \to \mc S (F)$ is an isomorphism.

By \cite[Proposition 2.2.10]{CGP} $\mc P' := \pi^{-1}(\mc P)$ is a pseudo-parabolic $F$-subgroup
of $\mc Q$. In view of \cite[Corollary 2.2.5]{CGP},
\[
\mc R_{u,F}(\mc P') = \pi^{-1}(\mc R_{u,F}(\mc P)) =  \pi^{-1}(\mc R_{us,F}(\mc P))
\]
is generated by two normal unipotent subgroups: $\ker \pi = \mc R_{u,F}(\mc Q)$ and
$\mc R_{us,F}(\mc P')$. These two subgroups commute by Theorem \ref{thm:3.8}.d. As in
\cite[\S 2.2]{CGP}, let $\lambda : GL_1 \to \mc Q$ be a cocharacter such that $\mc P' = 
\mc P_{\mc Q}(\lambda) \mc R_{u,F}(\mc Q)$. Then $\mc R_{u,F}(\mc Q) \subset \mc Z_{\mc Q}(\lambda)$
and $\mc R_{us,F}(\mc P') \subset \mc U_{\mc Q}(\lambda)$, so by \cite[Proposition 2.1.8.(2)]{CGP}
their intersection is trivial. We deduce that
\[
\mc R_{u,F}(\mc P') = \mc R_{u,F}(\mc Q) \times \mc R_{us,F}(\mc P') 
\]
and $\mc R_{us,F}(\mc P') \cong \mc R_{us,F}(\mc P)$. In particular
\begin{equation}\label{eq:3.4}
\pi : \mc R_{us,F}(\mc P')(F) \to \mc R_{us,F}(\mc P)(F) \text{ is an isomorphism.}
\end{equation}
This proves that $\pi (\mc Q (F))$ contains the required subgroups.\\
(c) It suffices to consider the case of a minimal pseudo-parabolic $F$-subgroup
$\mc P_\es$ of $\mc G$ containing $Z_{\mc G}(\mc S_\es)$. Let $N$ be the smallest
normal subgroup of $\mc G (F)$ which contains all the subgroups $\mc R_{us,F}(\mc P_\es)(F)$.
By \cite[Proposition C.2.2.4]{CGP} $N$ contains representatives for the Weyl group of
$(\mc G,\mc S_\es)$. So does $\tilde{\mc Q}(F)$, because $N \subset \tilde{Q}$
by part (b). Recall the Bruhat decomposition -- Theorem \ref{thm:2.2} with
representatives for $W(\mc G,\mc S_\es)$ in $\tilde{Q}$:
\[
\mc G (F) = \mc P_\es (F) N_{\mc G}(\mc S_\es)(F) \mc P_\es (F) =
\mc P_\es (F) Z_{\mc G}(\mc S_\es)(F) W(\mc G,\mc S_\es) \mc P_\es (F) .
\]
With \eqref{eq:3.12} we can rewrite this as
\begin{equation}\label{eq:3.5}
\mc G (F) = Z_{\mc G}(\mc S_\es)(F)
\mc R_{us,F}(\mc P_\es)(F) W(\mc G,\mc S_\es) \mc R_{us,F}(\mc P_\es) (F) .
\end{equation}
By part (b) the three rightmost terms are contained in $\tilde Q$, so we obtain
\begin{align*}
\tilde Q & = Z_{\tilde Q}(S_\es) \mc R_{us,F}(\mc P_\es)(F) W(\mc G,\mc S_\es) 
\mc R_{us,F}(\mc P_\es) (F) \\
& = \mc R_{us,F}(\mc P_\es)(F) Z_{\tilde Q}(S_\es) W(\mc G,\mc S_\es) Z_{\tilde Q}(S_\es) 
\mc R_{us,F}(\mc P_\es) (F) \\
& = (\mc P_\es (F) \cap \tilde Q) N_{\tilde{Q}}(S_\es) (\mc P_\es (F) \cap \tilde Q) . \qedhere
\end{align*}
\end{proof}

\subsection{Valuated root data} \
\label{subsec:BT}

We would like to generalize our results for pseudo-reductive groups to quasi-reductive groups.
In spite of Theorem \ref{thm:3.2} this is not so trivial, and we find it convenient to use 
more Bruhat--Tits theory. Let $\mc S_\es$ be a maximal $F$-split torus in $\mc Q$. By 
\eqref{eq:3.3} we may identify it with its image in $\mc G$, which we also denote by $\mc S_\es$.

Let $\Phi (\mc Q,\mc S_\es)$ be the root system of $\mc Q$ with respect to the maximal $F$-split 
torus $\mc S_\es$. Since $\mc R_{u,F}(\mc Q)$ commutes with $\mc S_\es$, $\Phi (\mc Q,\mc S_\es)$ 
can be identified with $\Phi (\mc G,\mc S_\es)$ \cite[Theorem C.2.15]{CGP}. In particular
they have the same Weyl group 
\begin{equation}\label{eq:3.51}
N_{\mc Q}(\mc S_\es) / Z_{\mc Q}(\mc S_\es) = W(\mc Q,\mc S_\es) \cong 
W(\mc G, \mc S_\es) = N_{\mc G}(\mc S_\es) / Z_{\mc G}(\mc S_\es) .
\end{equation}

\begin{lem}\label{lem:3.16}
\enuma{
\item $Z_{\mc Q}(\mc S_\es) / \mc S_\es$ is $F$-anisotropic and $Z_Q (S_\es) / S_\es$ is compact.
\item $Z_Q (S_\es)$ has a unique maximal compact subgroup $Z_Q (S_\es)_\cpt$, and the 
quotient $Z_Q (S_\es) / Z_Q (S_\es)_\cpt$ is a finitely generated free abelian group.
\item $Z_Q (S_\es)$ has arbitrarily small compact open subgroups which are normal in
$N_Q (S_\es)$.
} 
\end{lem}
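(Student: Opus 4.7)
The plan is to exploit the quotient $\pi:\mc Q \to \mc G := \mc Q/\mc R_{u,F}(\mc Q)$ and reduce each assertion to the pseudo-reductive results of Section~\ref{sec:pseudo}. The key preliminary is the short exact sequence of algebraic $F$-groups
\[
1 \to \mc R_{u,F}(\mc Q) \to Z_{\mc Q}(\mc S_\es) \to Z_{\mc G}(\mc S_\es) \to 1.
\]
The inclusion of $\mc R_{u,F}(\mc Q)$ in $Z_{\mc Q}(\mc S_\es)$ is Theorem~\ref{thm:3.8}(c). For surjectivity on the right, given a test-algebra point $g$ of $Z_{\mc G}(\mc S_\es)$ and an fpqc-local lift $q \in \mc Q$, the map $s \mapsto qsq^{-1}s^{-1}$ factors through $\mc R_{u,F}(\mc Q)$; using Theorem~\ref{thm:3.8}(c) to see that $\mc S_\es$ centralizes $\mc R_{u,F}(\mc Q)$, a short computation shows this map is a homomorphism $\mc S_\es \to \mc R_{u,F}(\mc Q)$, hence trivial since its source is a torus and its target is unipotent. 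Thus $q$ already centralizes $\mc S_\es$. Combining with Proposition~\ref{prop:1.7}(c) and Theorem~\ref{thm:3.8}(b) yields, at the level of $F$-points, an exact sequence
\[
1 \to \mc R_{u,F}(\mc Q)(F) \to Z_Q(S_\es) \to \pi(Z_Q(S_\es)) \to 1
\]
with compact kernel, whose image is open and of finite index in $Z_G(S_\es)$.

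For part~(a), I would quotient the algebraic sequence by $\mc S_\es$ (central in $Z_{\mc Q}(\mc S_\es)$ and meeting the unipotent kernel trivially) to obtain
\[
1 \to \mc R_{u,F}(\mc Q) \to Z_{\mc Q}(\mc S_\es)/\mc S_\es \to Z_{\mc G}(\mc S_\es)/\mc S_\es \to 1.
\]
The kernel is $F$-wound by Theorem~\ref{thm:3.8}(a) and the quotient is $F$-anisotropic by Lemma~\ref{lem:1.6}(b). A hypothetical $F$-subgroup $\mc G_a$ or $\mc G_m$ of the middle term would map either trivially (forcing $\mc G_a$ into the $F$-wound kernel, or excluding $\mc G_m$ outright since the kernel is unipotent) or nontrivially (contradicting $F$-anisotropy on the right). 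Thus $Z_{\mc Q}(\mc S_\es)/\mc S_\es$ is $F$-anisotropic, and Proposition~\ref{prop:1.8} gives the compactness of its $F$-points. Since $\mc S_\es$ is $F$-split, Hilbert~90 plus Proposition~\ref{prop:1.7}(a) identifies these $F$-points with $Z_Q(S_\es)/S_\es$ as topological groups.

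For part~(b), I would define $Z_Q(S_\es)_\cpt := \pi^{-1}(Z_G(S_\es)_\cpt)$, using the unique maximal compact subgroup of the pseudo-reductive centralizer supplied by Lemma~\ref{lem:2.7}. Compactness of $\ker\pi = \mc R_{u,F}(\mc Q)(F)$ makes this preimage compact, and $Z_G(S_\es)_\cpt$ is open so $Z_Q(S_\es)_\cpt$ is open. Any compact subgroup of $Z_Q(S_\es)$ projects into $Z_G(S_\es)_\cpt$ and so lies in $Z_Q(S_\es)_\cpt$, which gives uniqueness and maximality simultaneously. The induced map $Z_Q(S_\es)/Z_Q(S_\es)_\cpt \hookrightarrow Z_G(S_\es)/Z_G(S_\es)_\cpt$ is injective with finite-index image, and the right-hand group is the translation lattice on the apartment $\mh A_\es$ identified in the proof of Theorem~\ref{thm:2.6}, hence finitely generated free abelian; any finite-index subgroup of such a group is finitely generated free abelian.

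For part~(c), I would start with any sufficiently small compact open subgroup $K \subset Z_Q(S_\es)$, which exists by van Dantzig's theorem. Because $\mc S_\es$ is central in $Z_{\mc Q}(\mc S_\es)$, $S_\es$ normalizes $K$, so the normalizer $N_{Z_Q(S_\es)}(K)$ is open and contains $S_\es$; therefore $Z_Q(S_\es)/N_{Z_Q(S_\es)}(K)$ is a discrete quotient of the compact group $Z_Q(S_\es)/S_\es$ from part~(a), hence finite. The conjugacy class $\{zKz^{-1} : z \in Z_Q(S_\es)\}$ is then finite, and its intersection $K_0$ is a compact open subgroup of $Z_Q(S_\es)$ normalized by $Z_Q(S_\es)$. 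Since $W(\mc Q,\mc S_\es)$ is finite and $Z_Q(S_\es)$ is normal in $N_Q(S_\es)$, the further intersection $\bigcap_{w \in W(\mc Q,\mc S_\es)} w K_0 w^{-1}$ (over any lifts of $w$ to $N_Q(S_\es)$) is a compact open subgroup of $Z_Q(S_\es)$ normalized by $N_Q(S_\es)$, and can be made arbitrarily small by choice of $K$. The main obstacle I anticipate is establishing surjectivity of $\pi$ at the level of centralizers rigorously; once that preliminary is in place, parts~(a)--(c) reduce to rather formal manipulations combining Section~\ref{sec:pseudo} with the centrality of $\mc S_\es$ in $Z_{\mc Q}(\mc S_\es)$.
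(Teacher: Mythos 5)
Your proposal is correct, and in parts (a)--(c) it follows essentially the same strategy as the paper: reduce along $\pi \colon \mc Q \to \mc G$, define $Z_Q (S_\es)_\cpt$ as the preimage of $Z_G (S_\es)_\cpt$ (using Lemma \ref{lem:2.7} and compactness of $\mc R_{u,F}(\mc Q)(F)$), identify the quotient with a finite-index subgroup of the translation lattice on $\mh A_\es$, and get (c) as a finite intersection of conjugates of a small compact open subgroup, the finiteness coming from centrality of $S_\es$ in $Z_Q (S_\es)$ together with compactness of $N_Q (S_\es)/S_\es$ (you split this into $Z_Q(S_\es)$-conjugates and then Weyl-group lifts, and use van Dantzig where the paper invokes Proposition \ref{prop:2.1} with empty root system -- immaterial differences). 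The genuinely different ingredient is your preliminary step: you prove $Z_{\mc Q}(\mc S_\es) = \pi^{-1}(Z_{\mc G}(\mc S_\es))$, equivalently exactness of $1 \to \mc R_{u,F}(\mc Q) \to Z_{\mc Q}(\mc S_\es) \to Z_{\mc G}(\mc S_\es) \to 1$, directly by the commutator argument: for a lift $q$ of a point of $Z_{\mc G}(\mc S_\es)$, the map $s \mapsto q s q^{-1} s^{-1}$ lands in $\mc R_{u,F}(\mc Q)$, is a homomorphism because $\mc S_\es$ centralizes $\mc R_{u,F}(\mc Q)$ (Theorem \ref{thm:3.8}.c), and a homomorphism from a torus to a unipotent group is trivial. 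The paper obtains this identity \eqref{eq:3.52} only in part (b), as a consequence of Theorem \ref{thm:3.2}, the Bruhat decompositions for $\mc Q$ and $\mc G$ and the identification of Weyl groups \eqref{eq:3.51} (``$Z_{\mc Q}(\mc S_\es)$ must be as large as possible''), while in part (a) it writes the same short exact sequence without comment; so your treatment is more elementary and arguably more self-contained, whereas the paper reuses machinery it has already built. One small caution: to run your commutator argument at the level of group schemes you should either quote the standard fact that homomorphisms from multiplicative-type groups to unipotent groups vanish over any base, or simply observe that $Z_{\mc Q}(\mc S_\es)$ and $\pi^{-1}(Z_{\mc G}(\mc S_\es))$ are both smooth, so it suffices to compare geometric points, where your computation is the classical one (cf. \cite{CGP}); the ``fpqc-local lift'' phrasing as written is heavier than necessary, but this is a presentational point, not a gap.
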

\begin{proof}
(a) Since $\mc R_{u,F}(\mc Q) \cap \mc S_\es = \{1\}$, there is a short exact sequence
\[
1 \to \mc R_{u,F}(\mc Q) \to Z_{\mc Q}(\mc S_\es) / \mc S_\es \to 
Z_{\mc G}(\mc S_\es) / \mc S_\es \to 1 .
\]
By Lemma \ref{lem:1.6}.b the outer terms are $F$-anisotropic, so the middle term is also
$F$-anisotropic. By Proposition \ref{prop:1.8} $(Z_{\mc Q}(\mc S_\es) / \mc S_\es)(F)$ and
its closed subgroup $Z_Q (S_\es) / S_\es$ are compact.\\
(b) Clearly $\pi (Z_{\mc Q}(\mc S_\es)) \subset Z_{\mc G}(\mc S_\es)$.
By Theorem \ref{thm:3.2}, the Bruhat decompositions for $\mc Q$ and $\mc G$ with respect to 
$\mc S_\es$ and \eqref{eq:3.51}, $Z_{\mc Q}(\mc S_\es)$ 
must be as large as possible with this constraint. That is,
\begin{equation}\label{eq:3.52}
Z_{\mc Q}(\mc S_\es) = \pi^{-1} (Z_{\mc G}(\mc S_\es)).
\end{equation}
Now first the statement follows immediately from Lemma \ref{lem:2.7} and Theorem \ref{thm:3.8}.b.
Let $Z_Q (S_\es)$ act on $\mh A_\es$ via $\pi : Z_Q (S_\es) \to Z_G (S_\es)$.
In the proof of Lemma \ref{lem:2.7} we noted that $Z_G (S_\es)_\cpt$ equals the 
$Z_G (S_\es)$-stabilizer of any point of $\mh A_\es$, so the same goes for its preimage
$Z_Q (S_\es)_{cpt}$ in $Z_Q (S_\es)$. Consequently $Z_Q (S_\es) / Z_Q (S_\es)_\cpt$ acts
freely on $\mh A_\es$ by translations. From Lemma \ref{lem:1.1} we see that any 
$Z_G (S_\es)$-orbit in $\mh A_\es$ is discrete and cocompact, so $Z_Q (S_\es) / Z_Q (S_\es)_\cpt
\cong \Z^{\dim \mh A_\es}$.\\
(c) Proposition \ref{prop:2.1} for $Z_{\mc Q}(S_\es)$ (with the empty root system) yields
a decreasing family of compact open subgroups $K_n \subset Z_Q (S_\es)$ with 
$\bigcap_{n=1}^\infty K_n = \{1\}$. By part (a) and \eqref{eq:2.26} 
\begin{equation}\label{eq:3.26}
N_Q (S_\es) / S_\es \text{ is compact.} 
\end{equation}
The normalizer of $K_n$ in $N_Q (S_\es)$ contains the
cocompact open subgroup $S_\es K_n$, so it has finite index in $N_Q (S_\es)$. Therefore
\[
\bigcap\nolimits_{q \in N_Q (S_\es)} q K_n q^{-1}  
\]
is an intersection of only finitely many terms, and is open in $Z_Q (S_\es)$. By 
construction this intersection is also a compact normal subgroup of $N_Q (S_\es)$.
\end{proof}

Each $\alpha \in \Phi (\mc Q,\mc S_\es)$ gives rise to a root subgroup $\mc U_\alpha$ in 
$\mc Q$, see \cite[C.2.21]{CGP}. Let $m_\alpha = m_\alpha (u_\alpha) \in N_Q (S_\es)$ 
be as in \cite[Proposition C.2.24]{CGP} and define $M_\alpha = m_\alpha Z_Q (S_\es)$.

\begin{thm}\label{thm:3.3}
The group $Q = \mc Q (F)$ has a generating root datum
\[
\big( Z_Q (S_\es), (U_\alpha,M_\alpha)_{\alpha \in \Phi (\mc Q,\mc S_\es)} \big)
\]
in the sense of \cite[\S 6.1]{BrTi1}. It admits a valuation \cite[\S 6.2]{BrTi1}.
\end{thm}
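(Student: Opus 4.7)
The first assertion is essentially contained in \cite[\S C.2.28]{CGP}: the nontriviality of each $U_\alpha$ and the reflection property of $M_\alpha$ follow from \cite[Proposition C.2.24]{CGP}, the commutator relations from the open Bruhat cell, and the generation statement from the Bruhat decomposition of Theorem \ref{thm:3.2}(c) combined with Lemma \ref{lem:3.16}(a) (which forces $Z_Q(S_\es)/S_\es$ to be compact, and hence swallowed by any subgroup containing $S_\es$ and all $U_\alpha$). The real work is in constructing the valuation, and I would do so in two reduction steps.

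First I would pass from the quasi-reductive $\mc Q(F)$ to the pseudo-reductive quotient $\mc G(F) = (\mc Q/\mc R_{u,F}(\mc Q))(F)$ via the map $\pi$ of \eqref{eq:3.2}. The key observation is that \eqref{eq:3.4} exhibits $\pi : \mc R_{us,F}(\mc P')(F) \to \mc R_{us,F}(\mc P)(F)$ as an isomorphism, and since every root subgroup $U_\alpha \subset Q$ lies inside $\mc R_{us,F}(\mc P'_\alpha)(F)$ for a suitable pseudo-parabolic, $\pi$ restricts to an isomorphism $U_\alpha \xrightarrow{\sim} \pi(U_\alpha)$. Moreover $\pi$ intertwines the conjugation actions of $N_Q(S_\es)$ and $N_{\tilde Q}(S_\es)$, and Theorem \ref{thm:3.8}(d) ensures that $\mc R_{u,F}(\mc Q)$ commutes with every $U_\alpha$, so that commutators of root subgroups in $Q$ agree with those in $\tilde Q$. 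Hence any valuation $\varphi^{\mc G}$ on the root datum of $\mc G(F)$ pulls back under $\pi$ to a family $\varphi_\alpha = \varphi^{\mc G}_\alpha \circ \pi$ satisfying the same axioms.

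Second, I would construct $\varphi^{\mc G}$ using the decomposition of Section \ref{sec:pseudo}. By \eqref{eq:2.6} together with the simplifications \eqref{eq:2.11} and \eqref{eq:2.34} valid over a local field, $\mc G(F)$ is built from $\mc C(F)$ and the groups $\mc G_i(F_i)$, each of which may be taken to be an absolutely simple reductive group over a finite extension of $F$. The root system $\Phi(\mc G,\mc S_\es)$ is the disjoint (and mutually orthogonal) union of the root systems of the factors, and root subgroups belonging to different factors commute, so axioms (V0)--(V3) and (V5) of \cite[\S 6.2]{BrTi1} for $\mc G(F)$ reduce factor-wise to the classical valuated root datum of Bruhat--Tits for reductive groups over non-archimedean local fields. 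The ``prolongation" promised in the statement consists of rescaling each factor-valuation by the ramification index of $F_i/F$ so that the values lie in a single common additive subgroup of $\R$ compatible with the valuation of $F$.

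The main obstacle is axiom (V4), which demands that conjugation of $U_\alpha$ by $m \in M_\alpha$ translates $\varphi_\alpha$ by a specified amount. Within a single factor this is classical, but on $\mc G(F)$ one must also control the conjugation action of the extra piece $Z_{\mc G}(\mc S_\es)(F)$ coming from $\mc C(F)$ and from Weil restriction, and on $Q$ the further extension by $\mc R_{u,F}(\mc Q)(F)$. Here Lemma \ref{lem:3.16}(a) (and its pseudo-reductive predecessor Lemma \ref{lem:1.6}(b)) is decisive: $Z_Q(S_\es)/S_\es$ is compact, so it acts on each $U_\alpha$ through a precompact group of automorphisms and can only translate $\varphi_\alpha$ by a bounded function on a compact group; such a translation is trivial after a standard normalization, which establishes (V4). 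Combined with Theorem \ref{thm:3.8}(c) (guaranteeing that this central extension is trivial on $\mc S_\es$) this completes the verification.
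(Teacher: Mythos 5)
Your two reduction steps (from $Q$ to $\tilde Q$ and hence to $G$ via $\pi$, using \eqref{eq:3.4}; then from $G$ to the factors $\mc G_i (F_i)$ of \eqref{eq:2.6}, where \cite{BrTi2} supplies valuations that are retracted to maps $\phi_\alpha$ on $\mc U_\alpha (F)$) are exactly the paper's, and the claim that the generating root datum itself is \cite[Remark C.2.28]{CGP} is also how the paper begins. The problem is your treatment of the one axiom that does not reduce factor-wise. (It is (V2) of \cite[D\'efinition 6.2.1]{BrTi1}, not (V4); the ``prolonged valuation'' is a separate statement, Proposition \ref{prop:3.17}, about a filtration of $Z_Q(S_\es)$, and no rescaling by ramification indices occurs.) What must be shown is that for every $z \in Z_G (S_\es)$ the function $u \mapsto \phi_\alpha (u) - \phi_\alpha (z u z^{-1})$ is \emph{constant} on $U_\alpha \setminus \{1\}$, i.e. \eqref{eq:3.31}. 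Your argument via Lemma \ref{lem:3.16}.a / Lemma \ref{lem:1.6}.b only says that, modulo $S_\es$, $z$ lies in a compact group; even granting a decomposition $z = s\,c$ with $c$ in a compact subgroup of $Z_G(S_\es)$ (which is only available up to finitely many cosets), compactness merely bounds the displacement $\phi_\alpha (c u c^{-1}) - \phi_\alpha(u)$ as $u$ varies, it does not make it constant in $u$. Saying that conjugation ``can only translate $\varphi_\alpha$ by a bounded function'' presupposes that it acts as a translation of the valuation, which is precisely the content of (V2); and ``trivial after a standard normalization'' is not meaningful, since no renormalization of $\phi_\alpha$ can repair a $u$-dependent discrepancy. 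The appeal to Theorem \ref{thm:3.8}.c is also misplaced: that lemma is about $\mc R_{u,F}(\mc Q)$ and only enters the reduction from $Q$ to $\tilde Q$, not the verification of (V2) inside $G$.

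The paper closes this gap differently: conjugation by $z$ on $\mc U_\alpha$ is realized, after passing to a suitable finite Galois extension $F'/F$ (using surjectivity of $\mc G' \rtimes \mc C \to \mc G$ and $\mc T_i \to \mc T_i / Z(\mc G_i)$ on points over a separable closure), as conjugation by elements $z_1 \in Z_{\mc G'}(\mc S_\es)(F')$ and $z_2 \in \prod_i R_{F_i/F}(\mc T_i)(F')$; since the Bruhat--Tits valuations of \eqref{eq:2.23} are schematic and compatible with finite Galois extensions, conjugation by $z_1$ and by $z_2$ each shifts $\phi_\alpha$ by a constant, which yields \eqref{eq:3.31} and hence (V2). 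Some such argument (or an alternative, e.g. proving directly that the relevant compact subgroup of $Z_G(S_\es)$ normalizes every filtration subgroup $U_{\alpha,r}$) is genuinely needed; as written, your proof of the valuation axiom has a gap at its central point.
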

\begin{proof}
It was noted in \cite[Remark C.2.28]{CGP} that these $F$-subvarieties satisfy the axioms
of a generating root datum. 

By \eqref{eq:3.4} every $U_\alpha$ maps isomorphically to its
image in $G = \mc G (F)$ and by Theorem \ref{thm:3.2}.c the $U_\alpha$ and
$Z_{\tilde Q}(S_\es)$ generate $\tilde Q = \pi (\mc Q (F))$. Hence we can transform the above
into a generating root datum for the group $\tilde Q$:
\begin{equation}\label{eq:3.9}
\big( Z_{\tilde Q} (S_\es), (U_\alpha,\pi (m_\alpha ) Z_{\tilde Q}(S_\es)
)_{\alpha \in \Phi (\mc Q,\mc S_\es)} \big) .
\end{equation}
A valuation of either of these two generating root data consists of a family of maps
$\phi_\alpha : U_\alpha \to \R \cup \{\infty\}$ satisfying several conditions
\cite[D\'efinition 6.2.1]{BrTi1}. These conditions involve only the groups $U_\alpha$ and the
sets $M_\alpha$. Since $\pi : Q \to G$ is bijective on the $U_\alpha$ and $\pi (M_\alpha) =
\pi (m_\alpha ) Z_{\tilde Q}(S_\es)$, it suffices to show that the root datum for
$\tilde Q$ admits a valuation. This in turn follows if we show that the generating root datum
\begin{equation}\label{eq:3.8}
\big( Z_{G} (S_\es), (U_\alpha,\pi (m_\alpha ) Z_{G}(S_\es) )_{\alpha \in \Phi (\mc G,\mc S_\es)} \big)
\end{equation}
for $G$ admits a valuation.

Recall the shape of the pseudo-reductive group $\mc G$, which we described in \eqref{eq:2.6}
and the lines before that. Equations \eqref{eq:2.3}, \eqref{eq:2.11} and \eqref{eq:1.18} show
how $\mc G (F)$ is constructed from some reductive $F_i$-groups $\mc G_i (F_i)$.
As before, let $\mc T_i$ be a maximal $F_i$-torus in $\mc G_i$, such that $\mc S_\es \cap
R_{F_i / F} (\mc G_i)$ is contained in $R_{F_i / F}(\mc T_i)$. By \cite[Proposition A.5.15]{CGP}
$\mc T_i$ contains a maximal $F_i$-split torus $\mc S_i$ of $\mc G_i$. Then
\[
\Phi (\mc G,\mc S_\es) = \bigsqcup\nolimits_i \Phi (R_{F_i / F} (\mc G_i) \mc S_\es,\mc S_\es)
\quad \text{can be identified with} \quad \bigsqcup\nolimits_i \Phi (\mc G_i, \mc S_i) .
\]
Suppose that $\alpha_i \in \Phi (\mc G_i, \mc S_i)$ corresponds to $\alpha \in \Phi (\mc G,\mc S_\es)$.
From \eqref{eq:2.6} we see that the root subgroup $\mc U_{\alpha_i}(F_i) \subset \mc G_i (F_i)$ is
mapped isomorphically to $\mc U_\alpha (F) \subset \mc G (F)$. The element $m_\alpha (u)$ can also
be constructed entirely in $\mc G_i (F_i)$, and then be mapped to $\mc G (F)$ via \eqref{eq:2.6}.

The main result of \cite{BrTi2} says that the generating root datum
\begin{equation}\label{eq:2.23}
\big( Z_{\mc G_i}(\mc S_i) (F_i), ( \mc U_{\alpha_i}(F_i), M_{\alpha_i} )_{\alpha_i \in
\Phi (\mc G_i, \mc S_i)} \big)
\end{equation}
for $\mc G_i (F_i)$ admits a valuation. We use the maps $\phi_{\alpha_i} : \mc U_{\alpha_i}(F_i) \to
\R \cup \{ \infty \}$ from such a valuation, and we retract them to
$\phi_{\alpha} : \mc U_{\alpha}(F) \to \R \cup \{ \infty \}$. Then the shape of $\mc G$, as in
Lemma \ref{lem:2.5}, immediately implies that the all conditions of \cite[D\'efinition 6.2.1]{BrTi1},
except possibly (V2), remain valid for the $\phi_\alpha$. The condition (V2) states that,
for all $\alpha \in \Phi (\mc G,\mc S_\es)$ and all $m \in M_\alpha$, the function 
\[
\mc U_{-\alpha} (F) \setminus \{1\} \to \R : u \mapsto \phi_{-\alpha}(u) - \phi_\alpha (m u m^{-1})
\]
is constant. Recall that $M_\alpha = \pi (m_\alpha) Z_G (S_\es)$. Condition (V2) holds for 
$\pi (m_\alpha)$ because it boils to a statement entirely in $\mc G_i (F_i)$, where we know that
we have a valuation. Therefore it suffices to show that, for every $z \in Z_G (S_\es)$, 
\begin{equation}\label{eq:3.31}
\mc U_\alpha (F) \setminus \{1\} \to \R : u \mapsto \phi_\alpha (u) - \phi_\alpha (z u z^{-1}) 
\qquad \text{is constant.}
\end{equation}
The construction of $\mc G$ entails that the conjugation action $c(z)$ of $z$ on $\mc U_\alpha$ 
is computed via elements of $Z_{\mc G'}(\mc S_\es) = \prod_i R_{F_i/F} Z_{\mc G_i}(\mc S_\es)$ and of 
$\prod_i R_{F_i/F} (\mc T_i / Z(\mc G_i))$. By the surjectivity of $\mc G' \rtimes \mc C \to \mc G$
and $\mc T_i \to \mc T_i / Z(\mc G_i)$ on rational points over a separably closed field, there exists 
a finite Galois extension $F'/F$ and elements
$z_1 \in Z_{\mc G'}(\mc S_\es)(F'), z_2 \in \prod_i R_{F_i/F} (\mc T_i)(F')$ such that
$c(z)$ is conjugation by $z_1$ composed with conjugation by $z_2$. The valuations of \eqref{eq:2.23}
constructed by Bruhat and Tits are schematic, in particular they are compatible with finite Galois
extensions, in the sense that the $F$-rational points can be obtained from the $F'$-rational points
by intersecting with $\mc U_\alpha (F)$. Hence conjugation with $z_1$ satisfies \eqref{eq:3.31}, 
and the same for $z_2$. Thus (V2) holds, which provides the desired valuations on the root data 
\eqref{eq:3.8} and \eqref{eq:3.9}.
\end{proof}

Bruhat and Tits also devised the notion of a prolonged valuation \cite[D\'efinition 6.4.38]{BrTi1}.
It presumes the existence of a valuated root datum for a group $G$, with in particular a subgroup 
$U_0$ (called $T$ in \cite{BrTi1}), a root system $\Phi$ and root subgroups $U_\alpha$ for 
$\alpha \in \Phi$. The valuation gives rise to subgroups
\begin{equation}\label{eq:3.54}
U_{\alpha,r} := \phi_\alpha^{-1} ([r,\infty]) \qquad \alpha \in \Phi, r \in \R .
\end{equation}
As observed in \cite[p. 111]{ScSt} any ``good filtration" of $U_0$ gives rise to a prolongation 
of the valuation of a generating root datum for a locally compact group $G$. 

\begin{defn}\label{defn:0}
A good filtration of $U_0$ is a family of subgroups 
$U_{0,k} \; (k \in \R)$ of $U_0$, such that:
\begin{itemize}
\item[(i)] for $k \leq 0$, $U_{0,k}$ is a maximal compact subgroup of $U_0$;
\item[(ii)] $U_{0,k} \supset U_{0,\ell}$ for $k \leq \ell$;
\item[(iii)] $U_{0,k}$ lies between certain subgroups $H_{[k]}$ and $H_{(k)}$ of $U_0$.
\item[(iv)] $[U_{0,k},U_{0,\ell}] \subset U_{0,k+\ell}$ for all $k,\ell \in \R_{\geq 0}$.
\end{itemize}
\end{defn}
We recall from \cite[6.4.13]{BrTi1} that $H_{(k)}$ consists of those $h \in U_0$ such that 
\begin{equation}\label{eq:3.53}
[h,U_{\alpha,r}] \subset U_{\alpha,r+k} U_{2 \alpha, 2r + k} 
\qquad \forall \alpha \in \Phi, \forall r \in \R .
\end{equation}
By \cite[6.4.14]{BrTi1} the group $H_{[k]}$ for $G$ is the intersection of $U_0$ with 
the group generated by certain compact subgroups of the $U_\alpha$ for $\alpha \in \Phi$.

\begin{prop}\label{prop:3.17}
There exist compact open normal subgroups of $N_Q (S_\es)$ which form a good filtration of
$Z_Q (S_\es)$ and provide a prolongation of the valuation of the generating root datum from 
Theorem \ref{thm:3.3}. 
\end{prop}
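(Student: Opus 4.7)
The plan is to construct the filtration in two steps and then verify the four conditions of Definition \ref{defn:0}. For $k \leq 0$ I would take $U_{0,k} := Z_Q (S_\es)_\cpt$, the unique maximal compact subgroup supplied by Lemma \ref{lem:3.16}(b); being unique it is characteristic, and in particular normal in $N_Q (S_\es)$. For $k > 0$ I would mimic the construction of Proposition \ref{prop:2.1} applied to $Z_{\mc Q}(\mc S_\es)$---which has empty root system with respect to $\mc S_\es$, so no Iwahori decomposition is needed---to produce a decreasing family of compact open subgroups $V_j \subset Z_Q (S_\es)$ ($j \in \N$) forming a neighbourhood basis of $1$ and satisfying the commutator estimate $[V_j, V_\ell] \subset V_{j+\ell}$. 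The input for this construction is an $\mc O_F$-lattice $L \subset \mr{Lie}(Z_{\mc Q}(\mc S_\es))(F)$ with $[L, L] \subset \omega_F L$ (achievable after rescaling, cf.~\eqref{eq:2.18}) together with the exponential-type analytic diffeomorphism $\phi$ used in loc.~cit.

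To enforce normality in $N_Q (S_\es)$ I then set
\[
U_{0,k} := \bigcap\nolimits_{n \in N_Q (S_\es)} n V_{\lceil c k \rceil + j_0} n^{-1} \qquad (k > 0),
\]
for constants $c > 0$ and $j_0 \in \N$ to be fixed below. Exactly as in the proof of Lemma \ref{lem:3.16}(c), the fact that $S_\es$ centralises $Z_Q (S_\es)$, together with the compactness of $N_Q (S_\es)/S_\es$ shown in \eqref{eq:3.26}, forces the normaliser of each $V_j$ in $N_Q (S_\es)$ to have finite index in $N_Q (S_\es)$; hence the intersection above is finite, and each $U_{0,k}$ is a compact open subgroup of $Z_Q (S_\es)$ normal in $N_Q (S_\es)$. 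Conditions (i), (ii) and (iv) of Definition \ref{defn:0} are then immediate, and $\bigcap_{k > 0} U_{0,k} = \{1\}$.

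The main obstacle is condition (iii), the sandwiching $H_{[k]} \subset U_{0,k} \subset H_{(k)}$ with respect to the valuation from Theorem \ref{thm:3.3}. For the upper inclusion $U_{0,k} \subset H_{(k)}$ one must show that each $h \in U_{0,k}$ satisfies $[h, U_{\alpha, r}] \subset U_{\alpha, r+k} U_{2\alpha, 2r+k}$ for every $\alpha \in \Phi (\mc Q, \mc S_\es)$ and every $r \in \R$ (see \eqref{eq:3.53}). I plan to prove this analytically: the commutator map $(h,u) \mapsto h u h^{-1} u^{-1}$ on $Z_Q (S_\es) \times U_\alpha (F)$ is $F$-analytic and vanishes identically on $\{1\} \times U_\alpha (F)$, so the quantitative estimate used in \eqref{eq:2.24}--\eqref{eq:1.25} furnishes a constant $c > 0$ such that any $h \in V_{\lceil c k \rceil + j_0}$ increases the valuation $\phi_\alpha$ of any $u \in U_\alpha (F)$ by at least $k$. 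For the lower inclusion $H_{[k]} \subset U_{0,k}$, the generators of $H_{[k]}$ described after Definition \ref{defn:0} can be written, via the schematic nature of the Bruhat--Tits valuation exploited in the last step of the proof of Theorem \ref{thm:3.3}, in terms of the restriction of $\phi$ to the appropriate root-space components of a lattice in $\mr{Lie}(\mc Q)(F)$ extending $L$; provided $c$ is taken large enough, these generators then lie in $V_{\lceil c k \rceil + j_0}$ and, after symmetrising over $N_Q (S_\es)$, inside $U_{0,k}$. Choosing $c$ so that both inclusions hold simultaneously yields the required prolongation of the valuation.
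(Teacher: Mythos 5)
Your plan founders on condition (iii) of Definition \ref{defn:0}, which is where all the real content of the proposition sits. For the upper inclusion $U_{0,k}\subset H_{(k)}$ you appeal to the estimates around \eqref{eq:2.24}--\eqref{eq:1.25}, but those are uniform only on compact sets, whereas \eqref{eq:3.53} demands the gain of $k$ in $\phi_\alpha$ for \emph{every} $r\in\R$, i.e. for elements of the non-compact group $U_\alpha (F)$ of arbitrarily negative valuation; an analytic contraction estimate near the identity says nothing about $[h,u]$ for such $u$. (One could try to restore uniformity in $r$ by using that your $U_{0,k}$ is normalized by $S_\es$ and that conjugation by $S_\es$ shifts the filtrations $U_{\alpha,r}$ compatibly, but that step is nowhere in your argument.) More fundamentally, the two halves of (iii) pull your constant $c$ in opposite directions: enlarging $c$ shrinks $V_{\lceil ck\rceil+j_0}$, hence $U_{0,k}$, which helps $U_{0,k}\subset H_{(k)}$ but destroys $H_{[k]}\subset U_{0,k}$ --- for fixed $k$ the group $H_{[k]}$ is a fixed (generally nontrivial) subgroup of $Z_Q(S_\es)$, built from products of root-group elements, while $V_{\lceil ck\rceil+j_0}\to\{1\}$ as $c\to\infty$. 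So ``choose $c$ large enough for both inclusions'' is incoherent: the sandwich $H_{[k]}\subset U_{0,k}\subset H_{(k)}$ is a statement about one correctly normalized filtration, and a generic analytic congruence filtration with a tunable scaling parameter cannot be forced into it by soft estimates.

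This is precisely why the paper does not construct the filtration ab initio inside $Z_Q(S_\es)$: it imports the canonical schematic prolongations of Schneider--Stuhler and Yu on the reductive factors $Z_{\mc G_i}(\mc S_i)(F_i)$ of the standard presentation, for which both inclusions in (iii) are theorems and which, being characteristic as $\mc O_{F_i}$-schemes, are automatically normalized by $N_G (S_\es)$; it then handles the commutative factor $\mc C$ and the possibly non-compact centre by intersecting with the groups $C_{\lceil k\rceil}$ from Proposition \ref{prop:2.1} and with finitely many $N_G (S_\es)$-conjugates, and finally pulls everything back along $\pi$ to $Z_Q (S_\es)$. Two further warnings: Proposition \ref{prop:2.1} as proved only gives normality of $K_n$ in $K_0$, not the graded estimate $[V_j,V_\ell]\subset V_{j+\ell}$, so even your condition (iv) needs a supplementary (reindexed) bilinear estimate in the style of \eqref{eq:2.43}; and your claim that $\bigcap_{k>0}U_{0,k}=\{1\}$ is strictly stronger than what the paper achieves --- its filtration of $Z_Q(S_\es)$ has intersection $\mc R_{u,F}(\mc Q)(F)$, and the paper explicitly remarks that a prolongation with trivial intersection was not obtained --- which should have signalled that the soft construction is being asked to do more than it can deliver.
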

\begin{proof}
First we will settle this for the pseudo-reductive group $\mc G$, with $U_0 = Z_G (S_\es)$.
It was shown in \cite[Proposition I.2.6]{ScSt} that the valuations of the root data from 
\eqref{eq:2.23} used above can be prolonged. In fact there exists a natural prolongations 
\cite[\S 5]{Yu}, which give compact open subgroups of $Z_{\mc G_i}(\mc S_i)(F_i)$ that are stable
under all automorphisms of $Z_{\mc G_i}(\mc S_i)$ as an $\mc O_{F_i}$-scheme.

Via the same procedure as in part (b), these give rise to a prolongation of the valuation of the 
generating root datum for $\mc G' (F)$ 
(which is just like \eqref{eq:3.8}, only with $G$ replaced by $G'$). 

This yields such subgroups $U'_{0,k}$ for $\mc G' (F)$, that is, in $Z_{G'}(S_\es)$. 
In fact these come from $\mc O_F$-schemes \cite[Corollary 8.8]{Yu}, and they are characteristic 
subgroups of $Z_{\mc G'}(\mc S_\es)$ in the category of $\mc O_F$-schemes. The actions of
$(N_{\mc G'}(\mc S_\es) \rtimes \mc C)(F)$ and of $N_{\mc G}(\mc S_\es)(F)$ on $Z_{G'}(S_\es)$ 
are via automorphisms of $\mc O_F$-schemes, so $N_{G'}(S_\es) \rtimes C$ and $N_G (S_\es)$ 
normalize every $U'_{0,k}$.

At the same time we get analogous subgroups of $\mc G' (F')$ for every (finite Galois) 
extension $F'/F$. This yields a consistent definition of a subgroup 
$\psi_{\mc C}^{-1}(U'_{0,k}) \subset C$, for every $k \in \R_{\geq 0}$. 
The same argument as in the proof of part (b) shows that 
\begin{equation}\label{eq:3.49}
[\psi_{\mc C}^{-1}(U'_{0,k}), U'_{0,\ell}] \subset U'_{0,k+\ell} 
\text{ for all } k,\ell \in \R_{\geq 0}.
\end{equation}
Hence the $U'_{0,k} \rtimes \psi_{\mc C}^{-1}(U'_{0,k})$ with $k \in \R_{\geq 0}$ form a 
family of subgroups of $G' \rtimes C$ which satisfies (ii) and (iv). 
Notice that for $k = 0$ we obtain
\begin{equation}\label{eq:3.33}
Z_{G'} (S_\es)_\cpt \rtimes \psi_{\mc C}^{-1}(U'_{0,0}) = (Z_{G'} (S_\es) \rtimes C)_x , 
\end{equation}
for any point $x$ in the standard apartment of $\mc B (\mc G',F)$.

However, $\psi_{\mc C}^{-1}(U'_{0,k})$ is compact if and only if $Z(\mc G)(F)$ is compact. 
When $Z (\mc G)(F)$ is not compact, we need to do more work. Applying Proposition \ref{prop:2.1} 
to $\mc C$ with the trivial root system gives us a decreasing sequence of compact open subgroups 
$C_n \; (n \in \N)$ of $C$, with $\bigcap_{n=1}^\infty C_n = \{1\}$. We consider
\begin{equation}\label{eq:3.32}
U'_{0,k} \rtimes \big( \psi_{\mc C}^{-1}(U'_{0,k}) \cap C_{\lceil k \rceil} \big) 
\qquad k \in \R ,
\end{equation}
where $\lceil k \rceil \in \Z$ denotes the ceiling of $k$.

The group $(\mc S' \rtimes \mc S)(F)$, with $\mc S'$ as in \eqref{eq:1.46}, is contained in 
$Z_{G'} (S_\es) \rtimes C$ and centralizes $\mc C$. Thus it normalizes \eqref{eq:3.32}. 
By Lemma \ref{lem:1.6}.b $(\mc S' \rtimes \mc S)(F)$ is cocompact in $N_{G'} (S_\es) \rtimes C$, 
and \eqref{eq:3.32} is compact and open in $N_{G'} (S_\es) \rtimes C$. Therefore the normalizer
of \eqref{eq:3.32} has finite index in $N_{G'} (S_\es) \rtimes C$. In view of Proposition
\ref{prop:1.7}.c dividing out the central subgroup $\alpha (T')$ maps $N_{G'} (S_\es) \rtimes C$ 
to a finite index subgroup of $N_G (S_\es)$. Hence the normalizer of 
\eqref{eq:3.32} in $N_G (S_\es)$ has finite index in there. Then the intersection in
\begin{equation}\label{eq:3.50}
\bigcap\nolimits_{g \in N_{G} (S_\es)} g \big( U'_{0,k} \rtimes 
\big( \psi_{\mc C}^{-1}(U'_{0,k}) \cap C_{\lceil k \rceil} \big) \big) g^{-1}
\end{equation}
is essentially over finitely many terms, and the result is again open in $Z_{G'} (S_\es) \rtimes C$. 
Moreover \eqref{eq:3.50} is a compact subgroup of $N_{G'}(S_\es) \rtimes C$, normalized by
$N_G (S_\es)$. Since $N_G (S_\es) \rtimes C$ normalizes $U'_{0,k}$, the coordinates of 
the elements of \eqref{eq:3.50} in $Z_{G'}(S_\es)$ are precisely $U'_{0,k}$. Therefore 
\eqref{eq:3.50} has the form $U'_{0,k} C'_k$ for a compact open subgroup 
\[
C'_k \subset \psi_{\mc C}^{-1}(U'_{0,k}) \cap C_{\lceil k \rceil} .
\]
For $k \in \R_{\leq 0}$, we put $U_{0,k} = Z_G (S_\es)_\cpt$, which is compact, open and
normal in $N_G (S_\es)$ by Lemma \ref{lem:2.7}. For $k \in \R_{>0}$ we define $U_{0,k}$ to be 
the image of $U'_{0,k} C'_k$ in $G$. By the above construction and Proposition \ref{prop:1.7}, 
every $U_{0,k}$ is a compact open normal subgroup of $N_G (S_\es)$.

Now property (i) holds by definition, while the property (ii) for 
$\tilde U_{0,k} \rtimes \psi_{\mc C}^{-1}(U'_{0,k})$ immediately implies it also for $U_{0,k}$. 
Since $C$ is commutative, \eqref{eq:3.49} entails that $U'_{0,k} C'_k$ and $U_{0,k}$ satisfy (iv).
The description of $G$ as in \eqref{eq:2.6} implies that $H_{[k]}$ for $G$ is just the image of 
$H_{[k]}$ for $G'$, which is contained in $U'_{0,k}$ by (iii). So $H_{[k]} \subset U_{0,k}$, 
as required.
All elements of $U'_{0,k}$ have the property \eqref{eq:3.53} and 
\[
U'_{0,k} C'_k \subset U'_{0,k} \psi_{\mc C}^{-1}(U'_{0,k}) .
\]
Hence taking commutators with any element of $U_{0,k}$ also behaves as in \eqref{eq:3.53} and 
$U_{0,k} \subset H_{(k)}$. We conclude that (iii) holds for the groups $U_{0,k}$, and that 
they define a prolongation of the valuated root datum for $G$.

To prolong the valuation of the generating root datum for $Q$, we can take the groups
$\pi^{-1}(U_{0,k})$ with $k \in \R$. By \eqref{eq:3.52} these are subgroups of the group
$Z_Q (S_\es)$, which plays the role of $U_0$ for $Q$. The properties (ii)--(iv) of 
the groups $U_{0,k}$ immediately imply them also for the groups $\pi^{-1}(U_{0,k})$. By Theorem 
\ref{thm:3.8}.c $\pi^{-1}(N_G (S_\es)) = N_Q (S_\es)$, so the $\pi^{-1}(U_{0,k})$ are open normal 
subgroups of $N_Q (S_\es)$. Theorem \ref{thm:3.8}.b shows that they are compact, and that 
\[
\pi^{-1}(U_{0,k}) = \pi^{-1}(Z_G (S_\es)_\cpt) = Z_Q (S_\es)_\cpt 
\quad \text{for } k \leq 0 . \qedhere
\]
\end{proof}

Notice that, when $\mc Q$ is not pseudo-reductive, the intersection $\bigcap_{k \in \R} U_{0,k} =\\
\mc R_{u,F}(\mc Q)(F)$ is more than just the identity element. It would be useful to find a
prolonged valuation with $\bigcap_{k \in \R} U_{0,k} = \{1\}$, but we did not achieve this
in general.

\subsection{Affine buildings} \

Let $\mc Z$ be a maximal $F$-split torus in $Z(\mc Q)$. Its image in the 
commutative quasi-reductive $F$-group $\mc Q / \mc D (\mc Q) \mc R_{u,F}(\mc Q)$ is a maximal
$F$-split torus $\mc Z$' of the same rank. In particular we can identifty $X_* (\mc Z') 
\otimes_\Z \R$ with $X_* (\mc Z) \otimes_\Z \R$. We let $\mc Q (F)$ act on this space via 
$\mc Q \to \mc Q / \mc D (\mc Q) \mc R_{u,F}(\mc Q)$ and then as in Lemma \ref{lem:1.1}.
Notice that this action is by translations, and in particular by isometries.

For use in Paragraph \ref{par:adm} we introduce and investigate a subgroup of $Q$ which plays a 
role analogous to the kernel of all unramified characters of $Q$ (if $Q$ were reductive). 
The action of $Q$ on $X_* (\mc Z) \otimes_\Z \R$ yields a group homomorphism
\begin{equation}\label{eq:3.30}
Q \to X_* (\mc Z) \otimes_\Z \R : q \mapsto q \cdot 0 . 
\end{equation}
We define ${}^0 Q$ as the kernel of this homomorphism, or equivalently as the isotropy group
of $0 \in X_* (\mc Z) \otimes_\Z \R$ in $Q$.

\begin{lem}\label{lem:3.15}
\enuma{
\item The group ${}^0 Q$ is an open normal, unimodular subgroup of $Q$.
\item ${}^0 Q$ contains every compact subgroup of $Q$.
\item The intersection ${}^0 Q \cap Z(Q)$ is compact and $Q / {}^0 Q Z(Q)$ is finite.
} 
\end{lem}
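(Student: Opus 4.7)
The homomorphism $\nu \colon Q \to X_*(\mc Z) \otimes_\Z \R$, $q \mapsto q\cdot 0$, factors through the natural map $Q \to \mc Q^{ab}(F)$, where $\mc Q^{ab} := \mc Q/\mc D(\mc Q)\mc R_{u,F}(\mc Q)$, and then through the translation action from Lemma \ref{lem:1.1}, applied to the commutative pseudo-reductive quotient $\mc Q^{ab}/\mc R_{u,F}(\mc Q^{ab})$ (whose unipotent radical has compact $F$-points by Theorem \ref{thm:3.8}(b)). That construction identifies the image of $\mc Q^{ab}(F)$ in $X_*(\mc Z) \otimes_\Z \R$ with a finitely generated free abelian group embedded as a lattice, so $\nu(Q)$ is discrete. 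This gives openness of ${}^0 Q = \ker \nu$; normality is clear (kernel of a homomorphism into an abelian group), and unimodularity of ${}^0 Q$ follows from that of $Q$ (Lemma \ref{lem:1.4}), as an open subgroup of a unimodular group inherits unimodularity by restriction of the left and right Haar measures. This settles (a). Part (b) is immediate: a compact subgroup of $Q$ has compact image in a real vector space, hence trivial image.

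Turning to (c), since $\mc Z \subset Z(\mc Q)$ is the maximal $F$-split central torus we have $\mc Z(F) \subset Z(Q)$. The composition $\mc Z \to \mc Q^{ab}$ is an isogeny onto a maximal $F$-split torus of $\mc Q^{ab}$, so $\nu(\mc Z(F))$ is a full-rank sublattice of the discrete group $\nu(Q)$; the quotient $\nu(Q)/\nu(\mc Z(F)) \cong Q/{}^0 Q\,\mc Z(F)$ is therefore finite, which yields finiteness of $Q/{}^0 Q\,Z(Q)$. For compactness of $Z(Q) \cap {}^0 Q$, note that $\mc Z(F) \cap {}^0 Q$ is the kernel of the full-rank lattice map $\mc Z(F) \to X_*(\mc Z)$, so it equals the maximal compact subgroup of $\mc Z(F)$ and is compact. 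Writing $Z(Q) \cap {}^0 Q$ as an extension of $\mc Z(F) \cap {}^0 Q$ by a closed subgroup of $Z(Q)/\mc Z(F)$, it suffices to prove that $Z(Q)/\mc Z(F)$ is compact.

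For that final step I would use $Z(Q) \subset Z_Q(S_\es)$ together with compactness of $Z_Q(S_\es)/S_\es$ (Lemma \ref{lem:3.16}(a)), reducing to showing $(Z(Q) \cap S_\es)/\mc Z(F)$ is finite. An element $s \in S_\es$ automatically commutes with $Z_{\mc Q}(\mc S_\es)$ (since $\mc S_\es$ is central there), and commutes with the root subgroup $\mc U_\alpha$ of the generating root datum of Theorem \ref{thm:3.3} if and only if $\alpha(s) = 1$; combined with the fact that $\tilde Q \supset \mc R_{us,F}(\mc P_\es)(F)$ and the $U_\alpha$ generate a finite-index subgroup of $Q$ (Theorem \ref{thm:3.2}), this identifies $Z(Q) \cap S_\es$ with the $F$-points of the $F$-algebraic subgroup $\bigcap_{\alpha \in \Phi(\mc Q,\mc S_\es)} \ker \alpha \subset \mc S_\es$. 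The rank of the root system $\Phi(\mc Q,\mc S_\es)$ is $\dim \mc S_\es - \dim \mc Z$ (the $F$-semisimple rank of $\mc Q$), so $(\bigcap_\alpha \ker \alpha)^\circ = \mc Z$, making the quotient $(\bigcap_\alpha \ker \alpha)/\mc Z$ a finite $F$-group and giving the desired finiteness on $F$-points. The main obstacle is exactly this last identification: keeping track of the abstract center $Z(Q)$ versus $Z(\mc Q)(F)$, and dealing with the potentially non-surjective map $\pi \colon Q \to G$ of Theorem \ref{thm:3.2}, which is what forces recourse to the generating root datum of Theorem \ref{thm:3.3} rather than a direct reduction to the pseudo-reductive case.
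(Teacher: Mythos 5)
Parts (a), (b) and the finiteness half of (c) are correct and essentially the paper's own argument: openness of ${}^0 Q$ because the orbit of $0$ is discrete (the paper instead quotes openness of point stabilizers from the proof of Lemma \ref{lem:1.1}, which amounts to the same), unimodularity by restricting Haar measure to an open subgroup, triviality of the image of a compact subgroup in a vector space, and finiteness of $Q / {}^0 Q\, Z(Q)$ from the full-rank sublattice $\nu(\mc Z (F))$ inside the discrete group $\nu (Q)$.

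The compactness of ${}^0 Q \cap Z(Q)$ is where you leave the paper's route, and your chain has a genuine gap at the reduction ``$Z_Q (S_\es)/S_\es$ compact and $(Z(Q) \cap S_\es)/\mc Z (F)$ finite, hence $Z(Q)/\mc Z (F)$ compact''. For closed subgroups $H, P$ of a locally compact group $B$ with $B/P$ compact, smallness of $H \cap P$ does not bound $H$: one needs the image of $H$ in $B/P$ to be closed, and that can fail. Already $B = \R$, $P = \Z$, $H = \alpha \Z$ with $\alpha$ irrational has $B/P$ compact and $H \cap P = \{0\}$ while $H$ is noncompact; the same phenomenon occurs inside a compact-by-$\Z^r$ group such as $Z_Q (S_\es)$ (Lemma \ref{lem:3.16}.b), e.g. the graph of an irrational rotation in $(\R / \Z) \times \Z$. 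Your computation of $Z(Q) \cap S_\es$ (which is fine, modulo minor care with multipliable roots, and does need exactly the generating property of the root datum of Theorem \ref{thm:3.3}, since centralizing a finite-index subgroup would not suffice) only controls central elements that lie in $S_\es$; it does not exclude a central element whose translation vector on $\mh A_\es$ has a nonzero component in the directions of $\mh A_0$ yet no power in $S_\es$, which is precisely what the missing closedness statement would have to rule out. The statement is true, and centrality repairs it directly: for $z \in Z(Q)$ and $n \in N_Q (S_\es)$ one has $n z n^{-1} = z$, so the translation vector of $z$ on $\mh A_\es$ is $W(\mc Q,\mc S_\es)$-invariant, hence has vanishing $\mh A_0$-component and lies in $X_* (\mc Z) \otimes_\Z \R$; if moreover $z \in {}^0 Q$ this vector is $0$, so $z$ lies in the point stabilizer $Z_Q (S_\es)_\cpt$, which is compact. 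The paper's proof is shorter still and never looks at $Z(Q) \cap S_\es$: it notes that ${}^0 Q \cap Z(Q)$ maps into the isotropy group of $0$ for the proper action of Lemma \ref{lem:1.1}, which is compact, and pulls this back along $\pi$ using compactness of $\mc R_{u,F}(\mc Q)(F)$ (Theorem \ref{thm:3.8}.b).
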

\begin{proof}
(a) From the proof of Lemma \ref{lem:1.1} we see that the stabilizer in \\
$(\mc Q / \mc D (\mc Q) \mc R_{u,F}(\mc Q))(F)$ of any point of $X_* (\mc Z) \otimes_\Z \R$ 
is open and normal in that group. Hence ${}^0 Q$ is an open normal subgroup of $Q$. 
Since $Q$ is unimodular (Lemma \ref{lem:1.4}), so is ${}^0 Q$.\\
(b) The vector space $X_* (\mc Z) \otimes_\Z \R$ has no compact subgroups besides $\{0\}$. 
Hence the image of any compact subgroup $K$ of $Q$ under \eqref{eq:3.30} is $\{0\}$, which
means that $K \subset {}^0 Q$.\\
(c) The quotient $Q / {}^0 Q$ injects into $X_* (\mc Z) \otimes_\Z \R$. We note that by
\eqref{eq:1.5} and \eqref{eq:1.4} the image is a lattice of full rank in $X_* (\mc Z) 
\otimes_\Z \R$. The image of $\mc Z (F) \subset Z(Q)$ is the sublattice $X_* (\mc Z)$,
which has finite index in any larger lattice in $X_* (\mc Z) \otimes_\Z \R$. Hence 
$Q / {}^0 Q \mc Z (F)$ and $Q / {}^0 Q Z(Q)$ are finite.

The group $Z(Q)$ acts on $X_* (\mc Z) \otimes_\Z \R$ via $\pi : Z(Q) \to \mc C (F)$. 
The $\mc C (F)$-action is proper by Lemma \ref{lem:1.1}, so the isotropy group
$\mc C (F)_0$ is compact. By Theorem \ref{thm:3.8}.b so are $\pi^{-1} (\mc C (F)_0)$
and its closed subgroup ${}^0 Q \cap Z(Q)$.
\end{proof}

The big advantage of Theorem \ref{thm:3.3} is that now many results of Bruhat and Tits apply to
$Q = \mc Q(F)$, in particular it gives rise an affine building $\mc{BT}(\mc Q,F)$. Then 
$\mc{BT}(\mc Q,F) \times X_* (\mc Z) \otimes_\Z \R$ is an affine building -- maybe not literally 
according to \cite{BrTi1}, but in the sense of \cite[\S 2.1]{Tit}. The remarks in \cite{Tit}
entail that the results from \cite{BrTi1} are valid just as well for such an extended building.

\begin{thm}\label{thm:3.4}
\enuma{
\item $Q$ contains a double Tits system.
\item This double Tits system gives rise to a thick affine building $\mc{BT}(\mc Q,F)$ with 
an isometric, cocompact $Q$-action.
\item $\mc{BT}(\mc Q,F) \times X_* (\mc Z) \otimes_\Z \R$ is canonically homeomorphic to 
$\mc B (\mc G,F)$.
\item We endow $\mc{BT}(\mc Q,F) \times X_* (\mc Z) \otimes_\Z \R$ with the diagonal $Q$-action 
and we let $Q$ act on $\mc B (\mc G,F)$ via $\pi : \mc Q (F) \to \mc G (F)$. Then part (c) becomes 
an isomorphism of $Q$-spaces. 
\item The $Q$-action on $\mc{BT}(\mc Q,F) \times X_* (\mc Z) \otimes_\Z \R$ is isometric, proper and 
cocompact. Moreover $\mc{BT}(\mc Q,F) \times X_* (\mc Z) \otimes_\Z \R$ is a universal space for
proper $Q$-actions \cite[\S 1]{BCH}.
}
\end{thm}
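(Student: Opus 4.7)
The plan is to leverage the valuated root datum from Theorem~\ref{thm:3.3} together with the prolonged valuation from Proposition~\ref{prop:3.17}, and then transfer the building-theoretic picture established for $\mc G$ in Proposition~\ref{prop:2.3} across the quotient map $\pi : \mc Q(F) \to \mc G(F)$.

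For parts (a) and (b), I would invoke the main existence result of \cite[\S 6.5 and \S 2]{BrTi1}: a generating root datum equipped with a prolonged valuation on $U_0 = Z_Q(S_\es)$ automatically gives rise to a saturated double Tits system in $Q$, and the associated Coxeter complex enlarges to a thick affine building on which $Q$ acts by simplicial isometries. Thickness is ensured because the root groups $U_\alpha$ are non-trivial of arbitrarily small positive ``level,'' and cocompactness follows because the standard Iwahori-type subgroup generated by the $U_{\alpha,0}$ and $U_{0,0}$ fixes a chamber with compact stabilizer, giving a compact fundamental domain.

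For (c) and (d), observe that $\pi$ is bijective on each root subgroup (by \eqref{eq:3.4}) and that the two root systems coincide by \eqref{eq:3.51}. Moreover the valuations on the $U_\alpha \subset Q$ were constructed in Theorem~\ref{thm:3.3} by transporting back the valuations on the semisimple factors $\mc G_i(F_i)$, and the prolongation $\pi^{-1}(U_{0,k})$ of Proposition~\ref{prop:3.17} was obtained by pullback from the corresponding prolongation for $G$. Hence the apartment associated to $\mc S_\es$ in $\mc{BT}(\mc Q,F)$ is canonically identified with $\prod_i \mh A_{\mc T_i}$, carrying the same affine Weyl group and the same affine root hyperplanes. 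Gluing via stabilizers produces a canonical homeomorphism $\mc{BT}(\mc Q,F) \isom \prod_i \mc{BT}(\mc G_i,F_i)$, and taking the product with $X_*(\mc Z) \otimes_\Z \R$ -- on which $Q$ acts by translations through $\pi$ (using Theorem~\ref{thm:3.8}(c) to see that $\mc R_{u,F}(\mc Q)$ acts trivially on this factor) -- yields the desired $Q$-equivariant homeomorphism with $\mc B(\mc G,F)$.

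For (e), isometry is visible in the construction, and cocompactness follows from Proposition~\ref{prop:2.3}(a) together with the finite-index image statement of Theorem~\ref{thm:3.2}(a). Properness is the first point that genuinely uses quasi-reductivity: the kernel $\mc R_{u,F}(\mc Q)(F)$ of $\pi$ is compact by Theorem~\ref{thm:3.8}(b), so composing the proper $G$-action on $\mc B(\mc G,F)$ with $\pi$ gives a proper $Q$-action. The main obstacle I anticipate is universality. I plan to apply the standard criterion: a second countable, totally disconnected locally compact group acting properly and isometrically on a complete, finite-dimensional CAT(0) space such that every point-stabilizer is compact open realizes a universal proper space in the sense of \cite{BCH}. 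Affine buildings are CAT(0) and contractible, their direct product with a Euclidean space inherits these properties, and the point-stabilizers are compact by properness and open by \eqref{eq:2.60}; so the hypotheses are satisfied. The care required is in verifying finite-dimensionality of the extended building and in quoting a version of the universality criterion that covers the extension by the vector-space factor $X_*(\mc Z) \otimes_\Z \R$.
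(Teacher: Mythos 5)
Your overall route coincides with the paper's: parts (a) and (b) are quoted from \cite[\S 2 and \S 6]{BrTi1} applied to the valuated root datum of Theorem \ref{thm:3.3} with the prolongation of Proposition \ref{prop:3.17}, and part (e) is argued exactly as in the paper (isometry and cocompactness from the construction and Theorem \ref{thm:3.2}, properness from the compactness of $\mc R_{u,F}(\mc Q)(F)$, universality from a criterion of \cite{BCH}; the paper verifies the hypotheses via the CAT(0) property, uniqueness of geodesics and the fixed point theorem from \cite{BrTi1}, so your worry about finite-dimensionality is not needed).

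The genuine gap is in (c)--(d). The sentence ``gluing via stabilizers produces a canonical homeomorphism $\mc{BT}(\mc Q,F) \cong \prod_i \mc{BT}(\mc G_i,F_i)$'' is precisely what has to be proved, and matching apartments, affine Weyl groups and valuations does not yet give it: the building is glued from copies of the apartment indexed by the group itself, and $\pi (\mc Q (F)) = \tilde Q$ is only a finite-index subgroup of $\mc G (F)$ (Theorem \ref{thm:3.2}), with $Z_{\tilde Q}(S_\es)$ correspondingly smaller than $Z_G (S_\es)$; a priori the two gluings could yield different chamber sets. The paper's proof of (c) is devoted to exactly this point: first one shows that $\mc R_{u,F}(\mc Q)(F)$ lies in every Iwahori subgroup and hence acts trivially, so $\mc{BT}(\mc Q,F) = \mc{BT}(\tilde Q)$; then one observes that the building depends only on the subgroup generated by the root groups together with the pointwise stabilizer $H$ of the standard apartment, and the Bruhat decompositions \eqref{eq:3.10} show that the relevant subgroups of $\tilde Q$ and of $G$ differ only by a finite-index inclusion of apartment-fixing subgroups absorbed into the Iwahori subgroups, whence the chamber sets coincide; the same principle identifies $\mc{BT}(\mc G,F)$ with $\mc{BT}(\mc G',F) = \prod_i \mc{BT}(\mc G_i,F_i)$. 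Without an argument of this kind your canonical map is not known to be well defined or bijective. Relatedly, for (d) you only check triviality of the $\mc R_{u,F}(\mc Q)(F)$-action on the factor $X_*(\mc Z)\otimes_\Z \R$; you also need that elements of $Z_Q (S_\es)_\cpt$, and in particular the kernel of $\pi$, act on $\mc{BT}(\mc Q,F)$ compatibly with their images, which again comes from their membership in the relevant parahoric subgroups rather than from the root-datum transport alone.
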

\begin{proof}
(a) With Theorem \ref{thm:3.3} at hand, this is \cite[Th\'eor\`eme 6.5.i]{BrTi1}. \\
(b) This is worked out in \cite[\S 2 and \S 6]{BrTi1}. For later use we recall some parts of the
construction. Firstly, a standard apartment $\mh A_0$ with an action of $N_Q (S_\es)$ is exhibited in
\cite[Proposition 6.2.5]{BrTi1}. Next, a standard Iwahori subgroup $B$ of $Q$ is built, and a
set of chambers is defined as the collection of subgroups of $Q$ that are conjugate to $B$
(in other words, the set of Iwahori subgroups of $Q$). This yields an affine building
$\mc BT (\mc Q,F)$, on which $Q$ acts by \cite[Th\'eor\`eme 6.5.ii]{BrTi2}. It is naturally a direct
product of affine buildings whose facets are simplices and whose underlying affine Weyl group is 
irreducible \cite[Proposition 2.6.4]{BrTi1}. Every such affine building is thick, in the sense
that every simplex of codimension is contained in the closure of at least three chambers.\\
(c) The group $Z_Q (S_\es)$ acts by translations on $\mh A_0$, and Bruhat and Tits call the isotropy
subgroup (of any point of $\mh A_0$) $H$. Comparing with the proof of Lemma \ref{lem:2.7}, we see that
$H$ contains the unique maximal compact subgroup of $Z_Q (S_\es)$. In particular $H$ contains the
compact normal subgroup $\mc R_{u,F}(\mc Q)(F)$ of $Q$.

The standard Iwahori subgroup $B$ contains $H$, so in particular $\mc R_{u,F}(\mc Q)(F) \subset B$.
Since all Iwahori subgroups of $Q$ are conjugate, $\mc R_{u,F}(\mc Q)(F)$ is contained in every one
of them. But every Iwahori subgroup fixes the associated chamber of $\mc{BT}(\mc Q,F)$ pointwise,
so $\mc R_{u,F}(\mc Q)(F)$ fixes this entire building pointwise. In view of the construction of
the valuated root data in the proof of Theorem \ref{thm:3.3} (first for $\tilde Q$, from there
for $Q$), this entails that $\mc{BT}(\mc Q,F)$ is the same as the affine building
$\mc{BT}(\tilde Q)$ associated to the analogous double Tits system for $\tilde Q$.

The latter double Tits system is contained in the subgroup $\tilde Q'$ of $\tilde Q$ generated
by the root subgroups $\mc U_\alpha (F)$ and $\tilde H$ (the image of $H$ in $\tilde Q$).
In effect, restriction from $\tilde Q$ to $\tilde Q'$ means that $Z_{\tilde Q}(S_\es)$ is
replaced by $\tilde{H} X$, where $X$ is a finitely generated free abelian subgroup of
$Z_{\tilde Q}(S_\es)$ which acts on $\mh A_0$ as the translation part of the affine Weyl group
$\mh W$ from \cite[6.2.11]{BrTi1}. Here the $\mh W$-action is generated by the affine reflections
of $\mh A_0$ coming from elements $m_\alpha (u) \in N_{\tilde Q}(S_\es)$ as in the definition of
$M_\alpha$. Thus the entire building $\mc{BT}(\mc Q,F)$ depends only on $\tilde Q'$.

Similarly the affine building $\mc{BT}(\mc G,F)$ depends only on the subgroup $G''$ of $G$
generated by the $U_\alpha$ and $H''$ (the $H$ for $G$). From Theorem \ref{thm:3.2} we know that
the difference between $\tilde Q'$ and $G''$ is small, in a precise sense. Together with
\cite[6.2.11]{BrTi1} we find first that $\tilde Q'$ and $G''$ have the same image in the group of
affine transformations of $\mh A_0$, and then that their difference comes entirely from the
finite index inclusion $\tilde H \subset H''$. As in the proof of Theorem \ref{thm:3.2}.c, one
can find Bruhat decompositions for $\tilde Q'$ and $G''$:
\begin{equation}\label{eq:3.10}
\begin{aligned}
G'' = H'' X & \mc R_{u,F}(\mc P_\es)(F) W(\mc G,\mc S_\es) \mc R_{u,F}(\mc P_\es)(F) , \\
\tilde Q' = \tilde H X & \mc R_{u,F}(\mc P_\es)(F) W(\mc G,\mc S_\es) \mc R_{u,F}(\mc P_\es)(F) .
\end{aligned}
\end{equation}
Since $H''$ and $\tilde H$ are contained in the respective standard Iwahori subgroups, we
deduce from \eqref{eq:3.10} that there is a natural bijection between the chambers of
$\mc{BT}(\mc G,F)$ and those of $\mc{BT}(\tilde Q)$. The standard apartments in these
buildings are the same, because they depend only on the root subgroups $U_\alpha$ and on the
valuation. We conclude that $\mc{BT}(\tilde Q)$ can be identified canonically with
$\mc{BT}(\mc G,F)$.

From the above argument we also see that we obtain the same building $\mc{BT}(\mc G,F)$ if 
we replace $Z_G (S_\es)$ by any subgroup containing the intersection of $Z_G (S_\es)$ with
the subgroup of $G$ generated by the root subgroups. In particular $\mc{BT}(\mc G',F)$ is
just the same as $\mc{BT}(\mc G,F)$.

Recall that the building $\mc B (\mc G',F)$ is constructed in \eqref{eq:1.23} and Proposition
\ref{prop:2.3}, as the direct product of Bruhat--Tits buildings for the semisimple groups
$\mc G_i (F_i)$. Hence there are canonical homeomorphisms
\begin{equation}\label{eq:3.11}
\! \prod\nolimits_i \mc{BT}(\mc G_i ,F_i) = \mc B (\mc G',F) = \mc{BT}(\mc G',F) = 
\mc{BT}(\mc G,F) = \mc{BT}(\tilde Q) = \mc{BT}(\mc Q,F) . \hspace{-3mm}
\end{equation}
Recall from \eqref{eq:1.23} that 
$\mc B (\mc G,F) = \mc B(\mc G',F) \times X_* (\mc Z) \otimes_\Z \R$, where $\mc Z$ denotes
a maximal $F$-split torus in $Z(\mc G)$. From the proof of Theorem \ref{thm:3.2}.b, in particular 
\eqref{eq:3.3}, we see that the maximal $F$-split torus in $Z(\mc Q)$ is mapped isomorphically
to $\mc Z$ by $\pi : \mc Q \to \mc G$. Therefore we may identify this $\mc Z$ with the $\mc Z$
in the statement of the proposition. Together with \eqref{eq:3.11} this gives the desired
homeomorphism.\\
(d) The $Q$-action on $X_* (\mc Z) \otimes_\Z \R$ as subset of $\mc B (\mc G,F)$ goes via
$\mc Q \to \mc G \to \mc C / \phi (\mc T')$ and then as in \eqref{eq:1.4}.
As noted after \eqref{eq:2.6}, the image of $\mc G'$ in $\mc G$ is $\mc D (\mc G)$. Hence
\[
\mc C / \phi (\mc T') \cong \mc G / \mc D (\mc G) \cong \mc Q / \mc D (\mc Q) \mc R_{u,F}(\mc Q) ,
\]
showing that the action of $Q$ on $X_* (\mc Z) \otimes_\Z \R$ as defined just before the current 
proposition agrees with the action via $G$.

We already observed that the action of $Q$ on $\mc{BT}(\mc Q,F)$ can be identified with the
action of $\tilde Q = Q / \mc R_{u,F}(\mc Q)(F)$ on $\mc{BT}(\mc G,F)$. In \eqref{eq:3.11}
the action of $\tilde Q$ on $\mc{BT}(\mc G',F)$ is defined as the canonical extension of the
$\tilde Q'$-action (coming from the valuated root datum for that group) to $\tilde Q$. 
By \cite[Th\'eor\`eme 6.5.ii]{BrTi2} the embedding $\tilde Q' \to \tilde Q$ has the correct
type for such an extension. 

Similarly the $G$-action on $\mc{BT}(\mc G',F)$ is defined by first considering the action of
$G''$ derived from the valuated root datum, and then extending to $G$ via the embedding
$G'' \to G$. The two actions of $\tilde Q'$ on these buildings can be identified via 
\eqref{eq:3.11}, and they are extended in the same way to $\tilde Q$. Therefore the actions of
$\tilde Q$ on $\mc{BT}(\mc Q,F) \times X_* (\mc Z) \otimes_\Z \R$ and on $\mc B (\mc G,F)$
can be identified as well.\\
(e) The claim about the properties of the $Q$-action follows immediately from Theorem 
\ref{thm:3.2} and the compactness of $\mc R_{u,F}(\mc Q)(F)$. Bruhat and Tits showed that\\
$\mc{BT}(\mc Q,F) \times X_* (\mc Z) \otimes_\Z \R$ is a CAT(0)-space \cite[3.2.1]{BrTi1}, 
that it has unique geodesics \cite[2.5.13]{BrTi1} and that every compact subgroup of $Q$ 
fixes a point of this affine building \cite[Proposition 3.2.4]{BrTi1}. 
By \cite[Proposition 1.8]{BCH} these properties guarantee that 
$\mc{BT}(\mc Q,F) \times X_* (\mc Z) \otimes_\Z \R$ is a universal space for proper $Q$-actions.
\end{proof}

Theorem \ref{thm:3.4}.c justifies the definition
\begin{equation}\label{eq:3.17}
\mc B (\mc Q,F) := \mc{BT}(\mc Q,F) \times X_* (\mc Z) \otimes_\Z \R , 
\end{equation}
with the $Q$-action from Theorem \ref{thm:3.4}.d. Notice that with these conventions
\[
X_* (\mc Z) \otimes_\Z \R = \mc B (\mc Q / \mc D (\mc Q) \mc R_{u,F}(\mc Q),F) =
\mc B (\mc Q / \mc D (\mc Q),F).
\]
An apartment in the affine building $\mc B (\mc Q,F)$ is 
a subset of the form $\mh A \times X_* (\mc Z) \otimes_\Z \R$, where $\mh A$ is an apartment in
$\mc{BT}(\mc Q,F)$. By Proposition \ref{prop:2.3}.c, Theorem \ref{thm:3.4}.d and \eqref{eq:3.3}
every apartment of $\mc B (\mc Q,F)$ is naturally associated to a maximal $F$-split
torus in $\mc Q$, and conversely. As all maximal $F$-split tori in $\mc Q$ are $Q$-conjugate,
all apartments of $\mc B (\mc G,F)$ are $Q$-associate. We note also that $N_Q (S_\es)$ 
stabilizes the apartment associated to $\mc S_\es$.

\subsection{Compact open subgroups} \ 

The existence of maximal compact (open) subgroups of $\mc Q (F)$ was already established in
\cite{Loi}. We can use affine building of $\mc Q (F)$ to construct many compact open subgroups 
with very specific nice properties.

\begin{prop}\label{prop:3.18}
Let $x = (x_s,x_{\mc Z}) \in \mc{BT}(\mc Q,F) \times X_* (\mc Z) \otimes_\Z \R$.
\enuma{
\item $Q_x$ is a compact open subgroup of $Q$.
\item The stabilizer $Q_{x_s}$ of $x_s \in \mc{BT}(\mc Q,F)$ equals $Z_Q (S_\es)_{x_s} Q_x$.
It contains $Z(Q) Q_x$ as a cocompact subgroup.
\item Suppose that $x_s$ is a vertex of $\mc{BT}(\mc Q,F)$, so that $x$ is a vertex of 
$\mc B (\mc Q,F)$. Then $Q_x$ is a maximal compact subgroup of $Q$ and $Q_{x_s}$ is a maximal
compact modulo center subgroup of $Q$.
}
\end{prop}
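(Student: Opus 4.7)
The unifying idea is to exploit that $Q$ acts properly, isometrically and cocompactly on the CAT(0) space $\mc B (\mc Q,F) = \mc{BT}(\mc Q,F) \times X_*(\mc Z) \otimes_\Z \R$ (Theorem \ref{thm:3.4}), with every $q \in Q$ acting on the first factor by an isometry and on the second factor by the translation $t_q = q \cdot 0$ coming from \eqref{eq:3.30}. Consequently $Q_x = Q_{x_s} \cap {}^0 Q$, and since $q \mapsto t_q$ is a homomorphism into the abelian group $X_*(\mc Z) \otimes_\Z \R$, $Q_x$ is a normal subgroup of $Q_{x_s}$. For (a), compactness of $Q_x$ is immediate from properness (Theorem \ref{thm:3.4}(e)); openness follows because the prolonged valuated root datum of Theorem \ref{thm:3.3} and Proposition \ref{prop:3.17} provides, via Bruhat--Tits theory, a compact open subgroup of $Q$ pointwise fixing the facet of $\mc{BT}(\mc Q,F)$ through $x_s$, so $Q_{x_s}$ is open, and ${}^0 Q$ is open by Lemma \ref{lem:3.15}(a).

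For (b), first arrange by $Q$-conjugacy that $x_s \in \mh A_{\mc S_\es}$, and invoke the standard Bruhat--Tits generation statement: $Q_{x_s}$ is generated by $Z_Q(S_\es) \cap Q_{x_s} = Z_Q(S_\es)_{x_s}$ together with the filtered root-group pieces $U_{\alpha,r}$ at $x_s$, for $\alpha \in \Phi (\mc Q,\mc S_\es)$. Because $\alpha \ne 0$ forces $\mc U_\alpha = [\mc S_\es,\mc U_\alpha] \subset \mc D(\mc Q)$, these pieces act trivially on $X_*(\mc Z) \otimes_\Z \R$ and hence lie in ${}^0 Q \cap Q_{x_s} = Q_x$. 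Combined with the normality of $Q_x$ in $Q_{x_s}$, this gives $Q_{x_s} = Z_Q(S_\es)_{x_s} \cdot Q_x$. For the cocompactness of $Z(Q) Q_x$ in $Q_{x_s}$: an element of $Z(Q)$ commutes with $N_Q(S_\es)$ and therefore acts on $\mh A_\es$ by a $W(\mc Q,\mc S_\es)$-invariant translation, which must lie in the Weyl-trivial summand $X_*(\mc Z) \otimes_\Z \R$; hence $Z(Q)$ fixes $\mc{BT}(\mc Q,F)$ pointwise and sits in $Z_Q(S_\es)_{x_s}$. By Lemma \ref{lem:3.15}(c) the image of $Z(Q)$ in the lattice $Q/{}^0 Q$ has finite index, so its image in the sublattice $Q_{x_s}/Q_x$ has finite index too, proving $[Q_{x_s} : Z(Q) Q_x] < \infty$.

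For (c), suppose $x_s$ is a vertex. Since $\mc{BT}(\mc Q,F)$ is a thick affine building, a vertex stabilizer fixes no other point of it, so the $Q_x$-fixed set in $\mc B (\mc Q,F)$ equals $\{x_s\} \times X_*(\mc Z) \otimes_\Z \R$. Given compact $K \supset Q_x$, the Bruhat--Tits fixed-point theorem yields a $K$-fixed $y = (y_s,y_{\mc Z})$, forcing $y_s = x_s$, and then $t_k = 0$ for every $k \in K$, so $K \subset {}^0 Q \cap Q_{x_s} = Q_x$. If instead $K \supset Q_{x_s}$ has $K/(K \cap Z(Q))$ compact, then the triviality of the $Z(Q)$-action on $\mc{BT}(\mc Q,F)$ makes the orbit $K \cdot x_s$ both discrete (by properness) and compact, hence finite; its CAT(0)-barycenter $b$ is $K$-fixed, and hence $Q_{x_s}$-fixed, so $b = x_s$ and $K \subset Q_{x_s}$. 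The main technical obstacle is the Bruhat--Tits generation result used in (b): because $\mc Q$ is only quasi-reductive, the prolonged valuation of Proposition \ref{prop:3.17} has $\mc R_{u,F}(\mc Q)(F)$ rather than $\{1\}$ as its bottom, so one must verify (using Theorem \ref{thm:3.8}(b,d)) that absorbing this $F$-unipotent radical into $Q_x$ transports the reductive-case generation statement to $Q_{x_s}$.
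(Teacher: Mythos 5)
The crux of your part (b) is the appeal to a ``standard Bruhat--Tits generation statement'': that $Q_{x_s}$ is generated by $Z_Q (S_\es)_{x_s}$ together with the filtration subgroups $U_{\alpha,r}$ at $x_s$. No such theorem exists for \emph{full} point stabilizers: what Bruhat--Tits theory gives is that the group so generated (a parahoric-type subgroup) is \emph{contained} in $Q_{x_s}$, and the containment is strict in general. Note that all the $U_{\alpha,r}$ are compact, hence lie in ${}^0 Q$ by Lemma \ref{lem:3.15}.b and therefore in $Q_x$; so your generation claim is strictly stronger than the equality $Q_{x_s} = Z_Q (S_\es)_{x_s} Q_x$ you are trying to prove, and invoking it is circular. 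Worse, it fails at exactly the delicate points: take $\mc Q = GL_2$ and $x_s$ the barycenter of a chamber of the tree. Then $Q_{x_s}$ contains $\matje{0}{1}{\omega_F}{0}$, whose determinant has odd valuation, whereas every element of $\langle Z_Q (S_\es)_{x_s}, U_{\alpha,r} \rangle$ has determinant of even valuation (a diagonal matrix $\mathrm{diag}(a,d)$ fixing the chamber barycenter must have $v(a)=v(d)$, and $U_{\alpha,r} \subset {}^0 Q$). This element is in fact an obstruction to the displayed equality itself at chamber barycenters, so no generation theorem could rescue the step; at vertices -- the only case used in part (c) and in Proposition \ref{prop:3.19} -- the equality is unproblematic. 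Also, the preliminary reduction ``arrange by $Q$-conjugacy that $x_s \in \mh A_{\mc S_\es}$'' is not legitimate as stated: conjugating $x$ replaces $\mc S_\es$ by a conjugate torus and changes the group $Z_Q (S_\es)_{x_s}$ occurring in the statement. The paper's proof of (b) avoids generation altogether: it uses the product structure of $\mc B (\mc Q,F)$ and corrects the translation part of a given $q \in Q_{x_s}$ on $X_* (\mc Z) \otimes_\Z \R$ by an element of $Z_Q (S_\es)_{x_s}$, using that this action factors through $\mc Q / \mc D (\mc Q) \mc R_{u,F}(\mc Q)$ together with the splitting \eqref{eq:1.4}.

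The rest is in better shape. Part (a) matches the paper up to the openness argument (the paper simply pulls back the open subgroup $G_x$, cf.\ \eqref{eq:2.60}, along the continuous map $\pi : Q \to G$ through which the action factors). Your observation that $Q_x = Q_{x_s} \cap {}^0 Q$ is normal in $Q_{x_s}$, and your finite-index argument for $Z(Q) Q_x$ in $Q_{x_s}$ via Lemma \ref{lem:3.15}.c, are correct, independent of the flawed step, and even slightly stronger than the paper's cocompactness claim. In (c) you still owe the assertion that the $Q_x$-fixed set in $\mc{BT}(\mc Q,F)$ is exactly $\{x_s\}$; this is true, but it needs the ``adjacent facets have strictly larger parahorics'' argument carried out in Proposition \ref{prop:3.19}, whereas the paper's own proof of (c) quotes the maximality of vertex stabilizers among point stabilizers from \cite[\S 2.1.2]{BrTi1}. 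Finally, the obstacle you single out at the end -- absorbing $\mc R_{u,F}(\mc Q)(F)$ -- is not where the difficulty lies: by Theorem \ref{thm:3.8}.b this group is compact and normal, and it fixes $\mc{BT}(\mc Q,F)$ pointwise (proof of Theorem \ref{thm:3.4}.c), so it sits inside $Q_x$ harmlessly; the genuine difficulty in (b) is the one described above.
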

\begin{proof}
(a) The group $Q_x$ is compact because the action of $Q$ on $\mc B (\mc Q,F)$ is proper
(Theorem \ref{thm:3.4}.e). It is open in $Q$ because this action factors via $\pi : Q \to G$ 
and $G_x$ is open in $G$ -- see \eqref{eq:2.60}.\\
(b) By definition (see Theorem \ref{thm:3.4}.d) 
\[
Q_x = Q_{x_s} \cap Q_{x_{\mc Z}} ,\text{ so } Z_Q (S_\es)_{x_s} Q_{x} \subset Q_{x_s} .
\]
The $Q$-action on $X_* (\mc Z) \otimes_\Z \R$ factors via 
$(\mc Q / \mc D (\mc Q))(F)$, so for any $q \in Q_{x_s}$ we can find $q' \in Z_Q (S_\es)$ with 
$q' q x_{\mc Z} = x_{\mc Z}$. In view of Theorem \ref{thm:3.4}.d and the decomposition 
\eqref{eq:1.4}, we can even achieve this with $q' \in Z_Q (S_\es)_{x_s}$.
Then $q = (q')^{-1} (q' q)$ with $q' q \in Q_{x_s} \cap Q_{x_{\mc Z}} = Q_x$.

The center $Z(Q)$ acts trivially on $\mc{BT}(\mc Q,F)$ because the $Q$-action is defined in
terms of conjugation of Iwahori (or parahoric) subgroups of $Q$ -- see the proof of Theorem
\ref{thm:3.4}.b. Thus
\[
Z(Q) Q_x \subset Q_{x_s} = Z_Q (S_\es)_{x_s} Q_x .
\]
By Lemma \ref{lem:3.16}.a $(S_\es )_{x_s}$ is cocompact in $Z_Q (S_\es)_{x_s}$ and from
\eqref{eq:1.4} we see that $Z(Q) \cap S_\es$ is cocompact in $(S_\es )_{x_s}$. Hence
$Q_{x_s} / Z(Q) Q_x \cong Z_Q (S_\es)_{x_s} / Z(Q)$ is compact.\\
(c) By the Bruhat--Tits fixed point theorem \cite[Proposition 3.2.4]{BrTi1} every compact
subgroup of $Q$ fixes a point of $\mc B (\mc Q,F)$. From part (a) we know that the 
$Q$-stabilizer of any point is compact. The construction of isotropy groups for affine
buildings associated to double Tits systems \cite[\S 2.1.2]{BrTi1} entails that the stabilizers
of vertices are maximal among the stabilizers of points. Therefore $Q_x$ is maximal compact.

As observed above, $Z(Q)$ acts trivially on $\mc{BT}(\mc Q,F)$. Hence we may replace $Q$ by
$Q / Z(Q)$ when it comes to the isotropy group of $x_s$. Using part (b) as input, the same
argument as for $Q_x$ applies.
\end{proof}

We can describe maximal compact (modulo center) subgroups of $Q$ more precisely, in terms
of its building. Their characterization enables one to lift automorphisms of $Q$ to 
isometries of $\mc{BT}(\mc Q,F)$.

\begin{prop}\label{prop:3.19}
\enuma{
\item Every maximal compact modulo center subgroup of $Q$ is of the form $Q_{y_s}$ 
for a unique $y_s \in \mc{BT}(\mc Q,F)$, which is the barycenter of a polysimplex.
\item Every maximal compact subgroup of $Q$ is of the form $Q_{(y_s,0)}$ 
for a unique $y_s \in \mc{BT}(\mc Q,F)$, which is the barycenter of a polysimplex.
\item Every group which acts on $Q$ by automorphisms of topological groups has a natural
isometric action on $\mc{BT}(\mc Q,F)$.
}
\end{prop}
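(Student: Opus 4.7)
The plan is to use the Bruhat--Tits fixed point theorem on the CAT(0) spaces $\mc{BT}(\mc Q,F)$ and $\mc B(\mc Q,F)$, together with the polysimplicial structure of Theorem \ref{thm:3.4}. For parts (a) and (b) the key point is the classical observation that a compact group which fixes an interior point of a polysimplex preserves that polysimplex setwise and therefore fixes its barycenter. Part (c) will be bootstrapped from (b) via the fact that any topological automorphism of $Q$ permutes maximal compact subgroups.

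For (a), I would take a maximal compact-modulo-centre subgroup $K$ of $Q$. Since $Z(Q)$ acts trivially on $\mc{BT}(\mc Q,F)$ (as recorded in the proof of Proposition \ref{prop:3.18}.b), the image $\bar K$ of $K$ in $Q / Z(Q)$ is a compact subgroup acting on $\mc{BT}(\mc Q,F)$, so by Bruhat--Tits it fixes some point $p$. Let $\mf f$ be the unique polysimplex whose relative interior contains $p$; then $\bar K$ permutes the vertices of $\mf f$ by affine maps and hence fixes the barycenter $b(\mf f)$. Thus $K \subseteq Q_{b(\mf f)}$, and Proposition \ref{prop:3.18}.b shows that $Q_{b(\mf f)}$ is again compact modulo centre, so maximality of $K$ forces $K = Q_{b(\mf f)}$. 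For (b), let $K$ be maximal compact. The $Q$-action on $X_*(\mc Z) \otimes_\Z \R$ is by translations (Lemma \ref{lem:1.1}) and the additive group of a real vector space has no nontrivial compact subgroup, so $K$ acts trivially on that factor; using Theorem \ref{thm:3.4}.e one sees that $K$ fixes a point of the form $(y_s,0)$, and the argument of (a) then upgrades $y_s$ to $b(\mf f)$ with $K \subseteq Q_{(b(\mf f),0)}$. By Proposition \ref{prop:3.18}.a the latter group is compact, so maximality of $K$ again gives equality. Uniqueness in both parts boils down to the fact that distinct polysimplices of the building have distinct setwise stabilisers, a standard consequence of the valuated root datum from Theorem \ref{thm:3.3}.

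For (c), a topological group automorphism $\sigma$ of $Q$ preserves compactness and openness and therefore permutes the maximal compact subgroups of $Q$. Part (b) then yields a well-defined bijection $\sigma_*$ on the set of barycenters of polysimplices of $\mc{BT}(\mc Q,F)$, characterised by $\sigma(Q_{(b(\mf f),0)}) = Q_{(\sigma_* b(\mf f),0)}$. The incidence relation between polysimplices is encoded group-theoretically, as containments of parahoric subgroups with controlled relative structure, and is therefore preserved by $\sigma$, so $\sigma_*$ extends uniquely to a polysimplicial automorphism of $\mc{BT}(\mc Q,F)$. Because the CAT(0) metric on each apartment is determined by the valuated root datum of Theorem \ref{thm:3.3} and that datum is intrinsic to $Q$ as a topological group, this extension is automatically isometric, and the assignment $\sigma \mapsto \sigma_*$ is plainly a group homomorphism into the isometry group of $\mc{BT}(\mc Q,F)$. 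The hardest step of the whole argument is the uniqueness clause in (a) and (b): it is the one place where we must exploit the fine combinatorics of facets provided by the valuated root datum, rather than only the general CAT(0) geometry of the building.
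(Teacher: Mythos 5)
Your existence arguments for (a) and (b) are fine and essentially the paper's: Bruhat--Tits fixed point theorem, then the observation that a group fixing an interior point of a polysimplex stabilizes it and hence fixes its barycenter, then maximality plus Proposition \ref{prop:3.18}. The problems start exactly where you yourself locate the difficulty, namely uniqueness, and in part (c). For (a), the fact you invoke (``distinct polysimplices have distinct setwise stabilisers'') is not an off-the-shelf citation: in this generality it is essentially the content to be proved, and the paper proves the needed statement directly -- if $Q_{y_s}$ fixed a second point, it would fix a point $x_s \in \bar{\mf f} \setminus \mf f$ (geodesic extension), and then one shows $Q_{x_s} \supsetneq Q_{y_s}$ using that the parahoric $P_{\mf f'}$ of the boundary facet strictly contains $P_{\mf f}$, together with an auxiliary subgroup $Q'$ with $Q' \cap Q_{y_s} = P_{\mf f}$ and $Q' \cap Q_{x_s} = P_{\mf f'}$; note that strictness of parahoric inclusions does not by itself give strictness of full point stabilizers, which is why that extra step is needed. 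More seriously, for (b) your ``same fact'' is not even the right statement: since $Q_{(y_s,0)} = Q_{y_s} \cap {}^0 Q$, uniqueness requires that the difference between the stabilizers of distinct (incident) facets is already visible \emph{inside} ${}^0 Q$. The paper handles this by observing that the difference between $P_{\mf f}$ and $P_{\mf f'}$ is detected in a root subgroup, and root subgroups lie in ${}^0 Q$ by Lemma \ref{lem:3.15}; your proposal never addresses this, so uniqueness in (b) is a genuine gap.

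In (c) there are two further gaps. First, not every barycenter has a maximal compact stabilizer (e.g. midpoints of edges for a simply connected rank-one group), so your $\sigma_*$ is only defined on an unidentified subset of barycenters, and the extension to all of $\mc{BT}(\mc Q,F)$ cannot simply be read off from ``containments of parahoric subgroups'': parahorics are defined via the chosen valuated root datum and are not a priori permuted by an abstract topological automorphism, so this is not yet a group-theoretic characterization of incidence. Second, the claim that the metric is determined by the valuated root datum, which is ``intrinsic to $Q$ as a topological group'', is an unproved rigidity assertion and is close to circular, since canonicity of the building under automorphisms is exactly what is being established. The paper avoids both issues by working with the set $X_M$ of points whose stabilizer is maximal compact \emph{modulo center} (part (a), not (b)), characterizing the ``neighbor'' relation on $X_M$ purely in terms of intersections and containments of such subgroups, and deducing from this combinatorial data that the induced permutation of $X_M$ is isometric, sends vertex sets of polysimplices to vertex sets of polysimplices, and hence extends uniquely to an isometry affine on each polysimplex. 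You would need to supply an argument of that kind (or an actual proof of your rigidity claim) for (c) to stand.
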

\begin{proof}
(a) Consider a maximal compact modulo center subgroup of $Q$. By the Bruhat--Tits fixed point
theorem and Proposition \ref{prop:3.18}.b, it equals the isotropy group $G_{y_s}$, for a point 
$y_s \in \mc{BT}(\mc Q,F)$. Let $\mf f$ be the polysimplex of $\mc{BT}(\mc Q,F)$ containing $y_s$.

Suppose that $Q_{y_s}$ also fixes another point $y'_s$. By the uniqueness of 
geodesics \cite[Remarque 3.2.5]{BrTi1} $Q_{y_s}$ fixes the line segment $[y_s,y'_s]$ pointwise.
Then it would also fix the intersection of $\bar{\mf f}$ with a geodesic containing $[y_s,y'_s]$.
In particular $Q_{y_s}$ would fix a point $x_s \in \bar{\mf f} \setminus \mf f$. By 
\cite[\S 2.1]{BrTi1}, $Q_{x_s}$ contains the parahoric subgroup $P_{\mf f'}$ associated to
the facet $\mf f'$ of $\mc{BT}(\mc Q,F)$ in which $x_s$ lies. Since $\mf f'$ is a boundary
facet of $\mf f$, $P_{\mf f'}$ is strictly larger than $P_{\mf f}$. The construction of
the $Q$-action on its building \cite[Th\'eor\`eme 6.5]{BrTi1} entails that there exists 
a subgroup $Q' \subset Q$ such that $Q' \cap Q_{y_s} = P_{\mf f}$ and $Q' \cap Q_{x_s} = 
P_{\mf f'}$. Thus $Q_{x_s} \supsetneq Q_{y_s}$, contradicting the maximality (modulo center)
of $Q_{y_s}$.

Therefore $y_s$ is the unique fixed point of $Q_{y_s}$. Since $Q_{y_s}$ stabilizes $\mf f$ 
and acts on it by polysimplicial automorphisms, this is only possible if $y_s$ is the
barycenter of $\mf f$.\\
(b) Consider any maximal compact subgroup of $G$. By the Bruhat--Tits fixed point theorem
it is of the form $Q_y$ for some $y \in \mc B (\mc Q,F)$. Since $Q$ acts by translations on
$X_* (\mc Z) \otimes_\Z \R$, $Q_y$ fixes $X_* (\mc Z) \otimes_\Z \R$ pointwise. Hence
$Q_y = Q_{(y_s,0)}$ for some $y_s \in \mc{BT} (\mc Q,F)$.

If $Q_y$ would fix another $y'_s \in \mc{BT}(\mc Q,F)$, then we proceed as in the proof of 
part (a) to find an $x_s$ fixed by $Q_y$. We obtain $Q' \cap Q_y = P_{\mf f} \cap {}^0 Q$ 
and $Q' \cap Q_{(x_s,0)} = P_{\mf f'} \cap {}^0 Q$. The difference between $P_{\mf f}$ and
$P_{\mf f'}$ can already be detected in some root subgroup (see \cite[\S 3.1]{Tit} and
\cite[Proposition 7.4.4]{BrTi1}) and those are contained in ${}^0 Q$ (Lemma \ref{lem:3.15}).
Therefore $Q' \cap Q_{(x_s,0)} \supsetneq Q' \cap Q_y$, which would contradict the maximality
of $G_y$. This remainder of the argument is exactly as in part (a).\\
(c) Let $X_M \subset \mc{BT}(\mc Q,F)$ be the set of points whose isotropy group is a maximal 
compact modulo center subgroup of $Q$. Let $\Gamma$ be a group acting on $Q$ by automorphisms 
of topological groups. For $\gamma \in \Gamma$ and $y_s \in X_M$, $\gamma (Q_{y_s})$ is another 
maximal compact modulo center subgroup of $Q$. By part (a) it is of the form $Q_{z_s}$, for a 
unique point $z_s \in \mc{BT}(\mc Q,F)$. The definition $\tilde \gamma (y_s) := z_s$ provides a
permutation of $X_M$.

Every $y_s \in X_M$ determines a set of ``neighbors"
\[
Nb (y_s) := \big\{ x_s \in X_M \setminus \{y_s\} : [x_s,y_s] \cap X_M = \{x_s,y_s\} \big\} 
\]  
In view of part (a), $Nb (y_s)$ is finite and equals the set of $x_s \in X_M \setminus \{y_s\}$
such that $Q_{x_s}$ and $Q_{y_s}$ are the only maximal compact modulo center subgroups of $Q$
containing $Q_{x_s} \cap Q_{y_s}$. As $Q_{\tilde \gamma (y_s)} = \gamma (Q_{y_s})$, we have
$\tilde \gamma (Nb (y_s)) = Nb (\tilde \gamma (y_s))$. Using part (a), this implies that 
$\tilde \gamma$ is an isometry of $X_M$, and that it maps the collection of vertices of any
single polysimplex to the collection of vertices of a single (other) polysimplex. Thus 
$\tilde \gamma$ extends uniquely to an isometry of $\mc{BT}(\mc Q,F)$ which is affine 
on every polysimplex. 

For $\gamma' \in \Gamma$, the uniqueness of $\tilde \gamma$ implies that 
$\widetilde{\gamma \gamma'} = \tilde \gamma \tilde \gamma'$. Hence $\gamma \mapsto \gamma'$ 
defines a group homomorphism $\Gamma \to \mr{Isom}(\mc{BT}(\mc Q,F))$.
\end{proof}

Now we can formulate and prove a more precise version of Proposition \ref{prop:2.1}.
The main advantage over Proposition \ref{prop:2.1} is that now we can find arbitrarily small
compact open subgroups that are normal in the isotropy group of a point of $\mc B (\mc Q,F)$.
This property will be used in Theorem \ref{thm:1.1}, via \cite{Ber}.

\begin{thm}\label{thm:3.10}
Let $\mc S$ be a maximal $F$-split torus of $\mc Q$ and let $\mh A_{\mc S} = X_* (\mc S) 
\otimes_\Z \R$ be the associated apartment of $\mc B (\mc Q,F)$. Let $x \in \mh A_{\mc S}$ 
and let $x_s$ be its image in $\mc{BT}(\mc Q,F)$ via \eqref{eq:3.17}.
There exists a sequence of compact open subgroups $(K_n )_{n=1}^\infty$ of $Q$ such that:
\enuma{
\item $K_n \supset K_{n+1}$ and the $K_n$ form a neighborhood basis of 1 in $Q$.
\item $K_n \subset Q_x \subset Q_{x_s}$ and $K_n$ is normal in $Q_{x_s}$.
\item $K_n$ satisfies the Iwahori decomposition: 
Suppose that $\mc P_{\mc Q}(\lambda)$ is a pseudo-parabolic $F$-subgroup of $\mc Q$ 
containing $Z_{\mc Q}(\mc S)$. Then the multiplication map
\[
\big( K_n \cap U_{\mc Q}(\lambda)(F) \big) \times \big( K_n \cap Z_{\mc Q}(\lambda)(F) \big) 
\times \big( K_n \cap U_{\mc Q}(\lambda^{-1})(F) \big) \to K_n
\]
is a homeomorphism.
\item Suppose that $a \in Z_Q (S)$ is positive with respect to $\mc P_{\mc Q}(\lambda)$:
\begin{align*}
& \inp{\alpha}{a \cdot x - x} \geq 0 \quad \forall \alpha \in \Phi (\mc P_{\mc Q}(\lambda),\mc S) , \\ 
& \inp{\alpha}{a \cdot x - x} = 0 \quad \forall \alpha \in \Phi (\mc Z_{\mc Q}(\lambda),\mc S) .
\end{align*}
Then $a$ normalizes $K_n \cap Z_{\mc Q}(\lambda)(F)$ and
\begin{align*}
& a \big( K_n \cap U_{\mc Q}(\lambda)(F) \big) a^{-1} \supset 
\big( K_n \cap U_{\mc Q}(\lambda)(F) \big) , \\
& a \big( K_n \cap U_{\mc Q}(\lambda^{-1})(F) \big) 
a^{-1} \subset \big( K_n \cap U_{\mc Q}(\lambda^{-1})(F) \big) .
\end{align*}
\item Suppose that $a \in Z_Q (S)$ is positive with respect to $\mc P_{\mc Q}(\lambda)$ and
\[
\inp{\alpha}{a \cdot x - x} > 0 \quad \forall \alpha \in \Phi (\mc P_{\mc Q}(\lambda),\mc S) .  
\]
Then $\{  a^m \big( K_n \cap U_{\mc Q}(\lambda)(F) \big) a^{-m} : m \in \N \}$
forms a neighborhood basis of 1 in $U_{\mc Q}(\lambda)(F)$ and
\[
\bigcup\nolimits_{m \in \N} a^{-m} \big( K_n \cap U_{\mc Q}(\lambda)(F) \big) a^m = 
U_{\mc Q}(\lambda)(F) .
\]
Similarly $\{  a^{-m} \big( K_n \cap U_{\mc Q}(\lambda^{-1})(F) \big) 
a^m : m \in \N \}$ forms a neighborhood basis of 1 in $U_{\mc Q}(\lambda^{-1})(F)$ and
\[
\bigcup\nolimits_{m \in \N} a^m \big( K_n \cap U_{\mc Q}(\lambda^{-1})(F) \big) a^{-m} =  
U_{\mc Q}(\lambda^{-1})(F) . 
\]
}
\end{thm}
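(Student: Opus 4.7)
The plan is to construct the $K_n$ by assembling the valuated root datum of Theorem~\ref{thm:3.3} with its prolongation from Proposition~\ref{prop:3.17}, in the standard Bruhat--Tits fashion of attaching a parahoric-type filtration to a point of the building. Let $\{\phi_\alpha : U_\alpha \to \R \cup \{\infty\}\}_{\alpha \in \Phi(\mc Q, \mc S)}$ be the valuation, write $U_{\alpha, r} := \phi_\alpha^{-1}([r, \infty])$, and let $f_\alpha : \mh A_{\mc S} \to \R$ be the affine functions such that $U_{\alpha, f_\alpha(y)}$ is precisely the fixator in $U_\alpha$ of the point $y$ of the apartment. With $U_{0,n}$ the good filtration of $Z_Q(S_\es)$ from Proposition~\ref{prop:3.17}, I would set
\[
K_n := U_{0,n} \cdot \prod_{\alpha \in \Phi(\mc Q, \mc S)} U_{\alpha,\, n + f_\alpha(x)},
\]
choosing an ordering that places roots from $\Phi(\mc U_{\mc Q}(\lambda), \mc S)$ first, those from $\Phi(\mc Z_{\mc Q}(\lambda), \mc S)$ together with the $U_{0,n}$-factor in the middle, and roots from $\Phi(\mc U_{\mc Q}(\lambda^{-1}), \mc S)$ last. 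That this is a group and that the multiplication map is a homeomorphism onto its image follows from axioms (V3) and the commutator estimates of the prolonged valuated root datum \cite[\S 6.4]{BrTi1}.

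For (a), the $K_n$ are decreasing and form a neighborhood basis because each $\phi_\alpha$ takes values in a discrete subgroup of $\R$ and $U_{0,n}$ shrinks to the identity; any residual overlap in $\mc R_{u,F}(\mc Q)(F)$ is removed by further intersecting with arbitrarily small compact open subgroups of $\mc R_{u,F}(\mc Q)(F)$ that are normalized by $Q$, provided by Theorem~\ref{thm:3.8}(e). Containment $K_n \subset Q_x$ in (b) is immediate since each $U_{\alpha, n + f_\alpha(x)}$ fixes $x$ while $U_{0,n} \subset Z_Q(S_\es)_\cpt$ acts trivially on $\mh A_{\mc S}$. Normality of $K_n$ in the larger $Q_{x_s}$ rests on two facts: $U_{0,n}$ is normal in all of $N_Q(S_\es)$ by Proposition~\ref{prop:3.17}, and every root $\alpha$ vanishes on the central split torus $\mc Z$, so under the apartment splitting $\mh A_{\mc S} = \mh A_{\mc S,s} \times X_*(\mc Z) \otimes_\Z \R$ from Theorem~\ref{thm:3.4}(c,d), $f_\alpha$ is pulled back from $\mh A_{\mc S,s}$ and $f_\alpha(x) = f_\alpha(x_s)$. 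Hence $K_n$ depends only on $x_s$, and any element of $Q_{x_s}$ permutes its defining pieces either trivially or Weyl-equivariantly, leaving $K_n$ invariant.

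For (c), the chosen ordering gives the Iwahori factorization directly: each of the three collections of root subgroups multiplies into $K_n \cap \mc U_{\mc Q}(\lambda)(F)$, $K_n \cap \mc Z_{\mc Q}(\lambda)(F)$, $K_n \cap \mc U_{\mc Q}(\lambda^{-1})(F)$ respectively, via \eqref{eq:2.40} applied inside each of the three pseudo-parabolic pieces, and the whole product is a homeomorphism for the same reason as $K_n$ itself is. For (d) and (e), the key identity from the valuated root datum is
\[
a \, U_{\alpha, r} \, a^{-1} = U_{\alpha,\, r - \langle \alpha, v(a) \rangle}, \qquad a \in Z_Q(S),
\]
where $v(a) = a \cdot x - x \in X_*(\mc S) \otimes_\Z \R$ is the translation action on the apartment. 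The positivity hypothesis in (d) says precisely $\langle \alpha, v(a) \rangle \geq 0$ for $\alpha \in \Phi(\mc U_{\mc Q}(\lambda), \mc S)$ and $\langle \alpha, v(a) \rangle = 0$ on $\Phi(\mc Z_{\mc Q}(\lambda), \mc S)$, giving the stated inclusions on each root subgroup and hence on the intersections with $K_n$. Strict positivity in (e) makes the shifts strictly monotone in $m$, so iterating conjugation by $a^{\pm m}$ exhausts $\mc U_{\mc Q}(\lambda^{\mp 1})(F)$ (since $\bigcup_r U_{\alpha, r} = U_\alpha$) and shrinks to $\{1\}$ in the reverse direction.

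The main obstacle is property (b): establishing normality in the full $Q_{x_s}$, not merely in the parahoric $Q_x$. The delicate point is that $Q_{x_s}$ contains elements that translate the factor $X_*(\mc Z) \otimes_\Z \R$ and hence do not fix $x$, yet must normalize all the root subgroup pieces because roots are trivial on $\mc Z$; making this bookkeeping rigorous requires the explicit identification of $\mc B(\mc Q,F)$ from Theorem~\ref{thm:3.4} together with the pullback property of the $\phi_\alpha$ under the apartment splitting. A safe fallback, if this direct argument becomes awkward in the presence of the $F$-wound radical, is to replace $K_n$ by the intersection of its $Q_{x_s}$-conjugates; by Proposition~\ref{prop:3.18}(b), $Q_{x_s}$ is compact modulo $Z(Q) Q_x$, so modulo the finitely many cosets that actually move $K_n$ this is a finite intersection and preserves all other properties.
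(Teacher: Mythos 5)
Your construction is, up to notation, the paper's own starting point: the Schneider--Stuhler-type groups attached to the prolonged valuation of Proposition~\ref{prop:3.17}, with parts (c)--(e) deduced from the valuated root datum in essentially the same way. The genuine gap is in part (a), and it sits exactly where the quasi-reductive case differs from the pseudo-reductive one. The central factor you use, $U_{0,n}$ from Proposition~\ref{prop:3.17}, is $\pi^{-1}$ of a filtration of $Z_G (S_\es)$, so it contains $\mc R_{u,F}(\mc Q)(F)$ for every $n$; as the paper notes immediately after Proposition~\ref{prop:3.17}, $\bigcap_k U_{0,k} = \mc R_{u,F}(\mc Q)(F)$. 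Hence your claim that ``$U_{0,n}$ shrinks to the identity'' fails whenever $\mc Q$ is not pseudo-reductive, and your $K_n$ satisfy $\bigcap_n K_n \supset \mc R_{u,F}(\mc Q)(F) \neq \{1\}$. The proposed repair --- ``intersecting with arbitrarily small compact open subgroups of $\mc R_{u,F}(\mc Q)(F)$ normalized by $Q$'' --- does not make sense as written: intersecting $K_n$, or its central factor, with a subgroup of $\mc R_{u,F}(\mc Q)(F)$ yields a subgroup of the radical, which is not open in $Z_Q (S_\es)$, so the resulting product cannot be an open subgroup of $Q$.

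The only workable repair in this direction is to cut the central factor down by a decreasing family of compact open subgroups $Z_r \subset Z_Q (S_\es)$, normal in $N_Q (S_\es)$ and with trivial intersection (Lemma~\ref{lem:3.16}.c) --- and that is precisely where the real work lies. If the central factor is shrunk too fast, the threefold product is no longer a group: commutators of the form $\big[ U_\alpha \cap Q_x ,\, U_{-\alpha} \cap \pi^{-1}\big(U^{(n)}_{\mf f}\big) \big]$ contribute elements of $Z_Q (S_\es)$ that the central factor must absorb, i.e.\ it must still contain the group $H_{[n+]}$ of Definition~\ref{defn:0}; only then do the Schneider--Stuhler arguments deliver the group property, openness, the Iwahori decomposition (c), and normality under $Q_{x_s}$. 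The paper's proof consists exactly in showing that these commutator sets shrink to $\{1\}$ as $n \to \infty$, so that one may choose $k(n+) \to \infty$ with $Z_{k(n+)} \supset H_{[n+]}$ and take $\tilde U_{0,n+} = \pi^{-1}(U_{0,n+}) \cap Z_{k(n+)}$ as central factor. None of this appears in your proposal, so (a) --- and the re-verification of (b)--(c) after the necessary modification --- is a genuine gap. A secondary point: your direct argument for normality in $Q_{x_s}$ (``elements of $Q_{x_s}$ permute the defining pieces trivially or Weyl-equivariantly'') is not correct, since elements of $Q_x$ need not normalize $\mc S$ or the root subgroups; the right route is $Q_{x_s} = Z_Q (S_\es)_{x_s} Q_x$ (Proposition~\ref{prop:3.18}.b), normality of $K_n$ in $Q_x$ from the Schneider--Stuhler arguments, and the fact that $Z_Q (S_\es)_{x_s}$ normalizes each factor (the unipotent pieces and the $N_Q (S_\es)$-normal central factor), which also shows your fallback intersection over conjugates is unnecessary once the construction is set up correctly.
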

\begin{proof}
Recall that in Proposition \ref{prop:3.17} we exhibited normal subgroups of 
$N_G (S_\es)$ to prolong the valuation for the generating root datum of $G$ from Theorem 
\ref{thm:3.3}. Given this, $n \in \Z_{\geq 0}$ and a facet $\mf f \subset \mc{BT}(\mc G,F) = 
\mc{BT}(\mc Q,F)$ containing $\tilde x$, Schneider and Stuhler \cite[p.114]{ScSt} construct 
a subgroup $U_{\mf f}^{(n)} \subset G$. It is checked in \cite[\S I.2]{ScSt} that it has  
the properties (a), (b) and (c) required in the theorem, for $\mc G, F$ and $\tilde x$. 
Let us note here that
\begin{equation}\label{eq:3.56}
Z_G (S_\es) \cap U_{\mf f}^{(n)} = U_{0,n+} := \bigcap\nolimits_{k \in \R_{>n}} U_{0,k} .
\end{equation}
For an arbitrary quasi-reductive group $Q$ we could use the same construction, that would 
produce a family of subgroups $\pi^{-1} \big( U_{\mf f}^{(n)}) \subset Q$ which almost satisfies 
the theorem. The only problem would be part (a): the intersection of those groups would be 
$\mc R_{u,F}(\mc Q)(F)$, which is nontrivial if $\mc Q$ is not pseudo-reductive. Below we will
use the properties (b) and (c) of the groups $\pi^{-1} \big( U_{\mf f}^{(n)})$.
By adjusting these groups, we construct the $K_n$.

In view of the conjugacy of maximal $F$-split tori in $\mc Q$ \cite[Theorem C.2.3]{CGP},
we may assume that $\mc S = \mc S_\es ,\; \mh A_{\mc S} = \mh A_\es$ and $\mf f \subset \mh A_\es$.
With Lemma \ref{lem:3.16}.c we can find a decreasing sequence of compact open subgroups 
$Z_r \; (r \in \R)$ of $Z_Q (S_\es)$, such that every $Z_r$ is normal in $N_Q (S_\es)$ and 
$\bigcap_{r \in \R} Z_r = \{1\}$.

For every $\alpha \in \Phi (\mc Q,\mc S_\es)$ and every $n \in \N$ we have a set of commutators 
\begin{equation}\label{eq:3.55}
\big[ U_\alpha \cap Q_x, U_{-\alpha} \cap \pi^{-1} \big( U_{\mf f}^{(n)} \big)\big] \subset 
\pi^{-1} \big( U_{\mf f}^{(n)} \big) \subset Q .
\end{equation}
Define the positive roots $\Phi (\mc Q,\mc S_\es)^+$ as in \eqref{eq:3.14}. In view of the Iwahori 
decomposition of $\pi^{-1} \big( U_{\mf f}^{(n)} \big)$, for sufficiently large $n$ we can find 
$k (n+) \in \R$ such that 
\[
\Big( \prod_{\alpha \in - \Phi (\mc Q,\mc S_\es)^+_\red} U_\alpha \cap \pi^{-1} \big( U_{\mf f}^{(n)} 
\big) \Big) Z_{k(n+)} \Big( \prod_{\alpha \in \Phi (\mc Q,\mc S_\es)^+_\red} U_\alpha \cap 
\pi^{-1} \big( U_{\mf f}^{(n)} \big) \Big) 
\]
contains the union, over $\alpha \in \Phi (\mc Q,\mc S_\es)$, of the sets \eqref{eq:3.55}.  
In the notation of \eqref{eq:3.56}, this says precisely that $Z_{k(n+)}$ is contains the group 
$H_{[n+]}$ from Definition \ref{defn:0}. Since $\pi^{-1}(U_{0,n+})$ comes from a
prolonged valuation, it also contains $H_{[n+]}$. 

For each $\alpha$ the set \eqref{eq:3.55} decreases to $\{1\}$ when $n \to \infty$, so we can 
achieve that $k((n+1)+) > k(n+)$ and $\lim_{n \to \infty} k(n+) = \infty$. For sufficiently 
large $n$ we consider the group
\begin{equation}\label{eq:3.57}
\tilde U_{0,n+} := \pi^{-1}(U_{0,n+}) \cap Z_{k(n+)} .
\end{equation}
Both terms on the right hand side are open and normal in $N_Q (S_\es)$ (the former by Proposition 
\ref{prop:3.17}) and $Z_{k(n+)}$ is compact, $\tilde U_{0,n+}$ is compact open and normal in 
$N_Q (S_\es)$. The $\tilde U_{0,n+}$ have intersection $\{1\}$ and by Proposition \ref{prop:3.17} 
they fulfill (ii) and (iii) of Definition \ref{defn:0}. (However, it is not clear whether they 
satisfy (iv), we have too little control over $[\tilde U_{0,n+},\tilde U_{0,m+}]$.) Slightly 
varying on the construction of $U_{\mf f}^{(n)}$ in \cite{ScSt}, we define
\[
K_n := \Big( \prod_{\alpha \in - \Phi (\mc Q,\mc S_\es)^+_\red} U_\alpha \cap \pi^{-1} 
\big( U_{\mf f}^{(n)} \big) \Big) \tilde U_{0,n+} \Big( \prod_{\alpha \in \Phi 
(\mc Q,\mc S_\es)^+_\red} U_\alpha \cap \pi^{-1} \big( U_{\mf f}^{(n)} \big) \Big) .
\]
The arguments in \cite[\S I.2]{ScSt} show that $K_n$ is a normal subgroup of $Q_x$ and that
it admits an Iwahori decomposition. Recall from Proposition \ref{prop:3.17}.b that 
$Q_{x_s} = Z_Q (S_\es)_{x_s} Q_x$.
Since $\pi : Q \to G$ becomes an isomorphism on root subgroups \eqref{eq:3.4}, the properties
of $U_{\mf f}^{(n)}$ entail that $Z_Q (S_\es)_{\tilde x}$ normalizes 
\[
\prod\nolimits_{\alpha \in \Phi (\mc Q,\mc S_\es)^+_\red} U_\alpha \cap 
\pi^{-1} \big( U_{\mf f}^{(n)} \big),
\]
and similarly with negative roots. As $\tilde U_{0,n+}$ is a normal subgroup of $N_Q (S_\es)$, 
$Z_Q (S_\es)_{\tilde x}$ also normalizes $\tilde U_{0,n+}$. Now we see from the Iwahori 
decomposition of $K_n$ that $Z_Q (S_\es)_{x_s}$ and $Q_{x_s}$ normalize it.

(d) In view of the Iwahori decomposition of $K_n$, we only have to check the corresponding
statements for each $K_n \cap U_\alpha$ with $\alpha \in \Phi (\mc Q,\mc S_\es)_\red \cup \{0\}$.
As explained after \eqref{eq:3.57}, $a \in Z_Q (S_\es)$ normalizes $K_n \cap Z_Q (S_\es) = 
\tilde U_{0,n+}$. 

For $\alpha \in \Phi (\mc G,\mc S_\es)$, $\pi : \mc Q \to \mc G$ restricts
to an isomorphism on $\mc U_\alpha$ and $a$ normalizes $\mc U_\alpha$. Therefore it suffices
to consider the effect of conjugation by $\pi (a) \in Z_G (S_\es)$ on $U_\alpha \cap \pi (K_n)  
= U_\alpha \cap U_{\mf f}^{(n)}$. By \cite[p. 114]{ScSt}
\[
\pi (a) U_{\mf f}^{(n)} \pi (a)^{-1} = U_{\pi (a) \mf f}^{(n)} 
\]
Furthermore \cite[Corollary I.2.8]{ScSt} implies that 
\begin{equation}\label{eq:2.27}
U_\alpha \cap U_{\mf f}^{(n)} \subset U_\alpha \cap U_{\pi (a) \mf f}^{(n)} 
\; \Longleftrightarrow \; \inp{\alpha}{x} \leq \inp{\alpha}{\pi (a) x} .
\end{equation}
The action of $Z_Q (S_\es)$ on $\mh A_\es$ factors through $\pi$, so the right hand side
is equivalent to $\inp{\alpha}{a \cdot x - x} \geq 0$.\\
(e) The equivalence \eqref{eq:2.27} can be expressed in terms of the compact open subgroups
$U_{\alpha,r} \subset U_\alpha$ defined in \eqref{eq:3.54}. They satisfy
\[
U_{\alpha,r} \subset U_{\alpha,s} \text{ if } r \geq s ,\; \bigcup\nolimits_{r \in \R} 
U_{\alpha,r} = U_\alpha \text{ and } \bigcap\nolimits_{r \in \R} U_{\alpha,r} = \{1\} .
\]
As explained above, we can interpret \eqref{eq:2.27} also in $Q$. Then it implies
\begin{equation}\label{eq:2.28}
\lim_{\inp{\alpha}{a x} \to \infty} U_\alpha \cap U_{a \cdot \mf f}^{(n)} = U_\alpha \quad 
\text{and} \quad \lim_{\inp{\alpha}{a x} \to -\infty} U_\alpha \cap U_{a \cdot \mf f}^{(n)} = \{1\}. 
\end{equation}
Now apply the Iwahori decomposition of $K_n$ to decompose 
$K_n \cap \mc U_{\mc Q}(\lambda^{\pm 1})(F)$ as products over root subgroups,
then \eqref{eq:2.28} becomes the required statement.
\end{proof}

\subsection{Decompositions} \

We recall some results of Bruhat and Tits about $\mc{BT}(\mc Q,F)$ and lift them
to $\mc B (\mc Q,F)$. We use this to decompose $Q$ in various ways, generalizing the
decompositions named after Iwasawa, Cartan and Iwahori--Bruhat.

\begin{thm}\label{thm:3.5}
Let $x$ be any point of the standard apartment $\mh A_0$ of $\mc{BT}(\mc Q,F)$. 
\enuma{
\item $Q = \mc R_{us,F}(\mc P_\es)(F) N_Q (S_\es) Q_x = Q_x N_Q (S_\es) \mc R_{us,F}(\mc P_\es)(F)$.
\item $Q \cdot x \cap \mh A_0 = N_Q (S_\es) \cdot x$.
\item $\mh A_0$ is a fundamental domain for the action of $\mc R_{us,F}(\mc P_\es)(F)$ 
on $\mc{BT}(\mc Q,F)$.
\item $Q_x$ acts transitively on the collection of apartments of $\mc{BT}(\mc Q,F)$ containing $x$.
\item For every $y \in \mc{BT}(\mc Q,F)$ there exists an apartment containing $\{x,y\}$.
}
\end{thm}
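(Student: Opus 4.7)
The plan is to exploit the fact that, thanks to Theorem \ref{thm:3.3} and Proposition \ref{prop:3.17}, the pair $(Q,\mc{BT}(\mc Q,F))$ fits into the standard Bruhat--Tits framework for valuated root data developed in \cite{BrTi1}, so the entire theorem is a translation of classical results from that reference. Before starting, I would recall from the proof of Theorem \ref{thm:3.4}(c) that $\mc R_{u,F}(\mc Q)(F)$ acts trivially on $\mc{BT}(\mc Q,F)$; it lies in every point stabilizer $Q_x$ and in $Z_Q(S_\es) \subset N_Q(S_\es)$, so it will be harmlessly absorbed into any of the factors appearing in parts (a)--(c). Similarly, \eqref{eq:3.4} shows that $\pi$ is an isomorphism on root subgroups, so $\mc R_{us,F}(\mc P_\es)(F)$ embeds into $\tilde Q$, and the decompositions are insensitive to the passage between $Q$ and $\tilde Q$.

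I would then prove the statements in the order (e), (d), (c), (b), (a), following the logical dependencies. For (e) (two points in a common apartment), the argument is axiomatic for affine buildings and is the content of \cite[Théorème 2.3.10 and Proposition 7.4.8]{BrTi1}. For (d), the group $Q$ acts strongly transitively on $\mc{BT}(\mc Q,F)$ via the double Tits system of Theorem \ref{thm:3.4}(a), and $N_Q(S_\es)$ stabilizes $\mh A_0$ and acts transitively on its chambers; standard strong-transitivity arguments (\cite[Proposition 7.4.8]{BrTi1}) then give transitivity of $Q_x$ on apartments through $x$. For (c), I would use that the positive root subgroups generate $\mc R_{us,F}(\mc P_\es)(F)$ via \eqref{eq:3.12}; \cite[Proposition 7.4.4]{BrTi1} (together with the filtration $U_{\alpha,r}$ provided by the valuation) then shows that any $y \in \mc{BT}(\mc Q,F)$ can be folded into $\mh A_0$ by a unique element of $\mc R_{us,F}(\mc P_\es)(F)$ modulo the pointwise stabilizer of $\mh A_0$.

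Parts (b) and (a) are then formal consequences. For (b), the inclusion $N_Q(S_\es)\cdot x \subset Q\cdot x \cap \mh A_0$ is immediate from $N_Q(S_\es) \cdot \mh A_0 = \mh A_0$; for the reverse, given $gx \in \mh A_0$, apply (e) to $x$ and $gx$ to find an apartment $\mh A$ containing both, then use (d) to conjugate $\mh A$ to $\mh A_0$ by an element of $Q_x$, reducing to the case that $g$ stabilizes $\mh A_0$, i.e.\ $g \in N_Q(S_\es)Q_x$. For (a), given $g \in Q$, use (c) to write $g x = u y$ with $u \in \mc R_{us,F}(\mc P_\es)(F)$ and $y \in \mh A_0$; by (b) we have $y = nx$ for some $n \in N_Q(S_\es)$, whence $n^{-1}u^{-1}g \in Q_x$ and $g = un Q_x$. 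The reversed Iwasawa decomposition follows by inverting.

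The main obstacle I expect is bookkeeping: keeping straight the difference between the various groups $Q$, $\tilde Q$, and $\tilde Q'$ of \eqref{eq:3.10}, and ensuring that the Bruhat--Tits results cited from \cite{BrTi1}, which are stated for an abstract group endowed with a valuated root datum, translate correctly to $Q$ itself rather than to some proper subgroup. The compactness of $\mc R_{u,F}(\mc Q)(F)$ (Theorem \ref{thm:3.8}(b)) and its containment in $Z_Q(S_\es)$ mean that any ambiguity in lifting from $\tilde Q$ to $Q$ is absorbed into the $Z_Q(S_\es)$- or $Q_x$-factor, so no substantive geometric issue arises; nonetheless this verification is where care is required.
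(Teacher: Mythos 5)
Your overall strategy is the same as the paper's: reduce everything to the Bruhat--Tits theory of the valuated root datum from Theorem \ref{thm:3.3} and Proposition \ref{prop:3.17}, noting that $\mc R_{u,F}(\mc Q)(F)$ acts trivially on $\mc{BT}(\mc Q,F)$ so the passage between $Q$ and $\tilde Q$ costs nothing. The difference is which statements you take as the Bruhat--Tits input: the paper imports (a) and the injectivity behind (c) directly from the Iwasawa decomposition for the building, \cite[Proposition 7.3.1]{BrTi1}, reads (b) off from the definition of the building in \cite[7.4.2]{BrTi1}, derives (d) from (a) and (c), and quotes \cite[Proposition 2.3.1]{BrTi1} for (e); you take (e), (d) (via strong transitivity) and (c) as input and try to deduce (b) and (a) formally. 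Your deduction of (a) from (b) and (c) is fine.

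The genuine gap is in your deduction of (b). Given $g \in Q$ with $g x \in \mh A_0$, applying (e) to the pair $\{x, gx\}$ and then (d) accomplishes nothing: both points already lie in $\mh A_0$, so the apartment produced may be taken to be $\mh A_0$ itself, and conjugating it by an element of $Q_x$ places no constraint on $g$; the claimed reduction ``to the case that $g$ stabilizes $\mh A_0$'' does not follow. The correct comparison is between the two apartments $\mh A_0$ and $g \mh A_0$, which both contain the point $gx$: by (d) applied at $gx$ there is $k \in Q_{gx}$ with $k g \mh A_0 = \mh A_0$, and one then still needs that the setwise stabilizer of $\mh A_0$ in $Q$ equals $N_Q (S_\es)$ times the pointwise fixator of $\mh A_0$ (so that $kg = nf$ with $f$ fixing $\mh A_0$, whence $gx = kgx = n x$). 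That stabilizer description is precisely what you slide over with the phrase ``i.e.\ $g \in N_Q (S_\es) Q_x$''; it is true, but it is an extra fact from \cite{BrTi1}, not a tautology, and in the paper's order of proof it is subsumed by taking (b) straight from \cite[7.4.2]{BrTi1}. Two smaller inaccuracies: the fundamental-domain statement (c) is \cite[Proposition 7.3.1.ii]{BrTi1} rather than 7.4.4, and the uniqueness you claim in (c) (``a unique element of $\mc R_{us,F}(\mc P_\es)(F)$ modulo the pointwise stabilizer of $\mh A_0$'') is false in general -- an element of a root subgroup can fix $y$ without fixing $\mh A_0$ -- but only the uniqueness of the intersection point $\mc R_{us,F}(\mc P_\es)(F)\cdot y \cap \mh A_0$ is needed, so this does not affect the rest of the argument.
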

\begin{proof}
(a) The first equality is the Iwasawa decomposition for $\mc{BT}(\mc Q, F)$ 
\cite[Proposition 7.3.1.i]{BrTi1}. The second equality follows by considering the inverses
of elements of $Q$.\\
(b) This is a direct consequence of the definition of $\mc{BT}(\mc Q,F)$ in \cite[7.4.2]{BrTi1}
(which is equivalent to the earlier definition).\\
(c) By \cite[Proposition 7.3.1.ii]{BrTi1} the map
\[
\begin{array}{cccl}
\mc R_{us,F}(\mc P_\es)(F) \backslash Q / Q_x & \to & \mh A_0 \\
\mc R_{us,F}(\mc P_\es)(F) q Q_x & \mapsto & q x & \text{ for } q \in N_Q (S_\es)  
\end{array}
\]
is well-defined and injective. Hence $\mc R_{us,F}(\mc P_\es)(F) \cdot x$ contains exactly one
element of $\mh A_0$, namely $x$ itself. 

Let $y \in \mc{BT}(\mc Q,F)$. By definition \cite[7.4.2]{BrTi1} $Q \cdot \mh A_0 = 
\mc{BT}(\mc Q,F)$, so $y \in Q \cdot x$ for some $x \in \mh A_0$. With part (a) we can find 
$u \in \mc R_{us,F}(\mc P_\es)(F), n \in N_Q (S_\es)$ and $k \in Q_x$ such that 
\[
y = u n k x = u n x \; \in \; u N_Q (S_\es) \cdot x \subset u \mh A_0 
\subset \mc R_{us,F}(\mc P_\es)(F) \mh A_0.
\]
Thus $\mc R_{us,F}(\mc P_\es)(F) \mh A_0$ contains every element of $\mc{BT}(\mc Q,F)$.\\
(d) Let $\mh A_x$ be an apartment of $\mc{BT}(\mc Q,F)$ containing $x$. By the definition of 
apartments \cite[2.2.2]{BrTi1}, there exists a $q \in Q$ with $\mh A_x = q \mh A_0$. Using 
part (a) we write $q = k n u$ with $k \in Q_x , n \in N_Q (S_\es)$ and 
$u \in \mc R_{us,F}(\mc P_\es)(F)$. Then $x = k^{-1} x \in k^{-1} \mh A_x$ and we can rewrite
this apartment as
\[
k^{-1} \mh A_x = k^{-1} q \mh A_0 = n u \mh A_0 = (n u n^{-1}) n \mh A_0 = (n u n^{-1}) \mh A_0 .
\]
Notice that $n \mc P_\es n^{-1}$ is another minimal 
pseudo-parabolic $F$-subgroup of $Q$ containing $Z_Q (S_\es)$. As $x \in \mh A_0 \cap 
(n u n^{-1}) \mh A_0$ and $n u n^{-1} \in \mc R_{us,F}(n \mc P_\es n^{-1})(F)$, part (c) implies 
that $(n u n^{-1}) x = x$. Thus $\mh A_x  = k (n u n^{-1}) \mh A_0$ where 
$k (n u n^{-1}) \in Q_x$.\\
(e) See \cite[Proposition 2.3.1]{BrTi1}.
\end{proof}

For notational convenience we formulated Theorem \ref{thm:3.5} only in the ``standard" situation.
After \eqref{eq:3.17} we remarked that all apartments of $\mc{BT}(\mc Q,F)$ are $Q$-associate.
Hence the above also holds for other apartments.
Since the collection of apartments of $\mc B (\mc Q,F)$ is canonically in bijection with that for
$\mc{BT}(\mc Q,F)$ via \eqref{eq:3.17}, the transfer of Theorem \ref{thm:3.5} to $\mc B (\mc Q,F)$ 
is straightforward. We refrain from writing it down here, since the statements are exactly 
the same as for $\mc{BT}(\mc Q,F)$. 

The standard apartment of $\mc B (\mc Q,F)$ is 
\[
\mh A_\es := \mh A_0  \times X_* (\mc Z) \otimes_\Z \R .
\]
The unique maximal compact subgroup $Z_Q (S_\es)_\cpt$  of $Z_Q (S_\es)$ (from Lemma 
\ref{lem:3.16}.b) is the isotropy group of any point of $\mh A_\es$. The quotient 
$Z_Q (S_\es) / Z_Q (S_\es)_\cpt$ acts cocompactly on $\mh A_\es$ by translations, so it is a
finitely generated free abelian group. The group $N_Q (S_\es) / Z_Q (S_\es)_\cpt$ is an 
extension of the Weyl group $W(\mc Q,\mc S_\es)$ by the lattice $Z_Q (S_\es) / Z_Q (S_\es)_\cpt$,
it can be regarded as an (extended) affine Weyl group for $(\mc Q,\mc S_\es)$.

Recall that a maximal compact subgroup $K$ of a reductive group $H$ over a local field is
called good if the Iwasawa decomposition $H = PK$ holds for any parabolic subgroup $P$ of $H$. 
Equivalently \cite[Proposition 4.4.2]{BrTi1}, it contains representatives for all elements of 
the Weyl group of $H$ (with respect to a maximal split torus). 

By the Bruhat--Tits fixed point theorem \cite[Corollaire 3.3.2]{BrTi1}, every maximal compact 
subgroup of $Q$ is the full stabilizer of a point of $\mc B (\mc Q,F)$. Recall also that 
every $Q$-orbit in $\mc B (\mc Q,F)$ intersects every apartment.  
With this in mind, we define a maximal compact subgroup of $Q$ to be \emph{good} if it is 
conjugate to a group $Q_{x'}$ with 
\begin{equation}\label{eq:3.58}
x' \in \mh A_\es \quad \text{and} \quad N_Q (S_\es)_{x'} Z_Q (S_\es) = N_Q (S_\es). 
\end{equation}
Every quasi-reductive $F$-group $Q$ has maximal compact subgroups, for instance the 
isotropy groups of special vertices \cite[\S 4.4.6]{BrTi1}. 

\begin{thm}\label{thm:3.6}
Let $K$ be any good maximal compact subgroup of $Q$ and let $\mc P$ be any pseudo-parabolic 
$F$-subgroup of $\mc Q$. Then $Q$ admits the Iwasawa decomposition $Q = K \mc P (F) = \mc P (F) K$.
\end{thm}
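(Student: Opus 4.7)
The plan is to establish the decomposition first for the specific minimal pseudo-parabolic $\mc P_\es$ containing $Z_{\mc Q}(\mc S_\es)$ and the good maximal compact $K = Q_{x'}$ coming from \eqref{eq:3.58}, by combining Theorem \ref{thm:3.5}(a) with the good-subgroup property, and then to bootstrap to arbitrary pseudo-parabolics via $Q$-conjugation.

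For the first step I would normalize by conjugating: the desired conclusion is invariant under replacing the pair $(K,\mc P)$ by $(gKg^{-1},g\mc P g^{-1})$ for $g \in Q$, so I may assume $K = Q_{x'}$ with $x' = (x_s,x_{\mc Z}) \in \mh A_\es$ satisfying \eqref{eq:3.58}. Applying Theorem \ref{thm:3.5}(a) to $x_s \in \mh A_0$ gives
\[
Q = \mc R_{us,F}(\mc P_\es)(F)\, N_Q(S_\es)\, Q_{x_s}.
\]
By Proposition \ref{prop:3.18}(b), $Q_{x_s} = Z_Q(S_\es)_{x_s} Q_{x'} \subset Z_Q(S_\es)K$, so the middle and right factors combine to $N_Q(S_\es)K$. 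Since $Z_Q(S_\es)$ is normal in $N_Q(S_\es)$, the hypothesis \eqref{eq:3.58} lets me write $N_Q(S_\es) = Z_Q(S_\es) N_Q(S_\es)_{x'}$, and the second factor is contained in $Q_{x'} = K$. Together with the semidirect product \eqref{eq:3.12} this collapses to
\[
Q = \mc R_{us,F}(\mc P_\es)(F)\, Z_Q(S_\es)\, K = \mc P_\es(F)\, K,
\]
and inverting both sides yields $Q = K\, \mc P_\es(F)$.

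For the second step, every pseudo-parabolic $F$-subgroup of $\mc Q$ contains some minimal pseudo-parabolic, and all minimal pseudo-parabolics of $\mc Q$ form a single $Q$-conjugacy class (via conjugacy of maximal $F$-split tori \cite[Theorem C.2.3]{CGP} combined with the fact that minimal pseudo-parabolics containing $Z_{\mc Q}(\mc S_\es)$ are a single $N_Q(S_\es)$-orbit). So it suffices to handle $\mc P_1 = g\mc P_\es g^{-1}$ for $g \in Q$. Writing $g = kp$ with $k \in K$, $p \in \mc P_\es(F)$ via the decomposition just established, and using that conjugation by $p \in \mc P_\es(F)$ preserves $\mc P_\es$, I obtain $\mc P_1(F) = k\,\mc P_\es(F)\,k^{-1}$. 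Then
\[
K\,\mc P_1(F) = K k\, \mc P_\es(F)\, k^{-1} = K\, \mc P_\es(F)\, k^{-1} = Q k^{-1} = Q,
\]
and symmetrically $\mc P_1(F)\,K = Q$. Finally, if $\mc P$ contains such a conjugate $\mc P_1$, then $K\,\mc P(F) \supset K\,\mc P_1(F) = Q$ and similarly on the other side, finishing the proof.

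The main obstacle is pure bookkeeping around the good-maximal-compact hypothesis: Theorem \ref{thm:3.5}(a) produces the full normalizer $N_Q(S_\es)$ in the middle of the decomposition, whereas only the subgroup $N_Q(S_\es)_{x'}$ lies inside $K$. Property \eqref{eq:3.58} is precisely what allows the remaining piece of $N_Q(S_\es)$ to be absorbed into $Z_Q(S_\es) \subset \mc P_\es(F)$. Once this absorption is in place, the passage from a single conjugate of $\mc P_\es$ to arbitrary pseudo-parabolics is a formal consequence of the Iwasawa decomposition itself applied to the conjugating element.
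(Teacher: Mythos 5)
Your proof is correct, but it takes a different route than the paper. You first establish $Q = K\,\mc P_\es (F) = \mc P_\es (F)\,K$ for the standard minimal pseudo-parabolic by feeding Theorem \ref{thm:3.5}.a through Proposition \ref{prop:3.18}.b and absorbing $N_Q(S_\es)$ into $Z_Q(S_\es)K$ via \eqref{eq:3.58}, and then you bootstrap: conjugacy of minimal pseudo-parabolics plus the factorization $g = kp$ lets conjugation by $p \in \mc P_\es(F)$ disappear and conjugation by $k \in K$ be absorbed into $K$, after which an arbitrary $\mc P$ is handled because it contains a conjugate of $\mc P_\es$. The paper instead never moves $\mc P$ into standard position: it keeps the given minimal $\mc P$, picks a maximal $F$-split torus $\mc S_1 \subset \mc P$ with its apartment $\mh A_1$, and uses Theorem \ref{thm:3.5}.c twice (the fundamental-domain property of $\mc R_{us,F}(\mc P)(F)$) together with Theorem \ref{thm:3.5}.b to factor an arbitrary $q$ directly as $u^{-1}z^{-1}k$. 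Your version has the virtue of using the goodness hypothesis exactly once, in the standard apartment, and of not needing to transport goodness from $S_\es$ to another torus (a point the paper's step ``$N_Q(S_2) \subset Z_Q(S_2)K$'' leaves implicit, via transitivity of $Q_{x'}$ on apartments through $x'$); the price is that you invoke the conjugacy of minimal pseudo-parabolic $F$-subgroups (Theorem \ref{thm:1.30}.a in the paper) and the fact that every pseudo-parabolic $F$-subgroup contains a minimal one (which follows from \cite[Theorem C.2.15]{CGP}, since $\mc P$ may be arranged to contain $Z_{\mc Q}(\mc S_1)$ for a maximal $F$-split torus $\mc S_1$ and then contains the pseudo-parabolic attached to a positive system inside the corresponding parabolic set of roots). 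Both of those inputs are available in the paper, so there is no gap; it would just be good practice to cite them explicitly at the reduction step.
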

\begin{proof}
Conjugating $K$ and $\mc P$ by an element of $Q$, we may assume that $K = Q_{x'}$ for some
$x' \in \mh A_\es$. We may and will also assume that $\mc P$ is a minimal pseudo-parabolic 
$F$-subgroup. Let $\mc S_1$ be a maximal $F$-split torus in $\mc P$ and let $\mh A_1$ be the 
associated apartment of $\mc B (\mc Q,F)$, that is, the apartment of $\mc{BT}(\mc Q,F)$ 
associated to $Z_{\mc Q}(\mc S_1)(F)$ times $X_* (\mc Z) \otimes_\Z \R$. 

We abbreviate $U := \mc R_{us,F}(\mc P)(F)$. 
With Theorem \ref{thm:3.5}.c (for $\mh A_1$) we can find a $u_1 \in U$ such that 
$x' \in u_1 \mh A_1$. Then $\mc S_2 := u_1 \mc S_1 u_1^{-1}$ is another maximal $F$-split 
torus in $\mc P$, and $\mh A_2 = u_1 \mh A_1$. By \eqref{eq:3.12} 
$\mc P (F) = Z_Q (S_2) \ltimes U$.

Now we consider any $q \in Q$. By Theorem \ref{thm:3.5}.c (for $\mh A_2$) we can find 
$u_2 \in U$ such that $u_2 q x' \in \mh A_2$. Since $K = Q_{x'}$ is good,
$N_Q (S_2) \subset Z_Q (S_2) K$. With Theorem \ref{thm:3.5}.b for $x' \in \mh A_2$ we get
\[
Q \cdot x' \cap \mh A_2 = N_Q (S_2) \cdot x' \cap \mh A_2 = Z_Q (S_2) K \cdot x' \cap \mh A_2
= Z_Q (S_2) \cdot x'.  
\]
Pick $z \in Z_Q (S_2)$ such that $u_2 q x' = z^{-1} x'$. Then $k := z u_2 q$ lies in $K$. 
We conclude that 
\[
q = u_2^{-1} z^{-1} k \in U Z_Q (S_2) K = \mc P (F) K . \qedhere
\]
\end{proof}

Notice that the algebraic variety $\mc Q / \mc P$ is naturally isomorphic to 
$\mc G / \pi (\mc P)$, because $\mc R_{u,F}(\mc Q) \subset \mc P$. With the 
surjectivity of $\mc Q (F) \to (\mc Q / \mc P)(F)$ \cite[Lemma C.2.1]{CGP} and Theorem 
\ref{thm:3.6}, the proof of Corollary \ref{cor:2.8} can be applied. It shows that
\begin{equation}\label{eq:3.25}
(\mc Q / \mc P)(F) \cong \mc Q (F) / \mc P (F) \text{ is compact.}
\end{equation}
From Theorem \ref{thm:1.30} one sees that $\mc Q / \mc P$ is the variety of
pseudo-parabolic subgroups of $\mc Q$ conjugate to $\mc P$.

\begin{thm}\label{thm:3.7}
Let $x$ and $y$ be any points of $\mh A_\es$. 
\enuma{
\item $Q = Q_x N_Q (S_\es) Q_y$.
\item The natural map $N_Q (S_\es)_x \backslash N_Q (S_\es) / N_Q (S_\es)_y \to
Q_x \backslash Q / Q_y$ is bijective.
\item Suppose that $x \in \mh A_\es$ is generic, in the sense that $N_Q (S_\es)_x =
Z_Q (S_\es)_\cpt$. (Thus $Q_x$ can be regarded as an Iwahori subgroup of $Q$, namely
the pointwise stabilizer of the unique chamber of $\mh A_\es$ containing $x$.) Choosing
representatives $w$ for $N_Q (S_\es) / Z_Q (S_\es)_\cpt$ in $N_Q (S_\es)$, we obtain the
Iwahori--Bruhat decomposition
\[
Q = \bigsqcup\nolimits_{w \in N_Q (S_\es) / Z_Q (S_\es)_\cpt} Q_x w Q_x .
\]
\item Let $x', y' \in \mh A_\es$ be such that $Q_{x'}$ and $Q_{y'}$ are good maximal 
compact subgroups of $Q$, for example special vertices. Then 
$N_Q (S_\es)_{x'} \backslash N_Q (S_\es) / N_Q (S_\es)_{y'}$ is canonically in bijection 
with the set of $W(\mc Q,\mc S_\es)$-orbits in $Z_Q (S_\es) / Z_Q (S_\es)_\cpt$. 

In case $x' = y'$, it can be represented by a finitely generated semigroup 
$A \subset Z_Q (S_\es)$ and we obtain the Cartan decomposition
\[
Q = \bigsqcup\nolimits_{a \in A} Q_{x'} a Q_{x'} . 
\]
} 
\end{thm}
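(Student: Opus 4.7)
The plan is to derive all four parts from the building-theoretic structure provided by Theorems \ref{thm:3.4} and \ref{thm:3.5}, with (a) and (b) the technical core and (c), (d) arising as specializations. For (a), given $q \in Q$, I apply Theorem \ref{thm:3.5}(e), transported to $\mc B(\mc Q,F)$ via \eqref{eq:3.17} using that $Q$ acts by translations on the $X_*(\mc Z) \otimes_\Z \R$ factor, to obtain an apartment containing both $x$ and $q \cdot y$. Theorem \ref{thm:3.5}(d) then yields $k \in Q_x$ sending this apartment to $\mh A_\es$, so $k q y \in \mh A_\es$; Theorem \ref{thm:3.5}(b) produces $n \in N_Q(S_\es)$ with $k q y = n y$, whence $n^{-1} k q \in Q_y$ and $q \in Q_x \, n \, Q_y$. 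For (b), well-definedness is immediate and surjectivity is (a). For injectivity, suppose $k_1 n k_2 = n'$ with $k_1 \in Q_x$, $k_2 \in Q_y$, $n, n' \in N_Q(S_\es)$. Then $k_1 \cdot ny = n' y$ with both endpoints in $\mh A_\es$, so the apartment $k_1 \mh A_\es$ contains $x$ and $n' y$; by Theorem \ref{thm:3.5}(d) there is $k \in Q_x$ with $k k_1 \mh A_\es = \mh A_\es$. Hence $k k_1$ stabilizes $\mh A_\es$ setwise and fixes $x$; the standard Bruhat--Tits structure of the Tits system of Theorem \ref{thm:3.4} then factors $k k_1 = m h$ with $m \in N_Q(S_\es)_x$ and $h$ in the pointwise stabilizer of $\mh A_\es$, which coincides with $Z_Q(S_\es)_\cpt$. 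Chasing $k k_1 n y = m n y$ through $k n' y = k k_1 n y$ and using $Z_Q(S_\es)_\cpt \subset N_Q(S_\es)_y$ gives $n' \in N_Q(S_\es)_x \cdot n \cdot N_Q(S_\es)_y$.

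Part (c) follows at once: the generic hypothesis gives $N_Q(S_\es)_x = Z_Q(S_\es)_\cpt$, and since $Z_Q(S_\es)_\cpt$ is the unique maximal compact subgroup of the normal subgroup $Z_Q(S_\es)$ of $N_Q(S_\es)$ (Lemma \ref{lem:3.16}(b)) it is itself normal in $N_Q(S_\es)$, so the double-coset space from (b) collapses to the quotient $N_Q(S_\es)/Z_Q(S_\es)_\cpt$, yielding the disjoint Iwahori--Bruhat decomposition. For (d), goodness of $Q_{x'}$ means by \eqref{eq:3.58} that $N_Q(S_\es)_{x'}$ surjects onto $W(\mc Q, \mc S_\es) = N_Q(S_\es)/Z_Q(S_\es)$; the extended affine Weyl group $N_Q(S_\es)/Z_Q(S_\es)_\cpt$ is an extension of $W(\mc Q,\mc S_\es)$ by the finitely generated free abelian lattice $Z_Q(S_\es)/Z_Q(S_\es)_\cpt$ of Lemma \ref{lem:3.16}(b), with $N_Q(S_\es)_{x'}/Z_Q(S_\es)_\cpt$ mapping isomorphically to the Weyl-group factor (and likewise for $y'$). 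Part (b) therefore identifies the double cosets with $W(\mc Q, \mc S_\es)$-orbits on $Z_Q(S_\es)/Z_Q(S_\es)_\cpt$. For $x' = y'$, I represent these orbits by the intersection of the cocharacter lattice with a closed positive Weyl chamber, which, being the intersection of a finitely generated abelian group with a rational polyhedral cone, is a finitely generated semigroup $A$; then (a) gives $Q = \bigcup_{a \in A} Q_{x'} a Q_{x'}$ and (b) the disjointness.

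The main obstacle is the injectivity step in (b): one has to pin down the setwise stabilizer of $\mh A_\es$ in $Q$ as $N_Q(S_\es)$ times the pointwise stabilizer, and verify that the latter is exactly $Z_Q(S_\es)_\cpt$. In the pseudo-reductive case this is part of standard Bruhat--Tits formalism, but for $\mc Q$ properly quasi-reductive one must also observe that $\mc R_{u,F}(\mc Q)(F)$ fixes the entire building $\mc{BT}(\mc Q,F)$ pointwise (as used in the proof of Theorem \ref{thm:3.4}(c)) and acts trivially on $X_*(\mc Z) \otimes_\Z \R$ since it lies in the kernel of $\mc Q \to \mc Q / \mc D(\mc Q) \mc R_{u,F}(\mc Q)$; thus $\mc R_{u,F}(\mc Q)(F)$ is absorbed into $Z_Q(S_\es)_\cpt$ and introduces no extra subtlety beyond the pseudo-reductive picture already handled in Section \ref{sec:pseudo}.
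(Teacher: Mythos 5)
Your parts (a), (c) and the outline of (d) follow the paper's route closely: (a) is the same chain of Theorem \ref{thm:3.5}(e),(d),(b), and (c),(d) are the same reductions (normality of $Z_Q(S_\es)_\cpt$ in $N_Q(S_\es)$; goodness giving $N_Q(S_\es)_{x'}\twoheadrightarrow W(\mc Q,\mc S_\es)$ with kernel $Z_Q(S_\es)_\cpt$, and a Gordan-type argument for the semigroup, where the paper invokes \cite[1.3.8, Proposition 6.2.10]{BrTi1} to guarantee the chamber is rational for the lattice $Z_Q(S_\es)\cdot y'$). The genuine problem is the injectivity step in (b). After producing $k\in Q_x$ with $kk_1\mh A_\es=\mh A_\es$ and factoring $kk_1=mh$ with $m\in N_Q(S_\es)_x$, $h\in Z_Q(S_\es)_\cpt$, your ``chase'' only yields $kn'y=mny$, i.e.\ $n'y=k^{-1}m\,ny$ with $k^{-1}m=k_1h^{-1}\in Q_x$. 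That is just a restatement of the hypothesis that $ny$ and $n'y$ lie in the same $Q_x$-orbit; it does not place them in the same $N_Q(S_\es)_x$-orbit, so the conclusion $n'\in N_Q(S_\es)_x\,n\,N_Q(S_\es)_y$ does not follow. This implication (same $Q_x$-orbit $\Rightarrow$ same $N_Q(S_\es)_x$-orbit for points of $\mh A_\es$) is exactly the nontrivial content of (b), and it is not a formal consequence of Theorem \ref{thm:3.5}: the paper gets it by passing to the stabilizers $Q_{x_s}, Q_{y_s}$ in $\mc{BT}(\mc Q,F)$ and citing \cite[Proposition 4.2.1]{BrTi1}, i.e.\ it leans on the abstract Bruhat--Tits theory of the valuated root datum from Theorem \ref{thm:3.3} rather than on apartment transitivity alone.

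There is a geometric repair, but it needs a stronger input than Theorem \ref{thm:3.5}(d): one must know that for two apartments containing $x$ and $n'y$ there is an element of $Q$ carrying one to the other while fixing their intersection pointwise (a \cite[2.5.8/7.4.8]{BrTi1}-type statement, not stated in the paper). With that, $k$ can be chosen to fix $n'y$, and then $n'y=kn'y=kk_1ny=mny$ gives $(mn)^{-1}n'\in N_Q(S_\es)\cap Q_y=N_Q(S_\es)_y$, which closes the argument. Relatedly, the auxiliary facts you assert -- that the setwise stabilizer of $\mh A_\es$ is $N_Q(S_\es)$ times the pointwise fixer, and that the pointwise fixer equals $Z_Q(S_\es)_\cpt$ (your remark that $\mc R_{u,F}(\mc Q)(F)\subset Z_Q(S_\es)_\cpt$ is correct and relevant here) -- are true but are nowhere established in the paper and would also require justification (e.g.\ via the $Q$-equivariant bijection between apartments and maximal $F$-split tori, and faithfulness of $W(\mc Q,\mc S_\es)$ on $\mh A_\es$). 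As written, then, (b) has a gap; either supply the strengthened apartment statement or cite \cite[Proposition 4.2.1]{BrTi1} as the paper does.
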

\begin{proof}
(a) Consider any $q \in Q$, and let $x_s$ and $q y_s = (q y)_s$ be the components of,
respectively, $x$ and $q y$ in $\mc{BT}(\mc Q,F)$. By Theorem \ref{thm:3.5}.e there 
exists an apartment of $\mc{BT}(\mc Q,F)$ containing $\{ x_s , q y_s \}$. Taking the
Cartesian product with $X_* (\mc Z) \otimes_\Z \R$, we obtain an apartment $\mh A_q$ of
$\mc B (\mc Q,F)$ containing $\{ x, q y \}$. By Theorem \ref{thm:3.5}.d there is a 
$k \in Q_x$ such that $\mh A_q = \mh A_\es$. Then 
\[
k^{-1} q y \in \mh A_\es \cap Q \cdot y,
\]
so by Theorem \ref{thm:3.5}.b there exists a $n \in N_Q (S_\es)$ with $k^{-1} q y = n y$.
Then $k' := n^{-1} k^{-1} q$ lies in $Q_y$ and
\[
q = k n k' \in Q_x N_Q (S_\es) Q_y . 
\]
(b) Part (a) says that this natural map is surjective. Suppose that $n, n' \in N_Q (S_\es)$
determine the same double coset $Q_x n Q_y = Q_x n' Q_y$. As $Q_x \subset Q_{x_s}$ and 
$Q_y \subset Q_{y_s}$ also, $Q_{x_s} n Q_{y_s} = Q_{x_s} n' Q_{y_s}$. Then 
\cite[Propostion 4.2.1]{BrTi1} implies that
\[
N_Q (S_\es)_x n N_Q (S_\es)_y = N_Q (S_\es)_x n' N_Q (S_\es)_y .
\]
(c) This is a direct consequence of part (b) and the normality of $Z_Q (S_\es)_\cpt$ in
$N_Q (S_\es)$.\\
(d) Since $Q_{y'}$ is good, the canonical map 
\[
N_Q (S_\es)_{y'} \to W(\mc Q,\mc S_\es) = N_Q (S_\es) / Z_Q (S_\es) 
\]
is surjective, with kernel $Z_Q (S_\es)_\cpt$. Choose a set of representatives $W_{y'}
\subset N_Q (S_\es)_{y'}$ for $W(\mc Q,\mc S_\es)$, then
\begin{equation}\label{eq:3.14}
Z_Q (S_\es) W_{y'} = W_{y'} Z_Q (S_\es) = N_Q (S_\es) ,
\end{equation}
and similarly for $x'$. Consider the injection
\[
N_Q (S_\es) / N_Q (S_\es)_{y'} \to \mh A_\es : n N_Q (S_\es)_{y'} \mapsto n y' . 
\]
It is left $N_Q (S_\es)_{x'}$-equivariant and by \eqref{eq:3.14} its image is 
$Z_Q (S_\es) W_{y'} \cdot y' = Z_Q (S_\es) \cdot y'$. Dividing out by that left action 
and using $N_Q (S_\es)_{x'} / Z_Q (S_\es)_{x'} \cong W_{x'}$, we obtain a bijection
\begin{equation}\label{eq:3.15}
N_Q (S_\es)_{x'} \backslash N_Q (S_\es) / N_Q (S_\es)_{y'} \to 
W_{x'} \backslash Z_Q (S_\es) \cdot y' .
\end{equation}
Similarly the injection 
\[
Z_Q (S_\es) / Z_Q (S_\es )_\cpt \to \mh A_\es : z Z_Q (S_\es)_\cpt \mapsto z \cdot y' 
\]
induces a bijection
\begin{equation}\label{eq:3.16}
W(\mc Q ,\mc S_\es) \backslash Z_Q (S_\es) / Z_Q (S_\es )_\cpt \to 
W_{x'} \backslash Z_W (S_\es) \cdot y' .
\end{equation}
We regard $y'$ as the origin of the affine space $\mh A_\es$. Then 
\[
Z_Q (S_\es) / Z_Q (S_\es )_\cpt \cong Z_Q (S_\es) \cdot y' 
\]
becomes a lattice in $\mh A_\es$, so we can represent it by a finitely generated
free abelian subgroup $A'$ of $Z_Q (S_\es)$. Since $W (\mc Q,\mc S)$ is the Weyl group of a 
root system in $\mh A_\es$, a fundamental domain for the $W_{x'}$-action is formed
by a Weyl chamber with respect to $x'$ as base point. Choose a Weyl chamber $\mh A^+_\es$
which contains $y'$. Then 
\begin{equation}\label{eq:3.60}
Z_Q (S_\es) \cdot y' \cap \mh A_\es^+ \quad \text{represents} \quad 
W_{x'} \backslash Z_Q (S_\es) \cdot y' . 
\end{equation}
Let $A$ be the subset of $A'$ corresponding to $\mh A_\es^+ \cap Z_Q (S_\es ) \cdot y'$.
With \eqref{eq:3.15}, \eqref{eq:3.16} and part (a) we obtain bijections
\[
A \to W(\mc Q ,\mc S_\es) \backslash Z_Q (S_\es) / Z_Q (S_\es )_\cpt \to N_Q (S_\es)_{x'} 
\backslash N_Q (S_\es) / N_Q (S_\es)_{y'} \to Q_{x'} \backslash Q / Q_{y'} . 
\]
The second bijection is canonical if we regard $W(\mc Q,\mc S_\es)$ as  
$N_Q (S_\es)_{x'} / Z_Q (S_\es)_\cpt$.

By \cite[1.3.8 and Proposition 6.2.10]{BrTi1} the root system for $W_{x'}$ is contained in
$Z_Q (S_\es ) \cdot x'$. If $x' = y'$, then this implies that $A \cong Z_Q (S_\es) \cdot y' 
\cap \mh A_\es^+$ is a finitely generated semigroup in $Z_Q (S_\es)$. 
\end{proof}

\newpage

\section{Representation theory} 
\label{sec:rep}

Let $\mc G$ be a quasi-reductive group defined over a non-archimedean local field $F$.
We will investigate the representations of locally compact groups like $G = \mc G (F)$. 
We work in the category Rep$(G)$ of smooth $G$-representations on 
complex vector spaces. Thanks to Proposition \ref{prop:2.1}, this category is quite large.

\subsection{Parabolic induction and restriction} \
\label{par:ind}

We will show that certain results involving parabolic induction and Jacquet restriction hold
for all quasi-reductive $F$-groups. We note that for pseudo-reductive groups these results 
(except Theorem \ref{thm:1.14}) only use Section \ref{sec:pseudo}, and in particular do not 
rely on the Bruhat--Tits theory developed in Paragraph \ref{subsec:BT}.

Suppose that $H \subset G$ is a closed subgroup and that $(\pi,V) \in \mathrm{Rep}(H)$.
The vector space $\mathrm{ind}_H^G (V)$ consists of all locally constant functions
$G \to V$ such that
\[
f(g h^{-1}) = \pi (h) f(g) \; \forall h \in H, g \in G \quad
\text{and supp}(f) / H \text{ is compact.}
\]
This is a $G$-representation for the action by left translations.
Obviously the condition on the support is empty if $G / H$ is compact, which by Corollary
\ref{cor:2.8} and \eqref{eq:3.25} happens if $H$ is a pseudo-parabolic $F$-subgroup of $G$.

Let $\mc P_{\mc G}(\lambda)$ be a pseudo-parabolic $F$-subgroup of $\mc G$, and recall that
\[
\mc P_{\mc G}(\lambda) = \mc U_{\mc G}(\lambda) \rtimes \mc Z_{\mc G}(\lambda) \text{ with }
\mc U_{\mc G}(\lambda) = \mc R_{us,F} (\mc P_{\mc G}(\lambda)) .
\]
In this paragraph we will often abbreviate
\[
\mc P = \mc P_{\mc G}(\lambda) ,\; \overline{\mc P} = \mc P_{\mc G}(\lambda^{-1}) ,
\; M = \mc Z_{\mc G}(\lambda)(F) ,\; U = \mc U_{\mc G}(\lambda)(F)
\text{ and } \overline{U} = \mc U_{\mc G}(\lambda^{-1})(F).
\]
Notice that $M$ and $\overline{U}$ are not canonically determined by $\mc P$, this really
involves $\lambda : GL_1 \to \mc G$. By \cite[Corollary 2.2.5]{CGP} the group
\[
Z_{\mc G}(\lambda) / \mc R_{u,F}(\mc G) = \big( \mc P_{\mc G}(\lambda) / 
\mc R_{u,F}(\mc G) \big) \cap \big( \mc P_{\mc G}(\lambda) / \mc R_{u,F}(\mc G) \big)
\]
inherits the pseudo-reductivity of $\mc G / \mc R_{us,F}(\mc G)$. As $\mc R_{u,F}(\mc G)$ is
$F$-wound, this implies that $Z_{\mc G}(\lambda)$ is quasi-reductive. 
Then Lemma \ref{lem:1.4} and \eqref{eq:1.35} say that $M$ is unimodular. 
Let $\delta_P : P \to \R_{>0}$ be the modular function. Since the unipotent group $U$ is a 
union of compact subgroups, $\delta_P$ is trivial on $U$ and factors through $P / U \cong M$.

We define the normalized parabolic induction of $(\pi,V) \in \mathrm{Rep}(P/U)$ as
\[
i_P^G (\pi) = \mathrm{ind}_P^G (\delta_P^{-1/2} \otimes \pi) \in \mathrm{Rep}(G) .
\]
The space of $U$-coinvariants of $(\rho,W) \in \text{Rep}(G)$ is defined as
\[
W_U = W / \text{span} \{ \rho(u) w - w : w \in W, u \in U \} .
\]
This space carries a natural representation $\rho_U$ of $P / U$. We define the normalized Jacquet
restriction of $(\rho,W)$ as $r_P^G (\rho) = \delta_P^{1/2} \otimes \rho_U$. The functors
\[
i_P^G : \text{Rep}(P/U) \to \text{Rep}(G) \quad \text{and} \quad
r_P^G : \text{Rep}(G) \to \text{Rep}(P/U)
\]
for quasi-reductive $F$-groups have many of the properties that are known in the setting of 
reductive groups. Indeed, thanks to Proposition \ref{prop:2.1}, Lemma \ref{lem:2.5}.b and
Corollary \ref{cor:2.8} (as well as their analogues for quasi-reductive $F$-groups), 
everything in \cite[\S VI.1]{Ren} and \cite[Chapters 1--4]{Cas} remains
valid in our generality. Let us list some of these results:
\begin{align}
& i_P^G \text{ and } r_P^G \text{ are exact functors} , \\
& \label{eq:2.30} i_P^G \text{ is left adjoint to } r_P^G 
\text{ (for both see \cite[Proposition VI.1.2]{Ren})} , \\
& \label{eq:2.13} i_P^G \text{ preserves unitarity
(see \cite[IV.2.3]{Ren} or \cite[Proposition 3.1.4]{Cas})} ,\\
& \label{eq:2.29} r_P^G \text{ preserves finite type \cite[Proposition VI.1.3]{Ren}.}
\end{align}
(Recall that a $G$-representation has finite type if it is generated by a finite subset.)

It is easy to see in examples (e.g. the Steinberg representation of $SL_2 (F)$) that Jacquet
restriction does not preserve unitarity. Nevertheless, there is an analogue.
Let $(\pi,V) \in \text{Rep}(V)$ and let $(\check \pi,\check V)$ be its smooth contragredient
representation. Let $j_U : V \to V_U$ and $j_{\overline U} : \check V \to \check V_{\overline U}$
be the quotient maps.

Let $S_M$ be the maximal $F$-split torus in $Z(M)$. The negative chamber of $S_M$ consists
of all elements $m$ such that $\norm{\alpha (m)}_F < 1$ for all roots $\alpha$ of $U$ with respect
to $S_M$. We say that $m \in S_M$ lies deep in the negative chamber if there exists an
$\epsilon < 1$ such that $\norm{\alpha (m)}_F < \epsilon$ for all such $\alpha$. The smaller
$\epsilon$, the deeper.

\begin{thm}\label{thm:2.11} \textup{(Casselman)} \\
There exists a canonical $M$-invariant, non-degenerate pairing
\[
\inp{\cdot}{\cdot}_P : r_P^G (V) \times r^G_{\overline P} (\check V) \to \C .
\]
It is characterized by
\[
\inp{\pi (m) v}{\check v} = \inp{r_P^G (\pi) (m) j_U (v)}{j_{\overline{U}}(\check v)}_P
\]
for all $v \in V, \check v \in \check V$ and all $m$
sufficiently deep in the negative chamber of $S_M$.
\end{thm}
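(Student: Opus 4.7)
The plan is to follow Casselman's classical argument (Casselman's notes Proposition 4.2.3, or \cite{Ren} Theorem VI.2.1 and Proposition VI.9.6) and check that the quasi-reductive setting provides all the needed inputs. The crucial ingredients are: arbitrarily small compact open subgroups $K \subset G$ with Iwahori decomposition relative to $\mc P_{\mc G}(\lambda)$ and the scaling behavior of $K \cap U$, $K \cap \overline{U}$ under conjugation by elements deep in the negative chamber of $S_M$, both provided by Theorem \ref{thm:3.10}(c)--(e); and unimodularity of $M$, which follows from Lemma \ref{lem:1.4} applied to the quasi-reductive group $\mc Z_{\mc G}(\lambda)$ (quasi-reductivity of $\mc Z_{\mc G}(\lambda)$ was noted just before Theorem \ref{thm:2.11}).

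First I would establish Jacquet's lemma in this generality. Fix a compact open subgroup $K$ with Iwahori decomposition relative to $\mc P_{\mc G}(\lambda)$ and an element $a \in S_M$ strictly deep in the negative chamber in the sense of Theorem \ref{thm:3.10}(e). Set $V^K(U) := V^K \cap V(U)$, where $V(U) = \ker j_U$. The claim is that $v \in V^K(U)$ if and only if $e_{a^{-n}(K \cap U) a^n} \cdot v = 0$ for some $n \geq 0$, where $e_J$ denotes the Haar idempotent of the compact open subgroup $J$. One direction is immediate: any $v \in V(U)$ is a finite sum of vectors $\pi(u)w - w$ with $u$ in some compact open subgroup $J_0 \subset U$, and by Theorem \ref{thm:3.10}(e) some conjugate $a^{-n}(K \cap U) a^n$ contains $J_0$, so the averaging kills such vectors. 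The reverse direction follows because $v - e_J v \in V(U)$ for any compact open $J \subset U$.

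Second, I would define the pairing. For $v \in V^K$ and $\check v \in \check V^K$, set
\[
\inp{v}{\check v}_P := \inp{\pi(a^n) v}{\check v} \quad \text{for } n \gg 0.
\]
Stabilization uses the dual version of Jacquet's lemma: the Iwahori decomposition of $K$, together with $a(K \cap \overline{U}) a^{-1} \subset K \cap \overline{U}$ from Theorem \ref{thm:3.10}(d), implies that $\pi(a^{n+1})v - \pi(a^n) v$ is eventually orthogonal to $\check v$. Independence of the choice of $a$ is then checked by comparing two strictly negative elements inside a common Weyl chamber, and independence of the choice of $K$ follows by passing to a smaller compact open subgroup with the same decomposition properties (again Theorem \ref{thm:3.10}) and averaging. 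Jacquet's lemma shows that the pairing descends to a pairing on $V_U \times \check V_{\overline U}$. The $M$-equivariance with the normalization $\delta_P^{\pm 1/2}$ is the standard computation: conjugation by $m \in M$ rescales Haar measure on $K \cap U$ by $\delta_P(m)$, and splitting this factor symmetrically between $r_P^G$ and $r_{\overline P}^G$ produces the asserted equivariance. The characterization by the displayed formula is then built into the construction.

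Finally, non-degeneracy: if $\bar v \in V_U$ pairs to zero with all of $\check V_{\overline U}$, pick a representative $v \in V^K$ for $\bar v$. By construction $\inp{\pi(a^n) v}{\check v} = 0$ for every $\check v \in \check V^K$ and every $n$ large, but $\check V^K$ separates points of $V^K$, so $\pi(a^n) v = 0$ for such $n$; hence $v \in V^K(U)$ by Jacquet's lemma, which gives $\bar v = 0$. Symmetrically for the other variable. The main obstacle is Jacquet's lemma, and it rests entirely on the exhaustion $U = \bigcup_n a^{-n}(K \cap U) a^n$ supplied by Theorem \ref{thm:3.10}(e); once this is in hand, no step of Casselman's argument uses anything beyond the Iwahori decomposition, unimodularity of $M$, and the fact that $U$ is a union of compact open subgroups, all of which are available here.
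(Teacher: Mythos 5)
Your proposal is correct and takes essentially the same route as the paper: the paper's proof simply cites \cite[Proposition 4.2.3, Theorem 4.2.4, Theorem 4.3.3]{Cas} and notes that Casselman's arguments go through verbatim once one has the small compact open subgroups with Iwahori decomposition and their contraction/exhaustion behaviour under elements of the (anti)dominant chamber (Proposition \ref{prop:2.1}, refined in Theorem \ref{thm:3.10}), together with unimodularity of $M$ -- precisely the inputs you verify. Two small points of hygiene in your sketch, which do not change the approach: in the non-degeneracy step one should conclude $e_K \pi(a^n) v = 0$ rather than $\pi(a^n) v = 0$ (since $\pi (a^n) v$ need not lie in $V^K$, one pairs against $\check V^K \cong (V^K)^*$ only after applying the idempotent $e_K$), and, as in Casselman, this step uses admissibility (respectively a uniform choice of $n$ depending only on $K$), while the containment directions for $K \cap U$, $K \cap \overline U$ under a deep negative element must be matched carefully with the positivity convention of Theorem \ref{thm:3.10}(d)--(e).
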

\begin{proof}
The statement is a combination of Proposition 4.2.3, Theorem 4.2.4 and Theorem 4.3.3 of \cite{Cas}.
The proof goes exactly the same, using Proposition \ref{prop:2.1}.
\end{proof}

\begin{lem}\label{lem:2.12}
Let $P' = M' U' \supset P$ be another pseudo-parabolic $F$-subgroup of $G$, with opposite 
$\overline{P'} = M' \overline{U'} \supset \overline{P}$. Then $P \cap M' = P \cap \overline{P'}$ 
is a pseudo-parabolic subgroup of $M' = P' \cap \overline{P'}$ and the above functors are 
transitive in the following sense:
\[
i_P^G = i_{P'}^G \circ i_{P \cap  M'}^{M'} ,\qquad
r_P^G = r_{P \cap M'}^{M'} \circ r_{P'}^G .
\]
\end{lem}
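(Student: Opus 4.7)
The plan is to reduce the lemma to the standard transitivity argument in the reductive case, as developed in \cite{Ren,Cas}, while checking that each ingredient remains available for quasi-reductive $F$-groups.

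First I would establish the structural claim that $\mc P \cap \mc M'$ is a pseudo-parabolic $F$-subgroup of $\mc M'$ with unipotent radical $\mc U \cap \mc M'$, and that the multiplication map $(\mc U \cap \mc M') \times \mc U' \to \mc U$ is an isomorphism of $F$-schemes. The equality $\mc P \cap \mc M' = \mc P \cap \overline{\mc P'}$ is immediate from $\mc M' = \mc P' \cap \overline{\mc P'}$ and $\mc P \subset \mc P'$. To identify $\mc P \cap \mc M'$ as a pseudo-parabolic of $\mc M'$, I would pick cocharacters $\lambda,\lambda' : GL_1 \to \mc G$ defining $\mc P, \mc P'$, arranged so that the image of $\lambda'$ centralizes $\mc M'$ and $\lambda$ factors through $\mc M'$ modulo $\lambda'$; then $\mc P \cap \mc M' = \mc P_{\mc M'}(\lambda)$ and $\mc U \cap \mc M' = \mc U_{\mc M'}(\lambda)$ by the functorial behavior of the Borel--Tits constructions \cite[Proposition 2.1.8, Corollary 2.2.5]{CGP}. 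The product decomposition $\mc U = \mc U' \rtimes (\mc U \cap \mc M')$ then follows from the open cell description \cite[Proposition 3.3.6]{CGP}, and it persists on $F$-rational points because the factors are $F$-split unipotent.

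Next I would match modular characters. Using \eqref{eq:1.35}, for $m \in M$
\[
\delta_P(m) = \norm{\det \bigl( \mr{Ad}(m) : \mr{Lie}(\mc U) \to \mr{Lie}(\mc U) \bigr)}_F ,
\]
and similarly for $\delta_{P'}$ and $\delta_{P \cap M'}$. Since $\mr{Lie}(\mc U) = \mr{Lie}(\mc U') \oplus \mr{Lie}(\mc U \cap \mc M')$ as an $M$-module, this decomposes to
\[
\delta_P |_M = \delta_{P'}|_M \cdot \delta_{P \cap M'} .
\]
The third step is then transitivity of induction. In full generality the unnormalized compactly-supported induction satisfies $\mr{ind}_P^G = \mr{ind}_{P'}^G \circ \mr{ind}_P^{P'}$, and the step~1 decomposition $P = (P \cap M') \ltimes U'$ combined with $P' = M' \ltimes U'$ yields $\mr{ind}_P^{P'}(\sigma) \cong \mr{ind}_{P \cap M'}^{M'}(\sigma)$ (inflated along $P' \twoheadrightarrow M'$). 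All compactness/closedness conditions used in \cite[\S VI.1]{Ren} are available: Corollary \ref{cor:2.8} and \eqref{eq:3.25} give compactness of $\mc G/\mc P$, of $\mc G/\mc P'$ and of $\mc M' / (\mc P \cap \mc M')$, and Proposition \ref{prop:2.1} (resp.\ Theorem \ref{thm:3.10}) provides the small open subgroups needed to produce enough test functions. Inserting the normalizing twists and applying step~2 gives $i_P^G = i_{P'}^G \circ i_{P \cap M'}^{M'}$.

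The transitivity of Jacquet restriction is then a formal consequence: by \eqref{eq:2.30} both $r_P^G$ and $r_{P \cap M'}^{M'} \circ r_{P'}^G$ are right adjoint to the single functor $i_P^G = i_{P'}^G \circ i_{P \cap M'}^{M'}$, so they are naturally isomorphic. The main obstacle I anticipate is step~1: checking that in the quasi-reductive setting $\mc P \cap \mc M'$ really is a pseudo-parabolic of $\mc M'$ with the expected opposite and unipotent radical, and that the product $(\mc U \cap \mc M') \times \mc U' \to \mc U$ is scheme-theoretically an isomorphism, requires the machinery of pseudo-parabolic subgroups from \cite[Chapter 3 and Appendix C.2]{CGP}; once this is in place, steps~2--4 are formal consequences of earlier results in the paper.
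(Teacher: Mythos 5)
Your proposal is correct and follows essentially the same route as the paper: the paper's proof simply observes that Renard's Lemme VI.1.4 -- i.e.\ precisely the structure of pseudo-parabolics, the identity $\delta_P|_M = \delta_{P'}|_M \cdot \delta_{P\cap M'}$, and induction in stages that you spell out in your first three steps -- carries over to quasi-reductive groups via \eqref{eq:3.4} and the structure theory of \cite[\S 2.2]{CGP}. The only (harmless) deviation is your final step, where you obtain the transitivity of $r_P^G$ by uniqueness of adjoints from \eqref{eq:2.30} instead of computing directly with iterated $U$-coinvariants; either argument is valid once $i_P^G = i_{P'}^G \circ i_{P\cap M'}^{M'}$ is established.
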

\begin{proof}
For reductive groups this is shown in \cite[Lemme VI.I.4]{Ren}. The proof relies on the structure 
of parabolic subgroups and of root subgroups.
By \eqref{eq:3.4} the latter are the same for quasi-reductive groups as for pseudo-reductive
groups. Using the structure of pseudo-parabolic subgroups, as described in \cite[\S 2.2]{CGP}, 
the argument from \cite[Lemme VI.I.4]{Ren} goes through.
\end{proof}

Another result that remains valid for quasi-reductive $F$-groups is Bernstein's geometric lemma,
which we now formulate. Let $\mc S_\es$ and $\mc P_\es$ be as in \eqref{eq:3.12}.
First we assume that $\mc P$ and $\mc P'$ are pseudo-parabolic $F$-subgroups of $\mc G$ 
containing $\mc P_\es$. Let $W(\mc G,\mc S_\es)^{\mc P',\mc P} \subset W (\mc G,\mc S_\es)$ 
be a set of representatives for
\[
W(\mc P',\mc S_\es) \backslash W(\mc G,\mc S_\es) / W(\mc P,\mc S_\es) .
\]
For each $w \in W(\mc G,\mc S_\es)^{\mc P',\mc P}$ we choose a lift $\bar w \in N_G (S_\es)$, 
and we let $\overline{W(\mc G,\mc S_\es)}^{\mc P',\mc P}$ be the collection of all these 
$\bar w$. The Bruhat decomposition (see Theorem \ref{thm:2.2}) says that
\begin{equation}\label{eq:2.17}
G = \bigsqcup\nolimits_{\bar w \in \overline{W(\mc G,\mc S_\es)}^{\mc P',\mc P}} P' \bar w P .
\end{equation}
When $\mc P$ and $\mc P'$ are general pseudo-parabolic $F$-subgroups of $\mc G$, we can adjust
the construction of the $\bar w$ as in \cite[\S V.4.7]{Ren}, so that \eqref{eq:2.17} remains
true with the new interpretation of $\overline{W(\mc G,\mc S_\es)}^{\mc P',\mc P}$.

\begin{thm}\label{thm:2.9}
Let $P,P'$ be pseudo-parabolic $F$-subgroups of $G$ and let $(\pi,V) \in \mathrm{Rep}(P / U)$.
Then $r^G_{P'} i_P^G (\pi ,V)$ admits a filtration with terms $F (V)_i$, such that the 
successive subquotients $F (V)_i / F (V)_{i-1}$ are
\[
i^{M'}_{M' \cap \bar w P \bar{w}^{-1}} \circ \bar w \circ
r^M_{M \cap \bar w P' \bar{w}^{-1}} (\pi,V) .
\]
Here $\bar w$ runs through $\overline{W(\mc G,\mc S_\es)}^{\mc P',\mc P}\!\!$ and
the functor $\bar w$ stands for $(\bar w \cdot \rho)(g) = \rho (\bar{w}^{-1} g \bar w)$.
\end{thm}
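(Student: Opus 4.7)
\textbf{Proof plan for Theorem \ref{thm:2.9}.} The approach is to adapt the classical Bernstein--Zelevinsky/Casselman argument (as presented in \cite[\S V.4.7]{Ren} or \cite[\S 6.3]{Cas}) by stratifying $G$ via the Bruhat decomposition \eqref{eq:2.17}. First I would totally order $\overline{W(\mc G,\mc S_\es)}^{\mc P',\mc P} = \{\bar w_1,\ldots,\bar w_n\}$ so that each
\[
\Omega_i \; := \; \bigsqcup\nolimits_{j \leq i} P' \bar w_j P
\]
is closed in $G$. This is possible because each double coset $P' \bar w P$ is locally closed in $G$ (the orbit map being a submersion by Proposition \ref{prop:1.7}.a) and $G/P$ is compact by \eqref{eq:3.25}.

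Next I define $F(V)_i \subset i_P^G(\pi)$ as the subspace of functions whose support lies in $\Omega_i$. Each $F(V)_i$ is $P'$-stable, giving an increasing filtration $0 = F(V)_0 \subset F(V)_1 \subset \cdots \subset F(V)_n = i_P^G(\pi)$. The successive quotient $F(V)_i / F(V)_{i-1}$ consists of the functions supported on the locally closed stratum $P' \bar w_i P \cong P' \times^{P' \cap \bar w_i P \bar w_i^{-1}} \!\! \{\bar w_i\}$; incorporating the normalization $\delta_P^{-1/2}$ and changing variables along $\bar w_i$, this subquotient is canonically isomorphic as a smooth $P'$-representation to $\mathrm{ind}_{P' \cap \bar w_i P \bar w_i^{-1}}^{P'} \tau_i$, where $\tau_i$ is the restriction to $P' \cap \bar w_i P \bar w_i^{-1}$ of $\bar w_i \cdot (\delta_P^{-1/2} \otimes \pi)$.

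Applying the exact functor $r^G_{P'} = \delta_{P'}^{1/2} \otimes (\cdot)_{U'}$ to this filtration reduces the problem to computing the $U'$-coinvariants of each subquotient. A standard Mackey-type calculation, valid because $U'$ is normal in $P'$ and $P' \cap \bar w_i P \bar w_i^{-1} = (M' \cap \bar w_i P \bar w_i^{-1}) \ltimes (U' \cap \bar w_i P \bar w_i^{-1})$, yields
\[
\big(\mathrm{ind}_{P' \cap \bar w_i P \bar w_i^{-1}}^{P'} \tau_i\big)_{U'} \; \cong \;
\mathrm{ind}_{M' \cap \bar w_i P \bar w_i^{-1}}^{M'} \big( (\tau_i)_{U' \cap \bar w_i P \bar w_i^{-1}} \big) .
\]
Transporting the inner coinvariants back along $\bar w_i^{-1}$ identifies $U' \cap \bar w_i P \bar w_i^{-1}$ with $\bar w_i (\bar w_i^{-1} U' \bar w_i \cap P) \bar w_i^{-1}$, and $\bar w_i^{-1} U' \bar w_i \cap M$ is precisely the $F$-split unipotent radical of the pseudo-parabolic subgroup $M \cap \bar w_i^{-1} P' \bar w_i$ of $M$. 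After matching the modular twists, using Lemma \ref{lem:1.4} for the quasi-reductive groups $M$, $M'$ and their pseudo-parabolic subgroups, one recovers exactly $i^{M'}_{M' \cap \bar w_i \mc P \bar w_i^{-1}} \circ \bar w_i \circ r^M_{M \cap \bar w_i \mc P' \bar w_i^{-1}}(\pi,V)$.

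The main obstacle is the structural input on intersections of pseudo-parabolic $F$-subgroups: I need $M' \cap \bar w \mc P \bar w^{-1}$ to be a pseudo-parabolic $F$-subgroup of $M'$ with $F$-split unipotent radical $M' \cap \bar w \mc U \bar w^{-1}$, and analogously on the $M$-side. This should follow from the cocharacter description in \cite[\S 2.2]{CGP}: writing $\mc P = \mc P_{\mc G}(\lambda)$ and $\mc P' = \mc P_{\mc G}(\lambda')$, a suitable combination of $\lambda'$ with $\mathrm{Ad}(\bar w)\lambda$ realizes the intersection as the dynamic attractor of a cocharacter of $M'$, and \cite[Proposition 2.1.8]{CGP} then identifies its $F$-split unipotent radical. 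The second delicate point is the bookkeeping of the modular characters $\delta_P, \delta_{P'}, \delta_{M' \cap \bar w \mc P \bar w^{-1}}, \delta_{M \cap \bar w \mc P' \bar w^{-1}}$, but this is routine once all the enveloping groups are known to be unimodular. Given these, the remainder of the argument of \cite[\S V.4.7]{Ren} transfers verbatim, since it only relies on the compactness of $G/P$, the openness of the orbit maps (Proposition \ref{prop:1.7}.a) and the availability of small compact open subgroups with Iwahori decomposition (Theorem \ref{thm:3.10}).
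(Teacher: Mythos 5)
Your plan is essentially the paper's proof: the paper simply cites the classical geometric lemma arguments of \cite[\S III.1.2]{BeRu} and \cite[\S VI.5.1]{Ren} and notes that they go through using the small compact open subgroups with Iwahori decomposition, the Bruhat decomposition for quasi-reductive groups, and the structure of pseudo-parabolic subgroups from \cite[\S 2.2]{CGP} — precisely the ingredients (filtration by supports over the cosets $P'\bar w P$, Mackey-type computation of $U'$-coinvariants, dynamic description of intersections of pseudo-parabolics) that you assemble. So your proposal is correct and takes the same route, just written out in more detail than the paper's citation-style proof.
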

\begin{proof}
For reductive groups see \cite[\S III.1.2]{BeRu} or \cite[\S VI.5.1]{Ren}. With the
small compact open subgroups from Proposition \ref{prop:2.1}, Theorem \ref{thm:3.2} and the 
structure of pseudo-parabolic subgroups from \cite[\S 2.2]{CGP}, these two proofs remain 
valid for all quasi-reductive $F$-groups.
\end{proof}

We note one special case of Theorem \ref{thm:2.9}, where $P$ and $P'$ both equal the minimal
pseudo-parabolic $F$-subgroup $P_\es$ and $P_\es \cap \overline{P_\es} = Z_G (S_\es)$. 
It says that $r^G_{P_\es} i_{P_\es}^G (\pi,V)$ has a filtration with successive subquotients
\begin{equation}\label{eq:2.12}
(\bar w \cdot \pi, V) \qquad w \in W(\mc G,\mc S_\es) .
\end{equation}
In particular, when $\pi$ is irreducible, the $Z_G (S_\es)$-representation 
$r^G_{P_\es} i_{P_\es}^G (\pi)$ has finite length and the $\bar w \cdot \pi$ with 
$w \in W(\mc G,\mc S_\es)$ form precisely its Jordan--H\"older content.\\

Following the usual terminology for reductive groups, we say that $(\pi,V) \in \Rep (G)$ is 
supercuspidal if $r_P^G (\pi,V) = 0$ for all proper pseudo-parabolic $F$-subgroups $\mc P$ 
of $\mc G$. The following result and its proof are analogues of \cite[Corollaire VI.2.1]{Ren}.

\begin{lem}\label{lem:2.13}
Every irreducible smooth $G$-representation embeds in the parabolic induction of an
irreducible supercuspidal representation.
\end{lem}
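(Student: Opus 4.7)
The plan is to adapt the argument of \cite[Corollaire VI.2.1]{Ren} to the quasi-reductive setting, using the exactness and transitivity of $r_P^G$, its preservation of finite type \eqref{eq:2.29}, first Frobenius reciprocity \eqref{eq:2.30}, and Casselman's pairing (Theorem \ref{thm:2.11}).

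Among the pseudo-parabolic $F$-subgroups $\mc P = \mc M \mc U$ of $\mc G$ with $r_P^G(\pi) \neq 0$, I would first choose one whose Levi $\mc M$ has minimal dimension. The collection is nonempty because $r_G^G(\pi) = \pi \neq 0$. Since $r_P^G(\pi)$ has finite type as an $M$-representation by \eqref{eq:2.29}, Zorn's lemma produces an irreducible $M$-quotient $\sigma$ of $r_P^G(\pi)$.

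Next I would verify that $\sigma$ is supercuspidal. If not, there is a proper pseudo-parabolic subgroup $\mc P' = \mc M' \mc U' \subsetneq \mc M$ with $r_{P'}^M(\sigma) \neq 0$. Using \cite[\S 2.2]{CGP}, this $\mc P'$ comes from a pseudo-parabolic $\mc P'' \subsetneq \mc P$ of $\mc G$ with Levi $\mc M'$, and Lemma \ref{lem:2.12} gives $r_{P''}^G = r_{P'}^M \circ r_P^G$. The minimality of $\mc P$ forces $r_{P''}^G(\pi) = 0$; applying the exact functor $r_{P'}^M$ to the surjection $r_P^G(\pi) \twoheadrightarrow \sigma$ then exhibits $r_{P'}^M(\sigma)$ as a quotient of $r_{P''}^G(\pi) = 0$, a contradiction.

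Finally, to convert the surjection $r_P^G(\pi) \twoheadrightarrow \sigma$ into an embedding, I would use Casselman's pairing (Theorem \ref{thm:2.11}) to dualize it into an injection $\check\sigma \hookrightarrow r_{\bar P}^G(\check\pi)$, where $\bar{\mc P} = \mc M \overline{\mc U}$ is the opposite pseudo-parabolic. First adjointness \eqref{eq:2.30} applied to $\bar{\mc P}$ then produces a nonzero map $i_{\bar P}^G(\check\sigma) \to \check\pi$, which is surjective because $\check\pi$ is irreducible. Taking smooth contragredients and invoking the natural isomorphism $(i_{\bar P}^G(\check\sigma))^{\sim} \cong i_P^G(\sigma)$ yields the desired embedding $\pi \hookrightarrow i_P^G(\sigma)$.

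The main obstacle is this last duality step. One needs that $\sigma$ (and hence $i_{\bar P}^G(\check\sigma)$) is admissible, so that smooth biduality holds and the identification of $(i_{\bar P}^G(\check\sigma))^{\sim}$ with $i_P^G(\sigma)$ is valid; admissibility of an irreducible supercuspidal representation is affordable here because its central character and the compactness of $Z_M(S)/S$ (Lemma \ref{lem:3.16}) reduce matters to the compact case. One also needs that $\check\pi$ remains irreducible, which requires $\pi$ to be admissible. To avoid circularity with Theorem \ref{thm:1.1}, this admissibility should be extracted by a self-contained argument \`a la \cite{Cas}, using the Iwahori decomposition of Theorem \ref{thm:3.10}; alternatively, one can first establish second adjointness (the Bernstein-type identity $\Hom_G(\pi, i_{\bar P}^G(\sigma)) \cong \Hom_M(r_P^G(\pi),\sigma)$) in our setting, from which the embedding then follows directly from the surjection $r_P^G(\pi) \twoheadrightarrow \sigma$ without any appeal to contragredients.
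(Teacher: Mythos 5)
Your reduction steps are exactly the paper's: choose $\mc P$ with $r_P^G(\pi)\neq 0$ and Levi of minimal dimension, use \eqref{eq:2.29} to extract an irreducible quotient $\sigma$ of the finite-type representation $r_P^G(\pi)$, and verify supercuspidality of $\sigma$ via transitivity (Lemma \ref{lem:2.12}) and exactness. The genuine gap is in your final step. The detour through Casselman's pairing and contragredients is not available as you set it up: to pass from an injection $\check\sigma \hookrightarrow r_{\bar P}^G(\check\pi)$ to a nonzero map $i_{\bar P}^G(\check\sigma)\to\check\pi$ you need the adjunction $\Hom_G\big(i_{\bar P}^G(\check\sigma),\check\pi\big)\cong\Hom_M\big(\check\sigma, r_{\bar P}^G(\check\pi)\big)$, which is Bernstein's second adjointness, not the Frobenius reciprocity of \eqref{eq:2.30}, and the paper never establishes second adjointness for quasi-reductive groups. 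Moreover your route needs admissibility of $\pi$ (for irreducibility of $\check\pi$, for biduality, and already for the non-degeneracy of the pairing in Theorem \ref{thm:2.11}), whereas in the paper admissibility of irreducible smooth representations (Theorem \ref{thm:1.2}) is deduced \emph{from} Lemma \ref{lem:2.13}. The escape routes you name yourself (a self-contained admissibility argument, or proving second adjointness first) are precisely the missing and nontrivial content, so the proposal as it stands is incomplete and, in its most natural reading, circular.

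The gap disappears once you apply Frobenius reciprocity in the correct direction and with the \emph{same} parabolic: the adjunction between $r_P^G$ and $i_P^G$ (this is how \eqref{eq:2.30} is actually used in the paper, its phrasing notwithstanding) holds for arbitrary smooth representations, with no admissibility hypothesis, and yields
\[
0 \;\neq\; \Hom_M\big(r_P^G(\pi),\sigma\big)\;\cong\;\Hom_G\big(\pi,\, i_P^G(\sigma)\big).
\]
Since $\pi$ is irreducible, any nonzero homomorphism $\pi \to i_P^G(\sigma)$ is injective, which finishes the proof. This is exactly the paper's argument: no opposite parabolic, no contragredient, and no admissibility are needed.
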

\begin{proof} 
For any irreducible $(\rho,W) \in \Rep (G)$, Lemma \ref{lem:2.12} implies that there exists
a pseudo-parabolic $F$-subgroup $\mc P$ of $\mc G$ such that $r_P^G (\rho,W)$ is a nonzero
supercuspidal representation of $P/U$. By \eqref{eq:2.29} $r_P^G (\rho,W)$ has finite type,
and hence admits an irreducible quotient, say $(\pi,V) \in \Rep (P/U)$. By the exactness of 
$r_P^G$ that is again a supercuspidal $P/U$-representation. By Frobenius reciprocity 
\eqref{eq:2.30}
\[
0 \neq \Hom_{P/U} ( r_P^G (\rho), \pi) \cong \Hom_G (\rho, i_P^G (\pi)) . 
\]
In view of the irreducibility of $\rho$, any nonzero $G$-homomorphism 
$(\rho,W) \to i_P^G (\pi,V)$ gives the required embedding.
\end{proof}

Using the group ${}^0 G$ from \eqref{eq:3.30} we can formulate different,
equivalent characterizations of supercuspidality. 

\begin{thm}\label{thm:1.14} \textup{(Bernstein)} \ \\
For $(\pi,V) \in \Rep (G)$ the following are equivalent:
\begin{itemize}
\item $(\pi,V)$ is supercuspidal.
\item The restriction of $(\pi,V)$ to ${}^\circ G$ is a compact representation.
\item Every matrix coefficient of $(\pi,V)$ has compact support modulo $Z(G)$.
\end{itemize}
\end{thm}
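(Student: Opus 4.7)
The plan is to follow Bernstein's classical argument, as presented for reductive groups in \cite[\S VI.2]{Ren} or \cite[\S 5.3]{Cas}, and verify that each step generalizes to our setting using the structural results established earlier in the paper. I would prove the three implications (compact on ${}^\circ G$) $\Rightarrow$ (matrix coefficients compactly supported modulo $Z(G)$) $\Rightarrow$ (supercuspidal) $\Rightarrow$ (compact on ${}^\circ G$).

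First, the equivalence between compactness on ${}^\circ G$ and compact support modulo $Z(G)$ is essentially structural. By Lemma \ref{lem:3.15}, ${}^\circ G$ is an open normal subgroup of $G$, $G / {}^\circ G Z(G)$ is finite, and ${}^\circ G \cap Z(G)$ is compact. Consequently, a matrix coefficient of $(\pi,V)$ has compact support on ${}^\circ G$ if and only if its support has compact image in $G / Z(G)$. For this one uses that every matrix coefficient is $Z(G)$-quasi-invariant on each $G$-isotypic component of $V$, and that finitely many cosets of ${}^\circ G Z(G)$ exhaust $G$.

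Next, for supercuspidal implies compact support modulo $Z(G)$, I would apply the Cartan decomposition (Theorem \ref{thm:3.7}.d): $G = \bigsqcup_{a \in A} K a K$ with $K = G_{x'}$ a good maximal compact subgroup and $A \subset Z_G(S_\es)$. Given $v \in V$ and $\check v \in \check V$, choose a compact open subgroup $K' \subset K$ fixing both vectors; then $\langle \pi(g) v, \check v\rangle$ is constant on $(K' \cap K) a (K' \cap K)$-double cosets. For every maximal proper pseudo-parabolic $F$-subgroup $\mc P = \mc Z_{\mc G}(\lambda) \ltimes \mc U_{\mc G}(\lambda)$, Casselman's pairing (Theorem \ref{thm:2.11}) together with supercuspidality gives $\langle \pi(m) v, \check v \rangle = \langle r_P^G(\pi)(m) j_U(v), j_{\overline U}(\check v)\rangle_P = 0$ whenever $m$ is sufficiently deep in the negative chamber of $S_M$. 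Since $\Phi(\mc G, \mc S_\es)$ is a genuine root system and the region of the apartment $\mh A_\es$ that is deep in no proper negative chamber is bounded modulo the central direction $X_* (\mc Z) \otimes_\Z \R$ (which corresponds precisely to translations by $Z(G)$, via the action described after Theorem \ref{thm:3.4}), the matrix coefficient vanishes on $K a K$ for all $a$ outside a set whose image in $Z_G(S_\es)/(Z_G(S_\es) \cap Z(G))$ is finite, yielding compact support modulo $Z(G)$.

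Finally, for compact support modulo $Z(G)$ implies supercuspidal, assume for contradiction that $r_P^G(V) \neq 0$ for some proper pseudo-parabolic $\mc P$. Choose $v \in V$ and $\check v \in \check V$ with $\langle j_U(v), j_{\overline U}(\check v)\rangle_P \neq 0$, and pick $s \in S_M$ lying strictly in the negative chamber of $S_M$; since $\mc P$ is proper, the image of $s$ in $S_M / (S_M \cap Z(G))$ is of infinite order. For large $n$, Theorem \ref{thm:2.11} gives $\langle \pi(s^n) v, \check v\rangle = \delta_P(s^n)^{1/2}\langle j_U(v), j_{\overline U}(\check v)\rangle_P \neq 0$, while $\{s^n\}$ escapes every compact subset of $G/Z(G)$, contradicting the hypothesis. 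The main technical obstacle will be the geometric step in the supercuspidal implication: ensuring that the union of negative chambers of all maximal proper pseudo-parabolics, together with a bounded region plus the central line $X_*(\mc Z) \otimes_\Z \R$, exhausts the apartment $\mh A_\es$. This is a purely combinatorial property of root systems that holds here because $\Phi(\mc Q,\mc S_\es) = \Phi(\mc G,\mc S_\es)$ by Theorem \ref{thm:3.8}, reducing the claim to the standard reductive case.
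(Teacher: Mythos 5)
Your proposal takes essentially the same route as the paper: the paper's proof simply notes that Bernstein's classical argument (Renard, Th\'eor\`eme VI.2.1) goes through verbatim once one has the properties of ${}^0 G$ from Lemma \ref{lem:3.15}, the compact open subgroups with Iwahori decomposition from Theorem \ref{thm:3.10} (which is what underlies the Casselman pairing of Theorem \ref{thm:2.11} that you invoke), and the Cartan decomposition of Theorem \ref{thm:3.7}.d. Your write-up just makes that transfer explicit, so it is correct and matches the paper's approach.
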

\begin{proof}
The proof in the case of reductive groups is given in \cite[Th\'eor\`eme VI.2.1]{Ren}.
It uses:
\begin{itemize}
\item the properties of ${}^0 G$ from Lemma \ref{lem:3.15},
\item the small subgroups with an Iwahori decomposition from Theorem \ref{thm:3.10},
\item the Cartan decomposition from Theorem \ref{thm:3.7}.d.
\end{itemize}
With this input, the argument from \cite{Ren} is also valid for quasi-reductive groups.
\end{proof}

\subsection{Uniform admissibility} \
\label{par:adm}

Recall that a $G$-representation $(\pi,V)$ is admissible if $V^K$ has finite dimension for 
every compact open subgroup $K \subset G$. This property is important in harmonic analysis.
To establish the upcoming results, we make essential use of valuated root data and 
Bruhat--Tits theory, even in the case of pseudo-reductive groups.

\begin{thm}\label{thm:1.2}
Every irreducible smooth $G$-representation is admissible.
\end{thm}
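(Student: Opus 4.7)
The plan is to reduce to the case of supercuspidal representations via parabolic induction, and then handle the supercuspidal case directly using the characterization from Theorem \ref{thm:1.14}. I would induct on the semisimple $F$-rank of $\mc G$. In the base case (rank zero), one has $\mc G = Z_{\mc G}(\mc S_\es)$, so by Lemma \ref{lem:3.16}(a) the quotient $G/S_\es$ is compact. Any irreducible smooth $G$-representation then admits a central character on the split central torus $S_\es$ by smooth Schur's lemma, and its underlying $G/S_\es$-representation is a smooth irreducible representation of a compact totally disconnected group, hence finite-dimensional.

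For the inductive step, let $(\rho, V)$ be an irreducible smooth $G$-representation. By Lemma \ref{lem:2.13} one has an embedding $V \hookrightarrow i_P^G(\pi)$ for some pseudo-parabolic $F$-subgroup $\mc P = \mc M \ltimes \mc U$ of $\mc G$ and some irreducible supercuspidal representation $\pi$ of $M = \mc Z_{\mc G}(\lambda)(F)$. Since subrepresentations of admissible representations are admissible, it suffices to show that $i_P^G(\pi)$ is admissible. When $\mc P$ is a proper subgroup of $\mc G$, the Levi $M$ is quasi-reductive of strictly smaller semisimple $F$-rank, so by the inductive hypothesis $\pi$ is admissible on $M$. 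Admissibility of $i_P^G(\pi)$ then follows from the Iwasawa decomposition $G = K \mc P(F)$ (Theorem \ref{thm:3.6}) combined with the compactness of $G/\mc P(F)$ (equation \eqref{eq:3.25}): for any compact open $K \subset G$, the double coset space $K \backslash G / \mc P(F)$ is finite, so a $K$-fixed section of $i_P^G(\pi)$ is determined by its finitely many values at coset representatives $g_1, \ldots, g_r$, each lying in $\pi^{K_i}$ for a compact open $K_i \subset M$.

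It remains to handle the case $\mc P = \mc G$, where $\rho$ is itself supercuspidal. By Theorem \ref{thm:1.14}, all matrix coefficients of $\rho$ have compact support modulo $Z(G)$. I would fix a compact open $K \subset G$, shrinking it via Theorem \ref{thm:3.10} to a compact open subgroup admitting an Iwahori decomposition and being normal in the isotropy group of a point of $\mc B(\mc G, F)$. Using the Cartan decomposition $G = \bigsqcup_{a \in A} K a K$ with $A \subset Z_G(\mc S_\es)$ a finitely generated semigroup (Theorem \ref{thm:3.7}(d)), the compact-support-modulo-center property forces any bi-$K$-invariant matrix coefficient $c_{v, \check v}$, for $v \in V^K$ and $\check v \in \check V^K$, to be supported on only finitely many double cosets $K a K$ modulo $Z(G)$. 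A standard Harish--Chandra/Bernstein argument on the Hecke module structure of $V^K$ then yields $\dim V^K < \infty$.

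The main obstacle lies in the supercuspidal case. While the parabolic reduction via Lemma \ref{lem:2.13} and the preservation of admissibility under $i_P^G$ transport readily to quasi-reductive groups given the structural results of Section \ref{sec:quasi}, converting compact-support-modulo-center of matrix coefficients into a concrete bound on $\dim V^K$ requires the careful deployment of the Cartan decomposition and the finely controlled compact open subgroups established in Theorems \ref{thm:3.7} and \ref{thm:3.10}. These are precisely the tools that allow the classical arguments of Harish--Chandra and Bernstein for reductive $p$-adic groups to carry over to the more general quasi-reductive setting, with the Cartan subgroup $Z_G(\mc S_\es)$ playing its usual combinatorial role despite being neither a torus nor even reductive.
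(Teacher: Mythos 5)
Your reduction to the supercuspidal case is exactly the paper's: Lemma \ref{lem:2.13} embeds an arbitrary irreducible representation into $i_P^G(\pi)$ with $\pi$ irreducible supercuspidal, and admissibility is preserved by $i_P^G$ because $G/\mc P(F)$ is compact \eqref{eq:3.25}; your explicit induction on the semisimple $F$-rank is a harmless repackaging of this (and your base case is fine). The divergence, and the problem, is in the supercuspidal case. The step you actually justify is that for $v \in V^K$, $\check v \in \check{V}^K$ the coefficient $c_{v,\check v}$ is bi-$K$-invariant and supported on finitely many double cosets $KaK$ modulo $Z(G)$. That statement alone does not bound $\dim_\C V^K$: the finite set of cosets depends on the pair $(v,\check v)$, and nothing prevents it from growing as $v$ ranges over an infinite linearly independent family in $V^K$. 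The phrase ``a standard Harish--Chandra/Bernstein argument on the Hecke module structure of $V^K$'' is doing all the work here, and as stated there is no such argument that proceeds from per-coefficient finiteness of support on the Cartan decomposition to admissibility; this is precisely the nontrivial content that must be supplied.

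What the paper does instead is invoke the second bullet of Theorem \ref{thm:1.14}: a supercuspidal irreducible representation restricts to a \emph{compact} representation of ${}^0 G$, and then the admissibility of irreducible compact representations of a unimodular totally disconnected group (Schur orthogonality and formal degrees, i.e.\ the argument of \cite[Th\'eor\`eme VI.2.2]{Ren}) applies, using only Lemma \ref{lem:3.15} and general facts about such groups --- the Cartan decomposition enters only indirectly, through the proof of Theorem \ref{thm:1.14} itself. Your sketch never uses the compactness of $\rho |_{{}^0 G}$ or Lemma \ref{lem:3.15}, which are the actual ingredients. If what you intend by ``Bernstein argument'' is the uniform admissibility machinery built on Theorem \ref{thm:3.7}.d and Theorem \ref{thm:3.10}, note that in this paper that is the content of Theorem \ref{thm:1.1}, whose proof is given \emph{after} and partly on top of Theorem \ref{thm:1.2} (via the finite-dimensionality statement fed into Bernstein's Assertion A); invoking it here would at best require reproducing Bernstein's algebra lemma in full to avoid circularity, and in any case it is not written down in your proposal. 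So the supercuspidal step has a genuine gap: either supply the formal-degree/compact-representation argument for ${}^0 G$, or carry out Bernstein's Hecke-algebra argument explicitly rather than citing it.
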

\begin{proof}
By Lemma \ref{lem:2.13} such a representation embeds in the parabolic induction of an 
irreducible supercuspidal representation, say in $i_P^G (\pi,V)$. The compactness of $G/P$ 
\eqref{eq:3.25} implies that the functor $i_P^G$ preserves admissibility 
\cite[Lemme III.2.3]{Ren}. Therefore it suffices to prove that every irreducible 
supercuspidal representation is admissible. The argument for that given in
\cite[Th\'eor\`eme VI.2.2]{Ren} uses only Theorem \ref{thm:1.14}, Lemma \ref{lem:3.15}
and general properties of totally disconnected groups, so it also works for the
quasi-reductive group $G$.
\end{proof}

We fix a Haar measure on $G$, so that a convolution product is defined on $C_c (G)$.
For any compact open subgroup $K$, let $\mc H (G,K)$ be the convolution algebra of 
compactly supported $K$-biinvariant functions $G \to \C$. It is an involutive algebra 
with $f^* (g) = \overline{f (g^{-1})}$.

The Hecke algebra $\mc H (G)$ is defined as the union of the algebras $\mc H (G,K)$, 
over all compact open subgroups $K$ of $G$. It is well-known that Rep$(G)$ is equivalent 
with the category of nondegenerate $\mc H (G)$-modules, see \cite[Th\'eor\`eme III.1.4]{Ren}. 

\begin{lem}\label{lem:1.5}
The algebra $\mc H (G)$ is dense in $C_c (G)$ and in $L^1 (G)$.
\end{lem}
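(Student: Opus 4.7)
The plan is to first observe that $\mc H(G)$ coincides with $C_c^\infty(G)$, the space of locally constant compactly supported complex-valued functions on $G$. Indeed, if $f \in \mc H(G,K)$ then $f$ is bi-$K$-invariant and compactly supported, hence locally constant. Conversely, if $f$ is locally constant with compact support $C$, then by compactness of $C$ one can find a single compact open subgroup $K \subset G$ (for instance taken from the neighborhood basis provided by Theorem~\ref{thm:3.10}, or just using that any totally disconnected locally compact group has a neighborhood basis of $1$ consisting of compact open subgroups) such that $f$ is both left and right $K$-invariant; then $f \in \mc H(G,K)$.

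For density in $C_c(G)$ (with the sup-norm, or equivalently the inductive-limit topology, since our approximations will live in a common fixed compact-support class), I would take $f \in C_c(G)$ with support in a compact set $C$ and $\epsilon > 0$. Using uniform continuity of $f$ on the compact set $C$, together with the fact that the compact open subgroups form a neighborhood basis of $1$, I choose a compact open subgroup $K \subset G$ such that
\[
|f(kgk') - f(g)| < \epsilon \qquad \forall g \in G,\ k,k' \in K.
\]
Then define the averaged function
\[
\tilde f(g) := \mu(K)^{-2} \int_K \int_K f(kgk') \, dk \, dk' ,
\]
where $\mu$ is a Haar measure on $G$ normalized so that $\mu(K) > 0$. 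A direct check shows that $\tilde f$ is bi-$K$-invariant, is supported in $KCK$ (which is compact), and satisfies $\|f - \tilde f\|_\infty < \epsilon$. Hence $\tilde f \in \mc H(G,K) \subset \mc H(G)$.

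For density in $L^1(G)$, I would combine two facts: $C_c(G)$ is dense in $L^1(G)$ (this is a general fact for locally compact Hausdorff groups with Haar measure), and the approximation $\tilde f$ constructed above also converges to $f$ in $L^1$-norm, because both $f$ and $\tilde f$ are supported in the fixed compact set $KCK$, so
\[
\|f - \tilde f\|_{L^1} \leq \mu(KCK) \cdot \|f - \tilde f\|_\infty \leq \mu(KCK) \cdot \epsilon.
\]
Given $h \in L^1(G)$ and $\epsilon > 0$, first pick $f \in C_c(G)$ with $\|h - f\|_{L^1} < \epsilon/2$, then pick $\tilde f \in \mc H(G)$ as above with $\|f - \tilde f\|_{L^1} < \epsilon/2$.

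There is no real obstacle here: the only mildly nontrivial input is the existence of a neighborhood basis of $1$ consisting of compact open subgroups, which is automatic for totally disconnected locally compact groups (and in any case guaranteed for $G = \mc Q(F)$ by Theorem~\ref{thm:3.10}). The averaging-over-$K$ trick and the standard density of $C_c$ in $L^1$ do the rest.
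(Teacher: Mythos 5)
Your proof is correct, but it follows a different route from the paper. The paper's proof is soft: it notes (via Proposition \ref{prop:2.1}) that the compact open subgroups form a neighborhood basis of $1$, so that $\mc H (G)$ separates points, and then invokes the Stone--Weierstrass theorem to get density in $C_c(G)$ for the sup-norm, finishing with the remark that $L^1(G)$ is a completion of $C_c(G)$. You instead identify $\mc H(G)$ with the locally constant compactly supported functions and construct an explicit approximant by averaging $f$ over $K \times K$, i.e.\ applying the projection onto bi-$K$-invariant functions. Both arguments are valid; the only inputs in either case are the existence of arbitrarily small compact open subgroups (van Dantzig, or Proposition \ref{prop:2.1}/Theorem \ref{thm:3.10} in the paper) and uniform continuity of compactly supported continuous functions. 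Your construction buys a little extra: the approximants $\tilde f$ are supported in the fixed compact set $KCK$, so sup-norm closeness immediately converts into $L^1$-closeness, whereas the Stone--Weierstrass route gives sup-norm density first and one must still control supports (or argue as you do) to pass to $L^1$; in that sense your argument is more self-contained, at the cost of being slightly longer than the paper's two-line appeal to a classical theorem.
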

\begin{proof}
By definition $\mc H (G)$ is a subalgebra of $C_c (G) \subset L^1 (G)$.
By Proposition \ref{prop:2.1} the compact open subgroups $K$ form a neighborhood basis 
of 1 in $G$, so $\mc H (G)$ separates the points of $G$. The Stone--Weierstrass Theorem
says that $\mc H (G)$ is dense in $C_c (G)$. Since $L^1 (G)$ is a completion of 
$C_c (G)$, $\mc H (G)$ also lies dense in there.
\end{proof}

Let $K$ be any compact open subgroup of $G$ and write 
\begin{equation}\label{eq:1.21}
\mr{Irr}(G,K) = \{ V \in \Rep (G) : V \text{ is irreducible and } V^K \neq 0 \} . 
\end{equation}

\begin{prop}\label{prop:1.4}
\enuma{
\item There are functors
\[
\begin{array}{ccc} 
\Rep (G) & \longleftrightarrow & \mathrm{Mod}(\mc H (G,K)) \\
V & \mapsto & V^K \\
\mc H (G) \otimes_{\mc H (G,K)} W & \leftarrow & W
\end{array}
\]
\item These functors provide a bijection between (isomorphism classes in) Irr$(G,K)$ 
and (isomorphism classes of) irreducible representations of $\mc H (G,K)$.
\item Suppose that $\pi$ is a topologically irreducible unitary representation of $G$
on a Hilbert space $V$. Then $V^K$ is a topologically irreducible *-representation 
of $\mc H (G,K)$, or $V^K = 0$.
} 
\end{prop}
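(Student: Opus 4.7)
The strategy is to deduce Proposition~\ref{prop:1.4} from the general Hecke-algebra formalism for totally disconnected locally compact groups admitting arbitrarily small compact open subgroups; the only feature of $G$ used is that the compact open subgroups form a neighborhood basis of $1$, as guaranteed by Proposition~\ref{prop:2.1}. Fix a Haar measure on $G$ and write $e_K = \mr{vol}(K)^{-1} \mathbf{1}_K$, a self-adjoint idempotent in $\mc H(G)$ with $\mc H(G,K) = e_K * \mc H(G) * e_K$.

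For part~(a), any $V \in \Rep(G)$ satisfies $V^K = \pi(e_K) V$, and this subspace inherits an $\mc H(G,K)$-module structure from the $\mc H(G)$-action on $V$. Conversely, given $W \in \Mod(\mc H(G,K))$, the space $\mc H(G) \otimes_{\mc H(G,K)} W$ is a nondegenerate $\mc H(G)$-module (the $e_{K'}$ for $K' \subset K$ furnish an approximate identity), hence by \cite[Th\'eor\`eme III.1.4]{Ren} corresponds to a smooth $G$-representation.

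For part~(b), let $V \in \Irr(G,K)$ and pick a nonzero $v \in V^K$. Irreducibility gives $\pi(\mc H(G)) v = V$, so
\[
V^K = \pi(e_K) V = \pi(e_K * \mc H(G)) v = \pi(e_K * \mc H(G) * e_K) v = \mc H(G,K) \cdot v,
\]
whence $V^K$ is irreducible over $\mc H(G,K)$. Conversely, for an irreducible $W \in \Mod(\mc H(G,K))$, form $M = \mc H(G) \otimes_{\mc H(G,K)} W$ and let $N$ be the unique maximal $\mc H(G)$-submodule of $M$ meeting $W$ trivially (the class of such submodules is closed under sums). Then $V := M/N$ is an irreducible smooth $G$-representation: any proper nonzero $G$-submodule of $V$ pulled back to $M$ would strictly contain $N$ and therefore meet $W$ in a nonzero $\mc H(G,K)$-submodule, forced to be all of $W$ by irreducibility of $W$, contradicting the fact that $W$ generates $M$ over $\mc H(G)$. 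The injection $W \hookrightarrow V^K$ is an equality by the forward direction applied to $V$, so the two functors are mutually inverse on isomorphism classes.

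For part~(c), let $(\pi,V)$ be a topologically irreducible unitary $G$-representation on a Hilbert space. Self-adjointness of $e_K$ makes $P_K := \pi(e_K)$ an orthogonal projection, so $V^K = P_K V$ is closed. The integrated representation $f \mapsto \pi(f)$ is a $*$-homomorphism $\mc H(G) \to B(V)$ (the restriction of the $*$-representation of $L^1(G)$ attached to the unitary $\pi$, which is well-defined because $\mc H(G) \subset L^1(G)$ by Lemma~\ref{lem:1.5}), so $V^K$ carries a $*$-representation of $\mc H(G,K)$. Assume $V^K \neq 0$ and let $W \subset V^K$ be a closed $\mc H(G,K)$-invariant subspace. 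Set $Y := \overline{\pi(\mc H(G)) W} \subset V$; since $\mc H(G)$ is stable under the left $G$-action, $Y$ is a closed $G$-invariant subspace, so $Y \in \{0,V\}$ by topological irreducibility. If $Y = 0$ then $W \subset Y = 0$; if $Y = V$, continuity of $P_K$ together with the identity $P_K \pi(f) w = \pi(e_K * f * e_K) w \in \pi(\mc H(G,K)) W \subset W$ for $w \in W$ give
\[
V^K = P_K V = P_K Y \subset \overline{P_K \pi(\mc H(G)) W} \subset \overline{W} = W.
\]

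The principal obstacle is this last step of~(c): one must verify carefully that the continuous projection $P_K$ commutes with norm closures to the required extent, so that the $G$-invariant density of $\pi(\mc H(G)) W$ in $Y = V$ transfers into the $\mc H(G,K)$-invariant density of $\pi(\mc H(G,K)) W$ in $V^K$. Everything else is formal manipulation with the idempotent $e_K$ and its adjoint.
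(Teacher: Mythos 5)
Your proposal is correct, and it is more self-contained than the paper at the points where the paper outsources the work: for (a) and (b) the paper simply cites \cite[\S I.3.2 and Th\'eor\`eme III.1.5]{Ren}, whereas you reconstruct the standard idempotent formalism ($V^K=\pi(e_K)V$, cyclicity of every nonzero $K$-fixed vector, and the unique irreducible quotient $M/N$ of $\mc H (G)\otimes_{\mc H (G,K)}W$). For (c) your argument is essentially the paper's: both rest on $e_K$ being a self-adjoint idempotent, on topological irreducibility of $V$, and on continuity of the projection $P_K=\pi(e_K)$. The only real difference is how one sees that the closed $\mc H (G)$-span is all of $V$: the paper invokes Lemma \ref{lem:1.5} (density of $\mc H (G)$ in $L^1(G)$) together with topological irreducibility of the unitary representation, while you observe directly that $\overline{\pi(\mc H (G))W}$ is a $G$-invariant closed subspace because $\mc H (G)$ is stable under left translation; both routes are fine, and yours avoids Lemma \ref{lem:1.5} in this step.

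Two small repairs would make the write-up complete. In (b), the parenthetical claim that submodules of $M$ meeting $W$ trivially are closed under sums should be justified by noting that, with $W$ identified with $e_K M$, a submodule $N'$ satisfies $N'\cap W=0$ if and only if $e_K N'=0$ (use $e_K N'\subset N'\cap e_K M$ and $x=e_K x$ for $x\in e_K M$), and the latter condition is obviously stable under sums. Also, you have only verified one composite; for the asserted bijection you should add that, for $V\in \Irr (G,K)$ with $W=V^K$, the construction returns $V$: the natural surjection $\mc H (G)\otimes_{\mc H (G,K)}W\to V$ has kernel annihilated by $e_K$, hence contained in $N$, so $M/N\cong V$. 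Finally, the ``principal obstacle'' you flag at the end of (c) is not an obstacle: the only inclusion your chain of containments uses is $P_K(\overline{X})\subseteq \overline{P_K X}$, which is exactly what continuity of the bounded projection $P_K$ gives (the paper's step $\overline{\langle K\rangle \mc H (G) v}=\langle K\rangle \overline{\mc H (G) v}$ rests on the same continuity plus closedness of the range of $P_K$), so nothing further needs to be checked there.
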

Recall that a representation of a group or an algebra on a topological vector space is
called topologically irreducible if every nonzero invariant subspace is dense. We 
also note that such a representation is usually not smooth.
\begin{proof}
For parts (a) and (b) see \cite[\S I.3.2 and Th\'eor\`eme III.1.5]{Ren}. \\
(c) Since $(\pi,V)$ is unitary, $V^K$ is a *-representation of $\mc H (G,K)$. 
Suppose that $V^K \neq 0$ and take $v \in V^K \setminus \{0\}$. 
Let $\langle K \rangle \in \mc H (G)$ be the idempotent corresponding to $K$, it is 
the unit element of $\mc H (G,K) = \langle K \rangle \mc H (G) \langle K \rangle$. 
Then inside $V^K$:
\[
\overline{\mc H (G,K) v} = \overline{\langle K \rangle \mc H (G) \langle K \rangle v} =
\langle K \rangle \overline{\mc H (G) v} .
\]
By Lemma \ref{lem:1.5} and the topological irreducibility of $V$, the right hand side 
equals $\langle K \rangle V = V^K$. Since $v \in  V^K \setminus \{0\}$ was arbitrary, 
this shows that $V^K$ is topologically irreducible.
\end{proof}

The quasi-reductive $F$-group $G$ satisfies the following uniform admissibility property:

\begin{thm}\label{thm:1.1}
For every compact open subgroup $K$ of $G$ there exists a bound $N(G,K) \in \N$ such that:
\enuma{
\item every irreducible $\mc H (G,K)$-module has dimension at most $N(G,K)$,
\item $\dim_\C (V^K) \leq N(G,K)$ for every irreducible smooth $G$-representation $V$,
\item $\dim_\C (V^K) \leq N(G,K)$ for every topologically irreducible unitary 
$G$-representation on a Hilbert space $V$.
}
\end{thm}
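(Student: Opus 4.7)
\medskip

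The plan is to prove (a) first and then derive (b) and (c) essentially formally. For (a), the strategy is to transfer Bernstein's classical uniform admissibility argument from the reductive to the quasi-reductive setting, exploiting the structural results built up in Sections \ref{sec:pseudo}--\ref{sec:quasi}.

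First I would observe that it suffices to prove the statement for a cofinal family of compact open subgroups: if $K' \supset K$ is of finite index, then $V^{K'} \subset V^K$ and one gets a bound for $K'$ with a multiplicative constant. By Theorem \ref{thm:3.10}, we may thus restrict to $K = K_n$ with an Iwahori decomposition relative to a chosen minimal pseudo-parabolic $F$-subgroup $\mc P_\es = Z_{\mc Q}(\mc S_\es) \ltimes \mc U_{\mc P_\es}$, and moreover $K$ may be taken to be normal in $Q_{x_s}$. Next I invoke the Cartan decomposition (Theorem \ref{thm:3.7}.d): $G = \bigsqcup_{a \in A} K a K$ with $A \subset Z_Q (S_\es)$ a finitely generated semigroup, so that $\mc H (G,K)$ has a basis $\{ \mathbf{1}_{K a K} : a \in A \}$.

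The heart of the proof is Bernstein's argument showing that the $A$-direction can be controlled. By Lemma \ref{lem:2.13}, any irreducible $(\pi,V) \in \Rep (G)$ embeds into $i_{\mc P}^{G}(\sigma)$ for some pseudo-parabolic $\mc P$ and some irreducible supercuspidal $\sigma$ of $M = \mc Z_{\mc G}(\lambda)(F)$. I would proceed by induction on the split rank of $\mc G$: in the inductive step one replaces $(\pi,V)$ by a subquotient of $i_{\mc P}^{G}(\sigma)$ with $\mc P$ proper, uses $\dim i_{\mc P}^{G}(\sigma)^K \leq [K : K \cap \mc P]_{\mr{geom}} \cdot \dim \sigma^{K \cap M}$ (a direct count using $G = K \mc P (F)$ from Theorem \ref{thm:3.6}) together with the inductive hypothesis applied to $M$. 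The base case is supercuspidal $\pi$. There I invoke Theorem \ref{thm:1.14}: the restriction of $\pi$ to ${}^{0} G$ is compact, and by Lemma \ref{lem:3.15} ${}^{0} G \cap Z(G)$ is compact with ${}^{0} G Z(G)$ of finite index in $G$. Combined with the positivity properties (d) and (e) of Theorem \ref{thm:3.10} -- which show that deep positive $a \in Z_Q (S_\es)$ dilate $K \cap U_{\mc G}(\lambda)(F)$ and contract $K \cap U_{\mc G}(\lambda^{-1})(F)$ to $\{1\}$ -- one bounds the number of nonzero matrix coefficients $\langle \pi(a) v, \check v \rangle$ with $v \in V^K, \check v \in \check V^K$, and hence $\dim V^K$. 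This step is where the small $K$ normalized by the isotropy group, and the explicit positivity of Theorem \ref{thm:3.10}, are essential.

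Given (a), part (b) follows from Proposition \ref{prop:1.4}.b, which identifies $\Irr (G,K)$ with the irreducible $\mc H (G,K)$-modules via $V \mapsto V^K$. For (c), Proposition \ref{prop:1.4}.c furnishes a topologically irreducible $*$-representation of $\mc H (G,K)$ on the Hilbert space $V^K$. Let $\mc A$ be the norm-closure of the image of $\mc H (G,K)$ in $\mc B (V^K)$; then $\mc A$ acts topologically irreducibly, so its strong closure is $\mc B (V^K)$ by the bicommutant/Kadison transitivity theorem. If $\dim V^K = \infty$, then $\mc A$ admits $*$-representations of arbitrarily large finite dimension (through finite-rank cut-downs), in particular irreducible $\mc H (G,K)$-subquotients of arbitrarily large dimension, contradicting (a). Hence $\dim V^K \leq N(G,K)$.

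The main obstacle is the supercuspidal base case in the Bernstein induction. All the geometric inputs needed -- arbitrarily small $K$ with Iwahori decomposition, positivity of elements of $A$ acting on $K \cap U_{\mc G}(\lambda^{\pm 1})(F)$, Cartan decomposition, and the characterization of supercuspidality via compact matrix coefficients -- have been arranged in Theorems \ref{thm:3.10}, \ref{thm:3.7} and \ref{thm:1.14}, so the argument transfers from the reductive to the quasi-reductive setting without essentially new ideas.
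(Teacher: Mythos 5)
The way you derive (b) and (c) from (a) is essentially the paper's route (Proposition \ref{prop:1.4}; for (c) the paper simply cites \cite[Theorem 6.27]{Rud} where you sketch a bicommutant/cut-down argument), so the issue is (a) itself, and there the supercuspidal base case of your induction is a genuine gap. The reduction via Lemma \ref{lem:2.13} and the estimate $\dim_\C i_P^G(\sigma)^K \leq [K : K\cap P]\cdot \dim_\C \sigma^{K\cap M}$ only shifts the problem to: a bound on $\dim_\C \sigma^{K\cap M}$ that is \emph{uniform over all} irreducible supercuspidals $\sigma$ of all Levi factors $M$ (including $M=G$, since $V$ may itself be supercuspidal). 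Theorem \ref{thm:1.14} tells you that the matrix coefficients of each such $\sigma$ are compactly supported modulo the center, which yields admissibility of each individual supercuspidal -- that is exactly how Theorem \ref{thm:1.2} is proved -- but it gives no control on \emph{how many} double cosets $K a K$, $a \in A$, meet the support of a $K$-biinvariant matrix coefficient: that number grows with the representation, and bounding it uniformly over all supercuspidals possessing a $K$-fixed vector is precisely the content of uniform admissibility, not an available input. The dilation/contraction properties of Theorem \ref{thm:3.10}.d--e do not by themselves produce such a bound; in Bernstein's work they enter as hypotheses of a quite different argument, so your claim that the base case follows from them ``without essentially new ideas'' is where the proof breaks down.

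For comparison, the paper's proof of (a) uses the supercuspidal machinery only once, through Theorem \ref{thm:1.2}, to know that every irreducible $\mc H(G,K)$-module is finite dimensional; the \emph{uniformity} is then imported from Bernstein's Assertion A in \cite{Ber}, a linear-algebra statement about $\mc H (G,K_n)$ whose hypotheses are checked using the refined Cartan decomposition $G = G_x A^+ (A_{S_\es}\cap \mc Z (F)) (\Omega_2 \Omega_1) G_x$ (Theorem \ref{thm:3.7}.d together with Lemma \ref{lem:3.16}), with $A^+$ a finitely generated semigroup, a central part and a finite set, and using properties (a)--(d) of Theorem \ref{thm:3.10} for small $K_n \subset K$ normal in $Q_{x_s}$. (Also note your opening reduction is stated backwards: one passes to \emph{smaller} groups $K_n \subset K$, using $V^K \subset V^{K_n}$, exactly as the paper does via the injection $\mr{Irr}(\mc H (G,K)) \to \mr{Irr}(\mc H (G,K_n))$.) To repair your outline you should drop the induction over supercuspidal support and instead invoke (or reprove) Bernstein's Assertion A at this point; no reduction to supercuspidals can close the argument without already knowing a uniform bound for them.
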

\begin{proof}
Let $W \in \mr{Irr}(\mc H (G,K))$ be any irreducible module. By Proposition \ref{prop:1.4}.b 
$\mc H (G) \otimes_{\mc H (G,K)} W$ is an irreducible smooth $G$-representation. By Theorem 
\ref{thm:1.2} it is admissible, so again using Proposition \ref{prop:1.4}.b we find that 
\begin{equation}\label{eq:1.29}
W = \big( \mc H (G) \otimes_{\mc H (G,K)} W \big)^K \text{ has finite dimension.}
\end{equation}
Let $K_n \subset G$ be a compact open subgroup as in Theorem \ref{thm:3.10}, such that
$K_n \subset K$. Then Irr$(G,K) \subset \mr{Irr}(G,K_n)$ and $\mc H (G,K) \subset 
\mc H (G,K_n)$. From Proposition \ref{prop:1.4}.b we get an injection
\[
\mr{Irr} (\mc H (G,K)) \to \mr{Irr} (\mc H (G,K_n)) : 
W \mapsto \mc H (G,K_n) \otimes_{\mc H (G,K)} W ,
\]
which increases the dimensions. Hence we may replace $K$ by $K_n$.

Now we follow the argument of \cite{Ber}. Let $x \in \mh A_\es$ be such that $G_x$ is a good 
maximal compact subgroup of $G$. Then $G = G_x A G_x$ 
by Theorem \ref{thm:3.7}.d. By Lemma \ref{lem:3.16}.a we can write $A = A_{S_\es} \Omega_1$,
where $\Omega_1 \subset Z_G (S_\es)$ is finite and $A_{S_\es} \subset S_\es$ is a finitely
generated semigroup representing $W(\mc G,\mc S_\es) \backslash S_\es / (S_\es)_\cpt$.

Write $\mc S_\es = \mc S \mc Z$ with $\mc S$ a maximal $F$-split torus in the subgroup of
$\mc G$ generated by the root subgroups (with respect to $\mc S_\es$). Accordingly
\[
A_{S_\es} = A^+ (A_{S_\es} \cap \mc Z (F)) \Omega_2 , 
\]
where $\Omega_2$ is finite, $A_{S_\es} \cap \mc Z (F)$ is a subgroup of $\mc Z (F)$ 
representing $\mc Z (F) / \mc Z (F)_\cpt$ and $A^+$ represents 
$W(\mc G,\mc S_\es) \backslash S / S_\cpt$. We obtain
\begin{equation}\label{eq:1.16}
G = G_x A^+ (A_{S_\es} \cap \mc Z (F)) (\Omega_2 \Omega_1) G_x
\end{equation}
where $A^+ \subset S_\es$ is a finitely generated semigroup, $A_{S_\es} \cap \mc Z (F)$ 
central and $\Omega_2 \Omega_1$ finite. This and Theorem \ref{thm:3.10} (parts a--d) take
care of all the assumptions on $G$ and the $K_n$ made in \cite{Ber}. 

Then we may use \cite[Assertion A]{Ber}, which together with \eqref{eq:1.29} gives us 
precisely part (a). (See also \cite[\S 5]{Rud} for more details about Bernstein's arguments.) 
Applying Proposition \ref{prop:1.4}.b, we get part (b). By part (a) and 
\cite[Theorem 6.27]{Rud}, every topologically irreducible *-representation of $\mc H (G,K)$ 
has dimension at most $N(G,K)$. We combine this with Proposition \ref{prop:1.4}.c to
obtain part (c).
\end{proof}

We remark that in Theorem \ref{thm:1.1}.c the finite dimensional module $V^K$ is also 
irreducible as a representation of $\mc H (G,K)$ in the purely algebraic sense -- 
in finite dimensional vector spaces the topology barely makes any difference. From 
Proposition \ref{prop:1.4}.b we then see that $\mc H (G) V^K$ is an irreducible smooth
$G$-subrepresentation of $V$. It is the same for every compact open subgroup $K$, 
namely the space of smooth vectors 
\[
V_{sm} := \{ v \in V : \{ g \in G : \pi (g) v = v \} \text{ is open in } G \} . 
\]
With Lemma \ref{lem:1.5} we find that, for every topologically irreducible unitary \\
$G$-representation $\pi$ on a Hilbert space $V$,
\begin{equation}
V_{sm} \text{ is a dense irreducible smooth $G$-subrepresentation of } V. 
\end{equation}
Finally, we turn to analytic properties of group $C^*$-algebras for quasi-reductive groups.
We refer to \cite[\S 13.9]{Dix} for the terminology. 

\begin{cor}\label{cor:1.3}
Let $C^* (G)$ be the maximal $C^*$-algebra of $G$.
\enuma{
\item Let $\pi$ be a topologically irreducible unitary $G$-representation on a Hilbert 
space $V$. For every $f \in C^* (G) ,\; \pi (f)$ is a compact operator on $V$. 
In other words, $C^* (G)$ is a liminal $C^*$-algebra.
\item The group $G$ has type I.
}
\end{cor}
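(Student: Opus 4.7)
The plan is to deduce both statements from Theorem \ref{thm:1.1}.(c) via the Hecke algebra, by showing that $\pi(f)$ has uniformly bounded rank on a dense subalgebra and then passing to the norm closure.

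First I would treat part (a). Let $\pi$ be a topologically irreducible unitary $G$-representation on a Hilbert space $V$, fix a compact open subgroup $K \subset G$, and consider any $f \in \mc H (G,K)$. Since $\pi$ is unitary, $\pi(\langle K \rangle)$ is the orthogonal projection of $V$ onto $V^K$. The $K$-biinvariance of $f$ gives $f = \langle K \rangle * f * \langle K \rangle$, hence
\[
\pi (f) \;=\; \pi (\langle K \rangle) \, \pi (f) \, \pi (\langle K \rangle) ,
\]
so $\pi (f)$ vanishes on $(V^K)^\perp$ and maps $V$ into $V^K$. By Theorem \ref{thm:1.1}.(c) we have $\dim_\C V^K \leq N(G,K) < \infty$, so $\pi (f)$ is a finite rank operator, in particular compact. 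Varying $K$, every element of $\mc H (G) = \bigcup_K \mc H (G,K)$ acts on $V$ as a compact operator.

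Next I would extend this to $C^* (G)$ using density. The canonical map $L^1 (G) \to C^* (G)$ has dense image in the $C^*$-norm (by definition of the maximal $C^*$-algebra), and by Lemma \ref{lem:1.5} the subalgebra $\mc H (G) \subset L^1 (G)$ is already dense in the $L^1$-norm. Since the $L^1$-norm dominates the $C^*$-norm, $\mc H (G)$ is dense in $C^* (G)$. The $C^*$-representation $\pi$ on $V$ is contractive, so $\pi (C^* (G))$ lies in the operator-norm closure of $\pi (\mc H (G))$. The space of compact operators on $V$ is norm-closed, so $\pi (f)$ is compact for every $f \in C^* (G)$. This exactly says that $C^* (G)$ is liminal (CCR).

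For part (b), I would simply invoke the classical fact \cite[Th\'eor\`eme 5.5.2 and 9.1]{Dix} that every liminal $C^*$-algebra is of type I, and that a locally compact group has type I if and only if its (maximal or reduced) group $C^*$-algebra does; combining these with part (a) yields that $G$ has type I. There is no substantial obstacle here: the only nontrivial input is Theorem \ref{thm:1.1}.(c), the rest is a standard density and functional-analytic argument.
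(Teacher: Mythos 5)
Your proof is correct and follows essentially the same route as the paper: finite rank of $\pi(f)$ for $f \in \mc H(G,K)$ via Theorem \ref{thm:1.1}.c, density of $\mc H(G)$ in $C^*(G)$ via Lemma \ref{lem:1.5}, norm-closedness of the compact operators, and Dixmier's theorem for type I. Your write-up merely makes explicit some steps (the projection $\pi(\langle K\rangle)$ and the comparison of the $L^1$- and $C^*$-norms) that the paper leaves implicit.
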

\begin{proof}
(a) Consider any $f \in \mc H (G,K)$. Then $\pi (f) V \subset V^K$, so by Theorem 
\ref{thm:1.1}.c $\pi (f)$ has finite rank. Thus $\pi (\mc H (G))$ consists of finite 
rank operators on $V$. By Lemma \ref{lem:1.5} $\mc H (G)$ is dense in $C^* (G)$, so 
$\pi (C^* (G))$ is contained in the closure of the finite rank operators in the algebra 
of bounded linear operators on $V$. By definition that is the algebra of compact operators
on $V$.\\
(b) By part (a) and \cite[Th\'eor\`eme 5.5.2]{Dix}, $C^* (G)$ is of type I, which
by definition means that $G$ has type I.
\end{proof}

\newpage

\section{Disconnected groups} 
\label{sec:disc}

For the applications to the Baum--Connes conjecture that we have in mind, we need 
to consider disconnected algebraic groups. Let $F$ be an arbitrary field.
The Bruhat decomposition was announced for any connected linear algebraic $F$-group $\mc G$ 
by Borel and Tits \cite{BoTi}. Based on \cite{CGP} and for use in Paragraph \ref{par:Xi}, 
we will show that the connectedness assumption is superfluous.

Let $\mc G^+$ be a linear algebraic $F$-group (not necessarily smooth), with connected
component $\mc G$ (which is smooth, see \cite[Theorem 4.3.7.i]{Spr}). Let $\mc S$ be a maximal 
$F$-split torus of $\mc G$, let $Z_{\mc G^+ (F)}(\mc S)$ (resp. $N_{\mc G^+}(\mc S)$) be
its centralizer (resp. normalizer) in $\mc G^+ (F)$. The group $\mc G^+ (F)$ acts by 
conjugation on the set of maximal $F$-split tori of $\mc G$. Since all maximal $F$-split 
tori of $\mc G$ are already conjugate by elements of $\mc G (F)$ \cite[Theorem C.2.3]{CGP}, 
\begin{equation}\label{eq:1.11}
\mc G^+ (F) = \mc G (F) N_{\mc G^+ (F)}(\mc S) = N_{\mc G^+ (F)}(\mc S) \mc G (F) . 
\end{equation}
The Weyl group of $(\mc G,\mc S)$ is \cite[Proposition C.2.10]{CGP}
\begin{equation}\label{eq:2.26}
W(\mc G, \mc S) := N_{\mc G}(\mc S) / Z_{\mc G}(\mc S) \cong 
N_{\mc G (F)}(\mc S) / Z_{\mc G (F)}(\mc S) .
\end{equation}
The finite group 
\[
W(\mc G^+ (F),\mc S) := N_{\mc G^+ (F)}(\mc S) / Z_{\mc G^+ (F)}(\mc S)
\]
contains $W(\mc G,\mc S)$ as a normal subgroup.

\subsection{Pseudo-parabolic subgroups} \

For possibly disconnected groups, \cite[Definition 2.2.1]{CGP} says that every pseudo-parabolic
$F$-subgroup of $\mc G^+$ is of the form $\mc P^+ = \mc P_{\mc G^+}(\lambda) \mc R_{u,F}(\mc G)$.
Here $\lambda : GL_1 \to \mc G$ is a $F$-rational cocharacter and $\mc P_{\mc G^+}(\lambda)$
is defined by 
\begin{equation}\label{eq:1.55}
\mc P_{\mc G^+}(\lambda) (F) = \big\{ g \in \mc G^+ (F) : 
\lim_{t \to 0} \lambda (t) g \lambda (t)^{-1} \text{ exists in } \mc G^+ (F) \big\} .
\end{equation}
We note that $\mc P^+$ need not be connected if $\mc G^+$ is not, for $Z_{\mc G^+}(\lambda) := 
\mc P_{\mc G^+}(\lambda) \cap \mc P_{\mc G^+}(-\lambda)$ can contain elements of 
$\mc G^+ (F) \setminus \mc G (F)$. In particular $\mc P^+ (F)$ always contains the full 
$\mc G^+ (F)$-centralizer of the smallest $F$-torus of $\mc G$ through which $\lambda$ factors. 

Let $\mc P_\es$ be a minimal pseudo-parabolic $F$-subgroup of $\mc G$. We may assume that it 
contains $\mc S$. From \eqref{eq:1.55} we see that $\mc P_\es \cap N_{\mc G}(\mc S) = 
Z_{\mc G}(\mc S)$. By \cite[Proposition C.2.4]{CGP} 
\begin{equation}\label{eq:2.22}
\mc P_\es = Z_{\mc G}(\mc S) \ltimes \mc R_{us,F}(\mc P_\es) .
\end{equation}

\begin{thm}\label{thm:1.30}
Let $\mc G$ be any connected linear algebraic group defined over $F$.
\enuma{ 
\item All minimal pseudo-parabolic $F$-subgroups of $\mc G$ are conjugate under $\mc G (F)$.
\item Every pseudo-parabolic $F$-subgroup of $\mc G$ equals its own normalizer. 
}
\end{thm}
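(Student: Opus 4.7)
The strategy is to reduce both assertions to the pseudo-reductive case via the quotient map $\pi:\mc G\to\mc G^{pr}:=\mc G/\mc R_{u,F}(\mc G)$, which is pseudo-reductive over $F$. By \cite[Definition 2.2.1]{CGP} every pseudo-parabolic $F$-subgroup of $\mc G$ contains $\mc R_{u,F}(\mc G)$, so $\mc P\mapsto\pi(\mc P)$ gives an inclusion-preserving bijection between pseudo-parabolic $F$-subgroups of $\mc G$ and those of $\mc G^{pr}$. Minimality is therefore preserved in either direction, and $\pi^{-1}(\pi(\mc P))=\mc P$ scheme-theoretically for every such $\mc P$.

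For part (a), let $\mc P_1,\mc P_2$ be minimal pseudo-parabolic $F$-subgroups of $\mc G$. Each contains a maximal $F$-split torus of $\mc G$ (through which its defining cocharacter factors), so by \cite[Theorem C.2.3]{CGP} we may conjugate by $\mc G(F)$ to arrange that both $\mc P_i$ contain a common maximal $F$-split torus $\mc S$. The images $\pi(\mc P_i)$ are then minimal pseudo-parabolic $F$-subgroups of $\mc G^{pr}$ containing the maximal $F$-split torus $\pi(\mc S)$ (Proposition \ref{prop:1.7}.b); pseudo-reductive Bruhat theory \cite[Theorem C.2.8]{CGP} therefore yields an element $\bar w\in W(\mc G^{pr},\pi(\mc S))$ conjugating $\pi(\mc P_2)$ to $\pi(\mc P_1)$. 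The task is to lift $\bar w$ to $W(\mc G,\mc S)$. I claim that the natural morphism $W(\mc G,\mc S)\to W(\mc G^{pr},\pi(\mc S))$ of finite \'etale $F$-group schemes is surjective: over $\bar F$, a lift $h\in\mc G(\bar F)$ of a representative of $\bar w$ in $N_{\mc G^{pr}}(\pi(\mc S))(\bar F)$ satisfies $h\mc S h^{-1}\subset\pi^{-1}(\pi(\mc S))=\mc S\cdot\mc R_{u,F}(\mc G)$, so $h\mc S h^{-1}$ and $\mc S$ are two maximal tori of the solvable group $\mc S\cdot\mc R_{u,F}(\mc G)$, conjugate there by some $r\in(\mc S\mc R_{u,F}(\mc G))(\bar F)$; then $r^{-1}h\in N_{\mc G}(\mc S)(\bar F)$ represents the same Weyl class as $\bar w$ because $\pi(r)\in\pi(\mc S)$ lies in $Z_{\mc G^{pr}}(\pi(\mc S))$. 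Applying \cite[Proposition C.2.10]{CGP} to $\mc G$ (valid for any smooth connected affine $F$-group), we pick a representative $n\in N_{\mc G}(\mc S)(F)$ whose image in $W(\mc G^{pr},\pi(\mc S))(F)$ equals $\bar w$. Then $n\mc P_2 n^{-1}$ and $\mc P_1$ both map to $\pi(\mc P_1)$ under $\pi$ and hence coincide by the pseudo-parabolic bijection.

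For part (b), I claim more precisely that $N_{\mc G}(\mc P)=\pi^{-1}(N_{\mc G^{pr}}(\pi(\mc P)))$ as $F$-group schemes. Indeed, for any $F$-algebra $R$ and $g\in\mc G(R)$, the equation $g\mc P g^{-1}=\mc P$ is equivalent to $\pi(g)\pi(\mc P)\pi(g)^{-1}=\pi(\mc P)$, because $\pi$ is faithfully flat with kernel $\mc R_{u,F}(\mc G)\subset\mc P$. By self-normalization of pseudo-parabolic subgroups in the pseudo-reductive $F$-group $\mc G^{pr}$ \cite[Proposition 3.5.7]{CGP}, one has $N_{\mc G^{pr}}(\pi(\mc P))=\pi(\mc P)$, whence $N_{\mc G}(\mc P)=\pi^{-1}(\pi(\mc P))=\mc P$.

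The main technical obstacle is the lifting step in (a): since $\mc G(F)\to\mc G^{pr}(F)$ is generally not surjective (its image has only finite index, by Proposition \ref{prop:1.7}.c), one cannot lift the $\mc G^{pr}(F)$-conjugating element directly. Working with Weyl groups---finite \'etale $F$-group schemes---bypasses this: surjectivity is verified over $\bar F$ inside the auxiliary solvable group $\mc S\cdot\mc R_{u,F}(\mc G)$, after which \cite[Proposition C.2.10]{CGP} supplies an $F$-rational representative.
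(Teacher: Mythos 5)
Your part (b) is essentially the paper's own argument: both reduce to the pseudo-reductive quotient $\mc G / \mc R_{u,F}(\mc G)$ using that every pseudo-parabolic $F$-subgroup contains $\mc R_{u,F}(\mc G)$, and then quote \cite[Proposition 3.5.7]{CGP}; you merely make the compatibility of normalizers with $\pi$ explicit, which is fine. For part (a) you take a genuinely different route: the paper simply cites \cite[Theorem C.2.5]{CGP} (the Borel--Tits conjugacy theorem, stated there for arbitrary smooth connected affine $F$-groups), whereas you re-derive it by conjugating both groups into position over a common maximal $F$-split torus via \cite[Theorem C.2.3]{CGP}, passing to the pseudo-reductive quotient, and lifting a Weyl element. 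The argument is sound, but two details should be fixed. First, the input you need in $\mc G^{pr}$ --- that two minimal pseudo-parabolic $F$-subgroups containing $Z_{\mc G^{pr}}(\pi(\mc S))$ are conjugate under $N_{\mc G^{pr}}(\pi (\mc S))(F)$ --- is not a consequence of the Bruhat decomposition \cite[Theorem C.2.8]{CGP} as you cite it; it follows either from C.2.5 combined with conjugacy of maximal $F$-split tori inside $\pi(\mc P_1)$, or from \cite[Theorem C.2.15]{CGP} (bijection with positive systems) together with \cite[Proposition C.2.10]{CGP} and simple transitivity of the Weyl group on positive systems. Second, your lifting produces a class in $W(\mc G,\mc S)(\bar F)$ mapping to $\bar w$, and to apply C.2.10 you must know this class is $F$-rational; this is true because $\mc S$ is split and $W(\mc G,\mc S)$ acts faithfully on $X_*(\mc S)$, so the Weyl group scheme is constant, but it needs to be said --- alternatively, \cite[\S C.2.13]{CGP} already identifies $W(\mc G,\mc S)$ with the Weyl group of the pseudo-reductive quotient (the paper uses exactly this in Lemma \ref{lem:1.18}), which makes your hand-made surjectivity argument unnecessary. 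In terms of trade-offs: the paper's citation settles (a) in full generality in one line, while your reduction makes the descent to the pseudo-reductive case explicit (in the spirit of how (b) and Theorem \ref{thm:3.2} are handled), at the cost of re-proving with extra care a statement that C.2.5 already covers.
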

\begin{proof}
(a) This was announced in \cite{BoTi}, see \cite[Theorem C.2.5]{CGP} for a proof.\\
(b) As every pseudo-parabolic $F$-subgroup of $\mc G$ contains $\mc R_{u,F}(\mc G)$,
the statement reduces to the pseudo-reductive $F$-group $\mc G / \mc R_{u,F}(\mc G)$.
For that group it is given in \cite[Proposition 3.5.7]{CGP}.
\end{proof}

Let $N_{\mc G^+ (F)}(\mc P_\es, \mc S)$ be the simultaneous normalizer of $\mc P_\es$ and
$\mc S$ in $\mc G^+ (F)$, and put
\[
W (\mc G^+ (F),\mc P_\es,\mc S) = N_{\mc G^+ (F)}(\mc P_\es, \mc S) / Z_{\mc G^+ (F)}(\mc S) .
\]
The group $Z_{\mc G^+ (F)}(\mc S)$ normalizes every pseudo-parabolic $F$-subgroup
$\mc P$ of $\mc G$ containing $Z_{\mc G}(\mc S)$, for it fixes any cocharacter 
$GL_1 \to \mc S$ determining $\mc P$. Hence the subgroup $W (\mc G^+ (F),\mc P_\es,\mc S)$
of $W (\mc G^+ (F),\mc S)$ is precisely the stabilizer of $\mc P_\es$, and it acts
naturally on the set of pseudo-parabolic subgroups of $\mc G$ containing $Z_{\mc G}(\mc S)$.

\begin{lem}\label{lem:1.18}
$W(\mc G^+ (F),\mc S) = W (\mc G^+ (F),\mc P_\es,\mc S) \ltimes W(\mc G,\mc S)$.
\end{lem}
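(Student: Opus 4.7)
The plan is to verify three things: first that $W(\mc G,\mc S)$ is a normal subgroup of $W(\mc G^+(F),\mc S)$, second that it acts simply transitively on the set of minimal pseudo-parabolic $F$-subgroups of $\mc G$ that contain $\mc S$, and third that every class in $W(\mc G^+(F),\mc S)$ factors, using this action, as a product of an element of $W(\mc G,\mc S)$ and one of $W(\mc G^+(F),\mc P_\es,\mc S)$.

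For normality, I would exploit that $\mc G$ is the identity component of $\mc G^+$, so $\mc G(F)$ is normal in $\mc G^+(F)$. Hence conjugation by any $w\in N_{\mc G^+(F)}(\mc S)$ preserves both $N_{\mc G(F)}(\mc S)$ and $Z_{\mc G(F)}(\mc S)$, and so descends to an automorphism of the quotient $W(\mc G,\mc S) = N_{\mc G(F)}(\mc S)/Z_{\mc G(F)}(\mc S)$ inside $W(\mc G^+(F),\mc S)$.

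For the simply transitive action, I would argue transitivity as follows: given two minimal pseudo-parabolic $F$-subgroups $\mc P_\es,\mc P'_\es$ of $\mc G$ both containing $\mc S$, Theorem~\ref{thm:1.30}(a) yields $g\in\mc G(F)$ with $g\mc P_\es g^{-1}=\mc P'_\es$. Since all maximal $F$-split tori of $\mc P_\es$ are conjugate under $\mc P_\es(F)$ by \cite[Theorem C.2.3]{CGP}, one can find $p\in\mc P_\es(F)$ with $(gp)^{-1}\mc S (gp)=\mc S$, so $gp\in N_{\mc G(F)}(\mc S)$ conjugates $\mc P_\es$ to $\mc P'_\es$. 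For simple transitivity, if $n\in N_{\mc G(F)}(\mc S)$ normalizes $\mc P_\es$, then by Theorem~\ref{thm:1.30}(b) and the decomposition $\mc P_\es=Z_{\mc G}(\mc S)\ltimes\mc R_{us,F}(\mc P_\es)$ from \eqref{eq:2.22}, $n$ must lie in $Z_{\mc G(F)}(\mc S)$ (the nontrivial root subgroups in $\mc R_{us,F}(\mc P_\es)$ do not centralize $\mc S$, so an element of the unipotent radical that normalizes $\mc S$ is trivial). Thus $n$ becomes trivial in $W(\mc G,\mc S)$.

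With these two ingredients, I would conclude as follows. Pick $w\in W(\mc G^+(F),\mc S)$ with representative $\tilde w\in N_{\mc G^+(F)}(\mc S)$. Then $\tilde w\mc P_\es\tilde w^{-1}$ is again a minimal pseudo-parabolic $F$-subgroup of $\mc G$ containing $\mc S$ (using normality of $\mc G$ in $\mc G^+$), so by simple transitivity there is a unique $b\in W(\mc G,\mc S)$ such that $b\mc P_\es b^{-1}=\tilde w\mc P_\es \tilde w^{-1}$, and then $a:=b^{-1}w$ belongs to $W(\mc G^+(F),\mc P_\es,\mc S)$. This shows $W(\mc G^+(F),\mc S)=W(\mc G,\mc S)\cdot W(\mc G^+(F),\mc P_\es,\mc S)$, while simple transitivity also gives $W(\mc G,\mc S)\cap W(\mc G^+(F),\mc P_\es,\mc S)=\{1\}$. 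Combined with the normality of $W(\mc G,\mc S)$, this produces the claimed semidirect product decomposition. The main technical point (but not a real obstacle, since it is encoded in \eqref{eq:2.22} together with \cite[Theorem C.2.3]{CGP}) is verifying that $W(\mc G,\mc S)$ acts simply transitively on minimal pseudo-parabolic $F$-subgroups of $\mc G$ containing $\mc S$.
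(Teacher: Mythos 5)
Your argument is correct, and it shares the paper's skeleton: exhibit $W(\mc G,\mc S)$ as a normal subgroup of $W(\mc G^+ (F),\mc S)$ acting simply transitively on the minimal pseudo-parabolic $F$-subgroups of $\mc G$ containing $Z_{\mc G}(\mc S)$, observe that $W(\mc G^+ (F),\mc P_\es,\mc S)$ is exactly the stabilizer of $\mc P_\es$ for this action, and conclude the semidirect product. Where you genuinely differ is in how simple transitivity is obtained. The paper gets it combinatorially: $W(\mc G,\mc S)$ is identified with the Weyl group of the root system $\Phi (\overline{\mc G},\overline{\mc S})$ of the pseudo-reductive quotient $\overline{\mc G}=\mc G/\mc R_{u,F}(\mc G)$, and by \cite[Theorem C.2.15]{CGP} the minimal pseudo-parabolics containing $Z_{\mc G}(\mc S)$ correspond to positive systems, on which a Weyl group acts simply transitively. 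You instead prove it inside the group: transitivity from conjugacy of minimal pseudo-parabolics (Theorem \ref{thm:1.30}.a) corrected by conjugacy of maximal $F$-split tori of $\mc P_\es$ \cite[Theorem C.2.3]{CGP}, and freeness from self-normalization (Theorem \ref{thm:1.30}.b) together with the decomposition \eqref{eq:2.22}. Your route is more elementary and self-contained, using only facts already quoted in the paper, at the cost of length; the paper's is a one-line reduction to root-system combinatorics. Two small steps you should make explicit. First, in the freeness argument you need that a unipotent $u\in\mc R_{us,F}(\mc P_\es)(F)$ which normalizes $\mc S$ actually centralizes it: for $s\in\mc S(F)$ the commutator $u s u^{-1}s^{-1}$ lies both in $\mc S (F)$ and in $\mc R_{us,F}(\mc P_\es)(F)$, hence is trivial, and then $u\in Z_{\mc G}(\mc S)\cap \mc R_{us,F}(\mc P_\es)=\{1\}$ by \eqref{eq:2.22}; your parenthetical about root subgroups only covers elements that already centralize $\mc S$. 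Second, you work with minimal pseudo-parabolics containing $\mc S$ rather than $Z_{\mc G}(\mc S)$; these sets coincide (a pseudo-parabolic containing $\mc S$ can be written as $\mc P_{\mc G}(\lambda)\mc R_{u,F}(\mc G)$ with $\lambda \in X_*(\mc S)$, hence contains $Z_{\mc G}(\mc S)$), and this identification is what makes the $W(\mc G^+ (F),\mc S)$-action on your set well defined, since $Z_{\mc G^+ (F)}(\mc S)$ then fixes every point. Neither point is a real obstacle.
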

\begin{proof}
The $F$-group $\overline{\mc G} = \mc G / \mc R_{u,F}(\mc G)$ is pseudo-reductive and 
the image of $\mc S$ is a maximal $F$-split torus in there, say $\overline{\mc S}$.
These groups give rise to a root system $\Phi (\overline{\mc G},\overline{\mc S})$, 
whose Weyl group can be identified with $W(\mc G,\mc S)$ \cite[\S C.2.13]{CGP}. 
By \cite[Theorem C.2.15]{CGP} the set of minimal pseudo-parabolic $F$-subgroups of $\mc G$ 
containing $Z_{\mc G}(\mc S)$ is naturally in bijection with the collection of positive 
subsystems of $\Phi (\overline{\mc G},\overline{\mc S})$. 

Hence the Weyl group $W(\mc G,\mc S)$ acts simply transitively on both these sets. This
implies that $W(\mc G^+ (F),\mc S)$ is the semidirect product of its normal subgroup
$W(\mc G,\mc S)$ and the stabilizer of any minimal pseudo-parabolic $F$-subgroup.
\end{proof}

In contrast to Theorem \ref{thm:1.30}, a (minimal) pseudo-parabolic $F$-subgroup of
$\mc G^+$ can be properly contained in its own normalizer. We say that a pseudo-parabolic 
$F$-subgroup $\mc P^+$ of $\mc G^+$ is good, or more precisely $F$-good, if
\[
\mc P^+ (F) \mc G (F) = \mc G^+ (F).
\] 
It follows from Theorem \ref{thm:1.30} 
that every good pseudo-parabolic $F$-subgroup of $\mc G^+$ does equal its own normalizer.

By definition every pseudo-parabolic $F$-subgroup of the connected algebraic group $\mc G$ 
is good. It is easy to see that good pseudo-parabolic $F$-subgroups always exist in 
$\mc G^+$. Namely, let $\lambda : GL_1 \to \mc S$ be a cocharacter with 
$\mc P_\es = \mc P_{\mc G}(\lambda) \mc R_{u,F}(\mc G)$. For any 
$w \in W (\mc G^+ (F),\mc P_\es,\mc S)$, $w(\lambda)$ is another such cocharacter. 
Consider the $W (\mc G^+ (F),\mc P_\es,\mc S)$-invariant cocharacter
\[
\lambda^+ = \prod\nolimits_{w \in W (\mc G^+ (F),\mc P_\es,\mc S)} w(\lambda).
\]
With \cite[Proposition 2.2.4]{CGP}, one sees that $\mc P_{\mc G}(\lambda^+) = 
\mc P_{\mc G}(\lambda)$. By the \\ $W (\mc G^+ (F),\mc P_\es,\mc S)$-invariance, 
$\mc P_{\mc G^+}(\lambda^+)$ contains $N_{\mc G^+ (F)}(\mc P_\es, \mc S)$. By \eqref{eq:1.11} 
and Lemma \ref{lem:1.18} this means that 
\begin{equation}\label{eq:1.61}
\mc P_\es^+ := \mc P_{\mc G^+}(\lambda^+) \mc R_{us,F}(\mc G)
\end{equation}
is a good pseudo-parabolic $F$-subgroup of $\mc G^+$.

Now we can prove a Bruhat decomposition for possibly disconnected linear algebraic groups
over arbitrary fields.

\begin{thm}\label{thm:2.2}
Let $\mc P^+ \!$ and $\mc P'$ be pseudo-parabolic $F$-subgroups of 
$\mc G^+ \!$ containing $\mc P_\es$. 
\enuma{
\item $\mc G^+ (F) = \mc P' (F) N_{\mc G^+ (F)}(\mc S) \mc P^+ (F)$.
\item Write $W(\mc P, \mc S) = N_{\mc P}(\mc S) / Z_{\mc P}(\mc S)$. Suppose that $\mc P^+$ is 
$F$-good. The map
\[
W(\mc P' \cap \mc G,\mc S) \backslash W(\mc G,\mc S) / W(\mc P^+ \cap \mc G,\mc S)
\to (\mc P' \cap \mc G)(F) \backslash \mc G^+ (F) / \mc P^+ (F)
\]
is bijective.
}
\end{thm}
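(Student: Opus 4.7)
My plan is to reduce both parts to the connected Bruhat decomposition \cite[Theorem C.2.8]{CGP} by exploiting Lemma \ref{lem:1.18} (for part (a)) and the goodness hypothesis (for part (b)).

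For part (a), the key observation is that Lemma \ref{lem:1.18} allows one to lift the semidirect decomposition of $W(\mc G^+(F),\mc S)$ to the group level as
\[
\mc G^+(F) = \mc G(F) \cdot N_{\mc G^+(F)}(\mc P_\es,\mc S) ,
\]
since $W(\mc G,\mc S) \subset W(\mc G^+(F),\mc S)$ is exhausted by lifts in $N_{\mc G(F)}(\mc S) \subset \mc G(F)$. Given $g \in \mc G^+(F)$, write $g = g_0 m$ with $g_0 \in \mc G(F)$ and $m \in N_{\mc G^+(F)}(\mc P_\es,\mc S)$. Since $m$ normalizes $\mc P_\es$, the conjugate $m(\mc P^+ \cap \mc G)m^{-1}$ is still a pseudo-parabolic $F$-subgroup of $\mc G$ containing $\mc P_\es$. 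Applying the connected Bruhat decomposition to $g_0$ with the two pseudo-parabolics $\mc P' \cap \mc G \supset \mc P_\es$ and $m(\mc P^+ \cap \mc G)m^{-1} \supset \mc P_\es$, I obtain $g_0 = p_1 \, w \, (m p_2 m^{-1})$ with $p_1 \in (\mc P' \cap \mc G)(F)$, $w \in N_{\mc G(F)}(\mc S)$, $p_2 \in (\mc P^+ \cap \mc G)(F)$. Multiplying by $m$ on the right,
\[
g \;=\; p_1 \, w \, m \, p_2 \;\in\; \mc P'(F) \cdot N_{\mc G^+(F)}(\mc S) \cdot \mc P^+(F),
\]
which establishes (a).

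For part (b), I plan to factor the map through $(\mc P' \cap \mc G)(F) \backslash \mc G(F) / (\mc P^+ \cap \mc G)(F)$. Surjectivity of the natural map to double cosets in $\mc G^+(F)$ follows from $F$-goodness: since $\mc G^+(F) = \mc P^+(F)\,\mc G(F)$, any element of $\mc G^+(F)$ is $\mc P^+(F)$-equivalent to one of $\mc G(F)$. For injectivity at this stage, suppose $w, w' \in N_{\mc G(F)}(\mc S)$ satisfy $w' = p_1 \, w \, p_2^+$ with $p_1 \in (\mc P' \cap \mc G)(F)$ and $p_2^+ \in \mc P^+(F)$; then $p_2^+ = w^{-1} p_1^{-1} w' \in \mc G(F)$, so $p_2^+ \in \mc P^+(F) \cap \mc G(F) = (\mc P^+ \cap \mc G)(F)$, and $w,w'$ already represent the same double coset inside $\mc G(F)$. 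Finally, the connected Bruhat for $\mc G$ with the two pseudo-parabolics $\mc P' \cap \mc G$ and $\mc P^+ \cap \mc G$ (both containing $\mc P_\es$) identifies
\[
(\mc P' \cap \mc G)(F) \backslash \mc G(F) / (\mc P^+ \cap \mc G)(F) \;\cong\; W(\mc P' \cap \mc G,\mc S) \backslash W(\mc G,\mc S) / W(\mc P^+ \cap \mc G,\mc S) ,
\]
yielding the claimed bijection.

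The principal obstacle is the step in (a): naively one would like to write $g = g_0 n$ with $n \in N_{\mc G^+(F)}(\mc S)$ and apply connected Bruhat directly, but an arbitrary $n$ need not normalize $\mc P_\es$, so $n(\mc P^+\cap \mc G)n^{-1}$ might share no common minimal pseudo-parabolic with $\mc P' \cap \mc G$ and the connected Bruhat decomposition becomes unavailable. Choosing $m$ inside $N_{\mc G^+(F)}(\mc P_\es,\mc S)$, which is exactly what Lemma \ref{lem:1.18} permits, is what circumvents this difficulty.
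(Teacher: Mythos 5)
Your proof is correct and follows essentially the same route as the paper: both parts reduce to the connected Bruhat decomposition coming from the Tits system $(\mc G(F),\mc P_\es(F),N_{\mc G(F)}(\mc S))$ of \cite[Theorem C.2.20]{CGP}, using \eqref{eq:1.11} together with Lemma \ref{lem:1.18} to choose coset representatives in $N_{\mc G^+(F)}(\mc P_\es,\mc S)$ for part (a), and the $F$-goodness of $\mc P^+$ to identify $\mc G^+(F)/\mc P^+(F)$ with $\mc G(F)/(\mc P^+\cap\mc G)(F)$ for part (b). The only difference is cosmetic: you conjugate $\mc P^+\cap\mc G$ by the representative $m$, whereas the paper absorbs the conjugates $w^{-1}\mc P_\es(F)w=\mc P_\es(F)$ inside the product decomposition, which amounts to the same manipulation.
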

\begin{proof}
By \cite[Theorem C.2.20]{CGP} there is a Tits system $(\mc G (F), \mc P_\es (F), \mc S (F), R)$ 
with Weyl group $W(\mc G, \mc S)$. Here $R$ denotes the set of 
simple reflections in $W(\mc G,\mc S)$ determined by $\mc P_\es$. The theorem
for $\mc G (F)$ then follows from the properties of Tits systems \cite[\S IV.2.5]{Bou}. 

Choose a set of representatives $W^+ \subset N_{\mc G^+ (F)}(\mc P_\es,\mc S)$ for 
$W (\mc G^+ (F),\mc P_\es,\mc S)$. By Lemma \ref{lem:1.18} and \eqref{eq:1.11}
\begin{equation}\label{eq:1.12}
\mc G^+ (F) = \sqcup_{w \in W^+} \mc G (F) Z_{\mc G^+ (F)}(\mc S) w. 
\end{equation}
With the results for $\mc G (F)$ and 
$Z_{\mc G^+ (F)}(\mc S) \subset N_{\mc G^+ (F)}(\mc P_\es)$ we obtain
\begin{align*}
\mc G^+ (F) & = \mc P_\es (F) N_{\mc G (F)}(\mc S) \mc P_\es (F) Z_{\mc G^+ (F)}(\mc S) W^+ \\
& \subset \bigcup\nolimits_{w \in W^+} \mc P_\es (F) N_{\mc G (F)}(\mc S)
Z_{\mc G^+ (F)}(\mc S) w (w^{-1} \mc P_\es (F) w) \\
& = \bigcup\nolimits_{w \in W^+} \mc P_\es (F) Z_{\mc G^+ (F)}(\mc S) N_{\mc G (F)}(\mc S)
w \mc P_\es (F) \\
& = \mc P_\es (F) N_{\mc G^+ (F)}(\mc S) \mc P_\es (F) .
\end{align*}
Clearly this implies part (a). Moreover $Z_{\mc G^+ (F)}(\mc S) \subset \mc P^+$ and 
$\mc P^+$ is $F$-good, so it contains $\mc P_\es^+ \supset N_{\mc G^+ (F)}(\mc P_\es,\mc S)$.
Therefore map in part (b) is surjective. Furthermore the goodnesss of $\mc P^+$ implies that
$\mc G^+ (F) / \mc P^+ (F)$ can be identified with $\mc G (F) / (\mc P^+ \cap \mc G)(F)$.
Now the injectivity in (b) follows from part (b) for $\mc G$.
\end{proof}

\subsection{Good maximal compact subgroups} \
\label{par:Xi}

In this paragraph we assume that $F$ is a non-archimdean local field and that $\mc G^+$ is a 
possibly disconnected linear algebraic $F$-group (not necessarily smooth), whose identity
component $\mc G$ is quasi-reductive over $F$. We continue the use of the notations of
Section \ref{sec:rep} for $\mc G$ and $G$.
The locally compact group $G^+ = \mc G^+ (F)$ contains $G$ as an open normal subgroup of 
finite index. By Lemma \ref{lem:1.4} and \eqref{eq:1.35} 
\begin{equation}
G \text{ is unimodular, hence so is } G^+ .
\end{equation}
We would like $G^+$ to act naturally on the affine building $\mc B (\mc G,F)$. 
Unfortunately, it is not even clear that $N_{G^+}(S_\es)$ acts naturally on
\[
X_* (\mc S_\es) \otimes_\Z \R \quad \text{or} \quad
\mc B (\mc G / \mc D (\mc G) \mc R_{u,F}(\mc G),F) = X_* (\mc Z) \otimes_\Z \R.
\]
We can avoid this problem by assuming
\begin{equation}\label{eq:1.56}
\begin{split}
& \text{the action of } G \text{ on } 
\mc B (\mc G / \mc D (\mc G) \mc R_{u,F}(\mc G),F) \text{ by translations}\\
& \text{extends to an isometric action of } G^+ .
\end{split}
\end{equation}
Let us discuss which parts of Section \ref{sec:quasi} are valid for $\mc G^+$ under this
assumption. The conjugation action of $G^+$ on $G$ induces, via Proposition \ref{prop:3.19}.c,
a canonical isometric action of $G^+$ on $\mc{BT} (\mc G,F)$. Hence the action of $G$ on
\[
\mc B (\mc G,F) = \mc{BT}(\mc G,F) \times \mc B (\mc G / \mc D (\mc G) \mc R_{u,F}(\mc G),F) 
\]
extends to an isometric action of $G^+$. This $G^+$-action is proper because $G$ has finite
index in $G^+$ and the $G$-action is proper (Theorem \ref{thm:3.4}.e).
The proof of Proposition \ref{prop:3.18} also works for $G^+$. In particular it shows that
the $G^+$-stabilizer of any vertex of $\mc B (\mc G,F)$ is a maximal compact subgroup of $G^+$.
Theorem \ref{thm:3.10} is valid for $G^+$ with the same subgroups $K_n$. We note that to make
them normal in $G^+_{x_s}$ we have to take care that in Lemma \ref{lem:3.16}.c (which holds
just as well for $G^+$) we select subgroups of $Z_Q (S_\es)$ that are normal in $N_{G^+}(S_\es)$.

Using \eqref{eq:1.12}, one sees easily that Theorems \ref{thm:3.5} and \ref{thm:3.7} (except
part (d) of the latter) for $\mc G$ easily imply the same statements for $\mc G^+$. The Iwasawa
and Cartan decompositions do not generalize automatically to $\mc G^+$, for isotropy groups of
special vertices are not always good as maximal compact subgroups of $G^+$. \\

Almost everything in Paragraph \ref{par:ind} generalizes to $G^+$, even without 
\eqref{eq:1.56}. Indeed, these are statements about representations of totally 
disconnected locally compact groups, for which connectedness as algebraic groups plays only 
a minor role. Since $\mc R_{us,F}(\mc P^+) = \mc R_{us,F}(\mc P^+ \cap \mc G)$, the functors 
$r_P^G$ and $r_{P^+}^{G^+}$ do exactly the same on the vector space underlying a $G^+$-representation.

We note that for Theorem \ref{thm:2.9} we need the Bruhat decomposition for possibly 
disconnected linear algebraic groups, as established in Theorem \ref{thm:2.2}.
This entails that we must restrict Theorem \ref{thm:2.9} to the cases 
$r_{P'}^{G^+} i_{P^+}^{G^+}(\pi,V)$ where $\mc P^+$ is a good pseudo-parabolic $F$-subgroup 
of $\mc G^+$. In particular the important case \eqref{eq:2.12} is valid for $G^+$.

For Theorem \ref{thm:1.14} and most of Paragraph \ref{par:adm} we need the Cartan decomposition.
To ensure that our arguments for these results hold for $G^+$, we need to assume \eqref{eq:1.56} 
and that $G^+$ possesses a good maximal compact subgroup in the sense of \eqref{eq:3.58}.
Unfortunately, some disconnected groups do not possess any good maximal compact subgroup.
Surprisingly, that can fail already for groups with a simple neutral component.

\begin{ex}
Consider the group $SL_4 (F)$. It is simply connected, so its maximal compact subgroups are
precisely the stabilizers of special vertices in its Bruhat--Tits building \cite[\S 4.4.6]{BrTi1}.
It is known that all vertices of $\mc{BT}(SL_4,F)$ are special and that they form 4 orbits
under $SL_4 (F)$. 

Let $\tau$ be the inverse transpose mapping, an outer automorphism of $SL_4$ of order 2.
Write $w^+ = \mr{Ad}\left( \begin{smallmatrix}
 & & & 1 \\
 & & 1 & \\
 & 1 & & \\
\omega_F & & & 
\end{smallmatrix} \right) \circ \tau$, an automorphism of $SL_4 (F)$ whose square 
is inner. Then $SL_4^+ := SL_4 \cup w^+ SL_4$ is a disconnected 
reductive $F$-group with $W(SL_4^+ (F),\mc S) \cong \{ \mr{id},\tau \} \ltimes S_4$
(where $\mc S$ denotes the group of diagonal matrices in $SL_4$).

One checks that the natural action of $w^+$ on $\mc{BT}(SL_4,F)$ from Proposition 
\ref{prop:3.19}.c sends every orbit of vertices to another $SL_4 (F)$-orbit. Hence, for any 
$g \in SL_4 (F)$, $w^+ g$ does not fix any vertex of $\mc{BT} (SL_4,F)$. In other words, 
there does not exist any vertex of $\mc{BT} (SL_4,F)$ whose $SL_4^+ (F)$-stabilizer contains a 
representative for $w^+ \in W(SL_4^+ (F),\mc S)$, and $SL_4^+ (F)$ does not possess any 
good maximal compact subgroup.
\end{ex}

To alleviate this inconvenience we formulate two variations on the Iwasawa and Cartan 
decompositions for $G^+$. Unfortunately, part (c) still does not seem sufficient as a 
substitute for the Cartan decomposition in the proofs of Theorems \ref{thm:1.14} and 
\ref{thm:1.1}.

\begin{thm}\label{thm:1.9}
Let $\mc G^+$ be a linear algebraic $F$-group whose connected component $\mc G$ is
quasi-reductive over $F$.
\enuma{
\item Suppose that \eqref{eq:1.56} holds and that $G^+$ possesses a good maximal compact
subgroup $K^+$, in the sense of \eqref{eq:3.58}. Then $G^+$ admits Iwasawa and Cartan
decompositions with respect to $K^+$, just as in Theorems \ref{thm:3.6} and \ref{thm:3.7}.d.
\item Let $K$ be a good maximal compact subgroup of $G$. Then 
\[
G^+ = K \mc P^+ (F) = \mc P^+ (F) K
\]
for every good pseudo-parabolic $F$-subgroup $\mc P^+$ of $\mc G^+$.
\item Let $x,y \in \mh A_\es$ be such that $G_x$ and $G_x$ are good maximal compact subgroups
of $\mc G (F)$. Using Lemma \ref{lem:1.18} we choose a set of representatives 
$N^+ \subset N_{G^+}(\mc P_\es,S_\es)$ for $N_{G^+}(S_\es) / N_G (S_\es)$. 

Then there exist cones $A_n \subset Z_G (S_\es) / Z_G (S_\es)_\cpt$ for $n \in N^+$, such that
\[
G_x \backslash G^+ / G_y = 
\bigsqcup\nolimits_{n \in N^+} \bigsqcup\nolimits_{a \in A_n} G_x a n G_y . 
\]
}
\end{thm}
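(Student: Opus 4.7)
Under hypothesis \eqref{eq:1.56} the $G$-action on $\mc B(\mc G,F)$ extends to an isometric, proper $G^+$-action (using \eqref{eq:1.56} on $X_*(\mc Z) \otimes_\Z \R$ and Proposition \ref{prop:3.19}(c) on $\mc{BT}(\mc G,F)$), and by \eqref{eq:1.12} the statements of Theorem \ref{thm:3.5} extend to $G^+$. For part (a), my plan is simply to re-run the proofs of Theorems \ref{thm:3.6} and \ref{thm:3.7}(d) verbatim for $G^+$: these arguments rely only on Theorem \ref{thm:3.5}, on the action on $\mc B(\mc G,F)$, and on the good maximal compact subgroup containing representatives of the ambient Weyl group. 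The first two inputs are now available for $G^+$, and the goodness condition \eqref{eq:3.58} on $K^+$ furnishes representatives for all of $W(\mc G^+(F),\mc S_\es)$.

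For part (b), note that $\mc P^+$ is of the shape $\mc P_{\mc G^+}(\lambda) \mc R_{u,F}(\mc G)$ for some $F$-rational cocharacter $\lambda$, so $\mc P^+ \cap \mc G = \mc P_{\mc G}(\lambda) \mc R_{u,F}(\mc G)$ is a pseudo-parabolic $F$-subgroup of $\mc G$ contained in $\mc P^+$. Since $\mc P^+$ is $F$-good, $G^+ = \mc P^+(F) G$, and taking inverses and using normality of $\mc G$ in $\mc G^+$ also gives $G^+ = G \mc P^+(F)$. The Iwasawa decomposition of Theorem \ref{thm:3.6} applied to $(G, K, \mc P^+ \cap \mc G)$ gives $G = K (\mc P^+ \cap \mc G)(F) = (\mc P^+ \cap \mc G)(F) K$; substituting into the two factorizations of $G^+$ yields $G^+ = \mc P^+(F) K = K \mc P^+(F)$.

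For part (c), the plan is to reduce to Theorem \ref{thm:3.7}(d) for $G$ via the coset structure of $G^+$. By \eqref{eq:1.11} the set $N^+$ represents $G^+/G$, giving the disjoint partition $G^+ = \bigsqcup_{n \in N^+} G n$ and hence
\[
G_x \backslash G^+ / G_y \; = \; \bigsqcup_{n \in N^+} G_x \backslash G n / G_y .
\]
The map $G_x g n G_y \mapsto G_x g (n G_y n^{-1})$ identifies each slice with $G_x \backslash G / (n G_y n^{-1})$. Since $n \in N_{G^+}(\mc P_\es, \mc S_\es)$ stabilizes the apartment $\mh A_\es$, one has $n G_y n^{-1} = G_{n \cdot y}$ for $n \cdot y \in \mh A_\es$; and $G_{n \cdot y}$ is again a good maximal compact subgroup of $G$, because $n$ normalizes both $N_G(\mc S_\es)$ (using normality of $\mc G$ in $\mc G^+$) and $Z_G(\mc S_\es)$, so conjugating the goodness relation for $y$ by $n$ gives the goodness relation for $n \cdot y$. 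Applying Theorem \ref{thm:3.7}(d) to the pair $(x, n \cdot y)$ then produces the required cone $A_n \subset Z_G(\mc S_\es)/Z_G(\mc S_\es)_\cpt$ representing $G_x \backslash G / G_{n \cdot y}$, and transporting back gives $G_x \backslash G n / G_y = \{G_x a n G_y : a \in A_n\}$. Disjointness across $n \in N^+$ is immediate from projecting double cosets to $G^+/G$.

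I expect the only nontrivial check to be the compatibility in part (c) between the abstract $G^+$-action on $\mc B(\mc G,F)$ (coming from conjugation of compact subgroups via Proposition \ref{prop:3.19}(c) together with \eqref{eq:1.56}) and the translation action of $N_{G^+}(\mc S_\es)$ on $\mh A_\es$ that one wants to use when writing $n \cdot y$. Once this compatibility is verified, parts (a)--(c) are essentially bookkeeping on top of results already proved for the connected group $G$ in Section \ref{sec:quasi}.
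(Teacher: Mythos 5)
Your proposal is correct and matches the paper's proof essentially step for step: part (a) by rerunning the proofs of Theorems \ref{thm:3.6} and \ref{thm:3.7}.d for $G^+$, part (b) via $K\mc P^+(F) = K(\mc P^+\cap\mc G)(F)\,\mc P^+(F) = \mc G(F)\mc P^+(F) = \mc G^+(F)$ using goodness of $\mc P^+$, and part (c) via the partition coming from \eqref{eq:1.11}/\eqref{eq:1.12} and Theorem \ref{thm:3.7}.d applied to $G_x$ and $n G_y n^{-1}$. The only compatibility check you flag in (c) is sidestepped in the paper: instead of computing $n\cdot y$ through the extended building action, it observes that $n G_y n^{-1}$ is again a good maximal compact subgroup of $G$ whose unique fixed point lies in the apartment attached to $n\mc S_\es n^{-1}=\mc S_\es$ (Proposition \ref{prop:3.19} plus the Bruhat--Tits fixed point theorem), so that $n G_y n^{-1} = G_{n(y)}$ for some $n(y)\in \mh A_\es$, after which Theorem \ref{thm:3.7}.d applies verbatim.
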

\begin{proof}
(a) Under these conditions the proofs of Theorems \ref{thm:3.6} and \ref{thm:3.7}.d are 
valid for $\mc G^+ (F)$.\\
(b) By the Iwasawa decomposition for $G$:
\[
K \mc P^+ (F) = K (\mc P^+ \cap \mc G)(F) \mc P^+ (F) = \mc G (F) \mc P^+ (F) . 
\]
The definition of ``good" for pseudo-parabolic $F$-subgroups of $\mc G^+$ says that the
right hand side equals $\mc G^+ (F)$.\\
(c) First we note that by \eqref{eq:1.11}
\begin{equation}\label{eq:1.57}
G_x \backslash G^+ / G_y = \bigsqcup\nolimits_{n \in N^+} G_x \backslash G n / G_y \cong
\bigsqcup\nolimits_{n \in N^+} G_x \backslash G / (n G_y n^{-1}) \cdot n . 
\end{equation}
Conjugation with $n \in N^+$ is an automorphism of $\mc G (F)$, so $n G_y n^{-1}$ is again
a good maximal compact subgroup of $G$. By Proposition \ref{prop:3.19}.a it fixes a unique
point of $\mc{BT}(\mc G,F)$, which lies in the apartment associated to the maximal
$F$-split torus $n \mc S_\es n^{-1} = \mc S_\es$. By the Bruhat--Tits fixed point theorem
$n G_y n^{-1}$ equals the $G$-stabilizer of a point of $\mh A_\es$, say $n(y)$. Choose
$A_n$ as in \eqref{eq:3.60}, for $G,x$ and $n(y)$. By Theorem \ref{thm:3.7}.d 
$G = \bigsqcup_{a \in A_n} G_x a G_{n(y)}$. Now the result follows from \eqref{eq:1.57}.
\end{proof}

To apply some work of Harish--Chandra, Waldspurger \cite{Wal} and Lafforgue \cite{Laf} 
to quasi-reductive $F$-groups, we want to exhibit a well-behaved generalization of 
the Harish-Chandra's $\Xi$-function \cite[\S II.1]{Wal}. Some results in Paragraph \ref{par:ind} 
were selected to facilitate its construction and to show its properties.
Let $K \subset G$ be a good maximal compact subgroup which fixes a point $x' \in \mh A_\es$.
(It exists since every $G$-orbit in $\mc B (\mc G,F)$ intersects every apartment.) 

Recall the maximal $F$-split torus $\mc S_\es$ and the minimal good pseudo-parabolic $F$-subgroup
$\mc P_\es^+$ of $\mc G^+$ from \eqref{eq:1.61}. With Theorem \ref{thm:1.9}.b we extend 
$\delta_{P_\es^+}$ to a function $G \to \R_{>0}$ which is right-$K$-invariant. Let $\mu_K$ 
be the Haar measure on $K$ with $\mu_K (K) = 1$. We define
\begin{equation}
\Xi (g) = \int_K \delta_{P_\es^+} (k g)^{1/2} \textup{d}\mu_K (k) \qquad g \in G^+ .
\end{equation}
By the compactness of $K$, this is a continuous function $G^+ \to \R_{>0}$.

Let $(\pi,V) = i_{P_\es^+}^{G^+} (\mr{triv},\C)$ be the normalized parabolic induction of the 
trivial repre\-sen\-tation of $N_{G^+}(\mc P_\es,S_\es) \cong \mc P_\es^+ (F) / 
\mc R_{us,F}(\mc P_\es^+)(F)$.
We saw in \eqref{eq:2.13} that it is a unitary smooth $G^+$-representation. Let $f_K \in V$ be
the unique $K$-invariant function with $f_K (1) = 1$. (The uniqueness follows from
the Iwasawa decomposition $G^+ = P_\es^+ K$.) The definition of the inner product on $V$ 
\cite[Proposition 3.1.3]{Cas} works out to
\begin{equation}\label{eq:2.14}
\Xi (g) = \inp{\pi (g) f_K}{f_K} \qquad g \in G^+ .
\end{equation}
The unitarity of $\pi$ implies that, for all $g \in G^+$:
\begin{equation}\label{eq:2.25}
\Xi (g) \leq \Xi (1) = \inp{f_K}{f_K} = \int_K f_K^2 (k) \textup{d}\mu_K (k) = 
\int_K \textup{d} \mu_K (k) = 1.
\end{equation}
Let us recall the construction of a length function on $G^+$ from \cite[p. 242]{Wal}.
Choose an embedding of $F$-groups $\tau : \mc G^+ \to GL_n$. Since every maximal compact 
subgroup of $GL_n (F)$ is conjugate to $GL_n (\mc O_F)$, we may assume that $\tau (K) 
\subset GL_n (\mc O_F)$. For $g \in G^+$, let $\mr{mc}(g,g^{-1})$ be the collection of all
matrix coefficients of $\tau (g)$ and $\tau (g)^{-1}$. Let $\nu_F$ be the discrete 
valuation of $F$, and define
\[
\ell (g) = \sup \{ -\nu_F (x) : x \in \mr{mc}(g,g^{-1}) \} .  
\]
This is a $K$-biinvariant length function $G^+ \to \R_{\geq 0}$.

\begin{lem}\label{lem:2.10}
The following integral converges for sufficiently large $t \in \R_{>0}$:
\[
\int_{G^+} \Xi (g)^2 (1 + \ell (g))^{-t} \textup{d}\mu_{G^+} (g) .
\]
\end{lem}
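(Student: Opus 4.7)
The plan is to adapt the standard reductive argument of Harish--Chandra, as streamlined in \cite[\S II.1]{Wal}, to the quasi-reductive and possibly disconnected setting, using the structural results from Paragraphs \ref{subsec:BT} and \ref{subsec:dec}. First I would reduce to an integral over the connected component $G$: writing $G^+ = \bigsqcup_{n \in N^+} G n$ for a finite set $N^+$ (from Lemma \ref{lem:1.18} and \eqref{eq:1.11}), and using that both $G^+$ and $G$ are unimodular so that $\mu_{G^+}$ restricts on each coset $Gn$ to a fixed multiple of $\mu_G$, the integral over $G^+$ becomes a finite sum of integrals over $G$. Right translation by $n$ shifts $\ell$ and $\Xi$ in a controlled way: since $\tau(N^+)$ is a finite set of matrices, $|\ell(gn)-\ell(g)|$ is uniformly bounded, while $\Xi(gn) = \langle \pi(g)\pi(n)f_K, f_K\rangle$ is a matrix coefficient of $\pi$ between vectors fixed by some open compact $K'\subset K$. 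Thus it suffices to prove convergence of $\int_G |\langle \pi(g) v_1, v_2\rangle|^2 (1+\ell(g))^{-t} d\mu_G(g)$ for fixed $K'$-fixed vectors $v_1,v_2 \in V$.

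Next, after possibly shrinking $K$ to an Iwasawa-compatible good maximal compact subgroup and applying the Cartan decomposition of Theorem \ref{thm:3.7}.d, I obtain $G = \bigsqcup_{a\in A} K' a K'$ with $A\subset Z_G(S_\es)$ a finitely generated semigroup representing the positive cone of the cocharacter lattice. The integral becomes
\[
\sum_{a\in A} \mu_G(K'aK') \, |\langle \pi(a) v_1, v_2\rangle|^2 \, (1+\ell(a))^{-t}.
\]
I would then establish three estimates familiar from the reductive case: (i) the volume bound $\mu_G(K'aK') \leq C_1 \delta_{P_\es^+}(a)^{-1}$, via the Iwahori decomposition of $K'$ (Theorem \ref{thm:3.10}.c) together with the conjugation dynamics of Theorem \ref{thm:3.10}.d, which control $[K' : K' \cap aK'a^{-1}]$; (ii) the Harish--Chandra estimate $|\langle \pi(a) v_1, v_2\rangle| \leq C_2 \delta_{P_\es^+}(a)^{1/2}(1+\ell(a))^d$ for $a$ positive with respect to $\mc P_\es^+$ and some fixed integer $d$; and (iii) the comparability $\ell(a) \asymp \|a\|$ for some lattice norm on $Z_G(S_\es)/Z_G(S_\es)_\cpt$, which follows from the construction of $\ell$ as a logarithm of matrix coefficients together with Lemma \ref{lem:3.16}.b. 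Combining these gives a bound of the form $C\sum_{a\in A}(1+\ell(a))^{2d-t}$. Since the number of $a\in A$ with $\ell(a)\leq N$ grows polynomially of degree $\dim \mc S_\es$, this sum converges for $t$ sufficiently large.

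The main obstacle is estimate (ii), the Harish--Chandra bound on $\Xi$. Classically it is established by decomposing each $ka \in P_\es^+ K$ via the Iwasawa decomposition and carrying out a careful uniform analysis of $\delta_{P_\es^+}^{1/2}(p(k,a))$ as $a$ varies deep in the positive chamber; the polynomial factor $(1+\ell(a))^d$ reflects the number of distinct Iwasawa contributions, or equivalently the complexity of the $K'$-orbit structure on $G/P_\es^+$. This analysis should carry over to our setting thanks to Theorem \ref{thm:3.6} (Iwasawa decomposition with respect to a good maximal compact), the valuated root datum of Theorem \ref{thm:3.3}, and the Iwahori decomposition of Theorem \ref{thm:3.10}; the delicate point is obtaining uniformity in $k \in K$ rather than a pointwise bound, which requires exploiting the compactness of $K$ and the continuity of $\delta_{P_\es^+}^{1/2}$ on the closed cells determined by the Bruhat--Iwahori decomposition of $K$.
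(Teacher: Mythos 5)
Your overall strategy --- reduce the integral to a sum over a cone via a Cartan decomposition, bound volumes of double cosets, and invoke a Harish--Chandra type estimate on $\Xi$ along the chamber --- is the same as the paper's, which transports Waldspurger's proof of Lemme II.1.5 to $G^+$. But two steps, as written, do not work. First, after converting $\Xi(gn)$ into matrix coefficients $\langle \pi(g)\pi(n)f_K, f_K\rangle$ you only retain invariance under a small compact open subgroup $K'$, and there is no Cartan decomposition $G = \bigsqcup_{a\in A} K'aK'$ for such a $K'$: Theorem \ref{thm:3.7}.d requires a \emph{good maximal compact} subgroup, and your displayed sum $\sum_{a\in A}\mu_G(K'aK')\,|\langle \pi(a)v_1,v_2\rangle|^2(1+\ell(a))^{-t}$ does not equal the integral, since the matrix coefficient is not constant on $K$-double cosets when $v_1,v_2$ are only $K'$-fixed. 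The reduction to $G$ is also unnecessary: $\Xi$ and $\ell$ are $K$-biinvariant on all of $G^+$ (with $K=G_{x'}$ good), so Theorem \ref{thm:1.9}.c gives directly that the integral equals $\sum_{n\in N^+}\sum_{a\in A_n}\mu_{G^+}(KanK)\,\Xi(an)^2(1+\ell(an))^{-t}$; this is how the paper proceeds, and it makes clear that the key estimate must be proved at the \emph{disconnected} elements $an\in N_{G^+}(P_\es,S_\es)$, not merely at $a\in Z_G(S_\es)$, a point your reduction obscures rather than resolves.

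Second, and more seriously, your estimate (ii) --- $\Xi(a)\leq C\,\delta_{P_\es^+}(a)^{1/2}(1+\ell(a))^d$ on the chamber --- is the real content of the lemma, and your sketched route (uniform analysis of the Iwasawa projection $\delta_{P_\es^+}^{1/2}(p(k,a))$ over $k\in K$) is left entirely unproved: the integral you propose to analyse is just the definition of $\Xi(a)$, so ``the analysis should carry over'' is precisely the missing argument, and it is not clear how that route would produce the polynomial factor. The paper (following Waldspurger's Lemme II.1.1) obtains the bound by a different mechanism: $\Xi(g)=\inp{\pi(g)f_K}{f_K}$ is a matrix coefficient of $i^{G^+}_{P_\es^+}(\mathrm{triv})$, its asymptotics along the chamber are governed by Casselman's pairing (Theorem \ref{thm:2.11}) applied to the Jacquet module $r^{G^+}_{P_\es^+} i^{G^+}_{P_\es^+}(\mathrm{triv})$, and the special case \eqref{eq:2.12} of the geometric lemma shows that this Jacquet module has only the trivial representation of $N_{G^+}(P_\es,S_\es)$ as constituent, with multiplicity $|W(\mc G,\mc S_\es)|$; the operator $\pi(a)$ on it is therefore $\delta^{1/2}$ times a unipotent operator, which is exactly what yields $(1+\ell(a))^d$. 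One also needs $N_{G^+}(P_\es,S_\es)=S_\es C$ with $C$ compact (Lemmas \ref{lem:1.6}.b and \ref{lem:3.16}.a), and the validity of \eqref{eq:2.12} for $G^+$ rests on the disconnected Bruhat decomposition and good pseudo-parabolic subgroups (Theorems \ref{thm:2.2} and \ref{thm:2.9}). None of these ingredients appear in your proposal, so the crucial estimate --- and with it the convergence statement --- remains unestablished.
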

\begin{proof}
We can use the proof of \cite[Lemme II.1.5]{Wal} (which is the desired statement, but 
only for connected reductive $F$-groups), when we can show that all the ingredients are
also valid for $G^+ = \mc G^+ (F)$.

The first ingredient is an estimate on the volume of double cosets $KaK$ \cite[p. 241]{Wal}
with $a \in N_{G^+}(P_\es ,S_\es)$. This holds for $G^+$ because of the Cartan 
decomposition (Theorem \ref{thm:1.9}.c) and the existence of compact open subgroups with an 
Iwahori decomposition (Proposition \ref{prop:2.1} and Theorem \ref{thm:3.10}). Notice that
elements of $N_{G^+}(P_\es ,S_\es)$ normalize $U$ and hence also $\overline{U}$.

The most involved step in Waldspurger's argument is an estimate on $\Xi (g)$ for $g \in
N_{G^+} (P_\es,S_\es)$, namely \cite[Lemme II.1.1]{Wal}. An essential ingredient of the proof
of the latter is the normalized Jacquet restriction $r^{G^+}_{P_\es^+}(\pi) = r^{G^+}_{P_\es^+} 
i^{G^+}_{P_\es^+} (\mathrm{triv})$. By the special case \eqref{eq:2.12} of Theorem \ref{thm:2.9}, 
the trivial representation of $N_{G^+}(P_\es,S_\es)$ is the only irreducible constituent of 
$r^{G^+}_{P_\es^+} i^{G^+}_{P_\es^+} (\mathrm{triv})$, and it appears with multiplicity
$|W(\mc G,\mc S_\es)|$. Furthermore the argument for \cite[Lemme II.1.1]{Wal} uses
Theorem \ref{thm:2.11} and that $N_{G^+}(P_\es,S_\es) = S_\es C$ for some compact subset $C$, 
which follows from Lemma \ref{lem:1.6}.b (and its analogue Lemma \ref{lem:3.16}.a 
for quasi-reductive $F$-groups). With all this at hand, Waldspurger's proof goes through, 
and provides bounds on $\Xi (g)$ in terms of the length function $\ell$ on $G^+$.

Waldpurger uses the above estimates to reduce the integral \eqref{eq:2.6} to a sum over the set 
\[
W(\mc G,\mc S_\es) \backslash N_{G^+} (P_\es,S_\es) / Z_{G} (S_\es)_\cpt \cong
\bigsqcup\nolimits_{n \in N^+} A_n n
\]
appearing in the Cartan decomposition. In Lemmas \ref{lem:2.7} and \ref{lem:3.16}.b we showed 
that $Z_G (S_\es) / Z_G (S_\es)_\cpt$ is a finitely generated free abelian group, just 
as in the Cartan decomposition for connected reductive $p$-adic groups. It is normal 
and of finite index in $N_{G^+} (P_\es,S_\es) / Z_{G} (S_\es)_\cpt$. Consequently the same 
estimates as used in \cite[Lemme II.1.5]{Wal} work in our case, as long as we take $t$ 
sufficiently large compared to dim$(\mh A_\es)$.
\end{proof}

\newpage

\end{document}